\documentclass[12pt]{article}

\usepackage[T1]{fontenc}
\usepackage[latin1]{inputenc}
\usepackage[english]{babel}
\usepackage{lmodern}

\usepackage{amsmath,amsthm,amssymb,mathrsfs}
\usepackage{stmaryrd} 
\usepackage{accents}  
\usepackage{bm}

\usepackage[normalem]{ulem} 
\usepackage{xcolor}         

\usepackage{indentfirst}
\usepackage{enumitem}
\usepackage[font=small,labelfont=bf]{caption}

\usepackage[numbers,sort&compress]{natbib}    
\usepackage{breakcites} 

\usepackage{tikz}
\usetikzlibrary{decorations.pathreplacing}

\newtheorem{thm}{Theorem}[section]
\newtheorem{defi}[thm]{Definition}
\newtheorem{lma}[thm]{Lemma}
\newtheorem{cor}[thm]{Corollary}
\newtheorem{re}[thm]{Remark}

\newcommand{\diam}{\operatorname{diam}}
\newcommand{\law}{\mathcal{L}}

\newcommand{\pr}{\mathbb{P}}
\newcommand{\prob}{\pr}
\newcommand{\E}{\mathbb{E}}
\newcommand{\mean}{\E}

\newcommand{\real}{\mathbb{R}}
\newcommand{\N}{\mathbb{N}}
\newcommand{\Z}{\mathbb{Z}}

\newcommand{\Var}{\operatorname{Var}}
\newcommand{\indep}{\perp\!\!\!\perp}
\newcommand{\ubar}[1]{\underaccent{\bar}{#1}}

\newcommand{\bone}{\mathbf{1}}

\newcommand{\cF}{\mathcal{F}}

\DeclareMathOperator*{\argmin}{arg\,min}

\def\g{{\gamma}}
\def\iid{{i.i.d.}}

\newcommand{\G}{\Gamma}

\def\sxi{{\xi((x,M_x),\hP,\Gamma_{\lambda})}}

\newcommand{\Vol}{\operatorname{Vol}}
\def\card{{\operatorname{card}}}

\renewcommand{\P}{\mathcal{P}}
\def\scrB{{\mathcal{B}}}
\def\hP{{\hat{\P}}}
\def\scrP{{\mathcal{P}}}

\def\[{\left[}
\def\]{\right]}
\def\({\left(}
\def\){\right)}

\newcommand{\vertiii}[1]{{
		\left\vert\kern-0.25ex
		\left\vert\kern-0.25ex
		\left\vert #1
		\right\vert\kern-0.25ex
		\right\vert\kern-0.25ex
		\right\vert}}

\newcommand{\cov}{\operatorname{Cov}}
\def\var{{\operatorname{Var}}}

\newcommand{\yellow}[1]{\textcolor{yellow}{#1}}

\newcommand{\ignore}[1]{}

\def\qed{\hfill\hbox{${\vcenter{\vbox{
					\hrule height 0.4pt
					\hbox{\vrule width 0.4pt height 6pt
						\kern5pt
						\vrule width 0.4pt}
					\hrule height 0.4pt}}}$}}

\newcounter{con}
\stepcounter{con}
\newcommand{\qcon}[1]{\addtocounter{con}{1}}

\newcounter{as}
\stepcounter{as}
\newcommand{\qas}[1]{\addtocounter{as}{1}}

\newcommand{\R}{\mathbb{R}}
\newcommand{\M}{\mathbb{M}}
\def\0{{\mathbf{0}}}
\newcommand{\hatX}{\hat{\mathcal{X}}}

	\setlist[description]{topsep=0pt, partopsep=0pt}
	
	\numberwithin{equation}{section}
	
	\ignore{
		\makeatletter
		\renewcommand\section{\@startsection {section}{1}{\z@}%
			{-3.5ex \@plus -1ex \@minus -.2ex}%
			{1.3ex \@plus.2ex}%
			{\center\small\sc\MakeTextUppercase}}
		\def\subsection#1{\@startsection {subsection}{2}{0pt}%
			{-3.5ex \@plus -1ex \@minus -.2ex}%
			{1ex \@plus.2ex}%
			{\bf\mathversion{bold}}{#1}}
		\def\subsubsection#1{\@startsection{subsubsection}{3}{0pt}%
			{\medskipamount}%
			{-10pt}%
			{\normalsize\itshape}{\kern-2.2ex. #1.}}
		\makeatother
	}
	
	\allowdisplaybreaks[4]

\begin{document}

\title{\sc\bf\large  Quantitative CLT{\small s} for Geometric Statistics of Dependent Marked Point Processes
}

\author{
	Tianshu Cong\footnote{School of Mathematics, Jilin University, JL 130012, China. Email: congtianshu@jlu.edu.cn.  Work supported by National Natural Science Foundation of China (Grant No. 12401184), and Jilin Province Postdoctoral Research Start-up Fund.}\\ {\it Jilin University}\\
	Aihua Xia\footnote{School of Mathematics and Statistics,
		The University of Melbourne,
		VIC 3010, Australia. E-mail: aihuaxia@unimelb.edu.au. Work supported in part by Australian Research Council Grant No DP220102666.}\\ {\it University of Melbourne}
	\\ 
	J.  E. Yukich\footnote{Department of Mathematics, Lehigh University, Bethlehem, PA 18015, USA. E-mail: joseph.yukich@lehigh.edu.  Work supported in part by a Simons Research grant.
	} \\ {\it Lehigh University}
}

%
%

\date{\today}
\maketitle
\vskip-1cm
\begin{abstract} 
	Given a geometric statistic  expressible as a sum of scores which depend on local data, \citet{BYY19} established central limit theorems for centered and normalized versions of these statistics, subject to the underlying point process having fast decay of correlations and also subject to a variance growth condition.   Building on this, \citet{CX23} derived rates of normal approximation, as measured by the Wasserstein distance, for statistics of point processes exhibiting fast decay of dependence. Here we go further and  establish weak mixing conditions yielding rates of normal convergence for geometric statistics of marked point processes. The  mixing conditions, which are in terms of verifiable geometric criteria,  provide rates of normal approximation in the Wasserstein distance for statistics of a large class of point processes with dependent marks. Examples include statistics of  determinantal point processes, unevenly spaced time series, continuum percolation, interacting diffusions on spatial random graphs, as well as local $U$-statistics.
\end{abstract}

\vskip8pt \noindent\textit{Key words and phrases:} Wasserstein distance; Stein's method; fast decay of dependence; determinantal point process; unevenly spaced time series; stabilization; spatial random graphs; interacting diffusions; U-statistics.
	
\vskip8pt\noindent\textit{AMS 2020 Subject Classification:} Primary 60F05, Secondary 60D05; 60G55; 62E20; 05C80.

\section{Introduction and main results}

Consider a random spatial model, possibly time-dependent, and  driven by an underlying ground point process  $\P$ in $\R^d$, which we always take to be stationary. 
The  point process $\P$  may have spatial dependencies and it may carry marks, which may  depend on each other and  on the ground process; we shall write  $\hP$ to denote such marked point processes.   We are interested in the normal approximation of statistics of  spatial random models driven by $\hP$, including  basic counting statistics for $\hP$, statistics of time series driven by  $\hP$,  as well as  local $U$-statistics  and  statistics of interacting diffusions on random graphs defined on $\hP$. 

In all cases we assume that the statistics take the form  
$$
 H_{\lambda} := \sum_{x\in {{\scrP} \cap\Gamma_{\lambda}}}\sxi,$$ where
$\Gamma_{\lambda} \subset \R^d$, $\lambda\in (0,\infty),$  are observation windows,
the marks $M_x$ are random elements in a mark space, and the \emph{score} $\xi((x,M_x),\hP,\Gamma_{\lambda})$ describes  the interaction between a marked point $(x,M_x) \in  \hP$  and the process  $\hP$ itself,  with possible additional dependence on
$\Gamma_{\lambda}$. The scores  $\xi((x,M_x),\hP,\Gamma_{\lambda}),\  x \in \P$, are in general spatially dependent.
The statistics  $H_{\lambda}$  typically depend on the geometry of the point process  $\P$ and the underlying model and hence are  called  \emph{geometric statistics}. If   $\xi  \equiv 1$  then $H_{\lambda}$ 
reduces to the counting statistic over the window $\Gamma_{\lambda}$.

The study of geometric statistics has a long history and has roots in the works of \citet{BHH59}
 and \citet{Steele81}.  \citet{BB} were among the first to establish a central limit theorem for a geometric statistic, namely they showed that the standardized and centered sum of edge lengths of the $k$-nearest neighbor graph on an i.i.d.  random sample converges to a normal random variable as the sample size tends to infinity. In these seminal papers, as well as in the others that followed, the central limit theory for $(H_{\lambda})_{\lambda\in(0,\infty)}$ focused on statistics consisting of sums of score functions on Poisson or binomial input,  possibly with independent marks, and such that the score functions $\xi$ depended on local data in the sense that they satisfy stopping set stabilization criteria.  
In this paper we do not restrict to Poisson or binomial input, nor do we restrict to independent marks or to score functions with stopping sets.

When  $\P$ has fast decay of correlations and when $\xi$ satisfies stopping set exponential stabilization, then $((H_{\lambda} - \E H_{\lambda})/\sqrt{\Var H_{\lambda}})_{\lambda \in(0,\infty)}$ satisfies a qualitative central limit theorem as in \cite{BYY19}.  For  $\P$ satisfying an exponential mixing condition in the total variation distance (known as EDD; see Section~\ref{section1.4}) then convergence rates  for  $((H_{\lambda} - \E H_{\lambda})/\sqrt{\Var H_{\lambda}})_{\lambda \in(0,\infty)} $ to the standard normal under the Wasserstein distance  were obtained in \cite{CX23}.  The class of point processes was further enlarged in \citet{BYY25}, allowing qualitative central limit theorems for $((H_{\lambda} - \E H_{\lambda})/\sqrt{\Var H_{\lambda}})_{\lambda \in(0,\infty)}$ with $\P$ replaced by a marked point process $\hP$  {\em in which points and marks may be dependent},  and with $\xi$  required to satisfy a localization condition weaker than stabilization and 
allowing for long-range interactions, namely with $\xi$ required to be {\em distributionally close}  to a family of short-range scores.  Short-range scores are those depending only on data within a fixed deterministic radius.   

The purpose of this paper is to establish {\em quantitative} central limit theorems in this more general set-up, namely for statistics $H_\lambda$ driven by a dependently marked point process $\hat\P$, where $\hat\P$ has fast decay of dependence, its ground process $\P$ is stationary, and the score functions are closely approximated by a family of short-range scores.  Our approach extends \citet{SY13}, \citet{XY15}, and \citet{HOS25} in  three ways:  we do not restrict to input which is a rarefied Gibbsian point process, we allow  the input to have dependent marks,  and we require that the score function  satisfy localization criteria which are weaker than the  stabilization criteria required in these papers.  
We extend  \citet{CX24} by allowing for dependent marks and by relaxing the stabilization conditions on the score function in that paper. Finally, our work shows that some of the qualitative central limit theorems in \cite{BYY25} can be upgraded to quantitative central limit theorems when the ground point process $\P$ is EDD. In the EDD setting for $\P$, we treat a broad class of score functions with fewer Palm requirements than those considered  in \cite{BYY25}. The moment conditions on scores are less stringent and  require neither the existence of   high-order Palm distributions nor integrability of the score with respect to these distributions. On the other hand, the score functions considered here are required to satisfy stabilization criteria with respect to a local estimator, 
either in a probabilistic sense (cf.  Definition~\ref{prdistance}) or in an $L^p$ sense (cf. Definition \ref{lpstab}). 
We refer to Section~\ref{section1.4} for further remarks comparing our conditions with the existing ones in the literature.

We illustrate the versatility of our approach by obtaining quantitative central limit theorems for
a range of functionals, some of which fall outside the scope of  statistics encountered in classical stochastic geometric models: (i) counting statistics of determinantal point processes with kernels decaying exponentially fast, (ii) statistics of unevenly spaced time series,  (iii) statistics of continuum percolation , (iv) statistics of interacting diffusions on spatial random graphs with EDD input, and (v) certain local U-statistics of EDD marked point processes.

We deduce these quantitative central limit theorems from our main finding, which is that when the sums $H_{\lambda}$  satisfy a mixing condition on well-separated disjoint sets, then one obtains  rates of  normal approximation for  $(H_{\lambda} - \E H_{\lambda})/\sqrt{\Var H_{\lambda}} $, where the rate is with respect to the Wasserstein distance.   The mixing condition stipulates  that the covariance of functions of sums of scores on disjoint sets decays rapidly with respect to the distance between the sets, uniformly over a class of test functions and test sets. Mixing is shown to hold whenever  $\P$ satisfies EDD and whenever the score $\xi$ is well enough approximated by a family of short-range scores,  a geometric condition which is fulfilled whenever $\xi$ satisfies the standard stopping set stabilization criterion or even a weaker criterion whereby $\xi$ is well approximated by a family of  short-range scores either in the probabilistic or $L^p$ sense.

\subsection{Terminology}
	
We consider a ground point process $\P$ on $\real^d$, $d \geq 1$, and take it to be a simple stationary point process with intensity $\mu \in (0,\infty)$.  We let $\hP = \sum_{x \in \P} \delta_{ (x,M_x) }$ be a {\em simple, marked point process} on $\R^d \times \mathbb{M}$,  where $\mathbb{M}$ is a measurable space, $\delta$ is the Dirac measure, $ (x, M_x)$  are random elements in  $\R^d \times \mathbb{M}$, and for any $x \in \R^d$, the set $\{x\} \times \mathbb{M}$ contains at most one element of $\hP$. Elements $x \in \scrP$ are  referred to as {\em points}, while $M_x$ and $\mathbb{M}$ are the  {\em marks} and the {\em mark space}, respectively.  We do not  require independence of marks. The space $\mathbb{M}$ is general and, for example, could  be the space of time-evolving functions, which would be the case when each  $M_x, x \in \P,$ is a stochastic process representing  the state of an interacting particle system or the sample path of an interacting diffusion. The projection of $\hP$ on $\R^d$ is the ground process $\P$. The marked point process $\hP$ is a random element in the space ${\cal N}_{\R^d \times  \mathbb{M}}$ of locally finite subsets of $\R^d \times  \mathbb{M}$  endowed with the evaluation $\sigma$-algebra. Let $\Gamma_{\lambda}:=[ -0.5\lambda^{1/d}, 0.5\lambda^{1/d}]^d,\ \lambda\in(0,\infty)$, denote observation windows. 

A score  is a measurable function $\xi:  (\R^d \times \mathbb{M}) \times  {\cal N}_{\R^d \times  \mathbb{M}} \times {\cal B}(\real^d)  \to \R$ where ${\cal B}(\real^d)$ denotes the Borel $\sigma$-algebra on  $\real^d$. The value of  $\xi((x, m_x), \hat{\cal X}, A),$ 
where $\hat{\cal X} \in   {\cal N}_{\R^d \times  \mathbb{M}}$
and $A\in {\cal B}(\real^d)$, 
is relevant only for $(x, m_x) \in  \hat{\cal X}\cap (A\times \mathbb{M})$ and by convention we set it to be  $0$ when $(x,m_x) \not \in  \hat{\cal X} \cap (A\times \mathbb{M})$.
We put
\begin{equation*}
	H_{\lambda} := \sum_{x\in {{\scrP} \cap\Gamma_{\lambda}}}\sxi,  \   \sigma_{\lambda}^2:=\Var H_{\lambda}, \ Z_{\lambda} := \frac{H_{\lambda}-  \mathbb{E}H_{\lambda}} {\sigma_{\lambda}}.
\end{equation*}
We wish to establish rates of convergence of $(Z_{\lambda})_{\lambda\in(0,\infty)}$ to the normal.  
When $\xi((x, m_x), \hat{\cal X}, A)$ does not depend on $A\in {\cal B}(\real^d)$, then
the analysis of $(H_{\lambda})_{\lambda\in(0,\infty)}$ is easier because it is unaffected by the  position of $x$ relative to the boundary of $\Gamma_{\lambda}$.  We include the third parameter because, in some applications, the region $\Gamma_{\lambda}$  may  influence the value of the score function as would be the case for  statistics of the Voronoi tessellation  \cite[Section~$3.2$]{CX24}. While many of  our examples hold for the canonical choices of $\xi$ given by
$\sxi$ and  $\xi((x, M_x), {\hP}):=\xi((x,M_x),\hP,\mathbb{R}^d)$, we shorten our presentation and typically state results for only one of the two scores.  When marks are not present, all statements remain valid by viewing the ground process as a marked point process with degenerate marks. In this case, we write the score  as $\xi(x,\P,\Gamma_{\lambda})$ and all other notation remains unchanged.

For any real-valued random variable $X$ and $p \in [1, \infty)$  we put  $\|X\|_p=(\mean(|X|^p))^{1/p}$.  For any full-dimensional $B\in {\scrB} (\real^d)$, let $\Vol(B)$ denote its volume and for
${\cal X} \in {\cal N}_{\R^d}$, let  $\card({\cal X})$ be its cardinality.   

\begin{defi}\label{defimomp1} ($p_1$-moment on the ground  process)
	The ground point process $\scrP$  has a $p_1$-moment, $p_1 \in [1, \infty)$, if
	\begin{equation} \label{momground}
		b_1:= b_1(p_1):=   \|\card({\scrP} \cap \Gamma_1)\|_{p_1}<\infty.
	\end{equation}
\end{defi}

\begin{re}\label{remark1.1}  
The moment condition \eqref{momground}  holds for {all} $p_1 \in [1, \infty)$ for homogeneous Poisson point processes and for $\alpha-$determinantal point processes whose  kernel decays exponentially fast, provided  $-1/\alpha\in \mathbb{N}$, as established in Section~2.2.2 and Section~2.1  (Remark~(i)) of \cite{BYY19}. For a Gibbs point process with Papangelou intensity bounded above by a locally integrable function $h : \R^d \to \R$, iterating the GNZ equation gives the $n$-th factorial moment $\mathbb{E}[(\P(B))^{(n)}]\le(\int_Bh(x) \,\mathrm{d}x)^n$ for every bounded Borel set $B$; see, e.g., \citet[Section~2]{D19}, hence
$\card(\P\cap\Gamma_1)$ has finite moments of all orders. This includes many commonly used Gibbs point processes; for example, those whose interaction range is smaller than the critical value of continuum percolation of an associated Poisson point process \cite[Section~4.1]{HOS25}.
\end{re}

The ground point process $\scrP$ is of a general nature, and one may  require  conditions on the higher-order Palm distributions of $\scrP$ when proving limit theorems.  However, the class of score functions satisfying conditions based on higher-order Palm distributions is narrower than the class based on first-order Palm distributions (see the examples in Remark~\ref{re1.5}~(iv) and Remark~\ref{diff1stPalm2ndPalm}). While the first-order Palm distributions of ground point processes $\scrP$ are often well-studied, their higher-order Palm distributions are less well-known and typically cumbersome. Except for Poisson and related point processes, obtaining closed forms or analyzing the properties of these distributions is often more challenging than estimating higher-order moments of the score function $\xi$ with respect to the first-order Palm distributions of $\scrP$. Thus, 
we circumvent the use of higher-order Palm distributions by instead assuming only higher moment conditions on the ground point process $\scrP$ and on the score function $\xi$ with respect to the first-order Palm distributions;  see \cite[Lemma~5.5]{CX23}.

For any fixed distinct $x_1,...,x_q \in \R^d$, we denote by $\mathbb{P}_{x_1,...,x_q}$ the Palm probability given points 
$x_1,...,x_q$;  likewise  $\E_{x_1,...,x_q}$ and  ${\cal L}_{x_1,...,x_q}$ denote the Palm expectation and Palm distribution, respectively. 

\begin{defi}\label{defimomp2} ($p_2$-moment on the score)
	The score $\xi$ has a $p_2$-moment, $p_2 \in [1, \infty)$, if
	\begin{equation} \label{momscore}
		b_2 := b_2(p_2):= \sup_{\lambda\in(0,\infty)} \sup_{x\in\Gamma_{\lambda}} \(\E_x|\xi((x,M_x), \hP, \Gamma_{\lambda})|^{p_2}\)^{1/p_2} <\infty.
	\end{equation}
\end{defi} 

Given  $A\in {\cal B}(\real^d)$,  the sum of the score functions on the point set $\scrP \cap A$ is  
\begin{equation} \label{HlaA}  
	H_{\lambda,A}:=\sum_{x\in\scrP\cap A}\sxi,\ \ \ H_{\lambda}:=H_{\lambda,\G_{\lambda}}
\end{equation}
and its centered version is 
\begin{equation} \label{SlaA}
	S_A:=S_{\lambda,A}:=H_{\lambda,A}-\mathbb{E} H_{\lambda,A}.
\end{equation}
For $A,B\in {\cal B}(\real^d)$, let $\diam(A)=\sup\{d(x,y):\ x,y\in A\}$ be the diameter of $A$, $d(A,B)=\inf\{d(x,y):\ x\in A,y\in B\}$, $d(x,B)=d(B,x):=d(\{x\},B)$, where $d$ is the Euclidean distance on $\real^d$.  Given  $\rho\in (0,\infty)$ let  $B_\rho(A)=\{y:\ d(y,A)<\rho \}$ be the $\rho$-enlargement of $A$. Define the class of test functions
$$ \cF_b:=\{g: \mathbb{R} \rightarrow \mathbb{R};~ \|g\|_\infty\vee \|g'\|_\infty:=\sup_{x\in\real}|g(x)|\vee \sup_{x\in\real}|g'(x)| \le 1\}. $$

The following assumption, similar to the exponential decay of dependence in \cite[(2.1)]{CX23}, specifies a weak mixing condition on the {score} sums $\{S_A\}_{ A \in {\cal B}(\real^d)}$,  which, together with moment assumptions on the ground point process and the scores, yields  proximity bounds between $(H_{\lambda} - \E H_{\lambda})/\sqrt{\Var H_{\lambda}}$ and the standard normal in terms of the Wasserstein distance.  The mixing condition requires that the covariance of $S_A$ and $f(S_B)$, as well as the covariance of $S_A^2$ and $f(S_B)$, decays fast with respect to the distance between $A$ and $B$, uniformly for $f$ in a class of test functions and uniformly for $A, B$ belonging to a class of test sets.

\begin{defi} \label{WSM}  (mixing of score sums) 
Let ${\cal H}_1$ be the class of open hyperrectangles in $\real^d$ with sides at least unit length and with  faces parallel to coordinate planes in $\real^d$, and let ${\cal H}_2$ be the class of unions of two incident hyperrectangles in ${\cal H}_1$.  Let $ {\cal F}_1$  denote  the test function class  $\cF_b$ together with the identity $g(x) = x$ and let   $ {\cal F}_2 =  \cF_b$. The  score  sums satisfy  exponential mixing if there exists $\lambda_0 \in(0,\infty)$ and positive constants  $  \{ \g_i \}_{i = 0}^{3}$  such that for  $\lambda\ge \lambda_0$, $j \in \{1,2\}$ and   all $g\in {\cal F}_j$, $A\in  {\cal H}_j$, and all sets $B\in{\cal B}(\Gamma_\lambda)$ that are the finite union of disjoint hyperrectangles in ${\cal H}_1$
satisfying the separation condition
 $d(A,B)\ge \g_3 [\log(\diam(A)\vee \diam(B))\vee 1]$, we have 
	\begin{align}
		&\left|\mathbb{E}(S_A^jg(S_B)) - \mathbb{E}(S_A^j) \mathbb{E}(g(S_B)) \right|=\left|\cov(S_A^j,g(S_B))\right| \nonumber\\
		&\le \g_1(\diam(A)^{\g_0}\vee 1)(\diam(B)^{\g_0}\vee 1)\mathrm{e}^{-\g_2d(A,B)}=:U({\bm\g},A,B),
		\label{mixing-score}
		\end{align}
	where ${\bm\g} :=  \{ \g_i \}_{i = 0}^3$. The score sums satisfy polynomial mixing if  \eqref{mixing-score}  holds with $U({\bm\g},A,B)$ replaced by
	\begin{equation}\label{mixing-score'}
		U'({\bm\g},A,B) :=  \g_1(\diam(A)^{\g_0}\vee 1)(\diam(B)^{\g_0}\vee 1)d(A,B)^{-\g_2}, 
	\end{equation}
	where the parameters ${\bm\g} $ in \eqref{mixing-score} and \eqref{mixing-score'} are different in general.
\end{defi}

In Section 1.3 we  determine conditions on the ground point process  $\scrP$ and on the scores $\xi$ which imply the mixing bounds 
\eqref{mixing-score}  and \eqref{mixing-score'}.

\begin{re}\label{remark1.5a}
	\begin{description}
		\item{(i)} (the covariance conditions  \eqref{mixing-score} and \eqref{mixing-score'}) 
		The mixing of score sums implies covariance conditions of a more general form for a larger class of sets as in  Lemma~\ref{mixing-score-a1}, which plays an essential role in proving  Theorem~\ref{mainthm1}, our main result.
If either  \eqref{mixing-score} or \eqref{mixing-score'} hold for all pairs of disjoint sets $A$ and $B$, then there is no need to specify the parameter $\gamma_3$ controlling the separation condition.
		\item{(ii)} (comparison with mixing of score sums without centering) The mixing properties \eqref{mixing-score} and \eqref{mixing-score'} are equivalent, up to increasing the exponent $\gamma_0$ by an additive term $d$, to the corresponding properties obtained by replacing
		$S_A$ and $S_B$ with $H_{\lambda,A}$ and $H_{\lambda,B}$, respectively. See Lemma~\ref{mixing-uncentered}.
		\item{(iii)} (comparison with the $(BL,\psi,\theta)$ dependence condition)  
		The $(BL,\psi,\theta)$ dependence condition is an assumption imposed on covariances involving a specified class of test functions satisfying certain bounded Lipschitz conditions as  introduced in the context of random field theory;  cf. \citet[p.~93--95]{BA07} and the references therein. This assumption is used to obtain a rate of convergence for the normal approximation error of the partial sum of a centered $(BL,\psi,\theta)$-dependent random field under the Kolmogorov distance via Stein's method, see \cite[p.~182--188]{BA07} for details. Our framework  requires verifying a mixing property for a smaller class of test sets and a different class of test functions.
	\end{description} 
\end{re}

\subsection{A general quantitative CLT under  mixing of score sums}
We use the Wasserstein distance to quantify the difference between two real-valued random variables $X_1$ and $X_2$, where
\begin{equation*}\ignore{\label{Wass01}}
	d_W(X_1,X_2):=\sup_{h\in \cF_{\rm Lip}}|\mathbb{E}h(X_1)- \mathbb{E} h(X_2)|,
\end{equation*} 
where $\cF_{\rm Lip}:=\left\{f: ~\real\rightarrow\real, \quad  |f(x)-f(y)|\le |x-y|,\ \mbox{for all }x,y\in\real\right\}$.
We quantify the $d_W$ distance between $(H_{\lambda} - \E H_{\lambda})/\sqrt{\Var H_{\lambda}}$ and the standard normal $N(0,1)$, here denoted by $Z$.  The rates of normal approximation in the following general result as well as in the corollaries and applications assume that  $\Var H_{\lambda} =\Omega(\lambda^\nu)$ for some $\nu>2/3$.  We regard the verification of this condition as a separate issue and do not systematically address this.

\begin{thm}\label{mainthm1}(mixing of score sums implies a quantitative CLT)
	Assume that the point process $\scrP$ and score $\xi$ respectively satisfy the moment assumptions  \eqref{momground}  and \eqref{momscore}  for  $p_1$ and $p_2$ satisfying 
	$$\frac{p_1p_2}{p_1+p_2-1} \ge 3.$$
	Assume that $\Var H_{\lambda} =\Omega(\lambda^\nu)$ for some $\nu>2/3$. 
	\begin{description}
		\item{(a)} If the score sums satisfy exponential mixing \eqref{mixing-score}, then \begin{equation}\label{mainthm1-01}
			d_W\( \frac{H_{\lambda} - \E H_{\lambda}} {\sqrt{\Var H_{\lambda}}}, Z\) = O\( (\log \lambda)^{3d} \lambda^{-1.5\nu+1}\). 
		\end{equation}
		\item{(b)} If  polynomial mixing  \eqref{mixing-score'} holds and 
		\begin{equation}\label{relationship}
			\gamma_2 > \max\left\{\frac{3(-\nu d+2d+2\gamma_0)}{3\nu-2}, \frac{6(-\nu d+d+\gamma_0)}{3\nu-2}+\g_0\right\},
		\end{equation}
		or equivalently
		\begin{equation}\label{taudef}
			\tau(\g_0,\g_2):=\frac{3}{2}\(\frac{2\nu d+2\g_0}{3d+\g_2}\vee\frac{\nu d+2\g_0}{3d+\g_2-\g_0}\)< \frac32\nu-1,
			\end{equation} 
		then 
		\begin{equation}\label{mainthm1-02}
			d_W\( \frac{H_{\lambda} - \E H_{\lambda}} {\sqrt{\Var H_{\lambda}}}, Z\) = O\(\lambda^{-1.5\nu+1+\tau(\g_0,\g_2)}\)=o(1).
		\end{equation}
	\end{description}
\end{thm}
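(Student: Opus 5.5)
The plan is to use Stein's method with a blocking (local dependence) decomposition, in the spirit of \cite{CX23}. Partition $\Gamma_\lambda$ into $N\asymp \lambda/r^d$ congruent cubes $Q_i$ of side $r$. In case (a) take $r=\gamma_3'\log\lambda$ with a large constant; in case (b) take $r=\lambda^{\beta}$ and optimize $\beta$ at the end. Write $W=Z_\lambda=\sum_i X_i$ with $X_i=S_{Q_i}/\sigma_\lambda$. For each $i$ let $A_i=B_{r'}(Q_i)$ be a neighbourhood of $Q_i$ (a hyperrectangle in $\mathcal H_1$, radius $r'\asymp r$) and $D_i=B_{2r'}(Q_i)$, with $\eta_i=S_{A_i\cap\Gamma_\lambda}/\sigma_\lambda$. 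The sets $\Gamma_\lambda\setminus A_i$ and $\Gamma_\lambda\setminus D_i$ are finite unions of disjoint hyperrectangles, as Definition~\ref{WSM} requires. For a solution $f_h$ of the Stein equation, with $\|f_h'\|_\infty,\|f_h''\|_\infty\le 2$, expand in the standard way:
\[
\E[f(W)W]-\E f'(W)=\sum_i\E[X_i(f(W)-f(W-\eta_i))]-\E f'(W)+\sum_i\E[X_i f(W-\eta_i)].
\]
The final sum is a covariance between $S_{Q_i}$ and a bounded Lipschitz function of $S_{\Gamma_\lambda\setminus A_i}$. Since $f_h$ is not in $\cF_b$ but $f_h/2$ (suitably truncated) is, the mixing bound \eqref{mixing-score} with $j=1$ gives a contribution of order $\sigma_\lambda^{-1}\,U(\bm\gamma,Q_i,\Gamma_\lambda\setminus A_i)$. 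The generalized covariance form in Lemma~\ref{mixing-score-a1} will be used to handle test functions of the shape $x\mapsto f(x/\sigma_\lambda)$.

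Next, Taylor-expand $f(W)-f(W-\eta_i)=\eta_if'(W-\eta_i)+R_i$ with $|R_i|\le\eta_i^2$. This yields the third-moment remainder $\sum_i\E|X_i|\eta_i^2$. By H\"older and the moment lemma \cite[Lemma~5.5]{CX23}, which is where the condition $p_1p_2/(p_1+p_2-1)\ge3$ enters to bound $\|H_{\lambda,A}\|_3\lesssim \Vol(A)$ from \eqref{momground}--\eqref{momscore}, this remainder is $O(N r^{3d}\sigma_\lambda^{-3})=O(\lambda r^{2d}\lambda^{-1.5\nu})$. The main term $\sum_i\E[X_i\eta_if'(W-\eta_i)]-\E f'(W)$ is handled in two stages. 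First replace $f'(W-\eta_i)$ by $f'(W-\zeta_i)$, where $\zeta_i=S_{D_i}/\sigma_\lambda$, at the cost of another third-moment term. Then decouple $X_i\eta_i$ from $f'(W-\zeta_i)$, which is a function of the score sum on $\Gamma_\lambda\setminus D_i$, and the separation is again $\asymp r$. Writing $X_i\eta_i=\tfrac12[(X_i+\eta_i)^2-X_i^2-\eta_i^2]$ turns these products into squares of score sums over sets in $\mathcal H_1$ or $\mathcal H_2$. This is exactly why the $j=2$ case, with its unions of two incident rectangles, is built into Definition~\ref{WSM}. The result is $\E[X_i\eta_i]\E f'(W-\zeta_i)$ up to mixing errors. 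Finally $\sum_i\E[X_i\eta_i]=1-\sum_i\E[X_i(W-\eta_i)]$, and $\E[X_i(W-\eta_i)]$ is controlled by the $j=1$ mixing with the identity test function (allowed since it lies in $\mathcal F_1$). The remaining difference $\E f'(W-\zeta_i)-\E f'(W)$ is bounded by $\|\zeta_i\|_1$, so it must be paired with the factor $\E[X_i\eta_i]$ and absorbed into the same third-order quantity.

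The bookkeeping then goes as follows. In case (a), each mixing error is at most $\lambda^{c}e^{-\gamma_2 r}$ times polynomial factors, which is negligible once the constant in $r=C\log\lambda$ is large. This leaves $O((\log\lambda)^{2d}\lambda^{1-1.5\nu})$; the stated $(\log\lambda)^{3d}$ presumably absorbs a cruder count in the decoupling step. In case (b), the mixing errors are of order $N\lambda^{2\gamma_0/d}\, r^{-\gamma_2}\sigma_\lambda^{-1}$ or $\sigma_\lambda^{-2}$-weighted analogues, to be balanced against $\lambda r^{2d}\lambda^{-1.5\nu}$. Optimizing the exponent $\beta$ should produce $\tau(\gamma_0,\gamma_2)$, with its two branches coming from the $j=1$ and $j=2$ errors, and condition \eqref{relationship} is precisely what makes the total exponent negative.

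I expect the main obstacle to be the decoupling of $X_i\eta_i$ from $f'(W-\zeta_i)$. The mixing hypothesis only controls $\cov(S_A^j,g(S_B))$ for $A\in\mathcal H_j$ and $g\in\cF_b$, with centered sums over the specific sets, while the Stein terms involve non-centered products, normalization by $\sigma_\lambda$, and sets $B$ that are complements of boxes intersected with $\Gamma_\lambda$. I would first establish a general covariance lemma, in the style of Lemma~\ref{mixing-score-a1}, that extends the bound to $g(x)=\phi(a x+b)$ with $\phi\in\cF_b$ and to polarized products via the square identity. I would then check that the required geometric decompositions keep all sets within $\mathcal H_1\cup\mathcal H_2$ with diameters $O(r)$, or $O(\lambda^{1/d})$ for $B$.
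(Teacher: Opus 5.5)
Your argument is the paper's Stein/neighbourhood scheme, just organized differently. Your $\sum_i\E[X_if(W-\eta_i)]$ is the paper's $\epsilon_4$ ($j=1$ with $g=\tfrac12 f(\cdot/\sigma_\lambda)$). Your $\sum_i\E[X_i(W-\eta_i)]$ is $\epsilon_1$ ($j=1$, identity). The decoupling of $X_i\eta_i$ from $f'(W-\zeta_i)$ is $\epsilon_3$ via Lemma~\ref{mixing-score-a1}, and your Taylor and replacement remainders are $\epsilon_2,\epsilon_6$. The paper simply expands $\E f'(Z_\lambda)$ and $\E[Z_\lambda f(Z_\lambda)]$ separately.

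The genuine difference is the block geometry. The paper uses $k_\lambda\asymp\lambda$ cubes of volume in $[1,2^d)$, and only the sup-norm neighbourhoods $N_{i,\lambda,\rho}\subset N_{i,\lambda,2\rho}$ have size $\rho$; you take the blocks themselves of side $r\asymp r'$. With H\"older, your remainders total $O(\lambda r^{2d}\sigma_\lambda^{-3})$. So in (a) you get $(\log\lambda)^{2d}$, which is stronger than stated. The paper's extra $(\log\lambda)^{d}$ comes from the AM--GM step $|S_{i,\lambda}S_{i,\lambda,\rho}|\le\frac12(S_{i,\lambda}^2+S_{i,\lambda,\rho}^2)$ in $\epsilon_2,\epsilon_6$, not from the decoupling step.

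In (b) your scheme does not produce the two branches of $\tau$. The set $A=Q_i$ in your $j=1$ step now has diameter $\asymp r$, so the $j=1$ error $\lambda^{1+\gamma_0/d-\nu/2}r^{\gamma_0-d-\gamma_2}$ dominates the $j=2$ error, which is smaller by a factor $\lambda^{-\nu/2}$. Balancing it against $\lambda^{1-1.5\nu}r^{2d}$ gives the single exponent $\tau_s=\frac{2(\nu d+\gamma_0)}{3d+\gamma_2-\gamma_0}$. This satisfies $\tau_s\le\tau(\gamma_0,\gamma_2)$:
\begin{itemize}
\item if $\gamma_0\ge\nu d/2$, compare with the second branch of \eqref{taudef};
\item otherwise $\gamma_0<d$, because $\nu\le 2$ (as $\Var H_\lambda=O(\lambda^2)$ by Lemma~\ref{moments-XB}), and compare with the first branch.
\end{itemize}
So your route proves the theorem, in (b) sometimes with a better exponent. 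The paper's unit cubes are what generate the two-branch form of $\tau$.

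Three points in your write-up must be corrected for this to go through.
\begin{itemize}
\item[(i)] Keep the factor $\diam(Q_i)^{\gamma_0}\asymp r^{\gamma_0}$ in the mixing errors. The form $N\lambda^{2\gamma_0/d}r^{-\gamma_2}\sigma_\lambda^{-1}$ you wrote leads to the exponent $\frac{2(\nu d+2\gamma_0)}{3d+\gamma_2}$. That can exceed $\tau$: for $d=\nu=1$, $\gamma_0=10$, $\gamma_2=75$ (allowed by \eqref{relationship}) it equals $42/78>1.5\nu-1$, so it does not even give $o(1)$.
\item[(ii)] The identity $X_i\eta_i=\frac12[(X_i+\eta_i)^2-X_i^2-\eta_i^2]$ does not yield squares of score sums over sets. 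Since $Q_i\subset A_i$, $X_i+\eta_i=(2S_{Q_i}+S_{A_i\setminus Q_i})/\sigma_\lambda$, and the other polarization involves the frame $A_i\setminus Q_i$, which lies in neither $\mathcal H_1$ nor $\mathcal H_2$. Instead use Lemma~\ref{mixing-score-a1} with $A_1=Q_i$, $A_2=A_i$, $B=\Gamma_\lambda\setminus D_i$. It splits $A_i\setminus Q_i$ into at most $2d$ boxes incident to $Q_i$ and polarizes each product through a union of two incident boxes in $\mathcal H_2$.
\item[(iii)] $A_i$ and $D_i$ must be sup-norm enlargements, since the Euclidean $B_{r'}(Q_i)$ is not a box; this is why the paper switches to $d_\infty$.
\end{itemize}
No extra covariance lemma for rescaled test functions is needed. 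Once $\sigma_\lambda\ge 2$, the functions $x\mapsto\frac12 f(x/\sigma_\lambda)$ and $x\mapsto f'(x/\sigma_\lambda)$ already lie in $\cF_b$, using $\|f'\|_\infty\le\sqrt{2/\pi}$ rather than $2$.
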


\begin{re}\label{re1.3}
	\begin{description}
		\item{(i)} (the optimality of rates under exponential mixing) 
		It is not uncommon that $\nu=1$, cf. \cite{BYY19}.  In this case  the $O((\log \lambda)^{3d}\lambda^{-0.5})$ bound  \eqref{mainthm1-01}  is nearly optimal, save for the  logarithmic factor.  In general one cannot do better than establishing a rate  $O(\lambda^{-0.5})$ as seen in Section 2.3 of~\citet{SchulteY}, which treats Poisson input.
		\item{(ii)} (the rate of convergence under polynomial mixing) In the case of polynomial mixing, i.e., under condition \eqref{mixing-score'}, as $\g_2\to\infty$, the bound \eqref{mainthm1-02} approaches $O\(\lambda^{-1.5\nu+1}\)$, which, in general, is unimprovable  when $\nu=1$. 
		\item{(iii)} (moment conditions for the ground point process and  for the scores) 
		If the ground point process, respectively the score, has a $p= (3 + \delta)$-moment for some positive $\delta$, then the score, respectively ground point process, must have  a  $3(p-1)/(p-3)$ moment. For the point processes discussed in Remark~\ref{remark1.1}, one can take $p_1$  in Definition 1.1  to be arbitrarily large. Consequently, it suffices that the score satisfies \eqref{momscore}  for some $p_2>3$.  Likewise, if a score has moments of all orders then $\P$ need only have a moment of order $p_1>3$.  When compared with  \cite{CX23}   this gives  normal approximation results holding under  slightly more flexible moment assumptions on  the point process and the score.
		\item{(iv)} (ground point process with infinite second moment) 
		One may easily  construct a simple stationary point process $\P$ on $\real^d$ such that  $\|\card({\scrP} \cap \Gamma_1)\|_2=\infty$. For example, start with a unit intensity Poisson point process $\Psi$  on $\real^d$ representing locations of parents. Next, independently of other parents, each parent produces $\eta$ children with distribution
		$$\mathbb{P}(\eta=i)=\frac{4}{i(i+1)(i+2)},\ i \in \N.$$
		Assuming the locations of the children of a parent at $x$ are independent and identically distributed within the open ball 
centered at $x$ with radius $1$, and independently of the children of other parents, then the point process $\scrP$ formed by the locations of the children of $\Psi$ is a simple stationary point process on $\real^d$ with intensity $2$ and satisfies $\mathbb{E}(\card({\scrP} \cap \Gamma_1)^2)=\infty$.
	\end{description}
\end{re}

\subsection{Corollaries} \label{section1.3}
We give conditions on the marked point process $\hat{{\cal P}}$ and on the score $\xi$ which together ensure mixing of score sums as in Definition \ref{WSM}.
Our first condition is adapted from \cite{CX23}.
\begin{defi}\label{EDD}(EDD marked point processes) 
	The marked point process $\hat{{\cal P}}$ exhibits exponential decay of dependence (EDD) if there exist constants $\theta_0\in [0,\infty)$, $\theta_i\in (0,\infty)$ for $1\le i\le 3$, such that for any  $A$, $B\in {\cal B}(\mathbb{R}^d)$ with $d(A,B)\ge \theta_3[\log(\diam(A)\vee \diam(B))\vee 1]$,  we have
	\begin{equation}\label{mixing}
		\beta_{A,B}:=d_{TV}\(\law(\hat{{\cal P}}|_{A\cup B}),\law(\hat{{\cal P}}|_{A}\cup\tilde{\hat{{\cal P}}}|_{B})\)\le \theta_1(\diam(A)^{\theta_0}\vee 1)(\diam(B)^{\theta_0}\vee 1)\mathrm{e}^{-\theta_2 d(A,B)},
	\end{equation}
	where $\tilde{\hat{{\cal P}}}$ is an independent copy of $\hat{{\cal P}}$. A point process ${\cal P}$ is EDD if it satisfies \eqref{mixing} with degenerate marks.
\end{defi}

\begin{re}\label{reEDD} 
	By definition, if a marked point process $\hP$ is EDD, then so is the ground process ${\cal P}$.   EDD point processes include Gibbs point processes  whose  interaction range is smaller than the critical value of continuum percolation of an associated Poisson point process as well as determinantal point processes whose kernel satisfies 
	\begin{equation}\label{reEDDdoubleexp}
		|K(x,y)|\le C_1\mathrm{e}^{-C_2\mathrm{e}^{C_3|x-y|}}, \quad  x,y \in \R^d,  
	\end{equation} 
	for some positive constants  $C_1,C_2$ and $C_3$, see \cite[Appendix~A]{HOS25} and \cite[Section~3.2]{CX23}. Disagreement-coupling  methods related to those in \cite{HOS25} have recently been extended to marked Euclidean spaces and to certain
	repulsive pair-potential models in \citet{HOS26}. The strong spatial mixing theorem of \citet{MP22} also provides a route to verifying EDD for finite-range repulsive pair-potential Gibbs processes with activity below $e/\Delta_\phi$.  On the determinantal side, if $\alpha$ satisfies $-1/\alpha\in \mathbb{N}$ and the kernel $K$ is Hermitian with $0\le K\le -1/\alpha$, then $\alpha$-determinantal point processes  exist, as shown in \citet[Theorem~1.2 and the proof of Lemma~3.3]{ST03}, and can be interpreted as the superposition of $-1/\alpha$ $\iid$
	determinantal point processes with kernel $-\alpha K$. If the kernel satisfies \eqref{reEDDdoubleexp}, the resulting 
	$\alpha$-determinantal point process is still EDD, since it is a finite superposition of i.i.d. EDD point processes \cite[Lemma~3.5]{CX23}. 
\end{re}

If the score function is trivial, i.e., $\xi\equiv 1$, and the EDD point process has a  $p_1$-moment for some $p_1>3$, then $H_{\lambda}$ is the counting statistic and fulfils the exponential mixing condition \eqref{mixing-score} with
$$\gamma_0=3d+\theta_0\frac{p_1-3}{p_1},\qquad \gamma_2=\theta_2\frac{p_1-3}{p_1},\qquad \gamma_3=\theta_3,$$
and with $\gamma_1$ a finite constant depending only on the moment and EDD constants. This follows by  applying Lemmas~\ref{moments-ground} and  \ref{mixing-uncentered} together with H\"older's inequality.

Next we consider conditions on general score functions.  One possibility is to require that $\xi$ satisfies  exponential stopping set stabilization as in \citet{LSY19} and \cite{SchulteY}, but this condition is sometimes restrictive.    

Instead, for each $r \in (0, \infty)$ we construct a short-range score function $\hat{\xi}^{[r]}$  based solely on the point pattern within radius $r$, and such that $\hat{\xi}^{[r]}$
approximates $\xi$ with sufficient accuracy,  and then use this approximation  for quantifying the error in approximating $H_{\lambda}$ by the corresponding sum of short-range scores. Given a score $\xi$ and  $r \in(0,\infty)$, 
{\it a short-range estimator} 
$$ \hat{\xi}^{[r]}: (\R^d \times \mathbb{M}) \times {\cal N}_{\R^d \times  \mathbb{M}}\times {\cal B}(\real^d) \to \R$$
is defined to be  a measurable function of the marked point process that depends solely on the input within an open ball of radius $r$ around the location of interest, i.e., for almost all realizations $\hatX$ of the marked point process $\hat{\cal P}$,  for all locally finite sets $\hat{{\cal Y}}\subset (\mathbb{R}^d\backslash B(x,r))\times \M$, and for all $(x,m_x) \in \hatX$ the following holds:
\begin{equation}\label{restr}
	\hat{\xi}^{[r]} \left(\left(x,m_x\right), \left[\hatX\cap\left(B(x,r)\times \M\right)\right]\cup \hat{{\cal Y}},\Gamma_{\lambda} \right)=\hat{\xi}^{[r]}\left(\left(x,m_x\right),\hatX\cap\left(B(x,r)\times \M \right) ,\Gamma_{\lambda}\right).
\end{equation}

In other words  {\em $\hat{\xi}^{[r]}$ has a stopping set which is an open ball of radius $r$.} The restricted score 
\begin{equation} \label{restriction}
	\hat{\xi}^{[r]}\((x,m),\hatX,\Gamma_{\lambda}\):=\xi\((x,m),\hatX\cap\(B(x,r)\times\M\),\Gamma_{\lambda}\),  \ r\in (0,\infty),
\end{equation}
is a special case of the short-range estimator. We emphasize that while we could choose $\hat{\xi}^{[r]}$  to be the restriction of $\xi$ to the radius $r$ ball as in \eqref{restriction}, it is advantageous to allow for other choices of $\hat{\xi}^{[r]}$, as  when considering statistics of 	continuum percolation  models  as in \citet{LX24} and statistics of Laguerre tessellations as in  \citet{TY}. Short-range estimators $\hat{\xi}^{[r]}$   are used in  \cite{BYY25} where  $\hat{\xi}^{[r]}$ denotes the restricted score and it plays a central role in establishing optimal rates of normal approximation on the Poisson space in \cite{TY}.  The benefits are also discussed in Section~\ref{ex.srg}.

\begin{defi}\label{prdistance} (probabilistic stabilization) 
The score $\xi$ satisfies exponential probabilistic stabilization if there is a family of short-range scores $(\hat{\xi}^{[r]})_{r\in(0,\infty)}$ and positive constants $C_1$ and $C_2$ such that
	\begin{align} \label{localization}
		\varphi^{(Pr)}(r)&:= \sup_{\lambda\in(0,\infty)} \sup_{x\in\Gamma_{\lambda}}\mathbb{P}_x\left(  {\xi}((x,M_x),  \hat{{\cal P}},\Gamma_{\lambda})\neq \hat{\xi}^{[r]}((x,M_x),  \hat{{\cal P}}, \Gamma_{\lambda})\right)\nonumber\\
		& \le C_1\exp(-C_2r),    \quad r \in(0,\infty).
	\end{align}
	If  there exist positive constants $C$ and $\beta$ such that  $\varphi^{(Pr)}(r)  \le Cr^{-\beta},\  r\in (0,\infty)$, then  $\xi$ is said to satisfy polynomial probabilistic stabilization with order $\beta$. 
\end{defi}

Probabilistic stabilization of scores is a localization criterion similar to, but weaker than, the standard notion of stopping-set stabilization. It does not require the existence of a stabilizing stopping set, nor does it require comparing the restricted version in \eqref{restriction} with the original score function. Instead, it only requires the existence of an estimator based on local information that is sufficiently close to the original score function;  see  Section~\ref{section1.4}~(i). 

As soon as the scores $\xi((x,M_x), \hP, \Gamma_{\lambda})$ have enough short-range structure, in the sense that they are close to a family of short-range scores
$\hat\xi^{[r]}((x,M_x), \hP,\Gamma_{\lambda})$ as in  Definition~\ref{prdistance}, uniformly over all $x \in \Gamma_{\lambda},  r\in(0,\infty)$, then $H_{\lambda}$ is well-approximated by the normal, provided  $\hP$ is EDD.

\begin{cor}\label{maincorTV}(rates of normal convergence for sums of scores satisfying probabilistic stabilization) 
	Assume that ${\cal P}$ satisfies the $p_1$-moment condition \eqref{momground} and that the marked point process $\hat{{\cal P}}$ satisfies exponential decay of dependence   (\ref{mixing}). Assume that the scores satisfy the $p_2$-moment condition \eqref{momscore} with 
	$$\ubar{p}_0:=\frac{p_1p_2}{p_1+p_2-1} \ge 3.$$
	Lastly, assume $\Var H_{\lambda} =\Omega(\lambda^\nu)$ for some $\nu>2/3$. 
	\begin{description}
		\item{(a)} If $\xi$ satisfies exponential probabilistic stabilization  \eqref{localization} for the given family of short-range scores   $(\hat{\xi}^{[r]})_{r\in(0,\infty)}$ then
		\begin{equation*}
			d_W\( \frac{H_{\lambda} - \E H_{\lambda}} {\sqrt{\Var H_{\lambda}}}, Z\) = O\((\log \lambda)^{3d}\lambda^{-1.5 \nu+1}\).
		\end{equation*}
		\item{(b)} If $\xi$ satisfies polynomial  probabilistic stabilization for the given family of short-range scores   $(\hat{\xi}^{[r]})_{ r\in(0,\infty)}$ then
		\begin{equation*}
			d_W\( \frac{H_{\lambda} - \E H_{\lambda}} {\sqrt{\Var H_{\lambda}}}, Z\) = O\(\lambda^{-1.5\nu+1+ \tau(\g_0,\g_2)}\),
		\end{equation*}
		with $\g_2=\beta(1-2(p_1+ p_2-1)/(p_1p_2))=\beta (1-2/\ubar{p}_0)=:\beta \ubar{p}_1$, $\g_0=d(2+\ubar{p}_1)$ and, as in \eqref{taudef}, $\tau(\g_0,\g_2)=\frac{3}{2}\(\frac{2\nu d+2\g_0}{3d+\g_2}\vee\frac{\nu d+2\g_0}{3d+\g_2-\g_0}\)$. 
	\end{description}
\end{cor}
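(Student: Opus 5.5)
The corollary follows from Theorem~\ref{mainthm1}: we show that EDD of $\hP$ together with probabilistic stabilization of $\xi$ implies exponential, respectively polynomial, mixing of score sums in the sense of Definition~\ref{WSM}, with explicit parameters $\bm\g$. Fix $A\in{\cal H}_j$ and $B$ as in Definition~\ref{WSM}, and set $r:=d(A,B)/3$. Put
\[
\hat S_A:=\sum_{x\in\P\cap A}\hat\xi^{[r]}((x,M_x),\hP,\G_\lambda)-\E(\cdot),
\]
and define $\hat S_B$ in the same way. By \eqref{restr}, $\hat S_A$ is measurable with respect to $\hP|_{B_r(A)}$ and $\hat S_B$ with respect to $\hP|_{B_r(B)}$. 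These enlargements are at distance at least $r$ from each other, and their diameters are at most $\diam(A)+2r$ and $\diam(B)+2r$.

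We decompose
\[
\cov(S_A^j,g(S_B))=\cov(\hat S_A^j,g(\hat S_B))+\text{(error terms)}.
\]

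\emph{Main term.} Use the standard coupling bound $|\cov(X,Y)|\le 2\,\|X\|_{q}\|Y\|_\infty\,\beta^{1-1/q}$ for $X=\hat S_A^j$ and $Y=g(\hat S_B)$, which holds because $g\in\cF_b$ is bounded. When $g$ is the identity ($j=1$), apply instead the two-moment version $|\cov(X,Y)|\le C\|X\|_q\|Y\|_q\beta^{1-2/q}$. Here $\beta=\beta_{B_r(A),B_r(B)}$ is the EDD coefficient from \eqref{mixing}.

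\emph{Moments.} The $L^q$ norms of sums over a set of diameter $D$ are controlled by Hölder's inequality, combining the ground moment \eqref{momground} with the Palm score moment \eqref{momscore} through the Campbell/Palm argument of \cite[Lemma~5.5]{CX23}. This gives the natural exponent $\ubar p_0=p_1p_2/(p_1+p_2-1)$. The resulting bound is $\|H_{\lambda,A}\|_{\ubar p_0}\le C\,(\diam(A)^d\vee1)$. The same bound holds for the short-range scores, since on the event $\{\xi=\hat\xi^{[r]}\}$ they coincide with $\xi$; the complementary event is handled by truncation, or one applies the moment assumption directly. This is where the requirement $\ubar p_0\ge3$ enters, since $S_A^2$ times a bounded or identity test function needs moments of order up to $3$.

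\emph{Error terms.} These involve $S_A-\hat S_A$ and $S_B-\hat S_B$. Since $g$ is $1$-Lipschitz, they reduce to bounds on $\E|S_A-\hat S_A|\,|S_A|$ and similar mixed moments. Writing $\xi-\hat\xi^{[r]}=(\xi-\hat\xi^{[r]})\bone\{\xi\ne\hat\xi^{[r]}\}$ and applying Hölder's inequality with the moment bounds, we get a bound of the form $C\,\diam^{cd}\,\varphi^{(Pr)}(r)^{1-2/\ubar p_0}$. The factor $1-2/\ubar p_0=\ubar p_1$ appears because two further factors of size $L^{\ubar p_0}$ must be absorbed in the $j=2$ case. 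This is exactly how $\g_2=\beta\ubar p_1$ arises in part (b), with $\g_0=d(2+\ubar p_1)$ collecting the volume factors from the two sums. In the exponential case we get $\g_2$ proportional to $C_2\ubar p_1$ and to $\theta_2$, and the separation constant is $\g_3=3\theta_3$ or larger, so that the EDD condition holds for the enlarged sets at distance $r$.

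Summing the main and error terms yields \eqref{mixing-score} or \eqref{mixing-score'}, and Theorem~\ref{mainthm1} then gives (a) and (b) directly. I expect the main obstacle to be the error term for $j=2$ with $g$ the identity. That step requires a careful Hölder split so that only moments of order $\ubar p_0\ge3$ are used while still extracting the exact power $\varphi^{(Pr)}(r)^{\ubar p_1}$. I would handle it through the Palm-based moment lemma, applied to the products of indicators with scores.
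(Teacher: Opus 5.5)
\textbf{Gap: moments of the short-range sums.} Corollary~\ref{maincorTV} assumes \eqref{momscore} only for $\xi$; nothing is assumed about the integrability of the estimators $\hat\xi^{[r]}$. Your decomposition needs it at every step. The centering $\E(\cdot)$ in $\hat S_A$, the factor $\|\hat S_A^j\|_q$ in your total-variation covariance inequality, and the H\"older bounds on $S_A-\hat S_A$ and $S_B-\hat S_B$ all presuppose something like $\|\hat S_A\|_{\ubar p_0}\le C\Vol(A)$. Your justification does not give this. Agreeing with $\xi$ off an event of probability at most $\varphi^{(Pr)}(r)$ says nothing about the size of $\hat\xi^{[r]}$ on that event; it need not even be integrable. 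There is also no moment assumption on the estimator to ``apply directly'': that is exactly the extra hypothesis \eqref{momrestrictedscore} of Corollary~\ref{maincorLp}, which the paper introduces because such moments are not controlled by \eqref{momscore}.

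\textbf{How to repair it by truncation.} Your truncation remark can be made to work, but it must be done on the estimator, at a level tied to the stabilization rate, and the stabilization must then be re-verified. Let $\Phi(r)$ be the assumed exponential or polynomial bound on $\varphi^{(Pr)}(r)$. Put $M_r:=\Phi(r)^{-1/p_2}$ and $\check\xi^{[r]}:=\hat\xi^{[r]}\bone\{|\hat\xi^{[r]}|\le M_r\}$; this still satisfies \eqref{restr}. One checks that $\E_x|\check\xi^{[r]}|^{p_2}\le b_2^{p_2}+M_r^{p_2}\varphi^{(Pr)}(r)\le b_2^{p_2}+1$ and $\pr_x(\xi\ne\check\xi^{[r]})\le\varphi^{(Pr)}(r)+\pr_x(|\xi|>M_r)\le(1+b_2^{p_2})\Phi(r)$. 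For this modified family, Lemma~\ref{moments-XB} controls the short-range sums, and your bookkeeping then does give the exponent $\ubar p_1$. For instance, write $\check H_A$ for the sum of $\check\xi^{[r]}$ over $\P\cap A$ and set $s:=\ubar p_0/(\ubar p_0-1)$. H\"older gives $\|H_{\lambda,A}-\check H_A\|_s\le\|H_{\lambda,A}-\check H_A\|_{\ubar p_0}\,\pr(H_{\lambda,A}\ne\check H_A)^{1/s-1/\ubar p_0}$ with $1/s-1/\ubar p_0=\ubar p_1$. In part (b) this reproduces $\g_0=d(2+\ubar p_1)$ and $\g_2=\beta\ubar p_1$.

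\textbf{The paper's route.} The paper's proof never needs moments of the estimator. It couples $(\hP|_{B_r(A)},\hP|_{B_r(B)})$ maximally with $(\hP|_{B_r(A)},\tilde{\hP}|_{B_r(B)})$. It then extends the latter to independent full copies $\Psi_5,\Psi_6$ of $\hP$, giving score sums $S_A'\indep S_B'$ with the laws of $S_A$ and $S_B$. The short-range sums serve only as a bridge. Outside an exceptional event $E^c$, whose probability is at most the EDD coefficient plus terms of order $\mu(\Vol(A)+\Vol(B))\varphi^{(Pr)}(r)$, one has $S_A=S_A'$ and $S_B=S_B'$. Hence $\cov(S_A^j,g(S_B))=\E(S_A^jg(S_B)\bone_{E^c})-\E((S_A')^jg(S_B')\bone_{E^c})$. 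H\"older is then applied only to $S_A$, $S_A'$ and $g(S_B)$, whose moments come from \eqref{momscore} via Lemma~\ref{moments-XB}. The estimator sums are centered by $\E H_{\lambda,A}$ and never integrated. Compared with your route, this needs neither truncation nor a separate covariance inequality for a main term.

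\textbf{Two smaller points.}
\begin{enumerate}
\item For $j=2$ the test class is ${\cal F}_2=\cF_b$, so the case ``$j=2$ with $g$ the identity'' that you single out as the main obstacle does not arise. Had it been required, with only $\ubar p_0=3$ moments H\"older would leave no positive power of the bad-event probability.
\item The condition $\ubar p_0\ge3$ is not what the mixing verification uses; that step only needs $\ubar p_0>2$ so that $\ubar p_1>0$. It is needed for the third-moment terms in the proof of Theorem~\ref{mainthm1}.
\end{enumerate}
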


\begin{re}\label{re1.5} 
	\begin{description}
		\item{(i)} (on the rates) 
		The assertions of  parts (ii) and (iii) of Remark~\ref{re1.3} also apply to Corollary \ref{maincorTV}. If the score function has a finite dependence range, then it satisfies exponential probabilistic stabilization: indeed, taking $\hat{\xi}^{[r]}=\xi$ for all $r$ larger than the diameter of the dependence range gives $\varphi^{(Pr)}(r)=0$ for all sufficiently large $r$, and hence the result in part
		(a) applies.  Moreover, if $\varphi^{(Pr)}$ in Definition~\ref{prdistance} decays sub-exponentially, i.e., $\varphi^{(Pr)}(r)\le C_{\beta} r^{-\beta}$ for all positive $\beta$ with some positive constant $C_{\beta}$, then $d_W((H_{\lambda} - \E H_{\lambda})/\sqrt{\Var H_{\lambda}},Z)= O(\lambda^{-p})$ for all $p < 3\nu/2 - 1$.  
		\item{(ii)} (on the moments) 
		The trade-off between a $p = (3 + \delta)$-moment on the ground point process, respectively score, and a $3(p - 1)/(p- 3)$ moment on the score, respectively ground point process, as described in Remark~\ref{re1.3}~(iii), applies here as well. 
		\item{(iii)} (comparison with bounded-Lipschitz localization and the results based on it)  Another localization assumption, called {\em bounded-Lipschitz localization (BL-localization)}, is used in Sections 4-7 of \cite{BYY25} to establish a {\em qualitative} central limit theorem for $(H_\lambda)_{\lambda\in(0,\infty)}$.This condition requires for every $r \in (0,\infty)$, $q\in\mathbb{N}$, that the $q$-tuples of scores satisfy
		\begin{equation} \label{dTV}
			d_{BL} \big(\law_{\mathbf{x}}\big(\big(\xi((x_i,M_{x_i}),\hP,\Gamma_{\lambda})\big)_{i=1,...,q}\big),\law_{\mathbf{x}}\big(\big(\hat{\xi}^{[r]}((x_i,M_{x_i}), \hP,\Gamma_{\lambda} )\big)_{i=1,...,q} \big)\big) \leq \varphi_q(r),
		\end{equation}
		uniformly in $\lambda\in (0,\infty)$ and over all distinct $x_1,\ldots,x_q\in \Gamma_{\lambda}$. Here, $d_{BL}$ denotes the bounded-Lipschitz metric, for each $q\in\mathbb{N}$, $\law_{\mathbf{x}}:=\law_{x_1,\dots,x_q}$ denotes the conditional law given $x_1,\ldots,x_q\in\P$, and $\varphi_q$ is a fast decreasing function, that is, one which decays faster than any polynomial power.  When $q=1$, the assumption in \eqref{dTV} is weaker than probabilistic stabilization, since $d_{BL}$ is a weaker metric and the condition only requires closeness in distribution between the score functions and their estimators, rather than closeness in probability, which potentially offers applications to a wider class of scores. On the other hand, \eqref{dTV} is a more complex  requirement  than \eqref{localization}, as it requires closeness of {\em vectors of scores} and their localized versions with respect to higher-order Palm distributions (see Remark~\ref{re1.5}~(iv) and Remark~\ref{diff1stPalm2ndPalm}). By contrast, our localization condition only involves the case $q=1$, but with $d_{BL}$ replaced by the stronger probabilistic assumption.  We also require $\Var H_\lambda = \Omega(\lambda^{\nu})$ for some $\nu > 2/3$, whereas \cite{BYY25} only requires $\nu > 0$.
		\item{(iv)} (on the difference between conditions on higher-order and first-order Palm distributions)
		In general, when it comes to applications, one wishes to impose assumptions involving Palm distributions of the lowest possible order.  We illustrate this through a simple example involving nearest neighbor distances.  Though the  example is based on exponentially decaying functions $(\varphi_q)_{q\in\mathbb{N}}$ in \eqref{dTV}, it  can be easily adapted to the sub-exponential setting. Let $\P$ be a homogeneous Poisson point process in $\mathbb{R}^d$ with unit intensity. For each point $x\in \P$, define $$\xi(x,\P,\Gamma_{\lambda})=\xi(x,\P) =\card(\P\cap B(x,[-\ln d(x,\P\backslash\{x\})]\vee 1)).$$
		Then one can easily verify \eqref{localization} by setting 
		$$\hat{\xi}^{[r]}(x,\P,\Gamma_{\lambda})=\card(\P\cap B(x,r\wedge\{[-\ln d(x,\P\backslash\{x\})]\vee 1\})),$$
		since, for $r>1$, 
		\begin{align*}
			&\mathbb{P}_x\(\xi(x,\P,\Gamma_{\lambda})\ne \hat{\xi}^{[r]}(x,\P,\Gamma_{\lambda})\)\le  \mathbb{P}_x\(-\ln d(x,\P\backslash\{x\})>r\)\\ &=\mathbb{P}_x\(d(x,\P\backslash\{x\})<\mathrm{e}^{-r}\)=\mathbb{P}\(\card{(\P\cap B({\bf 0},\mathrm{e}^{-r}))}\ge 1\)=:\varphi(r),
		\end{align*}
		and $\varphi(r)$ decays exponentially fast.  However, for any given $r$, by taking $x_1$ and $x_2$ sufficiently close to each other, the left-hand side of \eqref{dTV} with $q=2$ can be made arbitrarily close to $1$, since the dependence range of $\law_{x_1,x_2}(\xi(x,\P,\Gamma_{\lambda}))$ becomes arbitrarily large as $d(x_1,x_2)\to 0$. This example not only illustrates the difference between assumptions based on first-order Palm distributions and those based on higher-order Palm distributions, but can also be adapted to show that each time assumptions are  strengthened from the $q$-th to  the $(q+1)$-th order Palm distributions, some potential applications may be  lost. This can be achieved by replacing $d(x,\P\backslash\{x\})$ with the distance between the $(q+1)$-th nearest neighbor of $x$ in $\P$ and the set consisting of $x$ together with its first $q$ nearest neighbors.
	\end{description}
\end{re}

In addition to probabilistic stabilization, it is useful to consider stabilization in the $L^p$ sense. 

\begin{defi}\label{lpstab} ($L^{p_3}$  stabilization)
	Given ${p_3} \in (0, \infty)$, $\xi$ satisfies exponential stabilization  in the $L^{{p_3}}$ metric, abbreviated   exponential $L^{p_3}$-stabilization, if there is a family of short-range scores   $(\hat{\xi}^{[r]})_{ r\in(0,\infty)}$ and  positive constants $C_1:=C_1({p_3})$ and $C_2:=C_2({p_3})$ such that
	\begin{align*} 
		\varphi^{ (L^{p_3}) }(r) &:= \sup_{\lambda\in(0,\infty)} \sup_{x\in\Gamma_{\lambda}} \left(\mathbb{E}_x\left| {\xi}((x,M_x),  \hP,\Gamma_{\lambda}) -  \hat{\xi}^{[r]}((x,M_x),  \hP, \Gamma_{\lambda}) \right|^{p_3}\right)^{1/p_3}\nonumber\\
		&  \le C_1\exp(-C_2r),    \quad r\in (0,\infty).\ignore{\label{localizationLp}}
	\end{align*}
	If  there exist positive constants $C:=C(p_3)$ and $\beta:=\beta(p_3)$ such that  $\varphi^{ (L^{p_3}) }(r)  \le Cr^{-\beta},  r\in (0,\infty),$ then  $\xi$ is said to  satisfy polynomial $L^{p_3}$-stabilization with order $\beta$.  
\end{defi}

Since the moments of the short-range scores cannot, in general, be controlled by \eqref{momscore}, we impose a separate moment condition on the short-range scores themselves. The family of short-range scores  $(\hat{\xi}^{[r]})_{r\in(0,\infty)}$ satisfies {\em a moment condition of order $p_2' \in [1, \infty)$} if
\begin{equation} \label{momrestrictedscore}
b_2' := b_2'(p_2'):=\sup_{\lambda\in(0,\infty)} \sup_{x\in\Gamma_{\lambda}}\sup_{r\in(0,\infty) }
\(\E_x|\hat{\xi}^{[r]}((x,M_x), \hP, \Gamma_{\lambda})|^{p_2'}\)^{1/p_2'} <\infty.
\end{equation}
Under suitable moment assumptions \eqref{momscore} and \eqref{momrestrictedscore} on score functions and their estimators, the probabilistic stabilization implies $L^p$ stabilization with different parameters, but in some applications (for example, in Section~\ref{section1.4} (i))  it is more direct to verify  probabilistic stabilization instead of $L^p$ stabilization. Also, in some other applications, the $L^p$ stabilization is useful for estimators that are not easily seen to satisfy probabilistic stabilization; see Sections~\ref{ex.srg} and \ref{localustatistic}.

\begin{cor}\label{maincorLp} (rates of normal convergence for sums of scores satisfying  $L^{p_3}$-stabilization)
	Assume that ${\cal P}$ has a $p_1$-moment as in \eqref{momground} and that the marked point process $\hat{{\cal P}}$  satisfies exponential decay of dependence  (\ref{mixing}).  Assume that the score $\xi$ has a $p_2$-moment as in \eqref{momscore} and that there is a family of short-range scores $(\hat{\xi}^{[r]})_{ r\in(0,\infty)}$ which satisfy the $p_2'$-moment condition   \eqref{momrestrictedscore} with 
	$$\ubar{p}=\frac{p_1 (p_2'\wedge p_2)}{p_1+ (p_2'\wedge p_2)-1} \ge 3.$$ 
	Lastly, assume $\Var H_{\lambda} =\Omega(\lambda^\nu)$ for some $\nu>2/3$.
	\begin{description}
		\item{(a)} If $\xi$ satisfies exponential $L^{p_3}$-stabilization  for the given family of short-range scores   $(\hat{\xi}^{[r]})_{ r\in(0,\infty)}$,  with $p_3 \ge \ubar{p}/(\ubar{p}-2)$, then 
		\begin{equation*}
			d_W\( \frac{H_{\lambda} - \E H_{\lambda}} {\sqrt{\Var H_{\lambda}}}, Z\) = O\( (\log \lambda)^{3d} \lambda^{-1.5\nu+1}\).
		\end{equation*}
		\item{(b)} If  $\xi$ satisfies polynomial  $L^{p_3}$-stabilization for the given family of short-range scores  $(\hat{\xi}^{[r]})_{r\in(0,\infty)}$, with $p_3\ge \ubar{p}/(\ubar{p}-2)$, then 
		\begin{equation*}
			d_W\( \frac{H_{\lambda} - \E H_{\lambda}} {\sqrt{\Var H_{\lambda}}}, Z\) = O\(\lambda^{-1.5\nu+1+\tau(2d,\beta)}\),
		\end{equation*}
		where $\tau(2d,\beta):=\frac{3d}{2}\(\frac{2\nu +4}{3d+\beta}\vee\frac{\nu +4}{d+\beta}\).$
	\end{description}
\end{cor}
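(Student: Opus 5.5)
The plan is to reduce Corollary~\ref{maincorLp} to Theorem~\ref{mainthm1}: we verify that the score sums satisfy exponential, respectively polynomial, mixing (Definition~\ref{WSM}) with suitable parameters. In case (b) these parameters should be $\g_0=2d$ and $\g_2=\beta$, up to harmless constants. For a test set $A\in{\cal H}_j$, a set $B$ as in Definition~\ref{WSM}, and a radius $r$ to be chosen as a fixed fraction of $d(A,B)$ (say $r=d(A,B)/3$), we introduce the short-range sums
\[
\hat H_{\lambda,A}^{[r]}:=\sum_{x\in\P\cap A}\hat\xi^{[r]}((x,M_x),\hP,\G_\lambda),\qquad \hat S_A^{[r]}:=\hat H_{\lambda,A}^{[r]}-\E\hat H_{\lambda,A}^{[r]},
\]
and analogously $\hat S_B^{[r]}$. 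We then split $\cov(S_A^j,g(S_B))$ into three pieces:
\begin{itemize}
\item the covariance $\cov((\hat S^{[r]}_A)^j,g(\hat S^{[r]}_B))$;
\item the replacement error $\cov(S_A^j,g(S_B))-\cov(S_A^j,g(\hat S^{[r]}_B))$;
\item the replacement error $\cov(S_A^j,g(\hat S^{[r]}_B))-\cov((\hat S^{[r]}_A)^j,g(\hat S^{[r]}_B))$.
\end{itemize}

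\textbf{Short-range covariance.} By \eqref{restr}, $\hat S^{[r]}_A$ is a functional of $\hP|_{B_r(A)}$ and $\hat S^{[r]}_B$ is a functional of $\hP|_{B_r(B)}$. These enlargements are still separated by $d(A,B)/3$, and their diameters are comparable to those of $A$ and $B$, so EDD \eqref{mixing} applies to them once $\g_3$ is chosen large relative to $\theta_3$. We couple $\hP|_{B_r(A)\cup B_r(B)}$ with $\hP|_{B_r(A)}\cup\tilde{\hP}|_{B_r(B)}$ so that the two disagree with probability $\beta_{B_r(A),B_r(B)}$. Under the independent version the covariance vanishes. Since $g$ is bounded, the difference is at most $2\|(\hat S^{[r]}_A)^j\bone_{\rm disagree}\|_1$ plus the same term for the independent copy. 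By H\"older's inequality this is bounded by $\|\hat S^{[r]}_A\|_{q}^j\,\beta^{1-j/q}$.

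The moment $\|\hat H^{[r]}_{\lambda,A}\|_q$ is controlled by $C\,\Vol(A)\le C\diam(A)^d$ for $q\le\ubar p$. This is the analogue of \cite[Lemma~5.5]{CX23}, which combines the $p_1$-moment of the ground process with the $p_2'$-moment \eqref{momrestrictedscore} through the Campbell--Palm formula and H\"older's inequality; this combination is exactly where the exponent $\ubar p$ arises. When $g$ is the identity in ${\cal F}_1$, we use the same argument with the product $\hat S_A\hat S_B$ and H\"older with three factors. The result is a bound of the form $C\diam(A)^{jd+\theta_0}\diam(B)^{d+\theta_0}e^{-c\,d(A,B)}$, which is exponentially small in both cases (a) and (b).

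\textbf{Replacement errors (main obstacle).} These terms are where the stabilization rate enters. For the $B$-replacement, the Lipschitz property of $g$ gives
\[
|\cov(S_A^j,g(S_B))-\cov(S_A^j,g(\hat S_B^{[r]}))|\le 2\|S_A\|_{q}^j\,\|S_B-\hat S^{[r]}_B\|_{q'},\qquad \tfrac{j}{q}+\tfrac1{q'}=1.
\]
For the $A$-replacement with $j=2$, we write $S_A^2-(\hat S_A^{[r]})^2=(S_A-\hat S^{[r]}_A)(S_A+\hat S^{[r]}_A)$ and use the boundedness of $g$ together with H\"older's inequality.

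The key estimate is
\[
\Big\|\sum_{x\in\P\cap A}\big(\xi-\hat\xi^{[r]}\big)\Big\|_{q'}\le C\,\Vol(A)\,\varphi^{(L^{p_3})}(r)^{\kappa}.
\]
To prove it, we expand the $q'$-th moment (for $q'\ge 1$) using Minkowski's inequality and the Palm formula at a single point. We interpolate between the $L^{p_3}$ stabilization and the $(p_2\wedge p_2')$-moments, while the ground count is controlled by $b_1$, in the same way as \cite[Lemma~5.5]{CX23}.

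With $q=\ubar p$, the constraint $q'=\ubar p/(\ubar p-2)$ in the worst case $j=2$ is precisely the hypothesis $p_3\ge\ubar p/(\ubar p-2)$. This lets us take $\kappa=1$ without losing moments, which is why the resulting $\g_2$ equals $\beta$ rather than a reduced power. Getting this moment bookkeeping right using only first-order Palm distributions is the delicate step.

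\textbf{Conclusion.} Collecting terms, the covariance is bounded by a constant times $\diam(A)^{2d}\diam(B)^{d}\varphi^{(L^{p_3})}(d(A,B)/3)$ plus exponentially small terms. In case (a) this yields \eqref{mixing-score}. In case (b) it yields \eqref{mixing-score'} with $\g_0=2d$ and $\g_2=\beta$. Theorem~\ref{mainthm1} then gives the stated rates, with $\tau(2d,\beta)$ as in \eqref{taudef} reducing to the displayed expression. The moment hypothesis of Theorem~\ref{mainthm1} holds because $p_2\ge p_2\wedge p_2'$. Finally, if \eqref{relationship} fails, then the exponent $-1.5\nu+1+\tau$ is nonnegative and the bound is trivial once $\beta$ is small.
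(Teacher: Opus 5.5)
Your route is the paper's. You approximate the score sums by short-range sums at radius $r=d(A,B)/3$ and decouple the short-range sums via EDD. You pay for the replacements with H\"older's inequality, using $S_A^2-(\hat S_A^{[r]})^2=(S_A-\hat S_A^{[r]})(S_A+\hat S_A^{[r]})$ and the Lipschitz property of $g$. You then read off $\g_0=2d$, $\g_2=\beta$ and invoke Theorem~\ref{mainthm1}.

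The one structural difference is where the EDD cost is paid. The paper realizes $\bar S_A^{[r]},\bar S_B^{[r]}$ on the independent copies $\Psi_5,\Psi_6$ from the proof of Corollary~\ref{maincorTV}, so their covariance vanishes identically. It then writes $S_A-\bar S_A^{[r]}=H^{[r]}_{\lambda,A}$, which holds only off the coupling-failure event $E_1$, and leaves that contribution implicit. You evaluate $\hat S_A^{[r]},\hat S_B^{[r]}$ on $\hP$ itself, so the replacement errors are exactly centered versions of $H^{[r]}_{\lambda,\cdot}$. The dependence then appears as the explicit term $\|\hat S_A^{[r]}\|_q^j\,\beta_{B_r(A),B_r(B)}^{1-j/q}$. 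This is cleaner bookkeeping. Note, though, that the prefactors $(\diam(A)+d(A,B))^{\theta_0}$ and $(\diam(B)+d(A,B))^{\theta_0}$ from the enlarged sets fit the form \eqref{mixing-score'} with $\g_0=2d$ only via the separation condition $d(A,B)\ge\g_3\log(\diam(A)\vee\diam(B))$ with $\g_3$ large. Your remark that (b) is trivial when \eqref{relationship} fails is a correct addition.

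The step you single out as delicate does not give what you claim. The single-point Palm/H\"older estimate (Lemma~\ref{cormomentXB}, i.e.\ the proof of Lemma~\ref{moments-XB} applied to $\xi-\hat\xi^{[r]}$) controls $H^{[r]}_{\lambda,B}$ only in $L^{s}$ with $s=p_1p_3/(p_1+p_3-1)$. This is strictly smaller than $p_3$ when $p_1<\infty$, because separating the score from $\card(B\cap\P)$ by H\"older, using only first-order Palm distributions, costs integrability. The $j=2$ replacement on the $B$ side needs $\|H^{[r]}_{\lambda,B}\|_{\ubar{p}/(\ubar{p}-2)}=O(\Vol(B)r^{-\beta})$, hence $p_1p_3/(p_1+p_3-1)\ge\ubar{p}/(\ubar{p}-2)$. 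That is not implied by $p_3\ge\ubar{p}/(\ubar{p}-2)$; it already fails at the threshold itself.

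You can restore decay by interpolating with the $(p_2\wedge p_2')$-moments (Lemma~\ref{lma.k4}, Corollary~\ref{cor.k4}(b)), or by using $|g(s)-g(t)|\le 2\wedge|s-t|$ to drop to a lower norm. Either way you get $\kappa<1$, i.e.\ $\g_2<\beta$. So ``interpolate'' and ``$\kappa=1$ without losing moments'' are incompatible in your sketch.

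As written, your argument gives (b) with $\g_2=\beta$ only under $p_1p_3/(p_1+p_3-1)\ge\ubar{p}/(\ubar{p}-2)$, and a smaller $\g_2$ otherwise. That condition holds automatically when $p_1$ can be taken arbitrarily large, as in the paper's applications of this corollary. Part (a) is unaffected, since any exponential rate suffices there.

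The paper's proof of (b) makes the same identification: it bounds $\|H^{[r]}_{\lambda,B}\|_{\ubar{p}/(\ubar{p}-2)}$ by $O(\Vol(B)r^{-\beta})$, citing Lemma~\ref{cormomentXB}. So this is a weakness you share with the paper, not a departure from it. Still, the moment bookkeeping should not be presented as following from $p_3\ge\ubar{p}/(\ubar{p}-2)$.
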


\begin{re}\label{re1.6}
	\begin{description}
		\item{(i)} If $\card({\cal P}\cap\Gamma_1)$ has finite moments of all  orders, i.e., assumption~\eqref{momground} holds for all $p_1\in(0,\infty)$, then it suffices that $p_2\wedge p_2'>3.$ 
		\item{(ii)} For statement (b), the displayed bound converges to $0$ as $\lambda$ goes to infinity whenever  $p_3\in(0,\infty)$ (instead of $p_3\ge \ubar{p}/(\ubar{p}-2)$) provided  $\beta$ is sufficiently large (see Corollary~\ref{cor.k4} (b)).
		\item{(iii)} 
As in (ii),  if $d$ and $\nu$ are fixed and polynomial $L^{p_3}$-stabilization holds for some $p_3\in(0,\infty)$, then the  bound for  	$d_W((H_{\lambda} - \E H_{\lambda})/\sqrt{\Var H_{\lambda}},Z)$ approaches $O(\lambda^{-1.5\nu+1})$ as $\beta\to\infty$. Furthermore, if ${\varphi^{ (L^{p_3}) }(r)}$ in Definition~\ref{lpstab} decays sub-exponentially for some $p_3\in(0,\infty)$, then $d_W((H_{\lambda} - \E H_{\lambda})/\sqrt{\Var H_{\lambda}}, Z)= O(\lambda^{- \kappa})$ for all $ \kappa < 1.5\nu-1$.
		\item{(iv)}  $L^{p_3}$-stabilization  is stronger than the bounded-Lipschitz localization condition \eqref{dTV} when $q=1$. 
However, unlike \eqref{dTV}, it requires neither estimates for vectors of scores nor estimates under higher-order Palm distributions, cf. Remark~\ref{re1.5}~(iii).
	\end{description}
\end{re}

\subsection{Comparison with the  literature}\label{section1.4}

\noindent (i) (Comparison with \cite{CX23})  
As seen below,   standard exponential  stopping set stabilization  for score functions implies exponential probabilistic stabilization  and therefore the main results in \cite{CX23} are a special case  of part (a) of Theorem \ref{mainthm1}. Under exponential stopping set stabilization,  \cite{CX23} obtains a rate of convergence with a factor of $(\log \lambda)^{5d}$ instead of the factor $(\log \lambda)^{3d}$ given here. Polynomial stabilization of scores is not considered in \cite{CX23}.

\begin{defi}\label{defistab}(stopping set stabilization)  
	The score $\xi$ is exponentially stabilizing (resp. polynomially stabilizing with order $\beta$) with respect to $\law(\hat{{\cal P}})$ if for all $\lambda\in(0,\infty)$,  $x\in \Gamma_{\lambda}$,  and almost all realizations $\hatX:=\{(x,m_x)\}_{x\in {\cal X}}$ of the law $\law_x(\hat{{\cal P}})$, where $\mathcal{X}$ is the ground configuration of $\hatX$,  there exists a radius of stabilization 
$$
R:= R^\xi:=R((x,m_x),\hatX,\Gamma_{\lambda})\in(0,\infty),
$$ 
	such that for all locally finite $\hat{{\cal Y}}\subset (\mathbb{R}^d\backslash B(x,R))\times \M$, we have
	\begin{align*}
		R\left(\left(x,m_x\right), \left[\hatX \cap\left(B(x,R)\times \M\right)\right]\cup \hat{{\cal Y}},\Gamma_{\lambda} \right)&=R\left(\left(x,m_x\right), \hatX \cap\left(B(x,R)\times \M\right) ,\Gamma_{\lambda}\right),\\
		\xi\left(\left(x,m_x\right), \left[\hatX \cap\left(B(x,R)\times \M\right)\right]\cup \hat{{\cal Y}},\Gamma_{\lambda} \right)&=\xi\left(\left(x,m_x\right), \hatX \cap\left(B(x,R)\times \M\right),\Gamma_{\lambda}\right),
	\end{align*}
	and there exist constants  $C_1$ and $C_2$ such that the tail probability 
	$$\tau(t):=\sup_{\lambda\in(0,\infty)}\sup_{x\in \Gamma_{\lambda}} \mathbb{P}_{x}\left(R((x,M_x), \hat{{\cal P}},\Gamma_{\lambda})\ge t \right)$$
	satisfies $\tau(t)\le C_1\mathrm{e}^{-C_2t}$ (respectively $\tau(t)\le C_1t^{-\beta}$) for all $t\in(0,\infty)$. 
\end{defi}

Next, take  the short-range score $\hat{\xi}^{[r]}$ to be the restriction of $\xi$ to the radius $r$ ball, as in \eqref{restriction}. For this choice of  $\hat{\xi}^{[r]}$ it follows that exponential stabilization implies exponential probabilistic stabilization and polynomial stabilization implies polynomial probabilistic stabilization with the same rate, since
$$\mathbb{P}_x\left(  {\xi}((x,M_x),  \hat{{\cal P}},\Gamma_{\lambda})\neq \hat{\xi}^{[r]}((x,M_x),  \hat{{\cal P}}, \Gamma_{\lambda})\right)\le \mathbb{P}_{x}\left(R((x,M_x), \hat{{\cal P}},\Gamma_{\lambda})\ge r \right)$$
for all $r\in(0,\infty)$.
Hence,  if the score stabilizes at a given rate, then it satisfies probabilistic stabilization at the same rate. Together with Corollary \ref{maincorTV} (b), this establishes a rate of normal convergence when the score function is polynomially stabilizing. 

\noindent  (ii)  (Related quantitative CLTs)   
Given Gibbsian input whose potential satisfies a finite range condition and given a large enough inverse temperature parameter, Theorem 2.3 of  \cite{SY13} establishes $d_K((H_{\lambda} - \E H_{\lambda})/\sqrt{\Var H_{\lambda}},Z) = O\((\log \lambda)^{3d} \lambda^{-0.5}\)$ provided that the variance is of volume order, and where $d_K$ denotes the Kolmogorov distance.  Later, this bound was sharpened to $O\left((\log\lambda)^{2d}\lambda^{-0.5}\right)$ for a larger class of Gibbsian input in \cite{HOS25}. When the input $\P$  is a  Poisson point process, one can improve  the rates of convergence in our main results and establish presumably optimal rates.  Notably,  if the scores satisfy exponential probabilistic stabilization and a fifth moment condition, then the rates of normal convergence for $(H_{\lambda} - \E H_{\lambda})/\sqrt{\Var H_{\lambda}}$ can be improved to $O(\lambda^{-0.5})$ provided the variance grows like $\Omega(\lambda)$. These Berry-Esseen rates of normal approximation hold in the Wasserstein and Kolmogorov distances,  as shown in \cite{TY} and in fact  hold under {\em bounded Lipschitz localization of scores},  where the closeness of scores in the probabilistic sense in Definition~\ref{prdistance} is replaced by a closeness of scores in the  bounded Lipschitz distance.

\noindent (iii)  (Related qualitative CLT)
It is shown in \cite{BYY25} that if the  input has  fast decay of correlations, if the score has a $(2 + \delta)$-moment,  
and if the score satisfies bounded Lipschitz localization and  $\Var H_{\lambda} =\Omega(\lambda^\nu)$ for some $\nu> 0$
then one obtains a qualitative CLT for  $(H_{\lambda} - \E H_{\lambda})/\sqrt{\Var H_{\lambda}}$, but with no rate of normal convergence.

\section{Auxiliary results and the proof of the main result}

The proof of Theorem~\ref{mainthm1} is preceded by several lemmas.

\begin{lma} \label{moments-ground} (easy moment bounds for $\scrP$)
	If  $\scrP$ satisfies the $p_1$-moment condition \eqref{momground}, then for any $B\in {\scrB}(\real^d)$ that can be covered by $L_B$ unit cubes, we have
	$$\| \card(B\cap {\scrP}) \|_{p_1} \le L_Bb_1,$$
	where $b_1$ is defined in \eqref{momground}.
	In particular, if $B$ is the union of $m$ hyperrectangles with side lengths at least 1, then
	$$ \| \card(B\cap {\scrP})\|_{p_1} \le 2^dmb_1\Vol(B).$$
	Furthermore, if these $m$ hyperrectangles are disjoint, then
	$$\| \card(B\cap {\scrP})\|_{p_1} \le 2^db_1\Vol(B).$$
	Also, if  $B\in {\scrB}(\real^d)$ is a ball with diameter at least 1, then
	$$ \| \card(B\cap {\scrP})\|_{p_1}\le 4^d\Gamma(0.5d+1)\pi^{-0.5d} b_1 \Vol(B)$$
	where $\Gamma(s)$ denotes the gamma function $\int_{0}^{\infty}t^{s-1}\mathrm{e}^{-t}\mathrm{d}t$ for $s\in(0,\infty)$. 
\end{lma}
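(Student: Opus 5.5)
The plan is to reduce everything to the first claim, which follows from stationarity and Minkowski's inequality. Suppose $B\subset \bigcup_{i=1}^{L_B} Q_i$ with each $Q_i$ a unit cube, i.e.\ a translate of $\Gamma_1$. Then
$$\card(B\cap\scrP)\le \sum_{i=1}^{L_B}\card(Q_i\cap\scrP).$$
By stationarity of $\scrP$, each $\card(Q_i\cap\scrP)$ has the same law as $\card(\Gamma_1\cap\scrP)$. Minkowski's inequality in $L^{p_1}$ (valid since $p_1\ge1$) then gives
$$\|\card(B\cap\scrP)\|_{p_1}\le \sum_{i=1}^{L_B}\|\card(Q_i\cap\scrP)\|_{p_1}=L_Bb_1.$$
A cube that is open, closed or half-open is contained in a closed translate of $\Gamma_1$, so boundary conventions cause no trouble.

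The remaining statements are purely covering-number estimates, and I would prove each by bounding $L_B$ by the stated multiple of $\Vol(B)$.

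For a single hyperrectangle $R$ with side lengths $\ell_1,\dots,\ell_d\ge1$, tile it by the grid of unit cubes anchored at one corner. This needs $\prod_j\lceil \ell_j\rceil$ cubes. Since $\lceil \ell\rceil\le 2\ell$ for $\ell\ge1$, the count is at most $2^d\Vol(R)$.

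For a union of $m$ such rectangles $R_1,\dots,R_m$, sum the coverings. This gives $L_B\le 2^d\sum_k\Vol(R_k)$. Each $\Vol(R_k)\le\Vol(B)$, so $L_B\le 2^dm\Vol(B)$. If the rectangles are disjoint, then $\sum_k\Vol(R_k)=\Vol(B)$, giving the sharper bound $2^d\Vol(B)$. Overlaps only on boundaries would be harmless as well.

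For a ball of diameter $D\ge1$, enclose it in a cube of side $D$ and cover that cube as above with at most $\lceil D\rceil^d\le (2D)^d$ unit cubes. The ball volume is $\Vol(B)=\pi^{d/2}(D/2)^d/\Gamma(d/2+1)$, so
$$(2D)^d=4^d\,\Gamma(0.5d+1)\,\pi^{-0.5d}\,\Vol(B).$$
This matches the stated constant.

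I do not expect any genuine obstacle. The only point needing care is the use of stationarity, which ensures that every translated unit cube has exactly the $b_1$ moment. Beyond that, one only has to keep the elementary inequality $\lceil\ell\rceil\le2\ell$ for $\ell\ge1$ straight, since it is what produces the factors $2^d$ and $4^d$.
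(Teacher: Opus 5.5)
Your proof is correct and takes the same route as the paper. You use Minkowski's inequality for the first bound, making explicit the stationarity that the paper leaves implicit, then the covering count $\prod_i\lceil \ell_i\rceil\le 2^d\Vol(R)$ summed over the rectangles, and finally enclose the ball in a cube whose side equals its diameter to get the constant $4^d\Gamma(0.5d+1)\pi^{-0.5d}$.
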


\noindent{\it Proof.} The first inequality follows from the Minkowski inequality. If $B$ is a hyperrectangle with side lengths $l_i\ge 1$, $1\le i\le d$, then it can be covered by $L_B:=\Pi_{i=1}^d\lceil l_i\rceil\le 2^d\Pi_{i=1}^dl_i=2^d\Vol(B)$ unit cubes. If $B$ is the union of $m$ such hyperrectangles, say $B_i$, $1\le i\le m$, then $L_B\le \sum_{i=1}^mL_{B_i}\le 2^d m \Vol(B)$. Furthermore, if these hyperrectangles are disjoint, then the upper bound becomes $L_B \le \sum_{i=1}^m L_{B_i} \le \sum_{i=1}^m (2^d \Vol(B_i)) = 2^d \Vol(B)$, and hence $\| \card(B\cap {\scrP})\|_{p_1}\le 2^d b_1 \Vol(B).$ If $B$ is a ball, we can cover it with a cube whose side length equals the diameter of the ball, and the statement follows from the ratio between the volumes of the cube and the ball. \qed

Recall the definition of $H_{\lambda,A}$ and  $S_{\lambda,A}$ in \eqref{HlaA} and \eqref{SlaA}, respectively, and recall that $\mu$ is the constant intensity of $\scrP$.

\begin{lma} \label{moments-XB} (moment bounds for $H_{\lambda,B}$ and $S_{\lambda,B}$) 
	If  $\scrP$ satisfies the $p_1$-moment condition \eqref{momground} and if $\xi$ satisfies the $p_2$-moment condition \eqref{momscore},  then there exists a constant $C:=C(b_1,b_2,\mu)$ independent of $\lambda$ such that for any set $B\in {\cal H}_2$,
	\begin{align}
		\|H_{\lambda,B}\|_{p_1p_2/(p_1+p_2-1)}&\le C\Vol(B),\label{momentXB-01}\\
		\|S_{\lambda,B}\|_{p_1p_2/(p_1+p_2-1)}&\le C\Vol(B).\label{momentXB-02}
	\end{align}
\end{lma}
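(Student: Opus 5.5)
Set $q:=p_1p_2/(p_1+p_2-1)$ and write $N_B:=\card(\scrP\cap B)$ and $\xi_x:=\xi((x,M_x),\hP,\Gamma_\lambda)$. The approach is to combine H\"older's inequality, applied to the random sum $H_{\lambda,B}=\sum_{x\in\scrP\cap B}\xi_x$, with the Campbell--Mecke formula. Campbell--Mecke turns the sum into an integral against the intensity $\mu$ with first-order Palm expectations. This way the score moment \eqref{momscore}, which only involves the first-order Palm law, and the ground moment \eqref{momground} can each be used once.

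First, apply H\"older to the finite sum:
\[
|H_{\lambda,B}|^q\le N_B^{q-1}\sum_{x\in\scrP\cap B}|\xi_x|^q .
\]
Then take expectations. To separate the two factors inside the sum, use the weighted Young inequality $ab\le a^{s}/s+b^{t}/t$ pointwise, with exponents chosen so that the count appears to power $p_1$ and the score to power $p_2$. The relevant identity is
\[
\frac{q-1}{p_1-1}+\frac{q}{p_2}=1 .
\]
This is exactly the relation defining $q$, since $q(p_1+p_2-1)=p_1p_2$. Write $N_B^{q-1}|\xi_x|^q=(N_B^{p_1-1})^{\alpha}(|\xi_x|^{p_2})^{1-\alpha}$ with $\alpha=(q-1)/(p_1-1)$ and $1-\alpha=q/p_2$. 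H\"older with respect to the Campbell measure $\E\sum_{x\in\scrP\cap B}$ then gives
\[
\E|H_{\lambda,B}|^q\le\Big(\E\sum_{x\in\scrP\cap B}N_B^{p_1-1}\Big)^{\alpha}\Big(\E\sum_{x\in\scrP\cap B}|\xi_x|^{p_2}\Big)^{1-\alpha}.
\]
The first factor equals $\E N_B^{p_1}$. By Lemma~\ref{moments-ground}, with $B\in{\cal H}_2$ a union of at most two hyperrectangles of sides at least $1$, this is at most $(2^{d+1}b_1\Vol(B))^{p_1}$. By Campbell--Mecke and \eqref{momscore}, the second factor equals $\mu\int_B\E_x|\xi_x|^{p_2}\,dx\le\mu b_2^{p_2}\Vol(B)$.

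Taking the $q$-th root yields
\[
\|H_{\lambda,B}\|_q\le C\,\Vol(B)^{(p_1\alpha+1-\alpha)/q}.
\]
Since $\Vol(B)\ge1$, it only remains to check that the exponent is at most $1$. Compute $p_1\alpha+1-\alpha=1+(p_1-1)\alpha=q$, so the exponent is exactly $1$. This gives \eqref{momentXB-01}. For \eqref{momentXB-02}, use $\|S_{\lambda,B}\|_q\le\|H_{\lambda,B}\|_q+|\E H_{\lambda,B}|\le 2\|H_{\lambda,B}\|_q$.

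The main obstacle is the decoupling step. There the factor $N_B^{q-1}$ is evaluated under the Palm-weighted measure, so a naive H\"older with plain $\E$ would require higher-order Palm information or joint moments that are not assumed. The trick is to treat $\E\sum_{x\in\scrP\cap B}$ as a single measure and apply H\"older there. This identifies the first factor back with the ordinary moment $\E N_B^{p_1}$, while the second factor is handled by first-order Palm expectations only. The exponent bookkeeping is exactly what forces the choice $q=p_1p_2/(p_1+p_2-1)$.
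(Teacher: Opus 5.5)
Your proof is correct and is essentially the paper's argument: the paper applies H\"older to the finite sum, then applies H\"older over $x\in\scrP\cap B$ pointwise and again over $\E$ with exponents $s_2=(p_1+p_2-1)/p_1$ and its conjugate $t_2$. Combined, those two steps are exactly your single H\"older step on the Campbell measure $\E\sum_{x\in\scrP\cap B}$ (your $1/(1-\alpha)$ equals $s_2$), and the paper then finishes, as you do, with Campbell--Mecke, \eqref{momscore} and Lemma~\ref{moments-ground}.

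One small inaccuracy: your mention of the pointwise weighted Young inequality is misleading. Used literally without an optimized rescaling, it would give $\Vol(B)^{p_1/q}$ instead of $\Vol(B)$. The step you actually display is H\"older, and that is the step the proof needs.
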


\noindent{\it Proof.} For $s_1,~s_2\in [1,\infty)$, let $t_1,~t_2$ denote their respective conjugate indices, i.e., $1/s_1 + 1/t_1 = 1$ and $1/s_2 + 1/t_2 = 1$.  Repeatedly applying H\"{o}lder's inequality, we have 
\begin{align}
	\|H_{\lambda,B}\|_{s_1}&\le \left\{\mean \left[\left(\sum_{x\in\scrP\cap B}|\sxi|\right)^{s_1}\right]\right\}^{1/s_1}\nonumber\\
	&\le \left\{\mean \left[\sum_{x\in {\scrP} \cap B}|\sxi|^{s_1}\card(B\cap {\scrP})^{s_1/t_1}\right]\right\}^{1/s_1}\nonumber\\
	&\le\left\{\mathbb{E}\left[\left(\sum_{x\in\scrP\cap B}|\sxi|^{s_1s_2}\right)^{1/s_2}\left(\sum_{x\in\scrP\cap B}\card(B\cap {\scrP})^{s_1t_2/t_1}\right)^{1/t_2}\right]\right\}^{1/s_1}\nonumber\\
	&=\left\{\mathbb{E}\left[\left(\sum_{x\in\scrP\cap B}|\sxi|^{s_1s_2}\right)^{1/s_2}\left(\card(B\cap {\scrP})^{1+s_1t_2/t_1}\right)^{1/t_2}\right]\right\}^{1/s_1} \nonumber\\
	& \le\left\{\left[\mathbb{E}\sum_{x\in\scrP\cap B}|\sxi|^{s_1s_2}\right]^{1/s_2}\left[\mathbb{E}\(\card(B\cap{\scrP})^{1+s_1t_2/t_1}\)\right]^{1/t_2}\right\}^{1/s_1} \nonumber\\
	&= \left(\int_B\mean_x (|\sxi|^{s_1s_2})\mu \mathrm{d}x\right)^{1/(s_1s_2)}\|\card(B\cap {\scrP})\|_{1+s_1t_2/t_1}^{1-1/(s_1s_2)}.\label{momentsXB-02}
\end{align}

Setting
$$s_1=\frac{p_1p_2}{p_1+p_2-1},~s_2=\frac{p_1+p_2-1}{p_1}$$
and using the $p_2$-moment condition \eqref{momscore} and Lemma~\ref{moments-ground} in (\ref{momentsXB-02}), we obtain 
\begin{equation*}
	\|H_{\lambda,B}\|_{p_1p_2/(p_1+p_2-1)}\le  b_2(\mu\Vol(B))^{1/p_2}\|\card(B\cap {\scrP})\|_{p_1}^{1-1/p_2}\le 0.5C\Vol(B),
\end{equation*}
where $b_2$ is defined in \eqref{momscore}.
This implies \eqref{momentXB-01}. For \eqref{momentXB-02}, we apply the Minkowski inequality followed by the Lyapunov inequality, which yields
\begin{align*}
	&\|S_{\lambda,B}\|_{p_1p_2/(p_1+p_2-1)} \le \|H_{\lambda,B}\|_{p_1p_2/(p_1+p_2-1)}+\|\mathbb{E}(H_{\lambda,B})\|_{p_1p_2/(p_1+p_2-1)}\\
	&\le 2\|H_{\lambda,B}\|_{p_1p_2/(p_1+p_2-1)}\le C \Vol(B). \tag*{\qed}
\end{align*}

Using the same idea as in the proof of Lemma~\ref{moments-XB}, we establish an upper bound for the $L^p$ norm of 
$$ H_{\lambda,B}^{[r]}=\sum_{x\in {\scrP} \cap B}\left[\sxi- \hat{\xi}^{[r]}\((x,M_x),\hat{{\cal P}},\Gamma_{\lambda}\)\right], \ B\in {\cal H}_2,$$
for a given  family of short-range scores $(\hat{\xi}^{[r]})_{ r\in(0,\infty)}$. 

\begin{lma}\label{cormomentXB}(moment bounds for  $H_{\lambda,B}^{[r]}$)  
	Assume  $\scrP$ has a  $p_1$-moment as in \eqref{momground}. 
	\begin{description}
		\item{(a)} If $\xi$ satisfies exponential $L^{p_3}$-stabilization for some $p_3 \in (0, \infty)$ and if $(\hat{\xi}^{[r]})_{r\in(0,\infty)}$ is the associated family of short-range scores, then there exist constants $C_1':=C_1'(b_1, \mu)$  and $C_2':=C_2'(b_1, \mu)$ independent of $\lambda$ such that for any  $B\in {\cal H}_2$ and all $r\in(0,\infty)$
		\begin{equation}\label{cormomentXB-1}
			\left\|H_{\lambda,B}^{[r]}\right\|_{p_1p_3/(p_1+p_3-1)}\le C_1'\Vol(B)\mathrm{e}^{-C_2'r}.
		\end{equation}
		\item{(b)}  If $\xi$ satisfies polynomial  $L^{p_3}$-stabilization for some $p_3 \in (0, \infty)$ and if   $(\hat{\xi}^{[r]})_{ r\in(0,\infty)}$ is the associated family of short-range scores, then there exists a constant $C_3':=C_3'(b_1,\mu)$ independent of $\lambda$ such that for any set $B\in {\cal H}_2$ and all $r\in(0,\infty)$
		\begin{equation}\label{cormomentXB-2}
			\left\|H_{\lambda,B}^{[r]}\right\|_{p_1p_3/(p_1+p_3-1)}\le C_3'\Vol(B)r^{-\beta}.
		\end{equation}
	\end{description}
\end{lma}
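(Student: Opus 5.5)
The plan is to rerun the H\"older chain from the proof of Lemma~\ref{moments-XB} verbatim, with the score replaced by the difference
$$\Delta^{[r]}(x):=\sxi-\hat{\xi}^{[r]}\((x,M_x),\hP,\Gamma_{\lambda}\),$$
and the $p_2$-moment replaced by the $L^{p_3}$-stabilization bound. The chain leading to \eqref{momentsXB-02} never used any property of $\xi$ beyond measurability: the triangle inequality, H\"older's inequality on the inner sum, and the Campbell--Mecke formula $\E\sum_{x\in\scrP\cap B}f(x,\hP)=\mu\int_B\E_x f(x,\hP)\,\mathrm{d}x$ all apply equally to $\Delta^{[r]}$. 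Taking $s_1,s_2\ge1$ with conjugates $t_1,t_2$, I get
$$\|H^{[r]}_{\lambda,B}\|_{s_1}\le\(\int_B\E_x|\Delta^{[r]}(x)|^{s_1s_2}\,\mu\,\mathrm{d}x\)^{1/(s_1s_2)}\|\card(B\cap\scrP)\|_{1+s_1t_2/t_1}^{1-1/(s_1s_2)}.$$

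Next I choose the exponents exactly as in Lemma~\ref{moments-XB}, with $p_2$ replaced by $p_3$:
$$s_1=\frac{p_1p_3}{p_1+p_3-1},\qquad s_2=\frac{p_1+p_3-1}{p_1}.$$
This gives $s_1s_2=p_3$, and one checks, as before, that $1+s_1t_2/t_1=p_1$. The first factor is then at most $(\mu\Vol(B))^{1/p_3}\varphi^{(L^{p_3})}(r)$, using the supremum over $x\in\Gamma_\lambda$ in Definition~\ref{lpstab}. We may assume $B\subset\Gamma_\lambda$, or else note that the scores vanish off $\Gamma_\lambda$ by convention. For the second factor, Lemma~\ref{moments-ground} applied to $B\in{\cal H}_2$ (a union of $m=2$ hyperrectangles) gives $\|\card(B\cap\scrP)\|_{p_1}\le 2^{d+1}b_1\Vol(B)$. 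Since $\Vol(B)\ge1$, the product is at most
$$\mu^{1/p_3}(2^{d+1}b_1)^{1-1/p_3}\,\Vol(B)\,\varphi^{(L^{p_3})}(r).$$
Plugging in $\varphi^{(L^{p_3})}(r)\le C_1e^{-C_2r}$ yields (a), and plugging in $\varphi^{(L^{p_3})}(r)\le Cr^{-\beta}$ yields (b). The resulting constants depend on $b_1,\mu$ and also on the stabilization constants $C_1,C_2,C,\beta$ and on $p_3$, which I read as implicitly permitted.

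The main obstacle is the range of $p_3$. When $p_3<1$, H\"older's inequality with $s_2=(p_1+p_3-1)/p_1<1$ is not available, and $s_1$ may also fall below $1$, so $\|\cdot\|_{s_1}$ is only a quasi-norm. My fix in that case is to avoid H\"older entirely and use $(\sum a_i)^{s}\le\sum a_i^{s}$ for $s\le1$, together with Campbell's formula and the Lyapunov inequality. This works provided the target exponent is at most $p_3$, and indeed $p_1p_3/(p_1+p_3-1)\le p_3$ because $p_1\ge1$. For $p_3\ge1$ the argument above goes through unchanged, with $s_1\ge1$ and $s_2\ge1$.
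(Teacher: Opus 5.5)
For $p_3\ge 1$ your argument is the paper's proof. It reruns the H\"older chain of Lemma~\ref{moments-XB} with $\xi$ replaced by $\xi-\hat{\xi}^{[r]}$, $p_2$ by $p_3$ and $b_2$ by $\varphi^{(L^{p_3})}(r)$. This gives $\|H^{[r]}_{\lambda,B}\|_{p_1p_3/(p_1+p_3-1)}\le \varphi^{(L^{p_3})}(r)(\mu\Vol(B))^{1/p_3}\|\card(B\cap\P)\|_{p_1}^{1-1/p_3}$, and Lemma~\ref{moments-ground} then finishes the argument.

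The error is in your patch for $p_3<1$. The inequality $p_1p_3/(p_1+p_3-1)\le p_3$ is equivalent to $p_1\le p_1+p_3-1$, i.e.\ to $p_3\ge 1$; it does not follow from $p_1\ge1$. So exactly when $p_3<1$, the target exponent $q:=p_1p_3/(p_1+p_3-1)$ lies in $(p_3,1]$. Subadditivity and Campbell give $\E|H^{[r]}_{\lambda,B}|^{q}\le \mu\int_B\E_x|\xi-\hat{\xi}^{[r]}|^{q}\,\mathrm{d}x$, and Lyapunov cannot bound this $q$-th moment by the $p_3$-th one.

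No other argument can repair this, because for $p_3<1$ the assertion is false; note the lemma imposes no moment condition on $\xi$ itself. Here is a counterexample.
\begin{itemize}
\item Take $\P$ Poisson, so every $p_1$ is admissible.
\item Attach i.i.d.\ marks $(M_x,L_x)$ independent of $\P$, with $L_x\sim\mathrm{Exp}(1)$ and $M_x\ge 0$ such that $\E M_x^{p_3}<\infty=\E M_x^{q}$.
\item Put $\xi((x,(m,\ell)),\hatX,\Gamma_\lambda)=m\,\mathbf{1}\{\mathcal{X}\cap B(x,\ell)=\{x\}\}$ and $\hat{\xi}^{[r]}=\xi\,\mathbf{1}\{\ell<r\}$.
\end{itemize}
Then $\hat{\xi}^{[r]}$ only looks inside $B(x,r)$, and $\E_x|\xi-\hat{\xi}^{[r]}|^{p_3}\le \E M_x^{p_3}\,\mathrm{e}^{-r}$, so exponential $L^{p_3}$-stabilization holds. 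Now fix $B\in{\cal H}_2$ with $B\subset\Gamma_\lambda$. On the event of positive probability that $\P\cap B=\{x\}$, $L_x\ge r$ and $\P\cap B(x,L_x)=\{x\}$, we have $H^{[r]}_{\lambda,B}=M_x$, and $M_x$ is independent of that event. Hence $\|H^{[r]}_{\lambda,B}\|_q=\infty$.

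So the lemma has to be read with $p_3\ge1$. That is all the paper's H\"older chain covers, and all that is used later (e.g.\ $p_3\ge\ubar{p}/(\ubar{p}-2)>1$ in Corollary~\ref{maincorLp}). For $p_3<1$, your subadditivity idea only yields the weaker bound $\|H^{[r]}_{\lambda,B}\|_{p_3}\le(\mu\Vol(B))^{1/p_3}\varphi^{(L^{p_3})}(r)$.
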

 
\noindent{\it Proof.} The inequalities follow from a step-by-step repetition of the proof of Lemma~\ref{moments-XB}, replacing $H_{\lambda,\cdot}$, $\sxi$, $p_2$, and $b_2$  with $H_{\lambda,\cdot}^{[r]}$, $\sxi- \hat{\xi}^{[r]}((x, M_x), \hat{{\cal P}}, \Gamma_{\lambda})$, $p_3$, and $\varphi^{ (L^{p_3}) }(r)$, respectively. This yields the bound
\begin{equation*}
	\left\|H_{\lambda,B}^{[r]}\right\|_{p_1p_3/(p_1 + p_3 - 1)} \leq \varphi^{ (L^{p_3}) }(r) (\mu\Vol(B))^{1/p_3}  \|\card(B \cap {\cal P})\|_{p_1}^{1 - (1/p_3)},
\end{equation*}
which establishes~(\ref{cormomentXB-1}) and~(\ref{cormomentXB-2}) under the respective assumptions of exponential and polynomial $L^{p_3}$-stabilization. \qed

The proof of our main result relies on blocking arguments over big blocks and small blocks, given respectively by the rectangles  $A_2$ and  $A_1$ in the next lemma, which we will eventually take to have edge lengths proportional to $\log \lambda$ and a constant, respectively.  The next lemma is designed to handle error terms arising in these blocking arguments. 
\begin{lma}\label{mixing-score-a1}(consequence of mixing for the set-function class corresponding to $j = 2$)
	\begin{description}
		\item{(a)} Assume  the exponential mixing condition \eqref{mixing-score} holds for the set-function class corresponding to $j = 2$. For all hyperrectangles $A_1,\ A_2$ in $ {\cal H}_1$ with $A_1\subseteq A_2$, and  for all  $B \in{\cal B}(\Gamma_\lambda)$ which are finite unions of disjoint hyperrectangles in ${\cal H}_1$ with  $d(A_2,B)\ge \g_3 [\log(\diam(A_2) \vee \diam(B))\vee 1]$, we have for  $g \in  {\cal F}_2$
		\begin{equation}\label{mixing-score-a2}
			\left|\mathbb{E}\(S_{A_1}S_{A_2}g(S_B)\)-\mathbb{E}\(S_{A_1}S_{A_2}\)\mathbb{E}\(g(S_B)\)\right|\le 1.5(2d+1)U({\bm\g},A_2,B). 
		\end{equation}
		\item{(b)} If the polynomial mixing condition \eqref{mixing-score'} holds for the set-function class corresponding to $j = 2$
		then the right-hand side of \eqref{mixing-score-a2} can be replaced by $1.5(2d+1)U'({\bm\g},A_2,B)$. 
	\end{description}
\end{lma}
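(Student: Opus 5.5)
The plan is to reduce the mixed product $S_{A_1}S_{A_2}$ to squares of score sums over sets in ${\cal H}_2$ or ${\cal H}_1$, to which the $j=2$ case of \eqref{mixing-score} applies directly. Write $A_2\setminus A_1$ as a disjoint union of hyperrectangles. Since $A_1\subseteq A_2$ are axis-parallel boxes, cutting $A_2$ along the $2d$ hyperplanes containing the faces of $A_1$ splits $A_2\setminus A_1$ into at most $2d$ slabs $D_1,\dots,D_k$, $k\le 2d$. These can be chosen as successive slabs: for each coordinate direction, one slab below and one slab above $A_1$, taken inside the current remaining box. Set $D_0:=A_1$. Then $S_{A_2}=\sum_{i=0}^k S_{D_i}$ up to null boundary contributions (boundaries carry no points almost surely by stationarity and simplicity), and
$$S_{A_1}S_{A_2}=S_{A_1}^2+\sum_{i=1}^k S_{A_1}S_{D_i}.$$

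For each cross term, use polarization:
$$S_{A_1}S_{D_i}=\tfrac12\bigl(S_{A_1\cup D_i}^2-S_{A_1}^2-S_{D_i}^2\bigr).$$
With the slab construction, $A_1\cup D_i$ is either a single box or a union of two incident hyperrectangles, so it lies in ${\cal H}_2$ (or ${\cal H}_1\subset$ the union of two boxes). This requires $A_1$ and $D_i$ to be incident, i.e.\ sharing a face portion. To ensure this, I would choose $D_i$ as the slabs adjacent to $A_1$ in the ``cross'' decomposition. Any remaining corner pieces are absorbed by enlarging slabs sequentially: $D_1,D_2$ are the full slabs in direction $e_1$, $D_3,D_4$ are the slabs in direction $e_2$ inside the remaining box, and so on. Each such slab touches $A_1$ along a face, so the union is an incident pair.

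One needs sides at least $1$. If a slab is thinner than $1$, I would instead merge it into a neighbor, or handle this by noting that the statement permits a reorganization. This degenerate side-length issue is what I expect to be the main technical obstacle, and I would resolve it by allowing the cuts to be redistributed so that every piece has side at least $1$, which is possible since $A_1$ and $A_2$ have sides at least $1$.

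Then, by linearity of covariance,
$$\cov(S_{A_1}S_{A_2},g(S_B))=\cov(S_{A_1}^2,g)+\tfrac12\sum_{i=1}^k\bigl[\cov(S_{A_1\cup D_i}^2,g)-\cov(S_{A_1}^2,g)-\cov(S_{D_i}^2,g)\bigr].$$
Each set $E\in\{A_1,D_i,A_1\cup D_i\}$ is contained in $A_2$, so $\diam(E)\le\diam(A_2)$ and $d(E,B)\ge d(A_2,B)$. Hence the separation hypothesis holds for $E$, because the required threshold $\gamma_3[\log(\diam(E)\vee\diam(B))\vee1]$ is monotone in $\diam(E)$. Moreover, $U({\bm\g},E,B)\le U({\bm\g},A_2,B)$, since $U$ increases in the diameter and decreases in the distance. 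Counting terms gives $1+\tfrac32 k\le 1+3d\le 1.5(2d+1)$ copies of $U({\bm\g},A_2,B)$, which yields \eqref{mixing-score-a2}. Part (b) follows identically, since $U'$ has the same monotonicity in $\diam(E)$ and $d(E,B)$.
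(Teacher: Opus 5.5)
Your argument is essentially the paper's: split $A_2\setminus A_1$ into at most $2d$ boxes incident to $A_1$, and keep the term $S_{A_1}^2$ as it is. Polarize each cross term as $\tfrac12(S_{A_1\cup D_i}^2-S_{A_1}^2-S_{D_i}^2)$. Then use $\diam(E)\le\diam(A_2)$ and $d(E,B)\ge d(A_2,B)$ to bound every covariance by $U({\bm\g},A_2,B)$, which gives $(1+3d)U\le 1.5(2d+1)U$. Your sequential slab construction makes the incidence of each slab with $A_1$ explicit, where the paper leaves it implicit.

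The one step that does not work as you describe is the fix for thin slabs. Write $A_1=\prod_j(a_j,b_j)\subseteq A_2=\prod_j(c_j,e_j)$ and suppose $0<a_1-c_1<1$. Take any box $R=\prod_j I_j\subseteq A_2\setminus A_1$ containing a point $x$ with $x_1\in(c_1,a_1)$ and $x_j\in(a_j,b_j)$ for all $j\ge2$.
\begin{itemize}
\item For $j\ge2$ each factor $I_j\cap(a_j,b_j)$ is nonempty, so $R\cap A_1=\emptyset$ forces $I_1\cap(a_1,b_1)=\emptyset$.
\item Hence $I_1\subseteq(c_1,a_1]$, and $R$ has a side shorter than $1$.
\end{itemize}
So no redistribution of the cuts yields a partition of $A_2\setminus A_1$ into members of ${\cal H}_1$. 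The $j=2$ mixing hypothesis then does not cover $S_R^2$. You also cannot merge $R$ into $A_1$, because $A_1$ is fixed by the statement.

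The paper's proof passes over this point silently. It is harmless where the lemma is actually used, namely $A_1=\mathbb{C}_i$ and $A_2=N_{i,\lambda,\rho}$ in the proof of Theorem~\ref{mainthm1}. There each gap between parallel faces of $A_1$ and $A_2$ is either $0$ or at least $\min\{\rho,\lambda^{1/d}/\lfloor\lambda^{1/d}\rfloor\}\ge1$. In that case your slabs all have sides at least $1$, and your argument goes through. So instead of claiming a redistribution, add (or check in the application) the hypothesis that every such gap is either $0$ or at least $1$.
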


\noindent{\it Proof.}  We will prove (a) only, as the proof of (b) is nearly a word-for-word repetition. We write $A_{{2,0}}=A_1$, partition $A_2 \setminus A_1$ into at most $2d$ hyperrectangles  incident to $A_1$, and write them as $A_{{2,i}}$, $1\le i\le {C\le }2d$. Since $S_{A_2}=\sum_{i=0}^{{C}}S_{A_{{2,i}}}$, we can rewrite the left-hand side of \eqref{mixing-score-a2} as
\begin{align}
	&\mathbb{E}\(S_{A_1}S_{A_2}g(S_B)\)-\mathbb{E}\(S_{A_1}S_{A_2}\)\mathbb{E}\(g(S_B)\)\nonumber\\
	&=\sum_{i=0}^{{C}}\left\{\mathbb{E}\(S_{A_1}S_{A_{{2,i}}}g(S_B)\)-\mathbb{E}\(S_{A_1}S_{A_{{2,i}}}\)\mathbb{E}\(g(S_B)\)\right\}.\label{mixing-score-a3}
\end{align}
We write $S_{A_1}S_{A_{{2,i}}}$ as a sum of squares so that we may apply mixing \eqref{mixing-score} for the set-function class corresponding to $j = 2$. Writing $S_{A_1}S_{A_{{2,i}}}=0.5\{S_{A_1\cup A_{{2,i}}}^2-S_{A_1}^2-S_{A_{{2,i}}}^2\}$, $1\le i\le {C}$, we have
\begin{align}
	&\mathbb{E}(S_{A_1}S_{A_{{2,i}}}g(S_B))-\mathbb{E}(S_{A_1}S_{A_{{2,i}}})\mathbb{E}\(g(S_B)\)\nonumber\\
	&=0.5\left\{\left[\mathbb{E}\(S_{A_1\cup A_{{2,i}}}^2g(S_B)\)-\mathbb{E}(S_{A_1\cup A_{{2,i}}}^2)\mathbb{E}\(g(S_B)\)\right]\right.\nonumber\\
	&\ \ \ -\left[\mathbb{E}\(S_{A_1}^2g(S_B)\)-\mathbb{E}\(S_{A_1}^2\)\mathbb{E}\(g(S_B)\)\right]\nonumber\\
	&\ \ \ -\left.\left[\mathbb{E}\(S_{A_{{2,i}}}^2g(S_B)\)-\mathbb{E}\(S_{A_{{2,i}}}^2\)\mathbb{E}\(g(S_B)\)\right]\right\}.\label{mixing-score-a3-1}
\end{align}
Applying \eqref{mixing-score} when $ j = 2$ for $A=A_1,\ A_{{2,i}}$ and $A_1\cup A_{{2,i}}$ respectively, we have
\begin{align}
	\left|\mathbb{E}\(S_{A_1}^2g(S_B)\)-\mathbb{E}\(S_{A_1}^2\)\mathbb{E}\(g(S_B)\)\right|&\le U({\bm\g},A_1,B),\label{mixing-score-a4} \\
	\left|\mathbb{E}\(S_{A_{{2,i}}}^2g(S_B)\)-\mathbb{E}\(S_{A_{{2,i}}}^2\)\mathbb{E}\(g(S_B)\)\right|&\le U({\bm\g},A_{{2,i}},B), \label{mixing-score-a5} \\
	\left|\mathbb{E}\(S_{A_1\cup A_{{2,i}}}^2g(S_B)\)-\mathbb{E}\(S_{A_1\cup A_{{2,i}}}^2\)\mathbb{E}\(g(S_B)\)\right|&\le U({\bm\g},A_1\cup A_{{2,i}},B). \label{mixing-score-a6}
\end{align}
The diameters of $A_1$, $A_{{2,i}}$, $A_1\cup A_{{2,i}}$, $1\le i\le  C\le 2d$, are all bounded above by $\diam(A_2)$, and their distances to $B$ are all bounded below by $d(A_2,B)$.  Hence  we can combine \eqref{mixing-score-a3}-\eqref{mixing-score-a6} to obtain  \eqref{mixing-score-a2}. \qed

Mixing properties of centered sums of scores are equivalent to those of the uncentered sums,  up to a change in the exponent $\gamma_0$ by $d$.
\begin{lma}\label{mixing-uncentered}
(mixing for centered and uncentered score sums)
 If $\ubar{p}_0=p_1p_2/(p_1+p_2-1)\ge 1$, then the mixing properties \eqref{mixing-score} and \eqref{mixing-score'} in Definition~\ref{WSM} are equivalent, up to increasing the exponent $\gamma_0$ by $d$, to the corresponding properties obtained by replacing $S_A$ and $S_B$ with $H_{\lambda,A}$ and $H_{\lambda,B}$, respectively. More precisely, if the centered version holds with parameter vector $\bm\gamma=\{\gamma_i\}_{i=0}^{3}$, then the uncentered version holds with  parameter vector $\bm\gamma'=\{\gamma_i'\}_{i=0}^{3}$ with  $\gamma_0'=\gamma_0+d$ and $\gamma_i'=\gamma_i$ for $i=2,3$. Conversely, if the uncentered version holds with parameter vector $\bm\gamma'$, then the centered version holds with  parameter vector $\bm\gamma$ satisfying the same relations, i.e. $\gamma_0=\gamma_0'+d$ and $\gamma_i=\gamma_i'$ for $i=2,3$.
\end{lma}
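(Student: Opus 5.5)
The plan is to relate centered and uncentered covariances by direct algebra. Write $S_A=H_A-m_A$ with $m_A:=\E H_{\lambda,A}$, and similarly for $B$. Lemma~\ref{moments-XB} gives $|m_A|\le \|H_{\lambda,A}\|_{\ubar p_0}\le C\Vol(A)$ for $A\in{\cal H}_2$. The point is that shifting the argument of a test function by a constant keeps it in the test class. If $g\in\cF_b$, then $\tilde g(x):=g(x-m_B)$ also lies in $\cF_b$. Moreover $g(S_B)=\tilde g(H_{\lambda,B})$, so covariances with $g(S_B)$ are covariances with $\tilde g(H_{\lambda,B})$. When $g$ is the identity, $g(S_B)$ and $H_{\lambda,B}$ differ only by a constant, so the covariances coincide. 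Hence the $B$-side centering costs nothing, and neither $\diam(B)$ nor the separation condition changes.

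On the $A$ side, for $j=1$ we have $\cov(S_A,g(S_B))=\cov(H_{\lambda,A},\tilde g(H_{\lambda,B}))$ exactly. For $j=2$ we expand
\[
S_A^2=H_{\lambda,A}^2-2m_AH_{\lambda,A}+m_A^2,
\]
and the constant term drops out of the covariance, leaving
\[
\cov(S_A^2,g(S_B))=\cov(H_{\lambda,A}^2,\tilde g(H_{\lambda,B}))-2m_A\cov(H_{\lambda,A},\tilde g(H_{\lambda,B})).
\]
Assuming the uncentered mixing bound with $\gamma_0'$, the right side is at most $(1+2C\Vol(A))\,U(\bm\gamma',A,B)$. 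For a hyperrectangle or a union of two incident hyperrectangles with sides at least $1$, we have $\Vol(A)\le c_d(\diam(A)^d\vee1)$. Therefore $(1+2C\Vol(A))(\diam(A)^{\gamma_0'}\vee1)\le C'(\diam(A)^{\gamma_0'+d}\vee1)$.

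This factor enters only through $A$, but $\gamma_0$ is used symmetrically for $A$ and $B$ in $U$. We therefore just bound $(\diam(B)^{\gamma_0'}\vee1)\le(\diam(B)^{\gamma_0'+d}\vee1)$ and enlarge $\gamma_1$ to $C'\gamma_1'$. This gives the centered property with $\gamma_0=\gamma_0'+d$ and unchanged $\gamma_2,\gamma_3$. The converse direction is identical with the roles swapped: $H=S+m_A$, so $H^2=S^2+2m_AS+m_A^2$. The polynomial case \eqref{mixing-score'} is word for word the same, since only the prefactor changes.

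The only step needing care is the bound on $|m_A|$. Lemma~\ref{moments-XB} requires $\ubar p_0\ge1$ and $A\in{\cal H}_2$, which both hold here, and its constant is independent of $\lambda$. We must also check that the shifted $\tilde g$ stays in ${\cal F}_j$ for both function classes, which the argument above confirms.
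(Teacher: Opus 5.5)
Your proposal is correct and follows essentially the same route as the paper. The paper likewise shifts the bounded test function by $\E H_{\lambda,B}$ so that it stays in $\cF_b$, treats $j=1$ exactly, and for $j=2$ expands the square and absorbs the cross term via $|\E H_{\lambda,A}|\le C\Vol(A)\le C'(\diam(A)^d\vee 1)$ from Lemma~\ref{moments-XB}, raising $\gamma_0$ by $d$; your explicit handling of the $B$-factor under the symmetric exponent and of the converse direction just fills in details the paper leaves implicit.
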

\noindent{\it Proof.} We only prove the assertion for exponential mixing  \eqref{mixing-score};  
the proof for  polynomial mixing \eqref{mixing-score'} is identical, with
$U$ replaced by $U'$. We first show that the centered mixing property follows from the corresponding uncentered one. The converse follows by the same argument, and we omit the details.

Assume that the uncentered score sums satisfy \eqref{mixing-score} with parameter
${\bm\gamma}'=\{\gamma_i'\}_{i=0}^{3}$. For $j=1$ and $g(x)=x$, the statement holds trivially, since centering does not change the covariance.
For $j=1$ and $g\in\mathcal F_b$, define
$g_B(t):=g(t-\mathbb{E}H_{\lambda,B}),$ for all $t\in \mathbb{R}$ and $B\in \mathcal{B}\(\mathbb{R}^d\)$.
Then $g_B\in\mathcal F_b$ and $g_B(H_{\lambda,B})=g(S_B)$. Hence
\begin{align*}
	\left|\mathbb E(S_Ag(S_B))-\mathbb E(S_A)\mathbb E(g(S_B))\right|&=\left|\cov(H_{\lambda,A},g_B(H_{\lambda,B}))\right|\le U({\bm\gamma}',A,B).
\end{align*}

For $j=2$ and $g\in\mathcal F_b$, from \eqref{momentXB-01}, we have
\begin{align*}
	&\quad \left|\mathbb E(S_A^2g(S_B))-\mathbb E(S_A^2)\mathbb E(g(S_B))\right| \nonumber\\
	&\quad=\left|\cov\left((H_{\lambda,A}-\mathbb{E}H_{\lambda,A})^2,g_B(H_{\lambda,B})\right)\right| \nonumber\\
	&\quad \le\left|\cov(H_{\lambda,A}^2,g_B(H_{\lambda,B}))\right|+2|\mathbb{E}H_{\lambda,A}|\left|\cov(H_{\lambda,A},g_B(H_{\lambda,B}))\right| \nonumber\\
	&\quad \le
	U({\bm\gamma}',A,B)+C(\diam(A)^d\vee 1)U({\bm\gamma}',A,B)  \nonumber\\ 
&\quad
 \le (C+1)(\diam(A)^d\vee 1)U({\bm\gamma}',A,B),
\end{align*}
for some positive constant $C$. Thus the centered score sums satisfy \eqref{mixing-score} with the same $\gamma_2,\gamma_3$, and with $\gamma_0$ replaced by $\gamma_0'+d$. This completes the proof. \qed

Now we may prove Theorem~\ref{mainthm1}.

\noindent{\it Proof of Theorem~\ref{mainthm1}.} To facilitate the proof, we replace the Euclidean metric on $\R^d$ with the  maximum distance metric
$$ d_\infty(x,y)=\max_{1\le i\le d} |x_i-y_i|,  \quad x,y\in\R^d,
$$ 
 so that balls defined in terms of $d_\infty$ are cubes. This change of metric is localized to this proof and elsewhere we keep the Euclidean metric on $\R^d$.
For $\rho\in(0,\infty)$ and $A\in{\cal B}(\R^d)$, let $B_\infty(A,\rho)=\{y:\ d_\infty(x,y)<\rho\mbox{ for some }x\in A\}$.

\noindent (a) 
Without loss, we assume $\lambda_0\ge 1$ and $\sigma_{\lambda}\ge 2$ for all $\lambda>\lambda_0$. For $\lambda>\lambda_0$, we subdivide $\Gamma_{\lambda}$ into a maximal collection of disjoint, open, congruent cubes, each with a volume greater than $1$. The boundaries of such cubes have zero measure and do not affect the distribution of the sums, so we ignore them throughout the blocking argument.  Let ${\cal C}:=\{\mathbb{C}_1,\dots,\mathbb{C}_{k_{\lambda}}\}$ be an  enumeration  of these cubes, where $\mathbb{C}_i$ are cubes with edge length $\lambda^{1/d}/\lfloor\lambda^{1/d}\rfloor$ for all $1\le i\le k_{\lambda}$. This partition  ensures that the volume of each  cube in the partition is in $[1,2^d)$ for all admissible $\lambda$. Then $k_{\lambda}=\Omega(\lambda)$. For a given $\rho\in (0,\infty)$, let $N_{i,\lambda, \rho}=B_\infty(\mathbb{C}_i,\rho )\cap\Gamma_{\lambda}$ and $N_{i,\lambda, 2\rho}=B_\infty(\mathbb{C}_i,2\rho)\cap\Gamma_{\lambda}$. We have $N_{i,\lambda, \rho}\subset B_\infty(\mathbb{C}_i,\rho)$ and $N_{i,\lambda, 2\rho}\subset B_\infty(\mathbb{C}_i,2\rho)$, and hence  the volumes of $N_{i,\lambda,\rho}$ and $N_{i,\lambda,2\rho}$
are bounded by $O(\rho^d)$,  $\rho$  large, for all $1\le i\le k_{\lambda}$. Let 
$$ S_{i,\lambda}=S_{\mathbb{C}_i}/\sigma_{\lambda}, \quad S_{i,\lambda,\rho}=S_{N_{i,\lambda,\rho }}/\sigma_{\lambda}, \quad S_{i,\lambda,2\rho }=S_{N_{i,\lambda,2\rho }}/\sigma_{\lambda}.$$ 

We  use Stein's method for normal approximation and modify the argument in the proof of \cite[Theorem~2.1]{CX23} to establish~\eqref{mainthm1-01}. 
To this end, recall the Stein equation for normal approximation from \citet[p.~15]{CGS11}:
\begin{equation}\label{steineq1}
	f'(w)-wf(w)=h(w)-Nh,
\end{equation} 
where $Nh:=\mean h(Z)$. The solution of \eqref{steineq1} is 
$$
f_h(w)=\mathrm{e}^{w^2/2}\int^w_{-\infty}\mathrm{e}^{-t^2/2}(h(t)-Nh)\mathrm{d}t=-\mathrm{e}^{w^2/2}\int_{w}^{\infty}\mathrm{e}^{-t^2/2}(h(t)-Nh)\mathrm{d}t,
$$ and it satisfies the following properties~\cite[p.~16]{CGS11}: 
$$\|f_h\|_\infty\le 2,~\|f_h'\|_\infty\le \sqrt{\frac{2}{\pi}},~\|f_h''\|_\infty\le 2.$$ 
The Stein equation \eqref{steineq1} transforms the estimate of $d_W(Z_{\lambda},Z)$, with $Z_{\lambda}=(H_{\lambda} - \E H_{\lambda})/\sqrt{\Var H_{\lambda}}$,  into the study of $f'(Z_{\lambda})-Z_{\lambda}f(Z_{\lambda})$:
\begin{align}\label{Stein}
	d_W(Z_{\lambda},Z)&=\sup_{h\in \cF_{\rm Lip}}|\mathbb{E}(h(Z_{\lambda})-Nh)| \le\sup_{f\in \cF}\left|\mathbb{E}\(f'(Z_{\lambda})-Z_{\lambda}f(Z_{\lambda})\)\right|,
\end{align} 
where $\cF:=\left\{f: ~\mathbb{R}\rightarrow\mathbb{R}, \|f\|_\infty\le 2,~\|f'\|_\infty\le \sqrt{2/\pi},~\|f''\|_\infty\le 2\right\}$. 

The first term in \eqref{Stein} satisfies  
\begin{align}
	&\mathbb{E}f'(Z_{\lambda})\nonumber\\
	&= \mathbb{E}\(Z_{\lambda}^2\)\mathbb{E}f'\(Z_{\lambda}\)\nonumber\\
	&=\sum_{i=1}^{k_{\lambda}}\mathbb{E}\(S_{i,\lambda}S_{i,\lambda,\rho}\)\mathbb{E}f'\(Z_{\lambda}\)+ \mathbb{E}\(Z_{\lambda}^2-\sum_{i=1}^{k_{\lambda}}S_{i,\lambda}S_{i,\lambda,\rho}\)\mathbb{E}f'(Z_{\lambda})\nonumber\\
	&=\sum_{i=1}^{k_{\lambda}}\mathbb{E}\(S_{i,\lambda}S_{i,\lambda,\rho}\)\left[\mathbb{E}f'(Z_{\lambda})-\mathbb{E}f'(Z_{\lambda}-S_{i,\lambda,2\rho})+\mathbb{E}f'(Z_{\lambda}-S_{i,\lambda,2\rho})\right]+\epsilon_1\nonumber\\
	&=\sum_{i=1}^{k_{\lambda}}\mathbb{E}\(S_{i,\lambda}S_{i,\lambda,\rho}\)\mathbb{E}\int_{0}^{S_{i,\lambda,2\rho}}f''(Z_{\lambda}-x)\mathrm{d}x+ \sum_{i=1}^{k_{\lambda}}\mathbb{E}\(S_{i,\lambda}S_{i,\lambda,\rho}\)\mathbb{E}f'\(Z_{\lambda}-S_{i,\lambda,2\rho}\)+\epsilon_1 \nonumber
	\\&=\sum_{i=1}^{k_{\lambda}}\mathbb{E}\(S_{i,\lambda}S_{i,\lambda,\rho}f'(Z_{\lambda}-S_{i,\lambda,2\rho})\)+\epsilon_1+\epsilon_2+\epsilon_3,\label{Stein2}
\end{align} 
where, keeping in mind that $Z_\lambda = \sum_{i=1}^{k_{\lambda}} S_{i,\lambda}$, we have 
\begin{align*}
	\epsilon_1=& \mathbb{E}\(Z_{\lambda}^2-\sum_{i=1}^{k_{\lambda}}S_{i,\lambda}S_{i,\lambda,\rho}\)\mathbb{E}f'(Z_{\lambda})=\sum_{i=1}^{k_{\lambda}}\mathbb{E}\(S_{i,\lambda}\(Z_{\lambda}-S_{i,\lambda,\rho}\)\)\mathbb{E}f'(Z_{\lambda});\\
	\epsilon_2=& \sum_{i=1}^{k_{\lambda}}\mathbb{E}\(S_{i,\lambda}S_{i,\lambda,\rho}\)\mathbb{E}\(\int_{0}^{S_{i,\lambda,2\rho}}f''(Z_{\lambda}-x)\mathrm{d}x\);\\
	\epsilon_3=& - \sum_{i=1}^{k_{\lambda}}\mathbb{E}\left[S_{i,\lambda}S_{i,\lambda,\rho}\(f'(Z_{\lambda}-S_{i,\lambda,2\rho})-\mathbb{E}f'(Z_{\lambda}-S_{i,\lambda,2\rho})\)\right].
\end{align*} 

The terms $\epsilon_1, \epsilon_2, \epsilon_3$ may be bounded by either using exponential mixing conditions to control score sums on sets separated by $\rho$ or by using moment bounds on score sums over regions with diameter proportional to $\rho$.  The first approach produces errors which are decreasing with increasing  $\rho$, whereas the second produces errors increasing with $\rho$.  A good balance is achieved by taking  $\rho$ to be proportional to $\log \lambda$. The details go as follows.

For $\epsilon_1$,  using the bound  $\|f'\|_\infty\le \sqrt{2/\pi}$ and exponential mixing condition \eqref{mixing-score} with $j=1$, $g(x)=x,~x\in \real$, $A=\mathbb{C}_i,$ $B=\Gamma_{\lambda}\backslash N_{i,\lambda,\rho} ,$  $1\le i\le k_{\lambda}$, we have 
\begin{align}
	|\epsilon_1|&\le\sqrt{\frac{2}{\pi}} k_{\lambda}\sigma_{\lambda}^{-2}\g_1\(\(2\sqrt{d}\)^{\g_0}\vee 1\)\(\(\sqrt{d}\lambda^{1/d}\)^{\g_0}\vee 1\)\mathrm{e}^{-\g_2 \rho}\nonumber\\
	&= O\(\sigma_{\lambda}^{-2}\lambda^{\g_0/d+1}\mathrm{e}^{-\g_2 \rho}\)= O\(\lambda^{-1}\)\label{e1}
\end{align} 
for  $\rho >C_1\log \lambda$, where $C_1$ is a positive constant. 

For $\epsilon_2$, using $\|f''\|_\infty\le 2$, Lemma~\ref{moments-XB} and Lyapunov's inequality, we have
\begin{align}
	|\epsilon_2|\le& 2  \sum_{i=1}^{k_{\lambda}}\mathbb{E}\(\left|S_{i,\lambda}S_{i,\lambda,\rho}\right|\)\mathbb{E}\(\left|S_{i,\lambda,2\rho}\right|\)\nonumber \\
	\le &\sum_{i=1}^{k_{\lambda}}\mathbb{E}\(S_{i,\lambda}^2+S_{i,\lambda,\rho}^2\)\mathbb{E}\(\left|S_{i,\lambda,2\rho}\right|\)\nonumber\\
	\le& \sum_{i=1}^{k_{\lambda}}\(\|S_{i,\lambda}\|_3^2+\|S_{i,\lambda,\rho}\|_3^2\)\|S_{i,\lambda,2\rho}\|_3\nonumber\\
	=& O\(\sigma_{\lambda}^{-3}k_{\lambda}\rho^{3d}\)= O\(\lambda^{-1.5\nu+1}\rho^{3d}\).\label{e2}
\end{align}

To bound $\epsilon_3$, as $\|f'\|_\infty\le \sqrt{2/\pi}$, we apply Lemma~\ref{mixing-score-a1}~(a) with $g(x)=f'(x/\sigma_{\lambda})$ for~$x\in \real$, $A_1=\mathbb{C}_i,~A_2=N_{i,\lambda,\rho},~B=\Gamma_{\lambda}\backslash N_{i,\lambda,2\rho}$, $1\le i\le k_{\lambda}$, to obtain
\begin{align}
	|\epsilon_3|\le &  \sum_{i=1}^{k_{\lambda}}\left|\mathbb{E}\(S_{i,\lambda}S_{i,\lambda,\rho}\(f'(Z_{\lambda}-S_{i,\lambda,2\rho})-\mathbb{E}f'(Z_{\lambda}-S_{i,\lambda,2\rho})\)\)\right|\nonumber\\ 
	\le & 1.5k_{\lambda}\sigma_{\lambda}^{-2}\g_1(2d+1)\(\(\sqrt{d}\(2+2\rho\)\)^{\g_0}\vee 1\)\( \(\sqrt{d}\lambda^{1/d}\)^{\g_0}\vee 1\)\mathrm{e}^{-\g_2 \rho}\nonumber\\ 
	= &O\(\sigma_{\lambda}^{-2}\rho^{\g_0}\lambda^{\g_0/d+1}\mathrm{e}^{-\g_2 \rho}\) = O\(\lambda^{-1}\),\label{e3}
\end{align}
when $ \rho \ge C_2\log \lambda$ for some positive constant $C_2$, since  $\|g(x)\|_{\infty}\le\sqrt{2/\pi}\le1$, $\|g'(x)\|_{\infty}=\|f''(x/\sigma_{\lambda})\|_{\infty}/\sigma_{\lambda}\le 2/\sigma_{\lambda}\le 1$, and so $g\in {\cal F}_b\subset{\cal F}_1$.

For the second term in \eqref{Stein}, we have
\begin{align}
	\mathbb{E}\(Z_{\lambda}f(Z_{\lambda})\)=&\sum_{i=1}^{k_{\lambda}}\mathbb{E}\left[S_{i,\lambda}f(Z_{\lambda})\right]\nonumber
	\\=&\sum_{i=1}^{k_{\lambda}}\mathbb{E}\left[S_{i,\lambda}\(f(Z_{\lambda})-f(Z_{\lambda}-S_{i,\lambda,\rho})\)\right]\nonumber\\
	&+\sum_{i=1}^{k_{\lambda}}\mathbb{E}\left[S_{i,\lambda}\(f(Z_{\lambda}-S_{i,\lambda,\rho})-\mathbb{E}\(f(Z_{\lambda}-S_{i,\lambda,\rho})\)\)\right]\nonumber\\
	&+\sum_{i=1}^{k_{\lambda}}\mathbb{E}\(S_{i,\lambda}\)\mathbb{E}f\(Z_{\lambda}-S_{i,\lambda,\rho}\)\nonumber\\
	=&\sum_{i=1}^{k_{\lambda}}\mathbb{E}\(S_{i,\lambda}S_{i,\lambda,\rho}\int_{0}^1f'(Z_{\lambda}-uS_{i,\lambda,\rho})\mathrm{d}u\)+\epsilon_4+\epsilon_5,\label{Stein3}
\end{align} 
where 
\begin{align}
	\epsilon_4=&\sum_{i=1}^{k_{\lambda}}\left\{\mathbb{E}\left[S_{i,\lambda}f(Z_{\lambda}-S_{i,\lambda,\rho})\right]-\mathbb{E}\(S_{i,\lambda}\)\mathbb{E}\(f(Z_{\lambda}-S_{i,\lambda,\rho})\)\right\};\nonumber\\
	\epsilon_5=& \sum_{i=1}^{k_{\lambda}}\mathbb{E}\(S_{i,\lambda}\)\mathbb{E}\(f(Z_{\lambda}-S_{i,\lambda,\rho})\)=0.\label{e5}
\end{align} 

To bound $\epsilon_4$, since $\|f\|_\infty\le 2$, an application of  exponential mixing \eqref{mixing-score} with $g(x)=0.5f(x/\sigma_{\lambda})$ for $x\in \real$, $j=1$, $A=\mathbb{C}_i,$ $B=\Gamma_{\lambda}\backslash N_{i,\lambda,\rho}$, $1\le i\le k_{\lambda}$,  yields  
\begin{align}
	|\epsilon_4|\le & 2k_{\lambda}\sigma_{\lambda}^{-1}\g_1\((2\sqrt{d})^{\g_0}\vee 1\)\({\(\sqrt{d}\lambda^{1/d}\)}^{\g_0}\vee 1\)\mathrm{e}^{-\g_2 \rho}\nonumber\\
	= & O\(\sigma_{\lambda}^{-1}\lambda^{\g_0/d+1}\mathrm{e}^{-\g_2  \rho}\)= O\(\lambda^{-1}\), \label{e4}
\end{align}
for $ \rho >C_{3}\log \lambda$ with a sufficiently large $C_3$, since $\|g(x)\|_{\infty}\le 1$, $$\|g'(x)\|_{\infty}=\|0.5f'(x/\sigma_{\lambda})\|_{\infty}/\sigma_{\lambda}\le \sqrt{1/2\pi}/\sigma_{\lambda}\le1,$$ so $g\in {\cal F}_b\subset {\cal F}_1$.

The remainder of the difference between \eqref{Stein2} and \eqref{Stein3} is
 \begin{align}
	|\epsilon_6|:=&\left|\sum_{i=1}^{k_{\lambda}}\mathbb{E}\(S_{i,\lambda}S_{i,\lambda,\rho}f'(Z_{\lambda}-S_{i,\lambda,2\rho})\)-\sum_{i=1}^{k_{\lambda}}\mathbb{E}\(S_{i,\lambda}S_{i,\lambda,\rho}\int_{0}^1f'(Z_{\lambda}-uS_{i,\lambda,\rho})\mathrm{d}u\) \right|\nonumber\\ 
	= &\sum_{i=1}^{k_{\lambda}}\left|\mathbb{E}\(S_{i,\lambda}S_{i,\lambda,\rho}\int_{0}^1\int_{0}^{S_{i,\lambda,2\rho}-uS_{i,\lambda,\rho}}f''(Z_{\lambda}-S_{i,\lambda,2\rho}+v)\mathrm{d}v\mathrm{d}u\)\right|\nonumber\\
	\le
	&2\sum_{i=1}^{k_{\lambda}}\mathbb{E}\left[\left|S_{i,\lambda}S_{i,\lambda,\rho}\right|\(\left|S_{i,\lambda,2\rho}\right|+\left|S_{i,\lambda,\rho}\right|\)\right]\nonumber\\
	\le &\frac{2}{3}\sum_{i=1}^{k_{\lambda}}\mathbb{E}\(2\left|S_{i,\lambda}\right|^3+3\left|S_{i,\lambda,\rho}\right|^3+\left|S_{i,\lambda,2\rho}\right|^3\),\label{e6}
\end{align}
where the penultimate  inequality follows from the bound $\|f''\|_\infty\le 2$ and the last inequality is from the inequality of arithmetic and geometric means, that is, $a_1a_2a_3\le (a_1^3+a_2^3+a_3^3)/3$ for nonnegative $a_1,a_2,a_3$, applied to $\left|S_{i,\lambda}S_{i,\lambda,\rho}S_{i,\lambda,2\rho}\right|$ and $\left|S_{i,\lambda}\right|S_{i,\lambda,\rho}^2$. Applying Lemma~\ref{moments-XB} in \eqref{e6}, we have
\begin{equation}\label{e7}
	|\epsilon_6| = O\(k_{\lambda}\sigma_{\lambda}^{-3} \rho^{3d}\)=O\(\lambda^{-1.5\nu+1} \rho^{3d}\).
\end{equation}

Combining \eqref{Stein} - \eqref{e7}, we obtain
$$ d_W(Z_{\lambda},Z) = O(\lambda^{-1.5\nu+1} \rho^{3d})$$ 
for \qcon{} $ \rho>C_{4}\log \lambda$, where $C_{4}=\max\{C_{1},C_{2},C_{3}\}$, as claimed. 

\noindent (b) Setting $N_{i,\lambda,2 \rho}'=B_\infty(\mathbb{C}_i, (\g_3\vee 1)(1+2 \rho)) \cap \Gamma_{\lambda}$, we still have the volume of $N_{i,\lambda,2 \rho}'$ bounded above by $O( \rho^d)$. By repeating the steps of the proof for part~(a), with $N_{i,\lambda,2 \rho}$ replaced by $N_{i,\lambda,2 \rho}'$, we can see that 
\begin{equation}\label{Stein4}
	d_W(Z_{\lambda},Z) \le \sum_{i=1}^{6}\epsilon_i, 
\end{equation}
for the same $\epsilon_i$'s as in part (a). Let $\rho \ge C_5\log\lambda$ with a sufficiently large $C_5$. By adjusting the bounds in \eqref{e1}, \eqref{e3} and \eqref{e4} - replacing the bounds in exponential mixing condition \eqref{mixing-score} with those in \eqref{mixing-score'} - along with \eqref{e2}, \eqref{e5} and \eqref{e7}, we obtain
\begin{align*}
	|\epsilon_1|&= O\(\lambda^{\g_0/d+1-\nu} \rho^{-\g_2}\);\\
	|\epsilon_2|&= O\(\lambda^{-1.5\nu+1} \rho^{3d}\);\\
	|\epsilon_3|&= O\(\lambda^{\g_0/d+1-\nu} \rho^{\g_0-\g_2}\);\\
	|\epsilon_4|&= O\(\lambda^{\g_0/d+1-\nu/2} \rho^{-\g_2}\);\\
	\epsilon_5&=0;\\
	|\epsilon_6|&= O\(\lambda^{-1.5\nu+1} \rho^{3d}\).
\end{align*}
Combining the above bounds with \eqref{Stein4} and by taking 
$$ \rho=\Omega\left(\lambda^{\frac{\nu+\g_0/d}{3d+\g_2}\vee\frac{0.5\nu+\g_0/d}{3d+\g_2-\g_0}}\right), $$
we obtain the bound
\begin{equation*}
	d_W(Z_{\lambda},Z)= O\(\lambda^{-1.5\nu+1+\tau(\g_0,\g_2)}\)=o(1),
\end{equation*} 
given the assumptions, where, as in \eqref{taudef}, 
$$ 
\tau(\g_0,\g_2)=\frac{3}{2}\(\frac{2\nu d+2\g_0}{3d+\g_2}\vee\frac{\nu d+2\g_0}{3d+\g_2-\g_0}\).$$ 
This concludes the proof of Theorem~\ref{mainthm1}. \qed

\noindent{\it Proof of Corollary~\ref{maincorTV}. }
\newcounter{maincocc}
\setcounter{maincocc}{1}
\newcommand{\qmaincocc}[1]{\addtocounter{maincocc}{1}}
\newcounter{maincocca}
\newcounter{maincoccb}
\newcounter{maincoccc}
\newcounter{maincoccd}
\newcounter{maincocce}
\newcounter{maincoccf}
\newcounter{maincoccg}
\newcounter{maincocch}
\newcounter{maincocci}
\newcounter{maincoccj}
\newcounter{maincocck}
\newcounter{maincoccl}
\newcounter{maincoccm}
\newcounter{maincoccn}
\newcounter{maincocco}
We show that the conditions of Theorem~\ref{mainthm1} are fulfilled. The moment condition on the ground point process as well as the moment condition on the score are assumed;  thus it remains to verify  exponential  mixing \eqref{mixing-score}  of the score sums for part (a) and  polynomial  mixing \eqref{mixing-score'} for part (b).  We first prove part (a).

\noindent {\em Step (i).  Coupling}.  Let   $A, B\in {\cal B}(\mathbb{R}^{d} )$ be disjoint sets, not necessarily hyperrectangles, such that  $d(A,B)\ge  C_{\themaincocc}\setcounter{maincocca}{\themaincocc}\qmaincocc{} \log(\diam(A)\vee \diam(B)\vee C_{\themaincocc}\setcounter{maincoccb}{\themaincocc}\qmaincocc{})$ for some positive constants $C_{\themaincocca}$ and $C_{\themaincoccb}$. We put $r = d(A,B)/3$.  Recall that $\tilde{\hat{{\cal P}}}$ is an independent copy of $\hat{{\cal P}}$.
The total variation bound ~\eqref{mixing} characterizing EDD point processes  implies that we can construct a  {\it maximal coupling} $(\Psi_1,\Psi_2)$ and $(\Psi_3,\Psi_4)$ in terms of the total variation distance between 
$$ (\hat{{\cal P}}|_{B_r(A)}, \hat{{\cal P}}|_{B_r(B)})\mbox{ and  }(\hat{{\cal P}}|_{B_r(A)}, \tilde{\hat{{\cal P}}}|_{B_r(B)}).$$ 
More precisely, the marked point processes $(\Psi_1,\Psi_2)$ and $(\Psi_3,\Psi_4)$ are random elements defined on the same probability space, taking values in ${\cal N}_{B_r(A) \times  \mathbb{M}}\times {\cal N}_{B_r(B) \times  \mathbb{M}}$, equipped with the $\sigma$-algebra generated by the vague topology, such that 
\begin{align*}
(\Psi_1,\Psi_2)&\overset{d}{=}(\hat{{\cal P}}|_{B_r(A)}, \hat{{\cal P}}|_{B_r(B)}), \\
(\Psi_3,\Psi_4)&\overset{d}{=}(\hat{{\cal P}}|_{B_r(A)}, \tilde{\hat{{\cal P}}}|_{B_r(B)}),
\end{align*} 
and by ~\eqref{mixing} we have
\begin{equation}\label{maincor1.1}
	\mathbb{P}(E_1):=\mathbb{P}\((\Psi_1,\Psi_2)\neq 
	(\Psi_3,\Psi_4)\)\le C_{\themaincocc}\setcounter{maincoccc}{\themaincocc}\qmaincocc{}(\diam(A)^{C_{\themaincocc}\setcounter{maincoccd}{\themaincocc}\qmaincocc{}}\vee 1)(\diam(B)^{C_{\themaincoccd}}\vee 1) \mathrm{e}^{-C_{\themaincocc}\setcounter{maincocce}{\themaincocc}\qmaincocc{} d(A,B)} 
\end{equation}
for some positive constants $C_{\themaincoccc}$, $C_{\themaincocce}$, and a non-negative constant $C_{\themaincoccd}$. From the coupling $(\Psi_3,\Psi_4)$, we can construct two independent marked point processes $\Psi_5$ and $\Psi_6$, each with the same distribution as $\hat{{\cal P}}$ such that $\Psi_5|_{B_r(A)}=\Psi_3$, $\Psi_6|_{B_r(B)}=\Psi_4$. Let $\bar{\Psi}_5$ and $\bar{\Psi}_6$ denote their respective ground processes. Define
\begin{align*}
	H_{\lambda,A}'&:=\sum_{x\in \bar{\Psi}_5\cap A}\xi((x,M_{5,x}),\Psi_5,\Gamma_{\lambda}),\ S_A':=H_{\lambda,A}'-\mathbb{E}(H_{\lambda,A}'),\\
	H_{\lambda,B}'&:=\sum_{x\in \bar{\Psi}_6\cap B}\xi((x,M_{6,x}),\Psi_6,\Gamma_{\lambda}),\ S_B':=H_{\lambda,B}'-\mathbb{E}(H_{\lambda,B}').
\end{align*}

\noindent {\em Step (ii).
 Approximating score sums by sums of short-range scores}. Recall
$r = d(A,B)/3$. We use this value of $r$ to construct short-range versions of $\bar{S}_A'$ and $\bar{S}_B'$ given by
\begin{align}
	\bar{S}_A^{[r]} &:=\sum_{x\in \bar{\Psi}_5\cap A}\hat{\xi}^{[r]}((x,M_{5,x}),\Psi_5,\Gamma_{\lambda})-\mathbb{E}(H_{\lambda,A}'),\label{maincor2a1}\\
	\bar{S}_B^{[r]}&:=\sum_{x\in \bar{\Psi}_6\cap B}\hat{\xi}^{[r]}((x,M_{6,x}),\Psi_6,\Gamma_{\lambda})-\mathbb{E}(H_{\lambda,B}')\label{maincor2a2}.
\end{align}
By construction, we have 
\begin{equation*}
S_A'\overset{d}{=}S_A, \ S_B'\overset{d}{=}S_B, \ 
\bar{S}_A^{[r]} \in \sigma(\Psi_5|_{B_r(A)})=\sigma(\Psi_3), 
\bar{S}_B^{[r]} \in \sigma(\Psi_6|_{B_r(B)})=\sigma(\Psi_4),
\end{equation*} 
and 
$$ S_A'\indep S_B',\ \bar{S}_A^{[r]}\indep \bar{S}_B^{[r]},$$
where $\indep $ denotes independence.

Let $E_2 := \{S_A' \neq \bar{S}_A^{[r]}\}$ and $E_3 := \{S_B' \neq \bar{S}_B^{[r]}\}$. We compute 
\begin{align}
	\mathbb{P}(E_2) & \le \mathbb{E}\int_{x\in A}\bone_{[\xi((x,M_{5,x}),\Psi_5,\Gamma_{\lambda})\ne \hat{\xi}^{[r]}((x,M_{5,x}),\Psi_5),\Gamma_{\lambda})]}\bar{\Psi}_5(\mathrm{d}x)  \nonumber \\
	& =\int_{x\in A}\mathbb{P}_x(\xi((x,M_{5,x}),\Psi_5,\Gamma_{\lambda})\ne \hat{\xi}^{[r]}((x,M_{5,x}),\Psi_5,\Gamma_{\lambda}))\mu \mathrm{d}x\nonumber
	 \nonumber \\  
	& \le  \mu \varphi^{(Pr)}(r) \Vol(A),  \label{maincor1.2}
\end{align}
and
\begin{align}
	\mathbb{P}(E_3) 
	&  \le \mathbb{E}\int_{x\in B}\bone_{[\xi((x,M_{6,x}),\Psi_6,\Gamma_{\lambda})\ne \hat{\xi}^{[r]}((x,M_{6,x}),\Psi_6,\Gamma_{\lambda})]}\bar{\Psi}_6(\mathrm{d}x) \nonumber\\
	&=\int_{x\in B}\mathbb{P}_x(\xi((x,M_{6,x}),\Psi_6,\Gamma_{\lambda})\ne \hat{\xi}^{[r]}((x,M_{6,x}),\Psi_6,\Gamma_{\lambda}))\mu \mathrm{d}x\nonumber \\
	&\le  \mu\varphi^{(Pr)}(r) \Vol(B).  \label{maincor1.3}
\end{align}

\noindent {\em Step (iii). Using short-range score sums to show $S_A = S_A'$ on a high probability event. }
Let $E=(E_1\cup E_2\cup E_3)^c$. On the coupling space, $S_A$ and $S_B$ are realized on $\hP$, whose restrictions to $B_r(A)$ and $B_r(B)$ are $\Psi_1$ and $\Psi_2$, respectively. Since the localized estimators depend only on these restrictions, on $E_1^c$, the localized sums from the two coupled copies coincide. Combining~\eqref{localization}, (\ref{maincor1.1}), (\ref{maincor1.2}) and~(\ref{maincor1.3}), we find on the event $E$,
$$
 S_A=S_A'=\bar{S}_A^{[r]}, \ S_B=S_B'=\bar{S}_B^{[r]}.
$$ 
Recalling that $\varphi^{(Pr)}(\cdot)$ decays exponentially fast and recalling $r = d(A,B)/3$, 
 the probability of the complement satisfies 
$$
 \mathbb{P}(E^c)\le C_{\themaincocc}\setcounter{maincocch}{\themaincocc}\qmaincocc{}(\diam(A)^{C_{\themaincocc}\setcounter{maincocci}{\themaincocc}\qmaincocc{}}\vee 1)(\diam(B)^{C_{\themaincocci}}\vee 1)\mathrm{e}^{-C_{\themaincocc}\setcounter{maincoccj}{\themaincocc}\qmaincocc{} d(A,B)}
$$
for some positive constants $C_{\themaincocch}$, $C_{\themaincocci}$ and $C_{\themaincoccj}$. Here, $\bar{S}_A^{[r]}$ and $\bar{S}_B^{[r]}$ are only needed to construct a coupling between $(S_A,S_B)$ and $(S_A',S_B')$. 

\noindent {\em Step (iv).  Conclusion.}  Applying H\"{o}lder's inequality and Lemma~\ref{moments-XB}, we obtain for
all  $j \in \{1,2\}$ and $g\in {\cal F}_j$, $A\in  {\cal H}_j$, and all admissible $B\subset \Gamma_\lambda$ satisfying the separation condition in Definition~\ref{WSM},
\begin{align}
	&\left|\mathbb{E}(S_A^jg(S_B))-\mathbb{E}(S_A^j)\mathbb{E}\(g(S_B)\)\right|\nonumber\\
	&=\left|\mathbb{E}\((S_A')^jg(S_B')\)-\mathbb{E}\((S_A')^jg(S_B')\mathbf{1}_{E^c}\)+\mathbb{E}\(S_A^jg(S_B)\mathbf{1}_{E^c}\)-\mathbb{E}\((S_A')^j\)\mathbb{E}\(g(S_B')\)\right|\nonumber\\
	&\le\left|\mathbb{E}\(S_A^jg(S_B)\mathbf{1}_{E^c}\)\right|+\left|\mathbb{E}\((S_A')^jg(S_B')\mathbf{1}_{E^c}\)\right|\label{maincor1.4.1}\\
	&\le U({\bm\gamma},A,B)\label{maincor1.4.2}
\end{align}
for some $\bm\gamma$, which confirms that exponential mixing \eqref{mixing-score} holds. Hence, the result follows by direct application of Theorem~\ref{mainthm1} (a).

\noindent (b) This is a step-by-step repetition of the proof of (a), with the primary modification being the bounds in \eqref{maincor1.2} and \eqref{maincor1.3}, which, using $\varphi^{(Pr)}(r)\le Cr^{-\beta}$, now take the form
\begin{equation}\label{for-b-1}
	\mathbb{P}(E_2)\le C_{\themaincocc}\setcounter{maincocck}{\themaincocc}\qmaincocc{} \Vol(A)d(A,B)^{-\beta}, \  \ \mathbb{P}(E_3)\le  C_{\themaincocck}\Vol(B)d(A,B)^{-\beta}
\end{equation}
for some constant $C_{\themaincocck}\in (0,\infty)$. Combining (\ref{maincor1.1}) and (\ref{for-b-1}), and choosing a suitable constant $\g_3$ for \eqref{mixing-score'}, we obtain
$$\mathbb{P}(E^c)\le C_{\themaincocc}\setcounter{maincoccl}{\themaincocc}\qmaincocc{} (\Vol(A)\vee 1)(\Vol(B)\vee 1)d(A,B)^{-\beta}$$
for some constant $C_{\themaincoccl}\in (0,\infty)$.
 Recall that
$$\ubar{p}_1=1-\frac{2}{\ubar{p}_0}=1-\frac{2(p_1+ p_2-1)}{p_1 p_2}.$$ 
Then, applying (\ref{maincor1.4.1}), H\"{o}lder's inequality, and Lemma~\ref{moments-XB}, we obtain the following bounds.

When $j=1$ and $g \in \cF_1$, i.e.,
$g(x)=x$ or $g\in \cF_b$, we have 
\begin{align*}
	&\left|\mathbb{E}\(S_Ag(S_B)\)-\mathbb{E}\(S_A\)\mathbb{E}\(g(S_B)\)\right|\nonumber\\
	&\le\left|\mathbb{E}\(S_Ag(S_B)\mathbf{1}_{E^c}\)\right|+\left|\mathbb{E}\(S_A'g(S_B')\mathbf{1}_{E^c}\)\right|\nonumber\\
	&\le2\|S_A\|_{p_1p_2/(p_1+p_2-1)}\|g(S_B)\|_{p_1p_2/(p_1+p_2-1)}\mathbb{P}(E^c)^{\ubar{p}_1}\nonumber\\
	&\le  C_{\themaincocc}\setcounter{maincoccm}{\themaincocc}\qmaincocc{} (\diam(A)^{d(1+\ubar{p}_1)}\vee 1)(\diam(B)^{d(1+\ubar{p}_1)}\vee 1)d(A,B)^{-\beta \ubar{p}_1}\ignore{\label{maincor1.7}}
\end{align*}
for some constant $C_{\themaincoccm}\in(0,\infty)$.

When $j=2$ and $g\in \cF_b$, we similarly obtain 
\begin{align*}
	&\left|\mathbb{E}\(S_A^2g(S_B)\)-\mathbb{E}\(S_A^2\)\mathbb{E}\(g(S_B)\)\right|\nonumber\\
	&\le\left|\mathbb{E}\(S_A^2\mathbf{1}_{E^c}\)\right|+\left|\mathbb{E}\((S_A')^2\mathbf{1}_{E^c}\)\right|\nonumber\\
	&\le 2\|S_A\|_{p_1p_2/(p_1+p_2-1)}^2\mathbb{P}(E^c)^{\ubar{p}_1}\nonumber\\
	&\le  C_{\themaincocc}\setcounter{maincoccn}{\themaincocc}\qmaincocc{} (\diam(A)^{d(2+\ubar{p}_1)}\vee 1)(\diam(B)^{d(2+\ubar{p}_1)}\vee 1)d(A,B)^{-\beta \ubar{p}_1}\ignore{\label{maincor1.8}}
\end{align*}
for some constant $C_{\themaincoccn}\in(0,\infty)$. Thus, polynomial mixing holds with  parameters $\g_0=d(2+\ubar{p}_1),\ \g_2=\beta \ubar{p}_1$, and the proof is complete by applying Theorem~\ref{mainthm1}~(b). \qed

Before going into the proof of Corollary~\ref{maincorLp}, we state a simple lemma.

\begin{lma}\label{lma.k4} 
	If $X$ is a random variable such that $\|X\|_{m_2}$ exists for some positive constant $m_2$, then for all $m_1 < m_3 < m_2$,
	$$\|X\|_{m_3}\le \|X\|_{m_1}^{m_1(m_2-m_3)/(m_3(m_2-m_1))}\|X\|_{m_2}^{m_2(m_3-m_1)/(m_3(m_2-m_1))}.$$
\end{lma}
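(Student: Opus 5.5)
This is the standard interpolation (log-convexity) inequality for $L^p$ norms, and I would derive it from a single application of H\"older's inequality. Write $m_3$ as a convex combination of $m_1$ and $m_2$ at the level of exponents:
$$m_3=\theta m_1+(1-\theta)m_2, \qquad \theta:=\frac{m_2-m_3}{m_2-m_1}\in(0,1),\qquad 1-\theta=\frac{m_3-m_1}{m_2-m_1}.$$
Then split $|X|^{m_3}=|X|^{\theta m_1}\,|X|^{(1-\theta)m_2}$.

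Apply H\"older's inequality with conjugate exponents $1/\theta$ and $1/(1-\theta)$:
$$\mathbb{E}|X|^{m_3}\le \bigl(\mathbb{E}|X|^{m_1}\bigr)^{\theta}\bigl(\mathbb{E}|X|^{m_2}\bigr)^{1-\theta}=\|X\|_{m_1}^{m_1\theta}\,\|X\|_{m_2}^{m_2(1-\theta)}.$$
Taking $m_3$-th roots gives exponents $m_1\theta/m_3=m_1(m_2-m_3)/(m_3(m_2-m_1))$ and $m_2(1-\theta)/m_3=m_2(m_3-m_1)/(m_3(m_2-m_1))$. These are exactly the exponents in the statement.

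For completeness, I would also check that all quantities are finite. Since $\|X\|_{m_2}<\infty$ and $m_1<m_3<m_2$, Lyapunov's inequality gives finiteness of $\|X\|_{m_1}$ and $\|X\|_{m_3}$. If $m_1<1$ or $m_3<1$, the quantities $\|\cdot\|$ are not norms, but the H\"older step above only uses $\mathbb{E}|X|^{m}$ and remains valid for all positive exponents.

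None of the steps is a real obstacle. The only care needed is the bookkeeping that matches the H\"older exponents to the stated powers, and noting that positivity of $m_1$ is implicit in the statement.
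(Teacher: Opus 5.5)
Your proposal is correct and follows the paper's proof: you split $|X|^{m_3}$ into $|X|^{m_1(m_2-m_3)/(m_2-m_1)}$ times $|X|^{m_2(m_3-m_1)/(m_2-m_1)}$, apply H\"older with the conjugate pair $(m_2-m_1)/(m_2-m_3)$ and $(m_2-m_1)/(m_3-m_1)$, and take $m_3$-th roots. Your bookkeeping of the exponents matches the statement exactly.
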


\noindent{\it Proof.} For any $m_3\in (m_1,m_2)$, H\"{o}lder's inequality implies 
\begin{align*}
	\mathbb{E}\(|X|^{m_3}\)=&\mathbb{E}\(|X|^{m_2(m_3-m_1)/(m_2-m_1)}|X|^{m_1(m_2-m_3)/(m_2-m_1)}\)\\
	\le&\left\||X|^{m_2(m_3-m_1)/(m_2-m_1)}\right\|_{(m_2-m_1)/(m_3-m_1)}\left\||X|^{m_1(m_2-m_3)/(m_2-m_1)}\right\|_{(m_2-m_1)/(m_2-m_3)}\\
	=&\mathbb{E}\(|X|^{m_1}\)^{(m_2-m_3)/(m_2-m_1)}\mathbb{E}\(|X|^{m_2}\)^{(m_3-m_1)/(m_2-m_1)}.
\end{align*}
The statement holds by taking $m_3$-th roots on both sides. \qed

\begin{cor}\label{cor.k4} Assume that $\xi$ 
satisfies the $p_2$-moment condition \eqref{momscore}  
and $\hat{\xi}^{[r]}$ satisfies the $p_2'$-moment condition 
\eqref{momrestrictedscore}. 
\begin{description}
	\item{(a)} If exponential $L^p$ stabilization holds for $p = p_3$,
	then  it also holds for all $p \le p_3 \vee (p_2\wedge p_2')$.
	\item{(b)} If polynomial  $L^p$ stabilization holds for $p = p_3$ with $\beta=C_2$ for some positive constant  $C_2$, then it also holds for any $p \in (p_3, p_2\wedge p_2')$ with $\beta=C_2p_3[(p_2\wedge p_2')-p]/\{p[(p_2\wedge p_2')- p_3]\}$.
\end{description}
\end{cor}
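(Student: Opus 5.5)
Set $X:=X_{r,\lambda,x}:=\xi((x,M_x),\hP,\Gamma_{\lambda})-\hat{\xi}^{[r]}((x,M_x),\hP,\Gamma_{\lambda})$ under the Palm measure $\mathbb{P}_x$, and write $q:=p_2\wedge p_2'$. Minkowski's inequality together with \eqref{momscore} and \eqref{momrestrictedscore} gives the uniform bound
\begin{equation*}
\sup_{\lambda,\,x\in\Gamma_\lambda,\,r}\(\E_x|X|^{q}\)^{1/q}\le b_2+b_2'=:K<\infty .
\end{equation*}
We will always argue pointwise in $(\lambda,x,r)$ and take suprema at the end. Every constant produced this way depends only on $p_3,p,q,b_2,b_2'$ and the stabilization constants, so it is uniform in $\lambda$, $x$ and $r$.

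For part (a), take $p\le p_3\vee q$. There are two cases. If $p\le p_3$, Lyapunov's inequality gives $\|X\|_p\le\|X\|_{p_3}\le C_1\mathrm{e}^{-C_2r}$; this covers $p\ge1$, and for $p<1$ one uses monotonicity of $L^p$ quasi-norms on a probability space, which is Jensen's inequality applied to the convex map $t\mapsto t^{p_3/p}$. If instead $p_3<p<q$, we apply Lemma~\ref{lma.k4} with $m_1=p_3$, $m_3=p$ and $m_2=q$:
\begin{equation*}
\|X\|_p\le \|X\|_{p_3}^{p_3(q-p)/(p(q-p_3))}\,\|X\|_{q}^{q(p-p_3)/(p(q-p_3))}\le K^{q(p-p_3)/(p(q-p_3))}\,C_1^{\theta}\,\mathrm{e}^{-C_2\theta r},
\end{equation*}
where $\theta:=p_3(q-p)/(p(q-p_3))>0$. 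This is again exponential decay, with a new constant and rate $C_2\theta$. The boundary case $p=q>p_3$ needs more care, because there the exponent $\theta$ vanishes and interpolation gives only boundedness. I would read the statement's "$p\le$" as covering the range in which interpolation yields genuine decay, that is $p<q$, together with $p\le p_3$. If $p=q$ is really intended, I would try a truncation split $\E|X|^q=\E[|X|^q\bone_{|X|\le t}]+\E[|X|^q\bone_{|X|>t}]$. However, the second term is controlled only by $q$-th moments and not by anything higher, so I expect that case to be unprovable without additional moment assumptions. I regard this as the main obstacle, and I would flag it rather than force it.

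For part (b), take $p\in(p_3,q)$. The same application of Lemma~\ref{lma.k4} gives
\begin{equation*}
\varphi^{(L^p)}(r)\le K^{q(p-p_3)/(p(q-p_3))}\,C^{\theta}\,r^{-C_2\theta},\qquad C_2\theta=\frac{C_2p_3(q-p)}{p(q-p_3)},
\end{equation*}
which is precisely the stated order $\beta$. Taking the supremum over $\lambda$ and $x\in\Gamma_\lambda$ finishes the argument. The only point to check is that Lemma~\ref{lma.k4} applies: it requires $\|X\|_{m_2}<\infty$, and this is provided by the bound $K$.
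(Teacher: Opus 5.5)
Your proof is correct and follows the paper's argument. You use Lyapunov's inequality for $p\le p_3$, and for $p\in(p_3,p_2\wedge p_2')$ you apply Lemma~\ref{lma.k4} with $m_1=p_3$, $m_2=p_2\wedge p_2'$, using Minkowski's inequality to bound the $L^{p_2\wedge p_2'}$ norm of the difference uniformly; this gives exactly the stated $\beta$ in (b).

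Your caution about the endpoint $p=p_2\wedge p_2'>p_3$ is justified. The paper's proof also covers only $p\le p_3$ and the open interval $(p_3,p_2\wedge p_2')$, interpolation gives no decay at the endpoint, and the use of part (a) in the proof of Corollary~\ref{maincorLp}(a) involves only exponents strictly below $p_2\wedge p_2'$.
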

\noindent{\it Proof.} (a) The statement holds trivially for all $p < p_3$ by Lyapunov's inequality. For any $p \in (p_3, p_2\wedge p_2')$, the statement holds by applying Lemma~\ref{lma.k4} with $X\sim {\cal L}_x\left(\xi((x,M_x),\hP,\Gamma_{\lambda})-\hat{\xi}^{[r]}((x,M_x),\hP,\Gamma_{\lambda})\right)$, $m_1=p_3$ and $m_2=p_2\wedge p_2'$ once we note that $\|X\|_{m_1}$ is uniformly bounded by  Minkowski's inequality.\\
(b) Statement (b) holds by taking $X\sim {\cal L}_x\left(\xi((x,M_x),\hP,\Gamma_{\lambda})-\hat{\xi}^{[r]}((x,M_x),\hP,\Gamma_{\lambda})\right)$, $m_1=p_3$ and $m_2=p_2\wedge p_2'$ in Lemma~\ref{lma.k4}. \qed

\noindent{\it Proof of Corollary~\ref{maincorLp}.} The moment conditions on the ground point process and on the score are already given;  it remains to verify  exponential mixing \eqref{mixing-score} for part (a) and  polynomial  mixing \eqref{mixing-score'} for part (b) and to apply Theorem~\ref{mainthm1}.  Recall the definitions of the sums of short-range scores in \eqref{maincor2a1} and \eqref{maincor2a2} 
where we have taken $r = d(A,B)/3$.  The independence condition  $\bar{S}_A^{[r]}\indep \bar{S}_B^{[r]}$ implies for any $j \in \{1,2\}$ and  $g\in {\cal F}_j$, $A\in  {\cal H}_j$, 
$$\mathbb{E}\(\(\bar{S}_A^{[r]}\)^jg\(\bar{S}_B^{[r]}\)\)-\mathbb{E}\(\(\bar{S}_A^{[r]}\)^j\)\mathbb{E}\(g\(\bar{S}_B^{[r]}\)\)=0.$$
This yields the following bound for any $j \in \{1,2\}$ and   $g\in {\cal F}_j$, $A\in  {\cal H}_j$,
\begin{align}
	\epsilon(r):= & \left|\mathbb{E}\(S_A^jg(S_B)\)-\mathbb{E}\(S_A^j\)\mathbb{E}\(g(S_B)\)\right|\nonumber\\
	=&\left|\left[\mathbb{E}\(S_A^jg(S_B)\)-\mathbb{E}\(S_A^j\)\mathbb{E}\(g(S_B)\)\right]-\left[\mathbb{E}\(\(\bar{S}_A^{[r]}\)^jg\(\bar{S}_B^{[r]}\)\)-\mathbb{E}\(\(\bar{S}_A^{[r]}\)^j\)\mathbb{E}\(g\(\bar{S}_B^{[r]}\)\)\right]\right| \nonumber\\
	\le& \left|\mathbb{E}\(\[S_A^j-\(\bar{S}_A^{[r]}\)^j\]g(S_B)\)\right|+\left|\mathbb{E}\(\(\bar{S}_A^{[r]}\)^j\[g(S_B)-g\(\bar{S}_B^{[r]}\)\)\]\right|\nonumber\\
	&+\left|\mathbb{E}\(S_A^j-\(\bar{S}_A^{[r]}\)^j\)\mathbb{E}(g(S_B))\right|+\left|\mathbb{E}\(\(\bar{S}_A^{[r]}\)^j\)\mathbb{E}\(g(S_B)-g\(\bar{S}_B^{[r]}\)\)\right|\nonumber\\
	=&:\epsilon_1+\epsilon_2+\epsilon_3+\epsilon_4.\label{maincor3.1}
\end{align} 
By Lemma~\ref{moments-XB}, with $\xi$, $p_2$, and $b_2$ replaced by $\hat{\xi}^{[r]}$, $p_2'$, and $b_2'$, respectively, we obtain
\begin{equation}\label{maincor3.2}
	\|\bar{S}_B^{[r]}\|_3\le \|\bar{S}_B^{[r]}\|_{p_1p_2'/(p_1+p_2'-1)}\le C_1 \Vol(B)
\end{equation}
for some constant $C_1\in (0,\infty)$ and all $r \in(0,\infty)$. To guarantee that both $p_1p_2'/(p_1+p_2'-1)\ge 3$ and $p_1p_2/(p_1+p_2-1)\ge 3$, it suffices to require $\ubar{p}=p_1(p_2\wedge p_2')/(p_1+p_2\wedge p_2'-1)\ge 3.$

Moreover, for any random variable $X$, any conjugate pair $p,q\ge 1$ satisfying $1/p+1/q=1$, and for all functions $g \in {\cal F}_1$, namely for all $g(x)=x$ or $g(x)\in \cF_b$, H\"{o}lder's inequality yields:
$$ \left|\mathbb{E}\left\{X \left[g(S_B)-g\(\bar{S}_B^{[r]}\)\right]\right\}\right|\le \|X\|_p\left\|g(S_B)-g\(\bar{S}_B^{[r]}\)\right\|_q\le \|X\|_p\|g'\|_\infty\|S_B-\bar{S}_B^{[r]}\|_q.$$
In particular, we obtain the following useful inequalities:
\begin{align*}
	&\left|\mathbb{E}\left\{X \left[g(S_B)-g\(\bar{S}_B^{[r]}\)\right]\right\}\right|\le \|X\|_p\|H_{\lambda,B}^{[r]}\|_q,\\
	&\left\|g(X)\right\|_p\le \|X\|_p\vee 1.
\end{align*}
These inequalities will be used repeatedly in the proofs.

(a) The assumption $\ubar{p}\ge 3$ implies $ p_2\wedge p_2'>3\ge \ubar{p}/(\ubar{p}-2)>\ubar{p}/(\ubar{p}-1)$. Then from Corollary~\ref{cor.k4} (a), 
the exponential $L^{p}$ stabilization 
also holds for $p$ taking $\ubar{p}/(\ubar{p}-1)$ and $\ubar{p}/(\ubar{p}-2)$.  When $j=1$ and either $g(x)=x$ or $g(x)\in \cF_b$, then by \eqref{maincor3.2}, Lemma~\ref{moments-XB}, Lemma~\ref{cormomentXB}, and H\"{o}lder's inequality,  the four terms on the right-hand side of \eqref{maincor3.1} satisfy  the bounds
\begin{equation}\label{maincor3.class.1}
	\begin{aligned}
		\epsilon_1=&\left|\mathbb{E}\(H_{\lambda,A}^{[r]} g(S_B)\)\right|\le\|H_{\lambda,A}^{[r]}\|_2\|g(S_B)\|_2= O\((\Vol(A)\vee 1)(\Vol(B)\vee 1)\mathrm{e}^{-C_2' r}\) ,    \\
		\epsilon_2 \le& \|\bar{S}_A^{[r]}\|_2\|H_{\lambda,B}^{[r]}\|_2= O\((\Vol(A)\vee 1)(\Vol(B)\vee 1)\mathrm{e}^{-C_2' r}\),\\
		\epsilon_3\le &\|H_{\lambda,A}^{[r]}\|_1\|g(S_{B})\|_1= O\((\Vol(A)\vee 1)(\Vol(B)\vee 1)\mathrm{e}^{-C_2' r} \),\\
		\epsilon_4\le&\|\bar{S}_A^{[r]}\|_1\|H_{\lambda,B}^{[r]}\|_1= O\((\Vol(A)\vee 1)(\Vol(B)\vee 1)\mathrm{e}^{-C_2' r} \).
	\end{aligned} 
\end{equation}
Therefore, each $\epsilon_i, 1 \leq i\leq 4,$  is bounded above by $O\left((\Vol(A)\vee 1)(\Vol(B)\vee 1)\mathrm{e}^{-C_2' r}\right)$ for some constant $C_2'\in(0,\infty)$. 

Similarly, when $j=2$ and $g(x)\in \cF_b$, the four terms on the right-hand side of \eqref{maincor3.1} satisfy the following bounds:
\begin{align}
	\epsilon_1=&\left|\mathbb{E}\(\(S_A-\bar{S}_A^{[r]}\)\(S_A+\bar{S}_A^{[r]}\)g(S_B)\)\right|\le\|H_{\lambda,A}^{[r]}\|_{\ubar{p}/(\ubar{p}-1)}(\|S_A\|_{\ubar{p}} +\|\bar{S}_A^{[r]}\|_{\ubar{p}} )\nonumber \\
	=& O\((\Vol(A)^2\vee 1)(\Vol(B)\vee 1)\mathrm{e}^{-C_2' r} \),\nonumber\\
	\epsilon_2\le&\left|\mathbb{E}\(\(\bar{S}_A^{[r]}\)^2\left|g(S_B)-g\(\bar{S}_B^{[r]}\)\right|\)\right|\le\left\|\(\bar{S}_A^{[r]}\)^2\right\|_{\ubar{p}/2}\|H_{\lambda,B}^{[r]}\|_{\ubar{p}/(\ubar{p}-2)}=\|\bar{S}_A^{[r]}\|_{\ubar{p}}^2\|H_{\lambda,B}^{[r]}\|_{\ubar{p}/(\ubar{p}-2)}\nonumber\\
	=& O\(\Vol(A)^2\vee 1)(\Vol(B) \vee 1)\mathrm{e}^{-C_2'r}\),\nonumber\\
	\epsilon_3\le &\left|\mathbb{E}\(H_{\lambda,A}^{[r]}\(S_A+\bar{S}_A^{[r]}\)\)\right|\le \|H_{\lambda,A}^{[r]}\|_2\(\|S_A\|_2+\|\bar{S}_A^{[r]}\|_2\) \nonumber\\
	= &  O\(\Vol(A)^2\vee 1)(\Vol(B)\vee 1)\mathrm{e}^{-C_2'r}\),      \nonumber\\
	\epsilon_4\le&\left|\mathbb{E}\(\(\bar{S}_A^{[r]}\)^2\)\right|\|H_{\lambda,B}^{[r]}\|_1\le \|\bar{S}_A^{[r]}\|_2^2\|H_{\lambda,B}^{[r]}\|_1\nonumber\\
	=  &  O\( \Vol(A)^2 \vee 1)(\Vol(B) \vee 1))\mathrm{e}^{-C_2'r} \).\nonumber
\end{align} 

Hence, 
\begin{equation}\label{maincor3.class.2}
	\max_{i \leq 4}  \epsilon_i =   O\((\Vol(A)^2 \vee 1)(\Vol(B) \vee 1) \mathrm{e}^{-C_2' r} \)
\end{equation} 
for some constant $C_2'\in(0,\infty)$. Combining the bounds in \eqref{maincor3.class.1} and \eqref{maincor3.class.2} with \eqref{maincor3.1}, we conclude that
\begin{equation*}\ignore{\label{maincor3.4}}
	\epsilon(r) = O\left( (\Vol(A)^2 \vee 1)(\Vol(B) \vee 1) \mathrm{e}^{-C_2' r} \right).
\end{equation*}
This verifies the exponential mixing condition \eqref{mixing-score} with parameters $\g_0=2d$,  $\g_2= C_2'/3$ and any $\g_3>0$.  The conclusion follows  by Theorem~\ref{mainthm1}~(a).

(b) As in the proof of part (a),  we obtain the following similar bounds. For $j=1$ and either $g(x)=x$ or $g(x)\in \cF_b$,  the four terms on the right-hand side of \eqref{maincor3.1} satisfy
\begin{equation}\label{maincor3.class.3}
	\begin{aligned}
		\epsilon_1\le&\|H_{\lambda,A}^{[r]}\|_2\|g(S_B)\|_2= O\((\Vol(A)\vee 1)(\Vol(B)\vee 1)r^{-\beta}\),\\
		\epsilon_2\le& \|\bar{S}_A^{[r]}\|_2\|H_{\lambda,B}^{[r]}\|_2= O\((\Vol(A)\vee 1)(\Vol(B)\vee 1)r^{-\beta}\),\\
		\epsilon_3\le &\|H_{\lambda,A}^{[r]}\|_1\|g(S_{B})\|_1= O\((\Vol(A)\vee 1)(\Vol(B)\vee 1)r^{-\beta}\),\\
		\epsilon_4\le&\|\bar{S}_A^{[r]}\|_1\|H_{\lambda,B}^{[r]}\|_1= O\((\Vol(A)\vee 1)(\Vol(B)\vee 1)r^{-\beta}\).
	\end{aligned} 
\end{equation}
For $j=2$ and $g(x)\in \cF_b$, we have
\begin{equation}\label{maincor3.class.4}
	\begin{aligned}
		\epsilon_1\le&\|H_{\lambda,A}^{[r]}\|_{\ubar{p}/(\ubar{p}-1)}(\|S_A\|_{\ubar{p}} +\|\bar{S}_A^{[r]}\|_{\ubar{p}} )= O\((\Vol(A)^2\vee 1)(\Vol(B)\vee 1)r^{-\beta}\),\\
		\epsilon_2\le& \left\|\bar{S}_A^{[r]}\right\|_{\ubar{p}}^2\|H_{\lambda,B}^{[r]}\|_{\ubar{p}/(\ubar{p}-2)}
		= O\((\Vol(A)^2\vee 1)(\Vol(B)\vee 1)r^{-\beta}\),\\
		\epsilon_3\le& \|H_{\lambda,A}^{[r]}\|_2\(\|S_A\|_2+\|\bar{S}_A^{[r]}\|_2\)= O\((\Vol(A)^2\vee 1)(\Vol(B)\vee 1)r^{-\beta}\),\\
		\epsilon_4\le& \|\bar{S}_A^{[r]}\|_2^2\|H_{\lambda,B}^{[r]}\|_1= O\((\Vol(A)^2\vee 1)(\Vol(B)\vee 1)r^{-\beta}\).
	\end{aligned} 
\end{equation}
Combining the bounds in \eqref{maincor3.class.3} and \eqref{maincor3.class.4} with \eqref{maincor3.1}, we obtain
\begin{equation*}\ignore{\label{maincor3.5}}
	\epsilon(r) = O\((\Vol(A)^2\vee 1)(\Vol(B)\vee 1)r^{-\beta}\).
\end{equation*}
This implies polynomial mixing of scores  with parameters $\g_0=2d$, $\g_2=\beta$ and any $\g_3>0$,
and the conclusion follows by Theorem~\ref{mainthm1}~(b). \qed

\section{Applications}
This section provides examples illustrating the applicability of Theorem~\ref{mainthm1}, Corollary~\ref{maincorTV} and Corollary~\ref{maincorLp}. The order of presentation parallels that of the main results in Section~1.

\subsection{Marked determinantal point processes}\label{ex.dpp}

We show that determinantal point processes having an exponentially decaying kernel
and equipped with i.i.d. marks $\{M_x\}_{x \in {\cal P}}$ give rise to statistics $H_{\lambda} := \sum_{x \in {\cal P} \cap \Gamma_{\lambda}}M_x$ which satisfy the conditions of Theorem~\ref{mainthm1}.  In the degenerate case $M_x\equiv 1$, the statistic $H_{\lambda}$ reduces to the counting statistic. We deduce  that these counting statistics satisfy a quantitative CLT with a nearly optimal rate.   Local (and weaker) qualitative central limit theorems for the point counts of unmarked determinantal point processes  were  established in  \citet{S02} and \citet{FL14}. However, for the counting statistics and localized marked sums considered here under merely exponential decay of the kernel, comparable nearly optimal Wasserstein bounds do not appear to be available. Closely  related results include the Wasserstein bound of \citet{W19},
which has a rate slower than $\lambda^{-1/4}$; the quantitative CLT of \cite{CX23}, which assumes the \emph{super-exponential} decay condition \eqref{reEDDdoubleexp} and thereby excludes commonly used determinantal point processes, including the Ginibre point process; and the Kolmogorov distance  bound of \citet{D25} for linear statistics of unmarked determinantal point processes.

The key idea is that for these statistics,  the mixing condition~\eqref{mixing-score} can be upper bounded by sums of expectations  expressed with respect to first- and second-order Palm distributions of ${\cal P}$. By employing the couplings in~\citet{MO21}, we find that the discrepancies between the distribution of ${\cal P}$ and its first- and second-order Palm distributions are localized near the reference points of the Palm distributions. More precisely, under the conditioning that one or two reference points lie in $A$, the configuration in $B$ changes only slightly, in the sense that under the relevant coupling, the probability that the original and Palm distribution configurations differ by at least one point decays exponentially fast as $d(A,B)\to\infty$, and the total discrepancy between these configurations in the set $B$ is bounded by two points. This, together with some additional estimates, yields mixing of score sums.

We focus on a stationary determinantal point process ${\cal P}$ on $\real^d$ with Lebesgue measure as the reference measure and a Hermitian kernel $K: \R^d \times \R^d  \to \bf{C}$, namely $K(x,y):=K_0(x-y)=\overline{K_0(y-x)}$. Let  $\hat K_0(x)=\int_{\mathbb{R}^d} \mathrm{e}^{-2\pi i x\cdot t}K_0(t)\mathrm{d} t$ be  the Fourier transform of $K_0$, where $x{\cdot}t$ is the dot product. We assume:
\begin{description}
	\item{\bf Condition 1.} The function $K_0$ is continuous and complex-valued on $\mathbb{R}^d$, $K_0\not\equiv 0$, and $|K_0(x)|=O(\mathrm{e}^{-|x|})$ as $|x|\to\infty$.
	\item{\bf Condition 2.} $0\le \hat K_0\le 1.$
\end{description}

The determinantal point process ${\cal P}$ is simple with its distribution determined by 
$$\mean\left[\prod_{i=1}^m{\cal P}(A_i)\right]=\int_{A_1\times A_2\times \dots\times A_m}{\rm det}\[K_0(u_i-u_j)\]_{i,j=1}^mdu_1\dots \mathrm{d}u_m,$$
for all mutually disjoint bounded sets $A_1,\dots,A_m \in {\cal{B}}(\R^d)$, $m\in \mathbb{N}$. The function 
$\rho_m: (\R^d)^m \to [0, \infty)$ given by 
$\rho_m(x_1,\dots,x_m):=\det\[K_0(x_i-x_j)\]_{1\le i,j\le m}$ is the $m$-th order correlation function of ${\cal P}$.

\begin{thm}\label{example1} (normal approximation for sum of marks on a determinantal point process)
	For the determinantal point process ${\cal P}$ with kernel  satisfying Conditions~1 and 2, let $\hat{{\cal P}}$ be a marked point process  equipped with i.i.d. marks $\{M_x\}_{x\in {\cal P}}$. Assume that $0< \|M_{\0}\|_{p_0}<\infty$ for some $p_0>3$. Then with
	$$H_{\lambda}=\sum_{x\in {\cal P}\cap \Gamma_{\lambda}} M_x,\quad Z_{\lambda}:= \frac{H_{\lambda}-\E H_{\lambda}} {\sqrt{\var H_{\lambda}}},
$$
 we have 
	$$d_W(Z_{\lambda},Z) =  O\((\log \lambda)^{3d} \lambda^{-0.5}\).$$
\end{thm}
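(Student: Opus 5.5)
We will apply Theorem~\ref{mainthm1}~(a) with $\xi((x,M_x),\hP,\Gamma_\lambda)=M_x$. This requires four inputs: the moment conditions, the variance growth $\Var H_\lambda=\Omega(\lambda)$ (so $\nu=1$), and exponential mixing \eqref{mixing-score} of the score sums for $j=1,2$.

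\emph{Moments.} Remark~\ref{remark1.1} gives \eqref{momground} for every $p_1$. Since the marks are i.i.d. and independent of $\P$, \eqref{momscore} holds with $p_2=p_0>3$. Taking $p_1$ large then gives $p_1p_2/(p_1+p_2-1)\ge 3$.

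\emph{Variance.} Condition on $\P$ and write
\[
\Var H_\lambda=\E\,\card(\P\cap\Gamma_\lambda)\Var M_{\0}+(\E M_{\0})^2\Var \card(\P\cap\Gamma_\lambda).
\]
If $\Var M_{\0}>0$, the first term is $\mu\lambda\Var M_{\0}$. Otherwise $M$ is a nonzero constant, and we need $\Var\card(\P\cap\Gamma_\lambda)=\Omega(\lambda)$. This follows from the Fourier representation
\[
\Var \card(\P\cap\Gamma_\lambda)=\int \hat K_0(1-\hat K_0)\,|\hat{\bone}_{\Gamma_\lambda}|^2 .
\]
Here $\hat K_0$ is continuous, since $K_0\in L^1$ by Condition~1, and not identically zero. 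If $0<\hat K_0<1$ somewhere, the integral is of volume order. I expect the degenerate case $\hat K_0\in\{0,1\}$, where $\hat K_0$ is an indicator, to be excluded: continuity of $\hat K_0$ together with $\hat K_0\in L^1$ rules out a nonzero indicator. This step is routine but should be checked.

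\emph{Mixing (the main obstacle).} The process $\P$ need not be EDD under mere exponential kernel decay, so Corollary~\ref{maincorTV} cannot be used. I will instead verify \eqref{mixing-score} directly, following the Palm-coupling strategy sketched before the theorem. First, by Lemma~\ref{mixing-uncentered} it suffices to treat the uncentered sums $H_{\lambda,A}$ and $H_{\lambda,B}$. For $j=1$, the Campbell formula and independence of the marks give
\[
\cov(H_{\lambda,A},g(H_{\lambda,B}))=\E M\int_A \mu\bigl[\E_x g(H_{\lambda,B})-\E g(H_{\lambda,B})\bigr]\,\mathrm dx .
\]
For $j=2$, expanding $H_{\lambda,A}^2$ produces a diagonal term $\E M^2\sum_{x\in A}$, handled via first-order Palm, and an off-diagonal term $(\E M)^2\sum_{x\neq y}$, handled via second-order Palm with density $\rho_2(x,y)$.

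Next, use the couplings of \citet{MO21}. The reduced Palm version $\P^{!x}$ (respectively $\P^{!x,y}$) can be coupled with $\P$ so that it is obtained by deleting at most one (respectively two) points. The deleted point is distributed according to a density proportional to $|K_0(x-\cdot)|^2$, or its analogue. Hence the probability that the two configurations differ in $B$ is at most $C\int_B |K_0(x-z)|^2\,\mathrm dz+\dots = O(\mathrm{e}^{-2d(A,B)})$, uniformly in $x\in A$. On the difference event, $g(H_{\lambda,B})$ changes by at most the marks of two points, since $\|g'\|_\infty\le1$ and $|g|\le1$. Hölder's inequality with $\|M\|_{p_0}$ then bounds this change by $O(\mathrm{e}^{-c\,d(A,B)})$.

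Integrating over $x\in A$, or over $(x,y)\in A^2$ with $\rho_2\le\mu^2$ (Hadamard), yields a bound of order $C\,\Vol(A)^2\mathrm{e}^{-c\,d(A,B)}$. The identity test function $g(x)=x$ needs a separate, simpler treatment: a direct covariance computation gives $-(\E M)^2\int_A\int_B|K_0(x-y)|^2$, which decays exponentially. This establishes \eqref{mixing-score}, and Theorem~\ref{mainthm1}~(a) with $\nu=1$ gives the stated rate.
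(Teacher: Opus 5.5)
Your route is the paper's: Theorem~\ref{mainthm1}~(a) with $\nu=1$, and reduction to uncentered sums via Lemma~\ref{mixing-uncentered}. You then expand $H_{\lambda,A}$ and $H_{\lambda,A}^2$ into first- and second-order Palm terms and use the couplings of \citet{MO21} to compare Palm and stationary expectations of $g(H_{\lambda,B})$. Volume-order variance comes from $K_0(\0)>\vertiii{K_0}_2^2$, which your continuity argument for $\hat K_0$ proves exactly as in Lemma~\ref{lma1.2}. Two points are slightly more transparent than the paper's Palm computation and citation of \cite{BYY19}: the explicit formula $\cov(H_{\lambda,A},H_{\lambda,B})=-(\E M_{\0})^2\int_A\int_B|K_0(x-y)|^2\,\mathrm{d}x\,\mathrm{d}y$ for $g(x)=x$, and the total-variance split that isolates the constant-mark case. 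Two steps, however, are wrong as written.

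\emph{Variance.} The displayed Fourier identity is incorrect: the multiplier of $|\hat{\bone}_W(t)|^2$ is $K_0(\0)-\widehat{|K_0|^2}(t)$, not $\hat K_0(t)(1-\hat K_0(t))$. This affects your conclusion. Since $|\hat{\bone}_{\Gamma_\lambda}|^2/\lambda$ is an approximate identity at $t=\0$, your formula would give $\Var\card(\P\cap\Gamma_\lambda)/\lambda\to\hat K_0(\0)(1-\hat K_0(\0))$, so ``$0<\hat K_0<1$ somewhere'' would not suffice. The correct limit is $K_0(\0)-\vertiii{K_0}_2^2=\int\hat K_0(1-\hat K_0)$, and in fact no asymptotics are needed:
\begin{equation*}
\Var\card(\P\cap\Gamma_\lambda)=K_0(\0)\lambda-\int_{\Gamma_\lambda}\int_{\Gamma_\lambda}|K_0(x-y)|^2\,\mathrm{d}x\,\mathrm{d}y\ge\bigl(K_0(\0)-\vertiii{K_0}_2^2\bigr)\lambda .
\end{equation*}
The constant is positive by your continuity argument.

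\emph{Second-order term.} The coupling gives no bound on the probability that $\P$ and $\P^{!}_{x,y}$ differ in $B$ that is uniform in $(x,y)\in A^2$. Let $K^x(u,v)=K_0(u-v)-K_0(u-x)K_0(x-v)/K_0(\0)$ be the kernel of $\P^{!}_x$. The second deleted point lands in $B$ with probability $\int_B|K^x(y,v)|^2\,\mathrm{d}v/K^x(y,y)$, and $K^x(y,y)=\rho_2(x,y)/K_0(\0)\to0$ as $y\to x$. So you cannot bound this probability and then apply $\rho_2\le\mu^2$ separately. Instead, keep the weight $\rho_2(x,y)$ of the off-diagonal term, which cancels the denominator; this is what the paper does. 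If $|K_0(z)|\le C\mathrm{e}^{-c|z|}$, then for $x,y\in A$,
\begin{equation*}
\rho_2(x,y)\,\frac{\int_B|K^x(y,v)|^2\,\mathrm{d}v}{K^x(y,y)}=K_0(\0)\int_B|K^x(y,v)|^2\,\mathrm{d}v\le 4C^2K_0(\0)\Vol(B)\,\mathrm{e}^{-2c\,d(A,B)} .
\end{equation*}
Iterating the coupling also requires checking the assumptions of \citet{MO21} for $K^x$. In particular you need $\rho_2(x,y)>0$ for $x\neq y$, i.e.\ $|K_0(z)|<K_0(\0)$ for $z\neq\0$. The paper proves this separately; you take it for granted.
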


\begin{re} \label{re3.2} (comparison with related literature) The assumptions imply that  ${\cal P}$ has exponentially decaying correlations and thus by Theorem 5.2 in \cite{BYY25}, $(Z_{\lambda})_{\lambda \geq 1}$ converges to the standard normal in distribution.  Theorem \ref{example1} establishes a rate of convergence for the case of i.i.d. marks, which also covers the unmarked case. This result also extends the quantitative CLT in \cite{CX23}, since the determinantal point processes satisfying Conditions~1 and~2 are not currently known to be EDD. Furthermore, following the same idea as in the proof, we can show that the statement still holds if the marks $M_x$ depend on the location $x$, while remaining independent of one another and of all other points. 
\end{re}

It is not known whether ${\cal P}$ is EDD. To prove Theorem \ref{example1}, we verify directly that the conditions of
Theorem~\ref{mainthm1} are fulfilled.

\begin{lma}\label{lma1.2} 
	If Conditions~1 and 2 hold, then $K_0(\0)>\int_{\mathbb{R}^d}|K_0(x)|^2\mathrm{d}x:=\vertiii{K_0}_2^2$. 
\end{lma}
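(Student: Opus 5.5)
We prove the claim $K_0(\0)>\vertiii{K_0}_2^2$ on the Fourier side, using Plancherel and Condition~2.

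\emph{Step 1: both sides in terms of $\hat K_0$.} By Condition~1, $K_0$ is continuous with $|K_0(x)|=O(\mathrm{e}^{-|x|})$, so $K_0\in L^1(\mathbb{R}^d)\cap L^2(\mathbb{R}^d)$. Hence $\hat K_0$ is continuous and bounded, and Plancherel gives
\begin{equation*}
\vertiii{K_0}_2^2=\int_{\mathbb{R}^d}|\hat K_0(t)|^2\,\mathrm{d}t .
\end{equation*}
Condition~2 says $0\le \hat K_0\le 1$, so $\hat K_0\in L^1$ as well: $\int \hat K_0$ is an integral of a nonnegative function, and it is finite. To see finiteness, note that $K_0$ is positive definite, being Hermitian with nonnegative Fourier transform. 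Then Fourier inversion, or Bochner's theorem applied to the continuous positive definite function $K_0$, yields $\hat K_0\in L^1$ and
\begin{equation*}
K_0(\0)=\int_{\mathbb{R}^d}\hat K_0(t)\,\mathrm{d}t .
\end{equation*}
We plan to justify this formally via a Gaussian approximate identity. Approximate $K_0(\0)$ by $\int \hat K_0(t)\mathrm{e}^{-\epsilon|t|^2}\mathrm{d}t$, which converges to $K_0(\0)$ by continuity of $K_0$. Monotone convergence then gives $\int\hat K_0<\infty$ and the identity above. This justification is the only technical point.

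\emph{Step 2: comparison.} Since $0\le \hat K_0\le 1$, we have $\hat K_0-\hat K_0^2=\hat K_0(1-\hat K_0)\ge 0$ pointwise. Therefore
\begin{equation*}
K_0(\0)-\vertiii{K_0}_2^2=\int_{\mathbb{R}^d}\hat K_0(t)\bigl(1-\hat K_0(t)\bigr)\,\mathrm{d}t\ge 0 .
\end{equation*}

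\emph{Step 3: strictness.} This is the main obstacle. Equality would force $\hat K_0(1-\hat K_0)=0$ almost everywhere, i.e. $\hat K_0$ equal to an indicator $\bone_E$ a.e. Because $\hat K_0$ is continuous, it takes only the values $0$ and $1$, so by connectedness of $\mathbb{R}^d$ it is constant. Since $\hat K_0\in L^1$, the constant must be $0$. Then $K_0\equiv 0$, contradicting Condition~1.

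An alternative route to strictness uses exponential decay directly. It makes $\hat K_0$ real-analytic, hence continuous, so the indicator-function argument above applies either way. Continuity of $\hat K_0$ already follows from $K_0\in L^1$, so the connectedness argument suffices.
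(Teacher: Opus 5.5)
Your proposal is correct and follows essentially the same route as the paper: Fourier inversion at the origin together with Plancherel gives $K_0(\0)=\int\hat K_0\ge\int\hat K_0^2=\vertiii{K_0}_2^2$. In the equality case, continuity of $\hat K_0$ forces it to be constant $0$ or $1$, and both possibilities are excluded. The differences are minor: you justify $\int\hat K_0=K_0(\0)<\infty$ with a Gaussian approximate identity, a step the paper takes for granted, and you rule out $\hat K_0\equiv 1$ via $\hat K_0\in L^1$, whereas the paper uses square integrability of $K_0$.
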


\noindent{\it Proof.} We prove the statement by contradiction. Assume that $K_0(\0)=\int_{\mathbb{R}^d}|K_0(x)|^2 \mathrm{d}x$. The Fourier inversion theorem states that
\begin{equation}\label{FIT}
	K_0(x)=\int_{\real^d}\mathrm{e}^{ 2\pi i x{\cdot}t }\hat K_0(t)\mathrm{d}t.
\end{equation} 
Evaluating at $x= \0$,  we get
\begin{equation}\label{lma1.2-1}
	K_0(\0)=\int_{\mathbb{R}^d}\hat K_0(t)dt\ge \int_{\mathbb{R}^d}(\hat K_0(t))^2\mathrm{d}t=\int_{\mathbb{R}^d}|K_0(x)|^2\mathrm{d}x.
\end{equation}
The inequality in \eqref{lma1.2-1} follows from Condition~2 whereas the last equality follows from the Plancherel theorem. Since $K_0(\0)=\int_{\mathbb{R}^d}|K_0(x)|^2 \mathrm{d}x$, the inequality  \eqref{lma1.2-1} is an  equality and Condition~2 ensures that $ \{t\in\mathbb{R}^d:\ \hat K_0(t)\not\in\{0,1\}\}$ has Lebesgue measure $0$.
 
On the other hand, Condition~1 ensures that $K_0$ is both integrable and square integrable with respect to the Lebesgue measure, which implies that $ \hat K_0$  must be continuous. This leads to the conclusion that $ \hat K_0$ must be constant, either $ \hat K_0\equiv 0$ or $ \hat K_0\equiv 1$. If $ \hat K_0\equiv 0$, then $K_0\equiv 0$, which contradicts $K_0\not\equiv 0$. If $ \hat K_0\equiv 1$, then $K_0$ is not square integrable with respect to the Lebesgue measure, which contradicts Condition~1.  
Thus, we have reached a contradiction, and the original assumption must be false. \qed

\noindent{\it Proof of Theorem~\ref{example1}.}  For all $x \in \P, \lambda \geq 1,$ put  $\sxi= M_x$. To apply part (a) of Theorem~\ref{mainthm1}, it is enough to check conditions \eqref{momground}, \eqref{momscore}, exponential mixing of score sums as in Definition \ref{WSM}, and that $\sigma_{\lambda}^2 =\var H_{\lambda} =\Omega(\lambda)$. From Section~2.2.2 and Section~2.1, Remark~(i) of \cite{BYY19}, $\mathbb{E}\({\cal P}(B)^k\)$ is finite for all bounded $B\in {\cal B}(\mathbb{R}^d)$ and all $k\in \mathbb{N}$, which ensures  \eqref{momground}. This, together with the independence of marks, $\|M_{\0}\|_{p_0}<\infty$ for some $p_0>3$ and Remark~\ref{re1.3} (iii), also ensures condition \eqref{momscore} in Theorem~\ref{mainthm1}. By combining Theorem~1.1.2~(ii) and Remark~(iii) related to Theorem~1.1.3 of \cite{BYY19} and Lemma~\ref{lma1.2}, we conclude that $\sigma_{\lambda}^2=\Omega(\lambda)$. It remains to show exponential mixing of score sums  \eqref{WSM}.  This will necessitate coupling arguments and some additional background material. 

Let $\P_x^{!}$ be the reduced Palm process of ${\cal P}$ at point $x$, that is, $\P_x^{!}+\delta_x\sim{\cal L}_x(\P)$.  Recall the assumptions required for the existence of a coupling  of the determinantal point process with kernel function $K(u,v)$ and its reduced Palm distributions in \cite{MO21}:
\begin{description}
	\item[Assumption 1.] The kernel  $K$ is Hermitian, that is, $K(u,v)=\overline{K(v,u)}$ for all $u,v\in \mathbb{R}^d.$
	\item[Assumption 2.] For any compact set $S\subset \mathbb{R}^d$, $\iint_{S^2}|K(u,v)| du dv<\infty,$ which means that $K$ is locally square integrable.
	\item[Assumption 3.] For any compact set $S\subset \mathbb{R}^d$, $\int_{S}|K(u,u)|\mathrm{d}u <\infty,$  which means that $K$ is of local trace class.
	\item[Assumption 4.] For any compact set $S\subset \mathbb{R}^d$, all eigenvalues of $K$ on $L^2(S)$, the space of square-integrable complex functions with respect to the Lebesgue measure restricted to $S$, lie within the interval $[0,1]$.
\end{description}

It is stated in \cite{MO21} that {Assumption}~$4$ is equivalent to the existence of the determinantal point process
 given that  {Assumptions} $1-3$ are satisfied, and the determinantal point process is unique, see, e.g., \citet[Theorem 4.5.5  \ \rm{and}  \ Lemma 4.2.6]{H09}. 

For the determinantal point process $\scrP$, $K(u,v)=K_0(u-v)$. 
We claim that Conditions~1 and 2 imply these four properties  (i)~$K_0$ is Hermitian, 
(ii)~$K_0(\0)\ge\vertiii{K_0}_2^2$, (iii)~$K_0(\0)> |K_0(x)|$, for all $x\ne \0$, and (iv)~all eigenvalues lie within $[0,1]$. Property (i) can be directly verified from \eqref{FIT}, and Property (ii) follows from \eqref{lma1.2-1}. Property (iv) is a direct consequence of Condition~$2$ and \citet[Proposition $3.1$]{LMR15}. Therefore, it remains to prove Property~(iii). To this end, write $b_1=\int_{\real^d}\cos^2(2\pi x{\cdot}t)\hat K_0(t)\mathrm{d}t$ and $b_2=\int_{\real^d}\sin^2(2\pi x{\cdot}t)\hat K_0(t)\mathrm{d}t$. Since $b_1+b_2=K_0(\0)\in(0,\infty)$, at least one of $b_1$ and $b_2$ is positive, so without loss, we assume that $b_1\in(0,\infty)$. Then
\begin{equation}\label{iii-1}
	\frac{|\cos(2\pi x{\cdot}t)|}{\sqrt{b_1}}\cdot \frac{1}{\sqrt{K_0(\0)}}\le \frac12\left(\frac{\cos^2(2\pi x{\cdot}t)}{b_1}+\frac{1}{K_0(\0)}\right),
\end{equation}
and 
\begin{align*}
	S_x&:=\left\{t:\ \frac{|\cos(2\pi x{\cdot}t)|}{\sqrt{b_1}}\cdot \frac{1}{\sqrt{K_0(\0)}}= \frac12\left(\frac{\cos^2(2\pi x{\cdot}t)}{b_1}+\frac{1}{K_0(\0)}\right)\right\}\\
	&=\left\{t:\ |\cos(2\pi x{\cdot}t)|=\sqrt{\frac{b_1}{K_0(\0)}}\right\}
\end{align*}
has Lebesgue measure $0$. Multiplying both sides of \eqref{iii-1} by $\hat K_0(t)$ and integrating in terms of $t$, we obtain
$$\int_{\real^d}\frac{|\cos(2\pi x{\cdot}t)|}{\sqrt{b_1}}\cdot \frac{1}{\sqrt{K_0(\0)}}\hat K_0(t) \mathrm{d}t< 1,$$
which is equivalent to 
\begin{equation}\label{iii-2}
	\int_{\real^d}|\cos(2\pi x{\cdot}t)|\hat K_0(t) \mathrm{d}t<\sqrt{b_1K_0(\0)}.
\end{equation}
Now, applying \eqref{iii-2} for the term with $\cos$ and the Cauchy-Schwarz inequality for the term with $\sin$ below, we get
\begin{align} 
	|K_0(x)|^2&=\left(\int_{\real^d}\cos(2\pi x{\cdot}t)\hat K_0(t) \mathrm{d}t\right)^2+\left(\int_{\real^d}\sin(2\pi x{\cdot}t)\hat K_0(t) \mathrm{d}t\right)^2\nonumber\\
	&<b_1K_0(\0)+b_2K_0(\0)=K_0(\0)^2,\label{iii-3}
\end{align}
hence Property (iii) follows.

Now, Properties (i)--(iv) show that $K(u,v)$ satisfies Assumptions~1-4.  
According to \cite[Theorem~1]{MO21}, $\P_x^{!}$ is also a determinantal point process with kernel $$K^{x}(u,v)=K_0(u-v)-\frac{K_0(u-x)K_0(x-v)}{K_0(\0)},$$
and there exists a coupling of ${\cal P}$ and ${\cal P}_x^{!}$ such that, almost surely, ${\cal P}_x^{!}\subseteq {\cal P}$ and ${\P}^{\Delta,x}={\cal P}\backslash {\cal P}_x^{!}$ 
 consists of at most one point, with 
\begin{equation*} \ignore{\label{onept}}
	p_x:=\mathbb{P}({\P}^{\Delta,x}\neq \emptyset)=\frac{\vertiii{K_0}_2^2}{K_0(\0)}, 
\end{equation*}
and conditional on ${\P}^{\Delta,x}\neq \emptyset$, the point in ${\P}^{\Delta,x}$ has intensity decaying exponentially fast with the distance to $x$:
\begin{equation*} \ignore{\label{twopt}}
	f_{x}(v)= \frac{|K_0(v-x)|^2} {\vertiii{K_0}_2^2}.
\end{equation*}
Furthermore, one can verify that $K^{x}(u,v)$ also satisfies Assumptions~1-3. For distinct $x_1, x_2 \in \real^d$, the second order intensity function at $x_1$ and $x_2$ is
$$\rho(x_1,x_2 )=K_0(\0)^2-|K_0(x_2-x_1)|^2\in \left(0, K_0(\0)^2\right].$$

Since the reduced Palm process ${\cal P}_{x_1}^{!}$ is also a determinantal point process, we can apply the result from \cite{MO21} to analyze the properties of the second-order reduced Palm process ${\cal P}_{x_1,x_2}^{!}$ of ${\cal P}$ at the distinct points $x_1$ and $x_2$. This process is the same as the reduced Palm distribution of ${\cal P}_{x_1}^{!}$ at point $x_2$, provided that ${\cal P}_{x_1}^{!}$ satisfies Assumptions~1-4. We thank J. M{\o}ller for pointing this out in a personal communication.

The Hermitian assumption can be directly verified using the Hermitian property of $K_0$.  {Assumptions}~2 and 3 follow from the compactness of the set $S$ and the boundedness of the kernel as shown in \eqref{iii-3}. Assumption~4 follows from the existence of the Palm process. 

Thus we have shown that Conditions 1 and 2 imply {Assumptions} 1-4.  Now we can finally establish the pertinent coupling.  Applying  \cite[Theorem~1]{MO21} again, there exists a coupling of ${\cal P}_{x_1,x_2}^{!}$ and ${\cal P}_{x_1}^{!}$ such that, almost surely, ${\cal P}_{x_1,x_2}^{!}\subseteq {\cal P}_{x_1}^{!}$, and ${\P}^{\Delta,x_1,x_2}:={\cal P}_{x_1}^{!}\backslash {\cal P}_{x_1,x_2}^{!}$ consists of at most one point. The probability of this event is given by 
$$p_{x_1,x_2}:=\mathbb{P}({\P}^{\Delta,x_1,x_2}\neq \emptyset)=\frac{\vertiii{K^{x_1}(x_2,\cdot)}_{2}^2}{K^{x_1}(x_2,x_2)}, $$ 
and conditional on ${\P}^{\Delta,x_1,x_2}\neq \emptyset$, the intensity of the point in ${\P}^{\Delta,x_1,x_2}$ is $$f_{x_1,x_2}(v)=|K^{x_1}(x_2,v)|^2/\vertiii{K^{x_1}(x_2,\cdot)}_{2}^2.$$

In summary, we can find a coupling of ${\cal P}_{x_1,x_2}^{!}$, ${\cal P}_{x_1}^{!}$ and ${\cal P}$ such that ${\cal P}_{x_1,x_2}^{!}\subseteq {\cal P}_{x_1}^{!}\subseteq {\cal P}$, and ${\P}^{\Delta,x_1,x_2}={\cal P}_{x_1}^{!}\backslash {\cal P}_{x_1,x_2}^{!}$ and $ {\P}^{\Delta,x_1}={\cal P}\backslash {\cal P}_{x_1}^{!}$ both contain at most one point and the intensity $f_{x_1,x_2}$ decays exponentially fast with respect to $|x_1 - v|$ and $|x_2 - v|$. Since the marks are independent, we automatically obtain a coupling of the corresponding marked point processes.

From Lemma~\ref{mixing-uncentered}, to show the exponential mixing  \eqref{mixing-score}, it is equivalent to show the corresponding property for the sums of scores without centering.  
We start with $j=2$. 
 Since $H_{\lambda,A}^2=\iint_{A^2}M_{x_1}M_{x_2}{\cal P}(d x_1){\cal P}(d x_2)$, using the second order Palm distribution and the independence of the marks conditional on the ground point process, we have for any disjoint  $A$ and $B$ in $\mathcal{B}(\mathbb{R}^d)$
\begin{align}
	&\left|\mathbb{E}\(H_{\lambda,A}^2g(H_{\lambda,B})\)-\mathbb{E}\(H_{\lambda,A}^2\)\mathbb{E}\(g(H_{\lambda,B})\)\right|\nonumber\\
	&= \left|\mathbb{E}\(\iint_{A^2}M_{x_1}M_{x_2}\left[g\(\int_{B}M_y{\cal P}(\mathrm{d}y)\)-g\(\int_{B}\tilde{M}_y{\cal P}'(\mathrm{d}y)\)\right]{\cal P}(d x_1){\cal P}(d x_2)\)\right|\nonumber\\
	&\le \left|\iint_{A^{2,\neq}}\rho(x_1,x_2)\mathbb{E}\(M_{\0}\)^2 \mathbb{E}\left[g\(\int_{B}M_y{\cal P}_{x_1,x_2}^{!}(\mathrm{d}y)\)-g\(\int_{B}\tilde{M}_y{\cal P}'(\mathrm{d}y)\)\right]dx_1dx_2\right|\nonumber\\
	&\quad+ \left|\int_{A}K_0(\0)\mathbb{E}\(M_{\0}^2\)\mathbb{E}\( \left[g\(\int_{B}M_y{\cal P}_x^{!}(\mathrm{d}y)\)-g\(\int_{B}\tilde{M}_y{\cal P}'(\mathrm{d}y)\)\right]\)\mathrm{d}x\right|,\label{thm3.1.1}
\end{align}
where here and in the following, ${\cal P}'$, $\tilde{M}_x$ denote independent copies of ${\cal P}$ and $M_x$, respectively. The assumption of independence between the marks and the ground process enables us to couple $\hat\P$ and its corresponding Palm process via $\P$ and its Palm process, resulting in the clean explicit form in \eqref{thm3.1.1}.

By condition 1, there are positive constants $C_1$ and $C_2$ such that $|K_0(x)|\le C_1\mathrm{e}^{-C_2|x|}$ for all $x\in \mathbb{R}^d$. 
For any $x \in A$ we bound $K_0(x - v)$ by $C_1\mathrm{e}^{-C_2d(A,B)}$. We find, for $x_1\neq x_2$,
\begin{align*}
	&\prob\(\({\cal P}\backslash {\cal P}_{x_1,x_2}^{!}\)(B)\ne0\)=\prob\(\({{\P}}^{\Delta,x_1,x_2}\cup {{{\P}}^{\Delta,x_1}}\)(B)\ne 0\)\nonumber\\
	&\le  {  \int_{B}  (p_{x_1}f_{x_1}(v)+ p_{x_1,x_2}  f_{x_1,x_2}(v) ) \mathrm{d}v } \nonumber\\
	&=\int_{B}\(\frac{|K_0(v-x_1)|^2}{K_0(\0)}+\frac{|K^{x_1}(x_2,v)|^2}{K^{x_1}(x_2,x_2)}\)\mathrm{d}v\nonumber\\
	&=\int_{B}\frac{1}{K_0(\0)}\(|K_0(v-x_1)|^2+\frac{|K_0(x_2-v)K_0(\0)-K_0(x_2-x_1)K_0(x_1-v)|^2}{K_0(\0)^2-|K_0(x_2-x_1)|^2}\)\mathrm{d}v\nonumber\\
	&\le\frac{1}{K_0(\0)} \text{Vol}(B)\[C_1^2\mathrm{e}^{-2C_2d(A,B)}+\frac{2(C_1^4\vee 1)\mathrm{e}^{-2C_2d(A,B)}}{K_0(\0)^2-|K_0(x_2-x_1)|^2}\]\\
	&=\frac{1}{K_0(\0)} \text{Vol}(B)\[C_1^2\mathrm{e}^{-2C_2d(A,B)}+\frac{2(C_1^4\vee 1)\mathrm{e}^{-2C_2d(A,B)}}{\rho(x_1,x_2)}\].
\end{align*}

By independence and the bound $\|M_{\0}\|_1\le \|M_{\0}\|_{p_0}<\infty$, the first term on the right-hand side of \eqref{thm3.1.1} is bounded above by
\begin{align}
	&2K_0(\0)^2\text{Vol}(A)^2\text{Vol}(B)\mathbb{E}(M_{\0})^2\frac{1}{K_0(\0)}\[C_1^4\mathrm{e}^{-2C_2d(A,B)}+(C_1^4\vee 1)\mathrm{e}^{-2C_2d(A,B)}\]\nonumber\\
	&=:\text{Vol}(A)^2\text{Vol}(B)C_3\mathrm{e}^{-C_4d(A,B)}.\label{thm3.1.2}
\end{align}
Similarly, for the second term on the right-hand side of \eqref{thm3.1.1}, we have
\begin{align}
	\prob\(\({\cal P}\backslash {\cal P}_{x}^{!}\)(B)\ne 0\)=\int_{B}\frac{|K_0(v-x_1)|^2}{K_0(\0)}\mathrm{d}v\le \frac{1}{K_0(\0)}\text{Vol}(B)C_1^2\mathrm{e}^{-2C_2d(A,B)},\label{thm3.1.4}
\end{align} 
which, together with the boundedness of $g$ and the bound  $\|M_{\0}\|_2\le \|M_{\0}\|_{p_0}<\infty$, ensures that the second term is bounded above by 
\begin{equation}
	2\text{Vol}(A)\text{Vol}(B)\mathbb{E}(|M_{\0}|)C_1^2\mathrm{e}^{-2C_2d(A,B)}:=\text{Vol}(A)\text{Vol}(B)C_5\mathrm{e}^{-C_4d(A,B)}. \label{thm3.1.3}
\end{equation} 
Combining \eqref{thm3.1.1}, \eqref{thm3.1.2} and \eqref{thm3.1.3} gives
\begin{equation}\label{thm3.1.7}
\left|\mathbb{E}\(H_{\lambda,A}^2g(H_{\lambda,B})\)-\mathbb{E}\(H_{\lambda,A}^2\)\mathbb{E}\(g(H_{\lambda,B})\)\right|\le (\text{Vol}(A)^2\vee 1)\text{Vol}(B)(C_3+C_5)\mathrm{e}^{-C_4d(A,B)}.
\end{equation}

For $j=1$ and $g(x)=x$, we have
\begin{align}
	&\left|\mathbb{E}(H_{\lambda,A}H_{\lambda,B})-\mathbb{E}(H_{\lambda,A})\mathbb{E}(H_{\lambda,B})\right|\nonumber\\
	&=\left|\mathbb{E}\iint_{A\times B}M_{x_1}M_{x_2}\mathcal{P}(\mathrm{d}x_1)\mathcal{P}(\mathrm{d}x_2)-\(\mathbb{E}\int_A M_{x}\mathcal{P}(\mathrm{d}x)\)\(\mathbb{E}\int_B M_{y}\mathcal{P}(\mathrm{d}y)\)\right|\nonumber\\
	&=\left|\int_{A}K_0(\bm{0})\mathbb{E}(M_{\bm{0}})\mathbb{E}\(\int_B M_y\mathcal{P}_x^{!}(\mathrm{d}y)-\int_B \tilde{M}_y\mathcal{P}'(\mathrm{d}y)\)dx\right|\nonumber\\
	&\le 2\mathbb{E}(M_{\bm{0}})^2\Vol(A)\Vol(B)C_1^2\mathrm{e}^{-2C_2d(A,B)},\label{thm3.1.5}
\end{align}
where the last inequality follows from \eqref{thm3.1.4}. If $g\in\mathcal{F}_b$ instead, replacing $M_{x_2}$ in \eqref{thm3.1.5} by $g(M_{x_2})$ yields 
\begin{align}
	&\left|\mathbb{E}(H_{\lambda,A}g(H_{\lambda,B}))-\mathbb{E}(H_{\lambda,A})\mathbb{E}(g(H_{\lambda,B}))\right|\nonumber\\
	&=\left|\int_{A}K_0(\bm{0})\mathbb{E}(M_{\bm{0}})\mathbb{E}\(\int_B g(M_y)\mathcal{P}_x^{!}(\mathrm{d}y)-\int_B g(\tilde{M}_y)\mathcal{P}'(\mathrm{d}y)\)dx\right|\nonumber\\
	&\le 2|\mathbb{E}(M_{\bm{0}})|\Vol(A)\Vol(B)C_1^2\mathrm{e}^{-2C_2d(A,B)}. \label{thm3.1.6}
\end{align}
Combining \eqref{thm3.1.5}, \eqref{thm3.1.6}, and the moment properties of $M_{\bm{0}}$ discussed above, we obtain:
$$
\left|\mathbb{E}\(H_{\lambda,A}g(H_{\lambda,B})\)-\mathbb{E}\(H_{\lambda,A}\)\mathbb{E}\(g(H_{\lambda,B})\)\right|\le \text{Vol}(A)\text{Vol}(B)C_6\mathrm{e}^{-C_4d(A,B)}.
$$
Together with \eqref{thm3.1.7} and Lemma~\ref{mixing-uncentered}, this implies the exponential
 mixing  of score sums \eqref{mixing-score}
for all disjoint $A, B$ without requiring that they satisfy a separation condition.  We have
  $\gamma_0=\gamma_0'+d=2d+d=3d$,  $\gamma_2=\gamma_2'=C_4$, and any $\gamma_3>0$, since $\Vol(D)\le (\diam(D))^d$ for all $D\in{\cal B}(\mathbb{R}^d)$. The proof now follows from  Theorem~\ref{mainthm1}~(a). \qed

\subsection{Unevenly spaced time series}

Consider an unevenly spaced time series, namely  a sequence of random events\\ $\{(T_i,U_i)\}_{i\in\Z:=\{\dots,-2,-1,0,1,2,\dots\}}$, where the time intervals between successive recording times $\{T_i\}_{i\in\Z}$ are not constant. A common approach to analyzing such data involves transforming it into equally spaced observations through interpolation (see \citet{T90}, \citet{BD16}, and references therein). We take a different approach and work with the untransformed data.

In this section, we  use Theorem \ref{mainthm1}~(b) to establish a quantitative CLT for sums of score functions driven by the sequence $\{(T_i,U_i)\}_{i\in\Z}$, where  $\{T_i\}_{i\in\Z}$ is an increasing sequence forming a stationary renewal point process, with $T_1$ being the first renewal after time 0, and $\{U_i\}_{i\in\Z}$  a strictly stationary sequence of random variables independent of $\{T_i\}_{i\in\Z}$. The process $\{(T_i,U_i)\}_{i\in\Z}$ is a marked point process, with the stationary renewal point process as the underlying ground point process ${\cal P}=\sum_{i\ge 1}\delta_{T_i}$ and with marks  $U_i$ associated with the points $T_i$ taking values in a given measurable mark space $\M$.  We do not assume that   ${\cal P}$ satisfies the EDD property.

The behavior of a renewal point process depends on the properties of its renewal times. The renewal point process is called {\it non-lattice} if there do not exist constants $a\in \mathbb{R}$ and $b\in(0,\infty)$ such that the support of the inter-arrival times is a subset of $a+b\mathbb{N}$. It is called {\it spread-out} if there exists a positive integer $i$ such that the $i$-th convolution of the distribution function, $G$, of the inter-arrival times has an absolutely continuous component. That is, $G^{*i}=c_1G_a+c_2G_s$ holds for two constants $c_1\in(0,\infty),~c_2=1-c_1\ge 0$, where $G_a$ is an absolutely continuous distribution and $G_s$ is another distribution. If a renewal point process is spread-out, then it is non-lattice. However, the reverse is not true: if the inter-arrival distribution is discrete and takes all positive rational values with positive probability, then the process is non-lattice but not spread-out.

The dependence structure of the marks is as follows. Fix $k_0 \in \N$. Each mark $U_i$ depends on the values of the preceding $k_0$ marks, with additional dependence on $R_i$, where   $\{R_i\}_{i\in\Z}$ is a sequence of $\iid$ random variables taking values in some measurable space $({\cal E},{\mathscr{E}})$. We assume 
\begin{equation}\label{timeseriesdefa1}
	U_i=f(\{U_j\}_{i-k_0 \le j< i}, R_i),   \  i \in \Z,  
\end{equation}
where $f: {\M}^{k_0}\times {\cal E} \to \M$ is measurable,  and $R_i$ is independent of both $\{T_j\}_{j\in\Z}$ and $\{U_j\}_{j\le i-1}$. This assumption includes the autoregressive process of order $k_0$ as a special case and guarantees that $\{U_i\}_{i\in \Z}$ is strictly stationary. Under this setting, the marked point process $\hat{{\cal P}}=\sum_{i\in \mathbb{Z}}\delta_{(T_i,U_i)}$ on $\mathbb{R}\times \M$ reduces to a point-process representation of an AR($k_0$) model when $T_i=i$ for $i\in\mathbb{Z}$, that is, when the renewal point process is lattice-based.

Define $\xi: (\R \times \mathbb{M}) \times  {\cal N}_{\R \times  \mathbb{M}} \to \R$ by  
\begin{equation}\label{timeseriesdefa2}
	\xi((T_i,U_i),\hat{{\cal P}})=\xi_{loc}\((T_i,U_i),\hat{{\cal P}}|_{[T_{i-j_0}\wedge (T_i-t_0), T_{i+j_0}\vee (T_i+t_0)]}\)
\end{equation}
for all $i \in \mathbb{Z}$, where $j_0 \in \mathbb{N}\cup\{0\}$ and  $t_0\ge 0$ are constants, and where $\xi_{loc}:  (\R \times \mathbb{M}) \times  {\cal N}_{\R \times  \mathbb{M}}  \to \R$ characterises the interaction between a renewal point (together with its mark) and its surrounding window in time, as measured by $t_0$ and/or the number of renewals $j_0$. 
We define $H_{\lambda}=\sum_{T_i\in {\cal P}\cap \Gamma_{\lambda}}\xi((T_i, U_i),\hat{{\cal P}})$. Statistics $H_{\lambda}$ of interest are generated by  choosing $\xi$ as follows:
\begin{description}
	\item{(i)} $\xi((T_i,U_i),\hat{{\cal P}})=U_i$, in which case $H_{\lambda}$ gives the total sum of a strictly stationary sequence of random variables counted by the renewal process;
	\item{(ii)} $\xi((T_i,U_i),\hat{{\cal P}})=(U_i-\mathbb{E}U_{i})(U_{i-j}-\mathbb{E}U_{i-j})$, in which case $H_{\lambda}$ becomes, up to a multiplicative constant, the estimator of the $j$-th autocovariance in time series, cf.  \citet[Section $3.1$]{H94}; 
	\item{(iii)} $\xi((T_i,U_i),\hat{{\cal P}})=g(T_i-T_{i-1})$ for some  function $g: \R \to \R$, when one studies the renewal-reward process (cf. \citet[Section~$7.4$]{R10}); and
	\item{(iv)} $\xi((T_i,U_i),\hat{{\cal P}})=h(U_i)$ for some function $h:\mathbb{M}\rightarrow \real$;  when $k_0=1$ and $\{T_i\}_{i\in \Z}$ reduces to the lattice $\Z$, one obtains  the discrete version of the additive Markov functionals, cf. \citet{R70}. 
\end{description}

Since the ground process and the marks arise from an iterative approach, we impose a decay-of-dependence condition on the marks.  This is achieved through the {\it $\phi$-mixing coefficient} between two $\sigma$-algebras $\mathscr{A}$ and $\mathscr{B}$, defined as 
$$ \phi(\mathscr{A}, \mathscr{B}):=\sup_{E\in \mathscr{A}, E'\in \mathscr{B},\mathbb{P}(E)>0}\left|\mathbb{P}(E'|E)-\mathbb{P}(E')\right|.$$

We define $H_{\lambda}=\sum_{T_i\in {\cal P}\cap \Gamma_{\lambda}}\xi((T_i, U_i),\hat{{\cal P}})$, $\sigma_{\lambda}^2=\Var H_{\lambda}$, and $Z_{\lambda}=(H_{\lambda}-\mathbb{E}H_{\lambda})/\sigma_{\lambda}$, where $\Gamma_{\lambda}=[-0.5\lambda,0.5\lambda]$. The main result of this section is as follows:

\begin{thm}\label{example2}(normal approximation for statistics of a spread-out renewal point process)
	Given  a marked stationary spread-out renewal point process,  assume that  the inter-arrival times have a finite $\alpha$-th moment for some $\alpha>2$, $\mathbb{E}_{{\bf 0}}\(|\xi(({\bf 0},U_{\bf 0}),\hat{{\cal P}})|^{p}\)<\infty$ for some constant $p>p_0 \ge 3$,  and
	\begin{equation}\label{phimixing}
		\phi(\sigma(U_1,U_2,\dots,U_{k_0}),\sigma(U_{n_0+1},U_{n_0+2},\dots, U_{n_0+ k_0}))<\frac12
	\end{equation}
	for some $n_0 \in \N, n_0 > k_0$. If $\var H_{\lambda}=\Omega(\lambda^\nu)$ for some $\nu>2/3$, and $$\frac{(36-6\nu)\vee 28}{3\nu-2}<\frac{\alpha (p_0-2)}{p_0},$$
	then 
	\begin{align*}
		d_W\( \frac{H_{\lambda} - \E H_{\lambda}} {\sqrt{\Var H_{\lambda}}}, Z\)&=O\(\lambda^{-1.5\nu+1+\tau(\g_0,\g_2)}\)= O\(\lambda^{-1.5\nu+1+3p_0\left(\frac{2\nu+4}{6p_0+\alpha(p_0-2)}\vee\frac{\nu+4}{2p_0+\alpha(p_0-2)}\right)}\)\\
		&= o\(1\),
	\end{align*}
	where $Z\sim N(0,1)$, $\g_0=2$, $\g_2=\alpha (p_0-2)/(2p_0)$ and $\tau$ is defined in \eqref{taudef}.
\end{thm}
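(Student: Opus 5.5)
We will apply Theorem~\ref{mainthm1}~(b) with $d=1$. Since the ground process is not assumed EDD, polynomial mixing \eqref{mixing-score'} will be verified directly, with $\g_0=2$ and $\g_2=\alpha(p_0-2)/(2p_0)$. For these values, $\tau(2,\g_2)$ from \eqref{taudef} reduces to the displayed exponent, and the hypothesis $\frac{(36-6\nu)\vee 28}{3\nu-2}<\frac{\alpha(p_0-2)}{p_0}=2\g_2$ is exactly \eqref{relationship} with $d=1$, $\g_0=2$. This is routine algebra.

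\emph{Moments.} A stationary renewal process whose inter-arrival times have a finite $\alpha$-th moment, $\alpha>2$, has $\card(\P\cap\Gamma_1)$ with all moments finite, since a renewal count on a unit interval has geometric tails. So $p_1$ can be taken arbitrarily large. By stationarity and the Palm--cycle (Ryll-Nardzewski) correspondence, the Palm moment $\E_x|\xi|^{p}$ is independent of $x$. Taking $p_2=p>p_0$ and $p_1$ large then gives $p_1p_2/(p_1+p_2-1)\ge p_0\ge3$.

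\emph{Mixing via coupling.} The plan mirrors the proof of Corollary~\ref{maincorTV}, with EDD replaced by a regeneration coupling. Fix disjoint $A,B$ as in Definition~\ref{WSM} and put $D=d(A,B)$. We build a copy of $\hP$ that agrees with $\hP$ near $A$ and agrees with an independent copy $\tilde{\hP}$ near $B$, except on a bad event $E^c$. Two couplings are needed.
\begin{enumerate}
\item[(i)] Ground process. Because the renewal process is spread-out with $\E\,\text{(gap)}^\alpha<\infty$, the classical Lindvall/Thorisson coupling applies. The process restarted after the last renewal before the right end of $A$, and an independent stationary copy, can be coupled to have coinciding renewal epochs after a coupling time $T$ with $\prob(T>t)=O(t^{-(\alpha-1)})$. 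This comes from the rate in total variation of the spread-out renewal theorem.
\item[(ii)] Marks. Condition \eqref{phimixing} for $\phi<1/2$ on blocks of length $k_0$, together with the Markov structure \eqref{timeseriesdefa1} on the state $(U_{i-k_0},\dots,U_{i-1})$, gives a Doeblin-type minorization every $n_0$ steps. Hence the mark chain can be coupled to its stationary independent copy at a geometric rate in the number of renewals. Marks are independent of $\{T_i\}$, so the two couplings combine.
\end{enumerate}
We must also control the window in \eqref{timeseriesdefa2}, which uses $j_0$ renewals or time $t_0$ on each side. Its range exceeds $D/3$ only if some $j_0$ consecutive gaps near $A$ or $B$ exceed order $D$. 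This has probability $O(\Vol\cdot D^{-\alpha})$ by Markov's inequality. Collecting terms gives $\prob(E^c)\le C(\Vol(A)\vee1)(\Vol(B)\vee1)D^{-\alpha/2}$ (or better). The exponent $\alpha/2$ is chosen conservatively, to absorb the extra counting of renewals and the discrepancy between Palm and stationary versions.

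\emph{Conclusion and main obstacle.} As in Step (iv) of the proof of Corollary~\ref{maincorTV}, we apply H\"older's inequality with Lemma~\ref{moments-XB} at exponent $p_0$. This yields a covariance bound of order $\|S_A\|_{p_0}^j\,\prob(E^c)^{1-2/p_0}$, with $\Vol\le\diam$ in $d=1$, so $\g_0=2$ and $\g_2=\frac{\alpha}{2}(1-\frac{2}{p_0})$, as claimed. The main obstacle is step (i): a polynomial-rate coupling of the stationary renewal process with a version conditioned on the configuration in a neighbourhood of $A$, uniformly in that configuration. The delay distribution there is the residual gap, and it has only an $(\alpha-1)$-th moment. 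We will first try Lindvall's coupling-time moment bound, $\E T^{\alpha-1}<\infty$ for spread-out renewals, applied conditionally on the last renewal before $A$, integrating out the overshoot with its $(\alpha-1)$-moment. The loss to $\alpha/2$ leaves room for this step.
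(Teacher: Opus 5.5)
Your reduction to Theorem~\ref{mainthm1}(b) with $d=1$, $\gamma_0=2$, $\gamma_2=\alpha(p_0-2)/(2p_0)$ is right, as is the algebra for \eqref{relationship} and $\tau$. Your treatment of the marks is a genuinely different route from the paper's. The paper couples only the \emph{ground} process (Lemma~\ref{example2.lma2}, giving independent pieces on $A_e$, $B_{1,e}$, $B_{2,e}$) and reassigns the original marks in order. It then removes the remaining mark dependence with a covariance inequality: $\phi$-mixing of the lifted chain $\mathbf{U}_i$ via Bradley, then Davydov's inequality, separately for $j=1$ and $j=2$. You instead couple the lifted mark chain to an independent copy, using the contraction $\sup_F|P^{n_0}(x,F)-P^{n_0}(y,F)|\le 2\phi<1$ (for a.e.\ states $x,y$) that \eqref{phimixing} provides. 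Everything then reduces to one H\"older estimate on a bad event, as in Corollary~\ref{maincorTV}. That route is viable and avoids the covariance inequalities, at the price of building a joint ground--mark coupling.

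However, as written your argument does not give $\gamma_0=2$. Your union bound over points of $A$ and $B$ for the score windows puts volume factors into the bad event, and you carry them into $\mathbb{P}(E^c)\le C(\Vol(A)\vee1)(\Vol(B)\vee1)D^{-\alpha/2}$. For $j=2$, H\"older then gives $\|S_A\|_{p_0}^2\,\mathbb{P}(E^c)^{1-2/p_0}\le C\Vol(A)^{3-2/p_0}(\Vol(B)\vee1)^{1-2/p_0}D^{-\gamma_2}$, i.e.\ $\gamma_0=3-2/p_0$. This is exactly why Corollary~\ref{maincorTV}(b) ends with $\gamma_0=d(2+\ubar{p}_1)$. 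Keeping the window term separate as $(\Vol(A)+\Vol(B))D^{-\alpha}$ does not rescue it. The piece $\Vol(A)^{3-2/p_0}D^{-\alpha(1-2/p_0)}$ is not dominated by $\Vol(A)^2D^{-\alpha(1-2/p_0)/2}$, because the separation condition only guarantees $D\gtrsim\log\diam(A)$. With the larger $\gamma_0$, both the hypothesis \eqref{relationship} and the rate change, so the theorem as stated does not follow.

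The missing idea is a bad event whose probability does not depend on $\Vol(A)$ or $\Vol(B)$. Require at least $C_1\rho\ge j_0+k_0$ renewals in each buffer interval flanking $A$ and $B$ (the paper's $A_{le},A_{re},B_{1,re},B_{2,le}$) and in the gaps $D_1,D_2$. Then every window $[T_{i-j_0}\wedge(T_i-t_0),\,T_{i+j_0}\vee(T_i+t_0)]$ with $T_i\in A$ (resp.\ $T_i\in B$) lies in $A_e$ (resp.\ $B_{1,e}\cup B_{2,e}$), however many points $A$ or $B$ contains. By Lemma~\ref{example2.lma1} this event fails with probability $O(\rho^{-\alpha/2})$. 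The same event supplies the order-$\rho$ renewals in the gaps that your geometric mark coupling needs. It is also the actual source of the exponent $\alpha/2$, since the ground coupling costs only $o(\rho^{-(\alpha-1)})$, so it must be proved rather than ``chosen conservatively''.

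Also, in $d=1$ the set $B$ lies on both sides of $A$. The regeneration coupling must therefore be applied at two places (sequentially, as in the proof of Lemma~\ref{example2.lma2}), not only after the right end of $A$. With these changes $\mathbb{P}(E^c)=O(D^{-\alpha/2})$, and your H\"older step yields $\gamma_0=2$, $\gamma_2=\alpha(p_0-2)/(2p_0)$.
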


\begin{re} 
	When  the ground process  $\P$ is a homogeneous Poisson point process, we can apply Theorem \ref{mainthm1}~(a) instead of (b) and use the same approach to derive a better bound. We omit the details.
\end{re}

The outline of the proof is as follows. We apply Theorem~\ref{mainthm1}~(b), where the main challenge is to establish the polynomial decay property in Definition~\ref{WSM}. Since $d=1$, $A$ is an interval and $B$ lies on both sides of $A$
with $d(A,B) = \rho$. The estimate of $\cov(S_A,g(S_B))$ reduces to accounting for the cost of replacing $S_B$ by independent copies of $S_{B_1}$ and $S_{B_2}$, where $B_1 \subseteq B$ is the interval to the left of $A$ and  $B_2 \subseteq B$ is the interval to the right.

To carry out this replacement, we must also account for the number of renewals  between  $A$ and $B_1$ and also between $A$ and $B_2$. Lemma~\ref{example2.lma1} shows that the number of renewals is roughly linear in the length of the interval of interest, while Lemma~\ref{example2.lma2} shows that the cost of replacing each piece in the ground process by an independent copy can be controlled using the renewal point process property: given sufficient time, the renewal point process returns to stationarity, allowing us to apply the same argument one interval at a time. The cost of replacing the marks is also controllable by considering the lifted mark process $\{{\bf U}_i\}_{i\in \Z}$, where ${\bf U}_i:=(U_{i},U_{i+1},\dots, U_{i+k_0-1})$, as a Markov process and using the $\phi$-mixing property of the Markov process, as in \citet{B05}, to control $\phi$-mixing on $\sigma$-algebras distant $\rho$ from each other. The final step is to combine these two lemmas; the complexity arises from the fact that each of $A$, $B_1$ and $B_2$ requires two-sided neighborhoods to allow for the dependence of the marks. 

Now we consider the asymptotic linearity of the number of renewals in an interval.

\begin{lma}\label{example2.lma1} (asymptotic linearity of the number of renewals) 
	Given a stationary non-lattice renewal point process $\{T_i\}$, assume the inter-arrival times $\tau_i:=T_{i+1}-T_i$ satisfy $\mean(\tau_i^\alpha)<\infty$ for some $\alpha>2$. Then for any constant  $C\in \(0,(2\mathbb{E}\tau_1)^{-1}\)$, there exist positive constants $C'$ and $C''$ depending on $C$ such that the number of renewals $N_t$ in the time interval $[0,t]$ satisfies 
	$$ \mathbb{P}\left(N_t\le C t\right) \le C't^{-\alpha/2} \mbox{ for }t\ge C''.$$
\end{lma}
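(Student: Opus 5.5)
We bound the probability that the $(\lfloor Ct\rfloor+1)$-st renewal after time $0$ has not occurred by time $t$. Let $m:=\lfloor Ct\rfloor$. Since $N_t$ counts the renewals $T_1<T_2<\cdots$ lying in $[0,t]$, we have the identity
$$\{N_t\le m\}=\{T_{m+1}>t\},\qquad T_{m+1}=T_1+\sum_{i=1}^{m}\tau_i .$$
For the stationary (delayed) renewal process, $T_1$ is independent of the i.i.d. inter-arrival times $\tau_1,\tau_2,\dots$. Moreover, $T_1$ has the equilibrium density $(1-G(x))/\mathbb{E}\tau_1$ on $(0,\infty)$. The choice $C<(2\mathbb{E}\tau_1)^{-1}$ gives $m\,\mathbb{E}\tau_1\le t/2$, and hence
$$\mathbb{P}(N_t\le Ct)\le \mathbb{P}\(T_1>t/4\)+\mathbb{P}\(\sum_{i=1}^m(\tau_i-\mathbb{E}\tau_i)>t/4\).$$
The non-lattice assumption plays no role in this argument; only the moment condition and stationarity are used.

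\emph{Fluctuation term.} Here we use Rosenthal's (or the Marcinkiewicz--Zygmund) inequality for the centered i.i.d. sum, valid since $\alpha>2$:
$$\mathbb{E}\Big|\sum_{i=1}^m(\tau_i-\mathbb{E}\tau_i)\Big|^\alpha\le c_\alpha\Big(m\,\mathbb{E}|\tau_1-\mathbb{E}\tau_1|^\alpha+\big(m\Var\tau_1\big)^{\alpha/2}\Big)=O(m^{\alpha/2})\quad (m\ge1).$$
Markov's inequality then bounds the second probability by $4^\alpha c\, m^{\alpha/2}t^{-\alpha}\le c'\,t^{-\alpha/2}$, because $m\le Ct$. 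If $m=0$ this term vanishes.

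\emph{Delay term.} Using the equilibrium density,
$$\mathbb{P}(T_1>s)=\frac{1}{\mathbb{E}\tau_1}\int_s^\infty(1-G(x))\,\mathrm{d}x=\frac{\mathbb{E}(\tau_1-s)^+}{\mathbb{E}\tau_1}\le\frac{\mathbb{E}\big(\tau_1\bone_{\{\tau_1>s\}}\big)}{\mathbb{E}\tau_1}\le \frac{\mathbb{E}\tau_1^\alpha}{\mathbb{E}\tau_1}\,s^{1-\alpha}.$$
Since $\alpha>2$ gives $\alpha-1\ge\alpha/2$, taking $s=t/4$ yields $O(t^{-\alpha/2})$ for $t\ge1$.

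Summing the two bounds gives $\mathbb{P}(N_t\le Ct)\le C't^{-\alpha/2}$ for all $t\ge C''$, where the threshold $C''$ (for instance $C''=1$) only ensures $t\ge1$. I expect the main point requiring care to be the delay term. One must correctly identify the law of $T_1$ in the stationary version, in which $T_1$ has only an $(\alpha-1)$-st moment rather than an $\alpha$-th. It then needs checking that this weaker tail still beats $t^{-\alpha/2}$, which is exactly where $\alpha>2$ enters a second time. The rest is a routine application of the moment inequality.
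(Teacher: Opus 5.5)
Your proof is correct and follows essentially the same route as the paper. Both arguments split into two pieces: the tail of the stationary delay $T_1$, which has a finite $(\alpha-1)$-st moment and so contributes $O(t^{1-\alpha})=O(t^{-\alpha/2})$ because $\alpha>2$, and a Rosenthal--Markov bound on the centered i.i.d. sum of $\lfloor Ct\rfloor$ inter-arrival times. The only difference is cosmetic: you use the identity $\{N_t\le m\}=\{T_{m+1}>t\}$ with a union bound at $t/4$, whereas the paper conditions on $T_1=s\le t/2$ and passes to the zero-delayed count $N_{o,t/2}$.
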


\noindent{\it Proof.} Let $G$ be the distribution of the inter-arrival times with mean $\mu_G=\mathbb{E}(\tau_1)$, and let $N_{o,t}+1$ represent the number of renewals in the interval $[0,t]$, conditioned on a renewal at $0$. Let $F_{T_1}$ be the distribution function of $T_1$ and let $n_t=\lfloor Ct\rfloor$. 
Then
\begin{align}
	\mathbb{P}\left(N_t\le Ct\right)&= \int_0^\infty\mathbb{P}\left(N_t\le n_t|T_1=s\right)\mathrm{d}F_{T_1}(s)\nonumber\\
	&\le \int_0^{t/2}\mathbb{P}\left(N_{o,t-s}+1\le n_t\right)\mathrm{d}F_{T_1}(s)+\mathbb{P}\left(T_1>\frac{t}{2}\right)\nonumber\\
	&\le \mathbb{P}\left(N_{o,t/2}\le n_t-1\right)+\mathbb{P}\left(T_1>\frac{t}{2}\right).\label{example2.lma1.1}
\end{align}
Now, since $\mathbb{E}(T_1^{\alpha-1})= \mathbb{E}(\tau_1^\alpha)/(\alpha\mu_G) <\infty$ and $(t/2)^{\alpha-1}\mathbb{P}(T_1>t/2)\le \int_{t/2}^\infty s^{\alpha-1}\mathrm{d}F_{T_1}(s)\to 0$ as $t\to\infty$, we obtain
\begin{equation}\label{example2.lma1.2}
	\mathbb{P}\(T_1>\frac{t}2\)=o(t^{-(\alpha-1)}).
\end{equation}

	For the first term in \eqref{example2.lma1.1}, let $Y_i=\tau_i-\mu_G$, $S_{n}:=\sum_{i=1}^n\tau_i$, and $\zeta_{n}:=\sum_{i=1}^nY_i$. The probability $\mathbb{P}(N_{o,t/2}\le n_t-1)$ is related to the sum $S_{n_t}$, which can be expressed as follows:
\begin{equation}\label{example2.lma1.3}
	\mathbb{P}\left(N_{o,t/2}\le n_t-1\right)=\mathbb{P}\left(S_{n_t}>\frac{t}2\right)=\mathbb{P}\left(\zeta_{n_t}>\frac{t}2-n_t\mu_G\right).
\end{equation}
Next, we define $w_t=t/2-n_t\mu_G > 0$, and apply Chebyshev's inequality followed by a Rosenthal inequality in \citet[p.~279]{Rosenthal70}, which states that given $\alpha>2$, there exists a positive constant $C_\alpha$ depending only on $\alpha$  such that for any sequence of independent random variables $Y_i$ in the space $L^{\alpha}$ with $\mathbb{E}Y_i=0$, $1\le i\le n$,  we have 
$$
\left\|\sum_{i=1}^nY_i\right\|_\alpha\le C_\alpha\max\left\{\(\sum_{i=1}^n\mathbb{E}\(|Y_i|^\alpha\)\)^{1/\alpha},\(\sum_{i=1}^n\mathbb{E}\(Y_i^2\)\)^{1/2}\right\}.$$
This gives 
$$
\mathbb{P}(\zeta_{n_t}>w_t)\le \frac{\mathbb{E}(|\zeta_{n_t}|^\alpha)}{w_t^\alpha}\le c_\alpha w_t^{-\alpha}\max\left\{n_t\mathbb{E}(|Y_1|^\alpha),n_t^{\alpha/2}(\mathbb{E}(Y_1^2))^{\alpha/2}\right\},$$
where $c_\alpha:=C_\alpha^\alpha$ is a positive constant and hence
\begin{align}\label{example2.lma1.4}
	\mathbb{P}(\zeta_{n_t}>w_t)&=O\left(\frac{n_t^{\alpha/2}}{w_t^\alpha}\right)=O(t^{-\alpha/2}).
\end{align}
Combining (\ref{example2.lma1.1}) - (\ref{example2.lma1.4}) completes the proof. \qed 

\begin{lma}\label{example2.lma2} (the cost of replacing renewals on intervals by independent copies)
	Let $\{T_i\}$ be a stationary, spread-out renewal point process, where the inter-arrival times have a finite $\alpha$-th moment for some $\alpha>2$. Let $I_i=[a_i,b_i]$ for $i=1,\dots,{m}$, where ${m}\ge 2$ and $-\infty\le a_1<b_1<a_2<b_2<\dots<a_{m}<b_{m}\le \infty$, with the convention $(a,\infty]=(a,\infty)$ and $[-\infty, b]= (-\infty, b]$.  Let  $\mathscr{R}_i$ be the restriction of the renewal point process to the interval $I_i$,  and let $\mathscr{R}'_i$ be a copy of $\mathscr{R}_i$, such that \{$\mathscr{R}'_i:\ 1\le i\le {m}\}$ are independent of each other. For any constant $C\in(0,\infty)$, if the minimum separation distance between intervals satisfies $\min_{1\le i\le {m}-1}(a_{i+1}-b_i)\ge Ct$, then we have as $t\to\infty$
	$$ d_{TV}\left((\mathscr{R}_1,\dots, \mathscr{R}_{m}), (\mathscr{R}'_1,\dots,\mathscr{R}'_{m})\right) = o\left(t^{-(\alpha-1)}\right). $$
\end{lma}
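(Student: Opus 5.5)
We couple the true restrictions with independent copies one gap at a time and sum the costs. The tool is the coupling inequality for spread-out renewal processes. If the inter-arrival law $G$ is spread-out with finite $\alpha$-th moment, then a zero-delayed (or finitely delayed) renewal process and the stationary one can be coupled so that their shifted futures agree after some random coupling time $\mathcal{T}$. This is Lindvall's classical construction, which gives exact coupling rather than the $\epsilon$-coupling available in the merely non-lattice case. Moreover $\mathcal{T}$ has a finite $(\alpha-1)$-th moment, by the moment version of the coupling time bounds (Lindvall, Thorisson). Consequently $\mathbb{P}(\mathcal{T}>s)=o(s^{-(\alpha-1)})$, by the same Markov-plus-tail argument used for \eqref{example2.lma1.2}. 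It also suffices to handle finitely many intervals, since $m$ is fixed.

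We proceed by induction on $m$, via the triangle inequality:
$$d_{TV}\bigl((\mathscr{R}_1,\dots,\mathscr{R}_m),(\mathscr{R}'_1,\dots,\mathscr{R}'_m)\bigr)\le\sum_{k=1}^{m-1} d_{TV}\bigl(\mathcal{L}(\mathscr{R}_1,\dots,\mathscr{R}_{k+1})\otimes\mathcal{L}(\mathscr{R}'_{k+2},\dots),\ \mathcal{L}(\mathscr{R}_1,\dots,\mathscr{R}_k)\otimes\mathcal{L}(\mathscr{R}'_{k+1},\dots)\bigr).$$
Each summand is at most the total variation distance between the conditional law of $\mathscr{R}_{k+1}$, given the past up to time $b_k$, and its stationary law, averaged over that past. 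By the regenerative property, conditionally on $\mathcal{F}_{b_k}$ (the process up to $b_k$) the process after $b_k$ is a delayed renewal process. Its delay is the forward recurrence time $D=T_{N(b_k)+1}-b_k$, and its inter-arrival times are fresh i.i.d.\ copies with law $G$. In particular, when $a_1=-\infty$ or $b_m=\infty$ nothing changes, since only the restriction to the gap $[b_k,a_{k+1}]$ matters.

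For the key estimate, fix the gap length $g=a_{k+1}-b_k\ge Ct$. On $\{D\le g/2\}$ we apply the coupling inequality to the zero-delayed process started at $b_k+D$ against the stationary process. The two agree on $[a_{k+1},\infty)$ except with probability $\mathbb{P}(\mathcal{T}>g/2)=o(t^{-(\alpha-1)})$, uniformly in $D$; uniformity holds because the coupling is constructed from a renewal at the point $b_k+D$. On $\{D>g/2\}$ we use the trivial bound $1$. The forward recurrence time at a fixed time $b_k$ of a stationary process is distributed as $T_1$, so \eqref{example2.lma1.2} gives $\mathbb{P}(D>g/2)=o(t^{-(\alpha-1)})$. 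Summing the $m-1$ terms yields $o(t^{-(\alpha-1)})$.

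The main obstacle is the rate of the coupling time: we need $\mathbb{E}\mathcal{T}^{\alpha-1}<\infty$, together with a bound that is uniform over the initial delay. We will cite the moment bounds of Lindvall (\emph{Lectures on the Coupling Method}, Thm.~II.4.2 type results), which for spread-out $G$ with $\mathbb{E}\tau^\alpha<\infty$ give $\mathbb{E}\mathcal{T}^{\alpha-1}<\infty$ for a zero-delayed process versus the stationary one. If a direct citation does not give uniformity in $D$, we handle it differently. We run both processes from time $b_k$ and couple the zero-delayed process at $b_k+D$ with the stationary process. Since $D\le g/2$ leaves room $g/2$, the failure probability is bounded by the single quantity $\mathbb{P}(\mathcal{T}>g/2)$.
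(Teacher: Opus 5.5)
Your proposal is correct and follows essentially the paper's argument. Both use the hybrid telescoping via the triangle inequality, condition on the past and on the first renewal after $b_{i-1}$ (the forward recurrence time, distributed as $T_1$), split at half the gap using the tail bound \eqref{example2.lma1.2}, and control the remaining term with an exact coupling of the zero-delayed and stationary processes whose coupling time has a finite $(\alpha-1)$-th moment; the only difference is that the paper takes this coupling from Hern\'andez Ru\'iz rather than Lindvall.
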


\noindent{\it Proof.} Without loss of generality, we assume that $\{\mathscr{R}_i\}$ and 
$\{\mathscr{R}'_i\}$ are defined on the same probability space and are independent. Let ${\cal R}_i=\left(\mathscr{R}_1,\dots,\mathscr{R}_i,\mathscr{R}'_{i+1},\dots,\mathscr{R}'_{m}\right)$ for $1\le i\le m$ with the convention ${\cal R}_m=\left(\mathscr{R}_1,\dots,\mathscr{R}_m\right)$. The triangle inequality gives 
$$d_{TV}\left((\mathscr{R}_1,\dots, \mathscr{R}_{m}), (\mathscr{R}'_1,\dots, \mathscr{R}'_{m})\right)\le \sum_{i=2}^{m}d_{TV}\left({\cal R}_i, {\cal R}_{i-1}\right),
$$
and hence it suffices to show that 
$$d_{TV}\left({\cal R}_i, {\cal R}_{i-1}\right)=o\left(t^{-(\alpha-1)}\right)$$
for all $2\le i\le {m}$. 

To this end, let $\Omega$ denote all locally finite subsets of $\cup_{j=1}^{m}I_j$, representing all possible configurations of the renewal point processes restricted to $\cup_{j=1}^{m}I_j$,
and let $\Omega_i$ represent all locally finite subsets of $\left(\cup_{j=1}^{m}I_j\right)\backslash I_i$, representing all possible configurations of the renewal point processes restricted to $\left(\cup_{j=1}^{m}I_j\right)\backslash I_i$. Each configuration can be viewed as a finite integer-valued measure, and we equip  both $\Omega$ and $\Omega_i$ with the weak topology. Let $\cF$ and $\cF_i$ be the $\sigma$-algebras generated on $\Omega$ and $\Omega_i$, respectively. For ease of notation, let ${\cal R}_{i-}=\left(\mathscr{R}_1,\dots,\mathscr{R}_{i-1},\mathscr{R}'_{i+1},\dots,\mathscr{R}'_{m}\right)$. Without loss of generality, assume $b_{i-1}=0$, so we can use the notation $T_1$ to denote the first renewal time after $b_{i-1}$. Then, we have
\begin{align}
	&d_{TV}\left({\cal R}_i, {\cal R}_{i-1}\right)\nonumber\\
	&=\sup_{A\in \cF} \left|\prob\left({\cal R}_i\in A\right)-\prob\left({\cal R}_{i-1}\in A\right)\right| \nonumber\\
	&\le\sup_{A\in \cF}\int_{\Omega_i}\int_0^\infty \left|\prob\left({\cal R}_i\in A|{\cal R}_{i-}=\xi,T_1=s\right)-\prob\left({\cal R}_{i-1}\in A|{\cal R}_{i-}=\xi,T_1=s\right)\right|\nonumber\\
	&\ \ \ \ \ \ \ \ \ \times\prob\left({\cal R}_{i-}\in \mathrm{d}\xi,T_1\in \mathrm{d}s\right)\nonumber\\
	&\le \int_{\Omega_i}\int_0^\infty d_{TV}\left({\cal L}(\mathscr{R}_i|{\cal R}_{i-}=\xi,T_1=s),{\cal L}(\mathscr{R}_i)\right)\prob\left({\cal R}_{i-}\in \mathrm{d}\xi,T_1\in \mathrm{d}s\right)\nonumber\\
	&\le \int_{\Omega_i}\int_0^{0.5Ct} d_{TV}\left({\cal L}(\mathscr{R}_i|{\cal R}_{i-}=\xi,T_1=s),{\cal L}(\mathscr{R}_i)\right)\prob\left({\cal R}_{i-}\in \mathrm{d}\xi,T_1\in \mathrm{d}s\right)\nonumber\\
	&\ \ \ +\prob\left(T_1>0.5Ct\right).\label{renewalxia-02}
\end{align}
Conditional on $T_1=s$, the distribution of $\mathscr{R}_i$ does not depend on ${\cal R}_{i-}$, and hence the bound \eqref{renewalxia-02} simplifies to
\begin{align}
	&d_{TV}\left({\cal R}_i, {\cal R}_{i-1}\right)\nonumber\\
	&\le\int_0^{0.5Ct} d_{TV}\left({\cal L}(\mathscr{R}_i|T_1=s),{\cal L}(\mathscr{R}_i)\right)\prob\left(T_1\in \mathrm{d}s\right)+\prob\left(T_1>0.5Ct).\right.\label{renewalxia-03}
\end{align}
Now, we use the coupling defined in \citet[Section~11.2]{Rta}
 to complete the proof. Let $\{T_{o,i}\}$, with $T_{o,0}=0$, be the zero-delayed renewal point process where the inter-renewal times have the same distribution as that of $\tau_1$. From the proof in \cite[p.~435--436]{Rta}, we know that there exists a positive random variable $\mathscr{T}$ and a coupling $\left(\{T_{o,i}'\},\{T_i'\}\right)$ such that
(i) ${\cal L}\left(\{T_{o,i}'\}\right)={\cal L}\left(\{T_{o,i}\}\right)$ and ${\cal L}\left(\{T_i'\}\right)={\cal L}\left(\{T_i\}\right)$, (ii) the renewal times of $\{T_{o,i}'\}$ coincide with those of $\{T_i'\}$ at and after $\mathscr{T}$, (iii) $\mean \left[\mathscr{T}^{\alpha-1}\right]<\infty$ 
\cite[Lemma~3.1]{Rta}. Using this coupling, we have for all $s \in [0,0.5Ct]$,
\begin{align}
	&d_{TV}\left({\cal L}(\mathscr{R}_i|T_1=s),{\cal L}(\mathscr{R}_i)\right)\nonumber\\
	&\le \prob(\{T_{o,i}'\}\cap [a_i-s,b_i-s]\ne \{T_i'\}\cap [a_i-s,b_i-s])\nonumber\\
	&\le \prob(\mathscr{T}> a_i-s).\label{renewalxia-04}
\end{align}
Thus, using the same proof as that for \eqref{example2.lma1.2}, from \eqref{renewalxia-03} and \eqref{renewalxia-04}, we obtain as $t \to \infty$
\begin{equation*}
	d_{TV}\left({\cal R}_i, {\cal R}_{i-1}\right)\le \prob(\mathscr{T}>a_i-0.5Ct)+\prob\left(T_1>0.5Ct\right)=o\left(t^{-(\alpha-1)}\right),
\end{equation*}
since $\mean\left(\mathscr{T}^{\alpha-1}\right)<\infty$ and $a_i-0.5Ct\ge 0.5Ct$, where we have assumed that $b_{i - 1} = 0$. This completes the proof. \qed 

\noindent{\it Proof of Theorem~\ref{example2}.}  
We deduce this from Theorem \ref{mainthm1}~(b).  The proof has multiple steps. 

\noindent Step (i).   Showing moment assumptions on ground point process and on the score.

We begin by verifying the moment assumptions \eqref{momground}, \eqref{momscore}.  Denote by $N_t$ and $N_{o,t}+1$ the numbers of renewals of the process $\{T_i\}_{i\in \Z}$ and its $0$-delayed counterpart $\{T_{o,i}\}$, respectively, in the time interval $[0,t]$.  From the construction, it follows that $N_t$ is stochastically dominated by $N_{o,t}+1$ for all $t$. 

To verify the moment condition on the ground process  \eqref{momground}, it is sufficient to show that $N_{o,1}$ has finite moments of arbitrary order. Since the inter-arrival times $\tau_i=T_{i+1}-T_i$ are positive with a positive probability, there exists a positive $x_0$ such that $\mathbb{P}(\tau_1\ge x_0)>p'$ for some constant $p' >0$. Therefore, $N_{o,1}$ is stochastically dominated by a negative-binomial random variable with parameters $r=\left\lceil 1/x_0 \right\rceil+1$ and $p=p'$, which has finite moments of arbitrary order. Thus, assumption \eqref{momground} holds for all large $p_1$. 

The assumed $p$-th moment condition with $p>3$ together with arbitrarily large $p_1$ ensures the moment conditions in Theorem \ref{mainthm1}~(b) according to Remark~\ref{re1.3}~(iii).

Step (ii).  Constructing a partition of $ \Gamma_{\lambda}$ and a coupling on this partition.

We aim to verify polynomial mixing of score sums  in \eqref{mixing-score'} with $A= [a,b]$ in \eqref{mixing-score'}  and $\Gamma_{\lambda} = [-0.5\lambda, 0.5\lambda]$.  Without loss, put  $B = \Gamma_{\lambda} \setminus B_\rho(A)$  for some positive $\rho$. If $B\subset \Gamma_{\lambda} \setminus B_\rho(A)$, the argument works with the same coupling. Since $d=1$, $A$ is an interval and $B$ lies on both sides of $A$.  As mentioned earlier,  the estimate of $\cov(S_A,g(S_B))$ reduces to accounting for the cost of replacing $S_B$ by independent copies of $S_{B_1}$ and $S_{B_2}$, where $B_1$ is the subset of $B$ to the left of $A$ and $B_2$ is the subset to the right.  To do this, we introduce a partition to help control dependencies of renewals and dependencies of marks, which goes as follows.

Without loss of generality, we can assume that $\max\{|a-\rho|, |b+\rho|\}<0.5\lambda$ and $\rho\ge 3t_0$, which,  together with Remark~\ref{remark1.5a}~(i), also ensure that $A$ and $B$ satisfy the condition in Definition~\ref{WSM} with $\gamma_3=3t_0$. 
These conditions ensure that $B=[-0.5\lambda,a-\rho]\cup[b+\rho,0.5\lambda]=:B_1\cup B_2$ lies on both sides of $A$, and in the following construction, the neighborhoods of intervals $A$, $B_1$ and $B_2$ all have length greater than or equal to $t_0$. When $\rho$ is sufficiently large, with high probability, the score functions evaluated at renewals in $A$ are determined by renewal points in the  $\rho/3$-enlarged  interval $B_{\rho/3}(A)$ given by
$$ A_e := \left[a-\frac{\rho}{3},\; b+\frac{\rho}{3}\right]$$
together with their marks, while the score functions with centers in $B$ are determined by the renewal points in an enlarged  interval around $B$, namely
$$ \left(-\infty,\; a-\frac{2\rho}{3}\right] \cup \left[b+\frac{2\rho}{3},\; \infty\right) =: B_{1,e} \cup B_{2,e}$$
together with their marks.
The gap between these two sets,
$$ D := \left(a-\frac{2\rho}{3},\; a-\frac{\rho}{3}\right) \cup \left(b+\frac{\rho}{3},\; b+\frac{2\rho}{3}\right) =: D_1 \cup D_2,$$
is used with high probability to break the dependence between the sums of score functions with arrival times in sets $A$ and $B$.
Since $t_0$ also plays a role in the dependence range of the score function and since  it extends the dependence range even further, we further define the following intervals:
\begin{align*}
	B_{1,re}&=\left(a-\rho,a-\frac{2\rho}{3}\right],\ \ B_{2,le}=\left[b+\frac{2\rho}{3},b+\rho-t_0\right),\\
	A_{le}&=\left[a-\frac{\rho}{3},a-t_0\right),  \ \ A_{re}=\left(b,b+\frac{\rho}{3}\right]. 
\end{align*}
The above intervals are illustrated in Figure~\ref{figurepm1}.

\begin{figure}[htbp]
	\centering
	\begin{tikzpicture}[scale=0.91, x=1.2cm, y=1cm]
		
		\coordinate (L)   at (-6.5,0);
		\coordinate (Ln)   at (-6,0);
		\coordinate (ar)  at (-4.7,0);   
		\coordinate (a2)  at (-3.3,0);   
		\coordinate (a3)  at (-1.9,0);   
		\coordinate (at0) at (-1,0);   
		\coordinate (a)   at (-0.5,0);
		\coordinate (b)   at (0.5,0);
		\coordinate (b3)  at (1.9,0);      
		\coordinate (b2)  at (3.3,0);      
		\coordinate (bt0) at (4.2,0);      
		\coordinate (br)  at (4.7,0);      
		\coordinate (Rn)  at (6,0);       
		\coordinate (R)   at (6.5,0);
		
		\draw[dotted, very thick, red]        (L)  -- (Ln);
		\draw[very thick, red]        (Ln)  -- (ar);
		\draw[very thick, orange]     (ar) -- (a2);
		\draw[very thick, brown] (a2) -- (a3);
		\draw[very thick, magenta]      (a3) -- (at0);
		\draw[very thick, violet]       (at0) -- (a);
		\draw[very thick, blue]       (a)  -- (b);
		\draw[very thick, magenta]    (b) -- (b3);
		\draw[very thick, brown]      (b3) -- (b2);
		\draw[very thick, orange]       (b2) -- (bt0);
		\draw[very thick, violet]       (bt0) -- (br);
		\draw[very thick, red]       (br) -- (Rn);
		\draw[dotted, very thick, red]       (Rn) -- (R);
		
		\node[red] at (Ln) {$[$};
		\node[red] at (ar) {$]$};
		\node[orange] at (-4.67,0) {$($};
		\node[orange] at (a2) {$]$};
		\node[brown] at (-3.27,0) {$($};
		\node[brown] at (a3) {$)$};
		\node[magenta] at (-1.87,0) {$[$};
		\node[magenta] at (at0) {$)$};
		\node[blue] at (a) {$[$};
		\node[blue] at (b) {$]$};
		\node[magenta] at (0.523,0) {$($};
		\node[magenta] at (1.87,0) {$]$};
		\node[brown] at (b3) {$($};
		\node[brown] at (3.27,0) {$)$};
		\node[orange] at (b2) {$[$};
		\node[orange] at (bt0) {$)$};
		\node[red] at (br) {$[$};
		\node[red] at (Rn) {$]$};
		\node[below, yshift=-2.5pt] at (Ln)  {$-\frac{\lambda}{2}$};
		\node[below, yshift=-5pt] at (ar)  {$a-\rho$};
		\node[below, yshift=-2.5pt] at (a2)  {$a-\tfrac{2\rho}{3}$};
		\node[below, yshift=-4pt] at (a3)  {$a-\tfrac{\rho}{3}$};
		\node[above] at (at0) {$a-t_0$};
		\node[below, yshift=-6.5pt] at (a)   {$a$};
		\node[below, yshift=-4pt] at (b)   {$b$};
		\node[above, yshift=2.5pt] at (bt0) {$b+\rho-t_0$};
		\node[below, yshift=-4pt] at (b3)  {$b+\tfrac{\rho}{3}$};
		\node[below, yshift=-2.5pt] at (b2)  {$b+\tfrac{2\rho}{3}$};
		\node[below, yshift=-4pt] at (br)  {$b+\rho$};
		\node[below, yshift=-2.5pt] at (Rn)  {$\frac{\lambda}{2}$};
		\node[left, yshift=-2.5pt]  at (L) {$-\infty$};
		\node[right, yshift=-2.5pt] at (R) {$\infty$};
		
		\draw[decorate, blue, decoration={brace,mirror,amplitude=9pt}]
		(-0.5,-0.75) -- (0.5,-0.75)
		node[midway, blue, below=9pt] {$A$};
		
		\draw[decorate, magenta, decoration={brace,mirror,amplitude=9pt}]
		(-1.9,-0.75) -- (-1,-0.75)
		node[midway, magenta, below=9pt] {$A_{le}$};
		
		\draw[decorate, magenta, decoration={brace,mirror,amplitude=9pt}]
		(0.5,-0.75) -- (1.9,-0.75)
		node[midway, magenta, below=9pt] {$A_{re}$};
		
		\draw[decorate, magenta, decoration={brace,mirror,amplitude=9pt}]
		(-1.9,-1.55) -- (1.9,-1.55)
		node[midway, magenta, below=9pt] {$A_e$};
		
		\draw[decorate, red, decoration={brace,mirror,amplitude=9pt}](-6,-0.75) -- (-4.7,-0.75)
		node[midway,below=9pt] {$B_1$};
		\draw[decorate, red, decoration={brace,mirror,amplitude=9pt}](4.7,-0.75) -- (6,-0.75)
		node[midway,below=9pt] {$B_2$};
		\draw[decorate, orange, decoration={brace,mirror,amplitude=9pt}](-6.5,-1.55) -- (-3.3,-1.55)
		node[midway,below=9pt] {$B_{1,e}$};
		\draw[decorate, orange, decoration={brace,mirror,amplitude=9pt}](3.3,-1.55) -- (6.5,-1.55)
		node[midway,below=9pt] {$B_{2,e}$};
		\draw[decorate, orange, decoration={brace,mirror,amplitude=9pt}](3.3,-0.75) -- (4.2,-0.75)
		node[midway,below=9pt] {$B_{2,le}$};
		\draw[decorate, orange, decoration={brace,mirror,amplitude=9pt}](-4.7,-0.75) -- (-3.3,-0.75)
		node[midway,below=9pt] {$B_{1,re}$};

		\draw[decorate, brown, decoration={brace,mirror,amplitude=9pt}]
		(-3.3,-0.75) -- (-1.9,-0.75)
		node[midway, brown, below=9pt] {$D_1$};
		
		\draw[decorate, brown, decoration={brace,mirror,amplitude=9pt}]
		(1.9,-0.75) -- (3.3,-0.75)
		node[midway, brown, below=8pt] {$D_2$};
	\end{tikzpicture}
	\caption{Interval partition of $\Gamma_{\lambda}$ for polynomial mixing}
	\label{figurepm1}
\end{figure}

The renewal points in the stationary renewal point process ${\cal P}$ on a given interval $I$ are simply the points of the process ${\cal P}$ that fall within the interval $I$, expressed  as ${\cal P}|_I$. From Lemma~\ref{example2.lma2}
(with $t$ and $C$ replaced by $\rho/3$ and $1$, respectively)
and extending the probability space if necessary, we can construct, on the same probability space, a copy $\{{\cal P}'|_{B_{1,e}}, {\cal P}'|_{B_{2,e}}, {\cal P}'|_{A_{e}}\}$ of $\{{\cal P}|_{B_{1,e}}, {\cal P}|_{B_{2,e}}, {\cal P}|_{A_{e}}\}$, such that ${\cal P}'|_{B_{1,e}}, {\cal P}'|_{B_{2,e}}, {\cal P}'|_{A_{e}}$ are independent and 
$$E_1:=\left\{({\cal P}|_{B_{1,e}}, {\cal P}|_{B_{2,e}}, {\cal P}|_{A_{e}})= ({\cal P}'|_{B_{1,e}}, {\cal P}'|_{B_{2,e}}, {\cal P}'|_{A_{e}})\right\}$$
satisfies
\begin{equation*}\ignore{\label{thm-ex2-01}}
	\mathbb{P}(E_1^c)\le o\(\rho^{-(\alpha-1)}\).
\end{equation*}
To fill the gap of ${\cal P}'$ on $D$ and to complete the definition of  ${\cal P}'$,  we define ${\cal P}'|_D={\cal P}|_D.$
It is important to note that the inter-arrival times of ${\cal P}'$ are generally no longer independent and identically distributed.

Step (iii). Breaking the direct dependence of the score function outside the neighborhood and controlling the dependence between marks within the neighborhoods.

To break the direct dependence in the marks, we require a sufficient number of renewal points in each of the three  sub-intervals between $B_1$ and $A$ and in each of the three sub-intervals between $A$ and $B_2$; hence we set 
$$ E_2=\left\{\min\{{\cal P}\(B_{1,re}\),{\cal P}\(B_{2,le}\),{\cal P}\(A_{le}\), {\cal P}\(A_{re}\),{\cal P}\(D_1\),{\cal P}\(D_2\)\}\ge C_1 \rho \right\},$$
where $C_1 :=(4\mathbb{E}\tau_1)^{-1}\in \(0,(2\mathbb{E}\tau_1)^{-1}\)$ as in Lemma~\ref{example2.lma1}, and let $E'_2$ be the corresponding event for ${\cal P}'$. 
Lemma~\ref{example2.lma1} ensures that
\begin{equation*}\ignore{\label{thm-ex2-02}}
	\mathbb{P}\(E_2^c\)=\mathbb{P}\((E_2')^c\)= O\(\rho^{-\alpha/2}\).
\end{equation*}
Without loss of generality, we assume that $\rho\ge (k_0+j_0)/C_1$ (in addition to  $\rho \geq 3t_0$) for $k_0$ and $j_0$ specified in \eqref{timeseriesdefa1} and \eqref{timeseriesdefa2}. This condition ensures that we can rule out any direct dependence of the score functions in $A$, $B_1$, and $B_2$ on the marks $U$ outside their respective neighborhoods $A_e$, $B_{1,e}$ and $B_{2,e}$ on event $E_2$. The requirement on the number of points in $D_1$ and $D_2$ also guarantees that the dependence of the marks within these neighborhoods is controllable on event $E_2$, which will be detailed later in \eqref{phimixing-result}.

Step (iv).  Constructing the marked point process $\left.\widehat{{\cal P}'}\right|_{\Gamma_\lambda}$ given  $\left.\widehat{{\cal P}}\right|_{\Gamma_\lambda}$.

To equate $\widehat{\mathcal{P}}|_{\Gamma_\lambda}$ with its marked counterpart $\left.\widehat{\mathcal{P}'}\right|_{\Gamma_\lambda}$ with high probability, we reuse the marks associated with $\mathcal{P}|_{\Gamma_\lambda}$, say $\{U_l,U_{l+1},\dots\}$, and assign them in the same order to the points of $\left.\mathcal{P}'\right|_{\Gamma_\lambda}$ (for example, $U_l$ goes to the smallest point of $\mathcal{P}'|_{\Gamma_\lambda}$, $U_{l+1}$ goes to the next smallest, and so on). The reason for  constructing $\left.\widehat{\mathcal{P}'}\right|_{\Gamma_\lambda}$ is that the marks on the intervals $B_{1,e}\cup B_{2,e}\cup A_e$ are determined by the configuration of renewals beyond these intervals, as stated in \eqref{timeseriesdefa1}. The advantage of $\widehat{\mathcal{P}'}|_{\Gamma_\lambda}$ is that it permits us to use the independent restrictions of $\mathcal{P}'|_{\Gamma_\lambda}$ to eliminate the dependence of the marks. From this construction, we observe that the corresponding $S'_A$ and $S'_B$ of $\widehat{{\cal P}'}$ satisfy the following equalities:
$$S_A\bone_{E_1\cap E_2}=S'_A\bone_{E_1\cap E'_2},\ \ \ 
S_B\bone_{E_1\cap E_2}=S'_B\bone_{E_1\cap E'_2}.$$

Step (v). Showing that mixing of score sums $S_A'$, $S_B'$ implies mixing of score sums for the function-set class corresponding to  $j=1$ in Definition~\ref{WSM}.

We aim to replace $S_A$ and $S_B$ with $S'_A$ and $S'_B$ on $E_3:=E_1\cap E_2$ via  the following inequality:
\begin{align}
	&\left|\mathbb{E}(S_AS_B)-\mathbb{E}(S_A)\mathbb{E}(S_B)\right|\nonumber\\ 
	&=\left|\mathbb{E}(S_A(\bone_{E_3}+\bone_{E_3^c})S_B)-\mathbb{E}(S_A(\bone_{E_3}+\bone_{E_3^c}))\mathbb{E}(S_B)\right|\nonumber\\
	&\le\left|\mathbb{E}(S_A\bone_{E_3}S_B)-\mathbb{E}(S_A\bone_{E_3})\mathbb{E}(S_B)\right|
	+\left|\mathbb{E}(S_A\bone_{E_3^c}S_B)\right|+\left|\mathbb{E}(S_A\bone_{E_3^c})\mathbb{E}(S_B)\right|.\label{example2.mainthm.1}
\end{align}
Additionally, by Lemma~\ref{moments-XB}, since $p_1$ in assumption \eqref{momground} can be arbitrarily large, both $S_A$ and $S_B$ have finite $L^{p_0}$ norms for $p_0$ in the statement of Theorem~\ref{example2}, 
which are bounded above by $C_2\text{Vol}(A)$ and $C_2\text{Vol}(B)$, respectively, for some positive constant $C_2$.

H\"{o}lder's inequality yields
\begin{align}
	\left|\mathbb{E}(S_A\bone_{E_3^c}S_B)\right|
	&\le \|S_A\|_{p_0}\|S_B\|_{p_0}\mathbb{P}(E_3^c)^{(p_0-2)/p_0}\nonumber\\
	& = O\left(\text{Vol}(A)\text{Vol}(B)\rho^{-\alpha (p_0-2)/(2p_0)}\right),\label{example2.mainthm.2}
\end{align}
and
\begin{align}
	\left|\mathbb{E}(S_A\bone_{E_3^c})\mathbb{E}(S_B)\right|
	&\le \|S_A\|_{p_0}\|S_B\|_{p_0}\mathbb{P}(E_3^c)^{(p_0-2)/p_0}\nonumber\\
	& =  O\left(\text{Vol}(A)\text{Vol}(B)\rho^{-\alpha (p_0-2)/(2p_0)}\right).\label{example2.mainthm.3}
\end{align}

Combining \eqref{example2.mainthm.1}, \eqref{example2.mainthm.2} and \eqref{example2.mainthm.3}, we obtain
\begin{align}
	&\left|\mathbb{E}(S_AS_B)-\mathbb{E}(S_A)\mathbb{E}(S_B)\right|\nonumber\\ 
	\le&\left|\mathbb{E}(S_A\bone_{E_3}S_B)-\mathbb{E}(S_A\bone_{E_3})\mathbb{E}(S_B)\right|+O\left(\text{Vol}(A)\text{Vol}(B)\rho^{-\alpha (p_0-2)/(2p_0)}\right).\label{example2.mainthm.4}
\end{align}

Each term involving $E_3^c$ incurs a cost of at most $O\left(\text{Vol}(A)\text{Vol}(B)\rho^{-\alpha (p_0-2)/(2p_0)}\right)$.  Therefore, we focus on the leading term that does not involve $E_3^c$, and instead add the error due to the manipulation required to transition from one form to the other. We use the notation $\beta_1\leftslice\beta_2$ to denote that $\beta_1\le \beta_2+O\left(\text{Vol}(A)\text{Vol}(B)\rho^{-\alpha (p_0-2)/(2p_0)}\right)$. With this, we can obtain from \eqref{example2.mainthm.4} the following expression: 
\begin{align}
	&\left|\mathbb{E}(S_AS_B)-\mathbb{E}(S_A)\mathbb{E}(S_B)\right|\nonumber\\ 
	\leftslice&\left|\mathbb{E}(S_AS_B\bone_{E_3})-\mathbb{E}(S_A\bone_{E_3})\mathbb{E}(S_B\bone_{E_3})\right|\nonumber\\
	=&\left|\mathbb{E}(S'_AS'_B\bone_{E_3'})-\mathbb{E}(S_A\bone_{E_3})\mathbb{E}(S_B\bone_{E_3})\right|\nonumber\\
		\leftslice&\left|\mathbb{E}(S'_AS'_B)-\mathbb{E}(S'_A)\mathbb{E}(S'_B)\right|, \label{example2.mainthm.5}
\end{align}
where $E'_3:=E_1\cap E'_2=E_3$. To see this, note that by construction,  the coupled processes ${\cal P}$ and ${\cal P}'$ agree on $D_1$ and $D_2$ and  on the event $E_1$ they also agree on the relevant neighborhoods of $A$ and $B$ and hence the events $E_2$ and $E_2'$ coincide on $E_1$.

Step (vi). Showing the mixing of score sums $S_A'$, $S_B'$ using classical $\phi$-mixing bounds.  

When we lift the marks to higher-dimensional space by defining 
$$ {\bf U}_i=(U_{i},U_{i+1}, \dots, U_{i+k_0-1})\in \M^{k_0},$$ 
then  $\{{\bf U}_i\}_{i\in\mathbb{Z}}$ forms a strictly stationary $k_0$-dimensional Markov process. For each $i$, the mark $U_i$ is the first component of the lifted vector ${\bf U}_i$. Let $\mathscr{A}'$ denote the $\sigma$-algebra generated by all the marked points in $\widehat{{\cal P}'}$ with the lifted marks ${\bf U}_i$, where at least one component is used in $S'_A$. From the construction, we have $S'_A\in \mathscr{A}'\subset \sigma\(\widehat{{\cal P}'}_{A_e}\)$, $S'_B\in\sigma\(\widehat{{\cal P}'}_{B_{1,e}\cup B_{2,e}}\)$. From \cite[Theorem~3.3~(2)]{B05}, which states that when \eqref{phimixing} holds, the $\phi$-mixing coefficient converges to $0$ exponentially fast as $n_0$ goes to infinity, along with the fact that $\{{\cal P}'_{B_{1,e}}, {\cal P}'_{B_{2,e}}, {\cal P}'_{A_{e}}\}$ are independent, the $\phi$-mixing coefficient satisfies
\begin{equation}
	\phi(\mathscr{A}', \sigma(S'_B)) = O\(\mathrm{e}^{-C_3\rho}\),\label{phimixing-result}
\end{equation}
for some positive constant $C_3$.

Moreover, by \citet[Lemma 2.1 and its Corollary]{D68}, for any two $\sigma$-fields $\mathscr{A}$ and $\mathscr{B}$ with the $\alpha$-coefficient $\alpha(\mathscr{A}, \mathscr{B})=\sup_{E\in \mathscr{A}, E'\in \mathscr{B}}|\mathbb{P}(E\cap E')-\mathbb{P}(E)\mathbb{P}(E')|$, we have for any $p,q\in (1,\infty]$ such that $1/p+1/q<1$ and random variables $X\in L^p(\mathscr{A})$, $Y\in L^q(\mathscr{B})$,
\begin{equation*}
	|\cov(X,Y)|\le 12 \alpha(\mathscr{A}, \mathscr{B})^{1-1/p-1/q}\|X\|_p\|Y\|_q.
\end{equation*} Together with the fact that $\alpha(\mathscr{A}, \mathscr{B})\le \phi(\mathscr{A}, \mathscr{B})$ from the definition, this gives \begin{equation}\label{covphi}
|\cov(X,Y)|\le 12 \phi(\mathscr{A}, \mathscr{B})^{1-1/p-1/q}\|X\|_p\|Y\|_q.
\end{equation}
Although a bound with a sharper constant is available in \citet[(1.3)]{R93}, we do not pursue this refinement here, since our focus is on the rate of the approximation error.
Since $S_A'$ is $\mathcal A'$-measurable, applying the covariance
inequality \eqref{covphi} with $p=q=3$ together with \eqref{phimixing-result} and the moment bounds $\|S_A'\|_{3}=O(\Vol(A))$
and $\|S_B'\|_{3}=O(\Vol(B))$ gives
\begin{align}
	\left|\mathbb{E}(S_A'S_B')-\mathbb{E}(S_A')\mathbb{E}(S_B')\right|&=|\cov(S_A',S_B')| \le 12\phi(\mathcal A',\sigma(S_B'))^{1/3}
	\|S_A'\|_{3}\|S_B'\|_{3}\nonumber\\
	&=O\(\Vol(A)\Vol(B)\mathrm{e}^{-C_3\rho/3}\).\label{example2.mainthm.6}
\end{align}

Combining \eqref{example2.mainthm.5} and \eqref{example2.mainthm.6}, we obtain
\begin{equation}\label{example2.mainthm.7}
	\left|\mathbb{E}(S_AS_B)-\mathbb{E}(S_A)\mathbb{E}(S_B)\right|= O\left(\text{Vol}(A)\text{Vol}(B)\rho^{-\alpha (p_0-2)/(2p_0)}\right).
\end{equation}

To show mixing of score sums for the function-set class corresponding to  $j=2$ in Definition~\ref{WSM}, we use a similar approach. Abusing notation, let  $\beta_1\leftslice\beta_2$ signify  that $\beta_1\le \beta_2+O\left((\text{Vol}(A))^2\rho^{-\alpha (p_0-2)/(2p_0)}\right)$. Specifically, we have:
\begin{align}
	&\left|\mathbb{E}\(S_A^2g(S_B)\)-\mathbb{E}\(S_A^2\)\mathbb{E}\(g(S_B)\)\right|\nonumber\\
	&\leftslice\left|\mathbb{E}\(S_A^2g(S_B)\bone_{E_3}\)-\mathbb{E}\(S_A^2\bone_{E_3}\)\mathbb{E}\(g(S_B)\bone_{E_3}\)\right|\nonumber\\
	&=\left|\mathbb{E}\((S'_A)^2g(S'_B)\bone_{E_3'}\)-\mathbb{E}\((S'_A)^2\bone_{E_3'}\)\mathbb{E}\(g(S'_B)\bone_{E_3'}\)\right|\nonumber\\
	&\leftslice\left|\mathbb{E}\((S'_A)^2g(S'_B)\)-\mathbb{E}\((S'_A)^2\)\mathbb{E}\(g(S'_B)\)\right|.
	\label{example2.mainthm.8}
\end{align}

Applying the covariance inequality \eqref{covphi} with $p=3/2$ and $q=\infty$ gives 
\begin{align*}
	\left|\mathbb{E}((S'_A)^2g(S'_B))-\mathbb{E}((S'_A)^2)\mathbb{E}(g(S'_B))\right|=& \cov\((S'_A)^2,g(S'_B)\)\nonumber\\
	\le& 12\phi(\mathcal A',\sigma(S_B'))^{1/3}
	\|(S_A')^2\|_{3/2}\|g(S_B')\|_{\infty}\nonumber\\
	=&O\(\text{Vol}(A)^2\mathrm{e}^{-C_3\rho/3}\),
	\end{align*}
which, together with \eqref{example2.mainthm.8}, guarantees
\begin{equation}\label{example2.mainthm.9}
	\left|\mathbb{E}\(S_A^2g(S_B)\)-\mathbb{E}\(S_A^2\)\mathbb{E}\(g(S_B)\)\right|
	= O\left(\text{Vol}(A)^2(\text{Vol}(B)\vee 1)\rho^{-\alpha (p_0-2)/(2p_0)}\right).
\end{equation}

Combining \eqref{example2.mainthm.7} and \eqref{example2.mainthm.9} implies that polynomial mixing of score sums \eqref{mixing-score'} is satisfied
by taking $\g_0=2$, letting $\g_1$ be the constant implicit in the `$\mathop{O}$' bound, and setting $\g_2=\alpha(p_0-2)/(2p_0)$, noting that $\Vol(A)\le \diam(A)$ for all sets $A$ when $d=1$. Recall $\g_3 = 3t_0$.  Therefore, condition \eqref{relationship} with $d=1$ becomes 
$$ 2\g_2=\frac{\alpha (p_0-2)}{p_0}>\frac{(36-6\nu)\vee28}{3\nu-2},$$
as assumed.  Theorem~\ref{example2} follows by applying Theorem~\ref{mainthm1}~(b).
 \qed

\subsection{Continuum percolation models} \label{ex3.3}

\ignore{Given a graph $G$ with vertex set $V(G)$  the {\it {graph} distance} $d_G(x, y)$ between $x, y \in V(G)$ is  the minimum number of edges connecting $x$ and $y$. By convention, $d_G(x, y) = \infty$ if there is no path between $x$ and $y$.  The graph ball centered at vertex $v_0 \in V(G)$ and  with radius $r \in \mathbb{N}$ is
\begin{equation*}\ignore{\label{graphball}}
	B^G(v_0,r):=G_{\{v;~d_G(v, v_0)\le r\}},
\end{equation*}
where $G_U$ denotes the subgraph $G$ induced by the vertex set $U$.}

For a locally finite point set $V$ in $\mathbb{R}^d$, $d \ge 2$,  and $r\ge 1$,  by ${\cal G}(V, r)$ we mean  the graph with vertex set $V$ and edge set $E := \{(u, v); \|u - v\| \le r,~u, v \in V^{2, \neq}\} $, where $V^{2, \neq} := \{(u, v); u \neq v,~u, v \in V\}$. When $V$ is finite, we define $C(V, i) := C(V, i, 1)$ and $L(V, i) := |C(V, i)|$ as the $i$-th largest component of the graph ${\cal G}(V, 1)$ and its cardinality, respectively. By convention, we set $C(V, i) = \emptyset$ and $L(V, i) = 0$ if the number of components in ${\cal G}(V, 1)$ is less than $i$.  Components with the same size are ordered by the lexicographical order of the smallest vertices in each component.

Let $\mathcal{P}_{\mu}$ be a homogeneous Poisson point process on $\mathbb{R}^d$ with intensity $\mu$.
We have ${\cal G}(\mathcal{P}_{\mu}, r)=r^{1/d}{\cal G}(r^{-1/d}\mathcal{P}_{\mu}, 1)$, where multiplying a graph by $r^{1/d}$ means scaling all vertex coordinates by $r^{1/d}$ while keeping the edge structure unchanged,  and equality means that both sides have the same vertices and the same edges.
Thus, without loss of generality, we focus on ${\cal G}(V, r)$ with $r = 1$.

For a point process $\P$ on $\real^d$,  ${\cal G}(\P, r)$ is
the  {\it random geometric graph} generated by  $\P$ with radius $r$,  a model for continuum percolation.  
We consider ${\cal G}({\cal P} \cap \Gamma_{\lambda}, 1)$  for a point process ${\cal P}$ and a positive constant $\lambda$. 
The model can be cast in terms of urban agglomeration, and therefore we may refer to each component 
$C({\cal P} \cap \Gamma_{\lambda}, i)$  as a {\it community}.

For a fixed positive integer $l_0 \in \N$, the component/community $C({\cal P} \cap \Gamma_{\lambda}, i)$ is called {\it $l_0$-isolated} if its cardinality satisfies $L({\cal P} \cap \Gamma_{\lambda}, i) \leq l_0$. We let $I_0$ be the indices such that $i \in I_0$ implies that $C({\cal P} \cap \Gamma_{\lambda}, i)$ is  $l_0$-isolated. Each component  $C({\cal P} \cap \Gamma_{\lambda}, i), i \geq 1,$ has  a range of influence given by $f(L({\cal P} \cap \Gamma_{\lambda}, i))$, where $f: \mathbb{N} \to [0, \infty)$ is a non-decreasing function. 

An $l_0$-isolated community $C({\cal P} \cap \Gamma_{\lambda}, i), i \in I_0$,  is said to be {\it remote} 
if its distance to every non-isolated community $C({\cal P} \cap \Gamma_{\lambda}, j)$, if such communities exist, is greater than $f(L({\cal P} \cap \Gamma_{\lambda}, j))$.    We let $I_0^{rem} \subseteq I_0$ be the indices of the remote components. 

We service each remote community by connecting it to its nearest non-isolated community $C({\cal P} \cap \Gamma_{\lambda}, j), j \notin I_0$.   This ensures that all  communities are serviced, as we assume that non-remote communities are already serviced.

This construction gives rise to the graph ${\cal G}^{iso}({\cal P}, \lambda)$ thus obtained by connecting each remote community $C({\cal P} \cap \Gamma_{\lambda}, i),\ i \in I_0,$ to the collection of  non-isolated communities ${\cal N}^{ni}({\cal P} \cap \Gamma_{\lambda}) := \{ C({\cal P} \cap \Gamma_{\lambda}, i) \}_{i \notin I_0}$  by adding an edge 
between $C({\cal P} \cap \Gamma_{\lambda}, i),\ i \in I_0,$  and the nearest point in ${\cal N}^{ni}({\cal P} \cap \Gamma_{\lambda})$.  For a remote community $C({\cal P} \cap \Gamma_{\lambda}, i)$, if there exist multiple pairs of nearest points between it and ${\cal N}^{ni}({\cal P} \cap \Gamma_{\lambda})$ at the same distance, one pair is chosen uniformly at random. The existence of isolated communities depends on the configuration of all points. The long-range dependence structure precludes letting the short-range estimator $\hat{\xi}^{[r]}$ be a restricted score function, which is often the natural choice for tackling problems such as this one, and which is customarily  employed to define stabilization, cf. \cite{CX23} and references therein. Instead we will 
use a different choice for the short-range estimator $\hat{\xi}^{[r]}$. As will be shown in the proof, our result does not depend on the particular method used to connect points in remote communities to those in non-isolated communities.

Given a graph $G$ with vertex set $V(G)$,  the {\it graph distance} $d_G(x, y)$ between $x, y \in V(G)$ is  the minimum number of edges connecting $x$ and $y$. By convention, $d_G(x, y) = \infty$ if there is no path between $x$ and $y$.  The graph ball centered at vertex $v_0 \in V(G)$ and  with radius $r \in \mathbb{N}$ is
\begin{equation*}\ignore{\label{graphball}}
	B^G(v_0,r):=G_{\{v;~d_G(v, v_0)\le r\}},
\end{equation*}
where $G_U$ denotes the subgraph $G$ induced by the vertex set $U$. We now consider the case where the score function 
$$\xi(x, {\cal P}, \Gamma_{\lambda}) = h(B^{{\cal G}^{iso}({\cal P}, \lambda)}(x, r_0))$$
is given by a function $h$ defined on the graph ball in 
${\cal G}^{iso}({\cal P}, \lambda)$
of graph radius~$r_0$. This type of model can be used to represent transportation costs or urban influence and is particularly relevant in the context of spatial growth and development. While traffic congestion and the reliability of transportation networks are key factors, another critical aspect of development is ensuring logistical connectivity between remote communities and well-serviced urban centres.

Define
$$
H_{\lambda} :=\sum_{x\in{\scrP}\cap\Gamma_{\lambda}}\xi(x, {\cal P}, \Gamma_{\lambda}).
$$
We note that the score function $\xi$ is typically not stabilizing, as its evaluation may require information about the entire configuration ${\cal P} \cap \Gamma_{\lambda}$ in order to determine the existence of remote communities. This statistic can be used to analyze commonly studied problems in the literature, including counting subgraphs that are isomorphic to a given graph or counting components of size $r_0$ for some $r_0\in \mathbb{N}$. With a simple adjustment, the same idea can also be used to analyze these problems for random geometric graphs in the continuum percolation model.

Let ${\cal P}_{\mu}^{\bm{0}}$ denote ${\cal P}_{\mu}\cup \{\bm{0}\}$, where $\bm{0}$ is the origin in $\mathbb{R}^d$. 
The probability $p_{\infty}(\mu)$ represents the probability that the origin $\bm{0}$ is contained in an infinite component of ${\cal G}\({\cal P}_{\mu}^{\bm{0}},1\)$, 
and is the {\it percolation probability} (or {\it percolation function}). The threshold intensity for the phase transition 
giving the existence of an infinite component, known as the {\it critical intensity} $\mu_c$, is defined by 
$$
\mu_c:=\inf\{\mu\in(0,\infty);~p_{\infty}(\mu)>0\}.
$$ 
A fundamental result in continuum percolation theory states that $\mu_c$ is finite when $d\ge 2$; see \citet[Chapter 3]{MY96}.   
Recall that a point process ${\cal Q}_1$ stochastically dominates ${\cal Q}_2$ on the same space if there exists a coupling of them, $(\bar{{\cal Q}}_1, \bar{{\cal Q}}_2)$, such that $\bar{{\cal Q}}_1 \overset{d}{=}{\cal Q}_1$, $\bar{{\cal Q}}_2 \overset{d}{=}{\cal Q}_2$, and $\bar{{\cal Q}}_2 \subset \bar{{\cal Q}}_1$ almost surely. 

With this setup, we have the following theorem.

\begin{thm}\label{example3} Let  $d \ge 2$. We assume:
	\begin{description}  
		\item[$\cdot$] the point process ${\cal P}$ satisfies the moment condition \eqref{momground} for some positive constant $p_1$, 
		\item[$\cdot$] ${\cal P}$ exhibits exponential decay of dependence as in (\ref{mixing}), and stochastically dominates ${\cal P}_{\mu_0}$ for some $\mu_0 > \mu_c$,
		\item[$\cdot$]  the function $f$ that determines the remote communities satisfies $f(s) > \sqrt{d}s$ for all $s \in \mathbb{N}$, 
		\item[$\cdot$]  the score function $\xi$ is a function of the graph ball 
		in ${\cal G}^{iso}({\cal P}, \lambda)$
		with graph radius  $r_0 \in \mathbb{N}$ and satisfies \eqref{momscore} for some positive  constant $p_2$ such that $p_1p_2/(p_1+p_2-1)\ge 3$,
		\item[$\cdot$] $\Var H_{\lambda} = \Omega(\lambda^\nu)$ for some $\nu > 2/3$.
	\end{description}
	Then 
	\begin{equation*}	
		d_W\( \frac{H_{\lambda} - \E H_{\lambda}} {\sqrt{\Var H_{\lambda}}}, Z\) = O\((\log \lambda)^{3d} \lambda^{-1.5\nu+1}\),
	\end{equation*}
	where $Z\sim N(0,1)$.
\end{thm}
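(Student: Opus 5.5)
The plan is to deduce Theorem~\ref{example3} from Corollary~\ref{maincorTV}~(a). Treat $\P$ as a marked point process with degenerate marks, except for the auxiliary uniform tie-breaking variables, which we attach as i.i.d. marks; this keeps EDD intact. The moment conditions \eqref{momground}, \eqref{momscore}, EDD and the variance growth are all assumed. What remains is to exhibit a family of short-range estimators $\hat\xi^{[r]}$ for which exponential probabilistic stabilization \eqref{localization} holds. The restriction \eqref{restriction} is useless here, because remoteness of a small community depends on the global configuration. Instead, define $\hat\xi^{[r]}(x,\P,\Gamma_\lambda)$ as follows. Look only at $\P\cap\Gamma_\lambda\cap B(x,r)$, and declare a component of ${\cal G}(\P\cap\Gamma_\lambda\cap B(x,r),1)$ that meets $B(x,r/2)$ to be \emph{locally non-isolated} if it has more than $l_0$ points. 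Using only these local components, rebuild the graph ${\cal G}^{iso}$ inside $B(x,r/2)$ with the same rule: isolated communities, remoteness via $f$ of the local component sizes, and connection to the nearest local non-isolated point. Then evaluate $h$ on the graph ball of radius $r_0$ at $x$. By construction this depends only on $B(x,r)$.

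Next define a good event $G_r(x)$ on which $\hat\xi^{[r]}=\xi$. Take $G_r(x)$ to be the event that, in every box of side of order $\log$-independent constant $\ell$ in a grid covering $B(x,r/2)$, the configuration is ``good'' in the sense of the standard finite-box crossing/uniqueness event for supercritical continuum percolation. Roughly: each box contains a crossing cluster of ${\cal G}(\cdot,1)$ with more than $l_0$ points, adjacent crossing clusters are connected, and every component of size at most $l_0$ lies within distance $\sqrt d\,\ell$ of a crossing cluster. Since $\P$ dominates $\P_{\mu_0}$ with $\mu_0>\mu_c$ and these events are increasing, their probability under $\P$ is at least that under $\P_{\mu_0}$. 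For supercritical Poisson continuum percolation, the probability that a fixed box is bad decays exponentially in $\ell$ (Penrose--Pisztora type estimates, \cite{MY96}).

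On $G_r(x)$, the relevant structure near $x$ is local. Every isolated community within graph distance $r_0$ of $x$ has diameter at most $l_0$, so it lies within Euclidean distance about $r_0\cdot\max f(l_0)$, a constant, of $x$. Its nearest non-isolated community is within a bounded distance, and whether it is remote is decided by non-isolated communities within distance $f(\cdot)$. This is where the hypothesis $f(s)>\sqrt d s$ enters: a large non-isolated component whose influence radius $f(L)$ reaches near $x$ must itself pass within distance about $\sqrt d L$, hence within the window, so its local part is already detected. Also, large components seen locally are genuinely non-isolated globally. Consequently the graph ball $B^{{\cal G}^{iso}}(x,r_0)$ coincides with the locally reconstructed one once $r$ exceeds a constant multiple of the relevant range.

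Since the good events hold with probability at least $1-C r^d\mathrm{e}^{-c r}$ under the Palm measure, which again dominates Poisson and so gives the same bound, we get $\varphi^{(Pr)}(r)\le C_1\mathrm{e}^{-C_2 r}$. Corollary~\ref{maincorTV}~(a) then yields the bound.

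The main obstacle is the step showing $\hat\xi^{[r]}=\xi$ on the good event. The difficulty is a large component whose influence radius $f(L)$ is huge because $L$ is huge, which would make remoteness nonlocal. I would handle it by showing that on $G_r(x)$ any such component intersects the crossing cluster network inside $B(x,r/2)$. The local rule then already classifies nearby small communities as non-remote whenever the global rule does, and vice versa. If this monotonicity argument fails, I would instead truncate with a polynomial window and accept only polynomial stabilization, then use Corollary~\ref{maincorTV}~(b).
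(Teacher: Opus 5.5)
There is a genuine gap where you pass to the Palm measure. Exponential probabilistic stabilization \eqref{localization} is a bound under $\mathbb{P}_x$, uniformly in $x\in\Gamma_\lambda$. You justify it by saying the Palm distribution ``again dominates Poisson'', but the hypotheses give stochastic domination only for the stationary law of ${\cal P}$, and this does not transfer to ${\cal L}_x({\cal P})$. For example, take ${\cal P}={\cal P}_{\mu_0}\cup{\cal Q}$, where ${\cal Q}$ is a stationary finite-range process that puts points only in vacant regions of ${\cal P}_{\mu_0}$. Then conditioning on a point at $x$ biases the configuration toward $x$ lying in such a vacancy.

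The paper flags exactly this obstruction right after the statement, and for that reason it does not invoke Corollary~\ref{maincorTV}. Your fallback to Corollary~\ref{maincorTV}~(b) has the same Palm problem, and it would also lose the $(\log\lambda)^{3d}$ rate.

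Your good event has a separate internal inconsistency. With boxes of fixed side $\ell$, a union bound over the $\asymp (r/\ell)^d$ boxes meeting $B(x,r/2)$ gives failure probability of order $(r/\ell)^d\mathrm{e}^{-c\ell}$, which does not decay in $r$. To get $Cr^d\mathrm{e}^{-cr}$ you need $\ell\asymp r$, and then every distance in your locality argument scales with $r$ as well.

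The paper avoids Palm measures entirely. It verifies the mixing condition \eqref{mixing-score} of Theorem~\ref{mainthm1}~(a) directly, reusing the coupling $\Psi_1,\dots,\Psi_6$ from the proof of Corollary~\ref{maincorTV}, with the trivial estimator $\hat\xi^{[r]}(x,{\cal P},\Gamma_\lambda)=h(B^{{\cal G}({\cal P}\cap\Gamma_\lambda,1)}(x,r_0))$. It replaces the Palm-based events $E_2,E_3$ by the non-Palm events that the largest component of $\Psi_5\cap\Gamma_\lambda$ (resp. $\Psi_6\cap\Gamma_\lambda$) is not crossing. By domination and \cite[Proposition~2]{PP96}, these events have probability at most $\mathrm{e}^{-C\lambda}$. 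This is at most $\mathrm{e}^{-c\,d(A,B)}$, since $d(A,B)\lesssim\lambda^{1/d}$ for all relevant $A,B$.

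What your proposal misses is how strong $f(s)>\sqrt d\,s$ is globally. A crossing component has $L\gtrsim\lambda^{1/d}$ points, so its influence radius $f(L)$ reaches across all of $\Gamma_\lambda$. Hence there are no remote communities anywhere, ${\cal G}^{iso}({\cal P},\lambda)={\cal G}({\cal P}\cap\Gamma_\lambda,1)$, and the estimator is exact with range $r_0\le r$. No local reconstruction of remoteness is needed; large components can only remove remoteness, so your ``main obstacle'' never arises.

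If you want to keep your local construction, you must likewise replace the Palm bound in the analogue of \eqref{maincor1.2} with a non-Palm one. The event that $\xi\neq\hat\xi^{[r]}$ for some $x\in A$ is contained in the event that some box of side $\asymp r$ in a grid covering $B_r(A)$ is bad. Domination does control that probability, giving a bound polynomial in $\diam(A)$ times $\mathrm{e}^{-cr}$, which then yields mixing.
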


Since the conditions of Theorem~\ref{example3} are insufficient to imply that the Palm process of ${\cal P}$ stochastically dominates a Poisson point process with a supercritical intensity, and since there is no existing result establishing that every Palm process of ${\cal P}$ possesses a giant component with probability converging to $1$ as $\lambda$ grows, we cannot establish \eqref{localization} directly.  Because \eqref{localization} is formulated in terms of the Palm distribution, Corollary~\ref{maincorTV} therefore cannot be applied directly in this case.

\noindent{\it Proof of Theorem~\ref{example3}.~} The existence of remote communities depends on the entire configuration of ${\cal P}\cap \Gamma_{\lambda}$. It  follows that $\xi$ is not stabilizing, hence the usual stabilization methods based on restricted score functions do not work. However, we can construct an estimate $\hat{\xi}^{[r]}$ of $\xi$ using the local  information about the continuum percolation and then deduce Theorem~\ref{example3} from Theorem~\ref{mainthm1}.

The moment assumptions \eqref{momground} and \eqref{momscore} are assumed. We thus only need to show exponential mixing of the score sums in~\eqref{WSM}. 
For this purpose, we follow the same steps as in the proof of 
Corollary~\ref{maincorTV}~(a) to construct the corresponding point processes $\Psi_i$, for $1\le i \le 6$, and to define the event $E_1$, as well as the random variables $S_A'$, $S_B'$, $\bar{S}_A^{[r]}$ and $\bar{S}_B^{[r]}$, in the same manner as in the proof of Corollary~\ref{maincorTV} with 
$$
\hat{\xi}^{[r]}(x, {\cal P}, \Gamma_{\lambda}):=\hat{\xi}^{[r_0]}(x, {\cal P}, \Gamma_{\lambda}):=h(B^{{\cal G}({\cal P}\cap \Gamma_{\lambda},1)}(x,r_0)),$$ for $A, B\in {\cal B}(\mathbb{R}^{d} )$ and $r=d(A,B)/3\ge r_0$. 
We will, however, need to modify the definitions of the events $E_2$ and $E_3$ used in the proof of Corollary~\ref{maincorTV}. To this end, we adopt the terminology from~\citet{PP96}, where a connected component is said to be {\it crossing} if the union of balls of radius~$1$ centered at the vertices of the component intersects all $2d$ faces of the boundary of $\Gamma_{\lambda}$.
We then define $E_2$ (resp. $E_3$) as the event that 
$C(\Psi_5 \cap \Gamma_{\lambda}, 1)$ (resp. $C(\Psi_6 \cap \Gamma_{\lambda}, 1)$) is {\it not} crossing. Since ${\cal P}$ stochastically dominates ${\cal P}_{\mu_0}$, 
it follows from~\cite[Proposition~2]{PP96} with $\phi_{\lambda}=0.5\lambda$ that there is a constant $C_6:=C_6(\mu_0)$  such that 
$$
\mathbb{P}(E_2)\vee \mathbb{P}(E_3)\le \exp(-C_6\lambda).
$$

Whenever $C(\Psi_5 \cap \Gamma_{\lambda}, 1)$ and $C(\Psi_6 \cap \Gamma_{\lambda}, 1)$ are crossing, there must be components of size at least $\lambda^{1/d}$ and this, together with the condition $f(s) \geq \sqrt{d}s$ ensures that on $E_2^c\cap E_3^c$
 there are no remote communities in $\Psi_5\cap \Gamma_{\lambda}$ and $\Psi_6\cap \Gamma_{\lambda}$, and consequently no edges are added between remote communities and isolated communities. Therefore, ${\cal G}^{iso}({\cal P}, \lambda)$ and ${\cal G}(\P \cap \Gamma_{\lambda}, 1)$ coincide and thus the graph ball of radius $r$ in the first graph is a subset of the Euclidean ball of radius $r$ in the second graph. Under the event $E=(E_1\cup E_2\cup E_3)^c$, 
when $d(A,B)\ge 3r_0$, we have $S_A=S_A'=\bar{S}_A^{[r]}$, $S_B=S_B'=\bar{S}_B^{[r]}$. Hence, exponential mixing of the score sums in~\eqref{WSM} follows from the same argument for \eqref{maincor1.4.2} in Step~(iv) of the proof of Corollary~\ref{maincorTV}. 
The desired statement then follows directly by applying Theorem~\ref{mainthm1} (a). \qed

\subsection{Interacting diffusions on spatial random graphs}\label{ex.srg}

In this section we obtain quantitative CLTs for statistics of interacting diffusion models on spatial random graphs driven by EDD input.  The results are quantitative counterparts of the qualitative CLTs established in \cite[Section~9]{BYY25} 
and complement the Berry--Esseen rates of normal approximation for such statistics under Poisson input in \cite{TY}.

Before introducing the specific model, we  describe the underlying  {\it spatial random graphs}.  For ease of reading, this description is adapted from \cite{BYY25}, where a more complete treatment is provided and where, for example, the relevant measurability issues are addressed.

Given a locally finite point set ${\cal X}\subset\Gamma_{\lambda}$, let ${\cal G}({\cal X},\sim)=:{\cal G}({\cal X})$ denote a graph with vertex set ${\cal X}$, where edges are determined by a deterministic rule $\sim$ depending on the entire configuration. That is, ${\cal X}\mapsto {\cal G}({\cal X},\sim)$ is a measurable mapping from the space ${\cal N}_{\Gamma_{\lambda}}$ of locally finite subsets of $\Gamma_{\lambda}$ to the space of finite labelled graphs on $\Gamma_{\lambda}$, equipped with the discrete $\sigma$-algebra.

A function $S_{\lambda}:\Gamma_{\lambda}\times{\cal N}_{\Gamma_{\lambda}}\to   [0,\infty)$,
$\lambda\in(0, \infty]$, 
with the convention $\Gamma_{\infty}=\mathbb{R}^d$, is called an {\it interaction range} with respect to the graph operator ${\cal G}(\cdot,\sim)$ if it satisfies:
\begin{itemize}
	\item[(i)] for all ${\cal X}\in{\cal N}_{\Gamma_{\lambda}}$ and $x\in{\cal X}$, the neighborhood of $x$ is determined by ${\cal X}\cap B(x,S_{\lambda}(x,{\cal X}))$, i.e.,
	$y\sim x~\text{in}~{\cal G}({\cal X},\sim)\ \text{iff}\ 
	y\sim x~\text{in}~{\cal G}({\cal X}\cap B(x,S_{\lambda}(x,{\cal X})),\sim)$,
	\item[(ii)] the ball $B(x,S_{\lambda}(x,{\cal X}))$ is a stopping set, in the sense that  
	$$ S_{\lambda}(x,{\cal X})=S_{\lambda}\(x,[{\cal X}\cap B(x,S_{\lambda}(x,{\cal X}))]\cup{\cal Y}\)$$
	for any ${\cal Y}\in {\cal N}_{\mathbb{R}^d\setminus B(x,S_{\lambda}(x,{\cal X}))}$.
\end{itemize}

By construction, $y\sim x$ in ${\cal G}({\cal X},\sim)$ only if $|x-y|\le \min\{S_{\lambda}(x,{\cal X}),S_{\lambda}(y,{\cal X})\}$.

The following stabilization property for the graph ${\cal G}$ is inspired by a similar, though slightly stronger property   in \cite[Definition~8.1 and display~(9.2)]{BYY25}; see Remark~\ref{diff1stPalm2ndPalm} for the differences.

\begin{defi}\label{graphstabilizng}(stabilizing property of the graph operator ${\cal G}(\cdot,\sim)$ with respect to $\P$) 
	We say that ${\cal G}$ stabilizes with respect to the point process $\P$ if there exists a family of interaction ranges $(S_{\lambda})_{\lambda\in(0, \infty]}$ such that
	$$\sup_{\lambda\in(0, \infty]}\sup_{x\in\Gamma_{\lambda}}
	\mathbb{P}_{x}\(S_{\lambda}(x,\P\cap\Gamma_{\lambda})>r\)
	\le \varphi(r),\qquad r\in (0,\infty),
$$
	for some fast decreasing function $\varphi$, i.e., for  $\varphi$ satisfying
$$
 \limsup_{r\to\infty} r^m \varphi(r)=0,\qquad \forall m\ge 1.
$$
\end{defi}

We further assume that the graph operator is translation invariant, that is,
$$
x\sim y~\text{in}~{\cal G}({\cal X},\sim)
\ \text{iff}\ 
x+z\sim y+z~\text{in}~{\cal G}({\cal X}+z,\sim),
\quad \forall x,y\in{\cal X},~z\in\mathbb{R}^d.$$

\begin{defi}\label{admissible}(admissible graphs) 
The graph operator ${\cal G}$ is  {\it admissible} on ${\cal P}$ if  it is  translation invariant and stabilizes with respect to ${\cal P}$ as in Definition~\ref{graphstabilizng}.  
\end{defi}
	
As shown in \cite[Appendix A, Examples A.1--A.4]{BYY25}, if  $\P$ is a homogeneous Poisson point process or a stationary determinantal point process with an exponentially decaying kernel, then admissible graphs on  $\P$ include  the undirected $k$-nearest neighbors graph, the sphere of influence graph, and the Delaunay graph. Throughout this section, we work with such point processes $\P$ that are also EDD, together with the corresponding graph operators ${\cal G}(\cdot,\sim)$.

It is shown in \cite[Section~9]{BYY25} that statistics of interacting diffusions on spatial random graphs satisfy asymptotic normality, provided the underlying point process has fast decay of correlations.  The key to their results is  that the relevant statistics could be expressed as a sum of score functions which satisfy exponential stabilization in the $L^2$ distance.
That work  made no attempt to find rates of normal convergence. 
Here we indicate that our general results  yield rates of normal convergence when the input is a class of EDD point processes   and when the initial diffusion displacements are independent and uniformly bounded.
We  recall the set-up  and refer to \cite[Section~$9$]{BYY25} for full details.

Given a countable  $V \subset \R^d$,  let ${\cal G} := {\cal G}(V)$ be an admissible interaction graph on $V$.
Let $N_v$ be the neighborhood of $v \in V.$
Fix a time horizon $t_0 \in (1,\infty)$. 
 Consider a system of interacting $\R^{d'}$-valued diffusions
$M(v,t):=M^G(v,t), v \in V, \,  t \in [0, t_0],$ defined by
\begin{equation}\label{e:sys_ID_V}
	dM(v,t) = b(t,M(v,t),M(N_v,t))dt + \sigma(t,M(v,t),M(N_v,t))dZ_v(t), 
\end{equation}
where $Z_v(\cdot), v \in V,$ are i.i.d. standard Brownian motions in $\R^{d'}, d' \in \N$, and where $M(N_v,t) = \{M(v',t)\}_{v' \sim v}$, where $v' \sim v$ means that $v'$  is a neighbor of $v$.  Denote by $M(v):= M(v,0) \in \R^{d'}, v \in V,$ the initial states, assumed to be almost surely  uniformly bounded,  i.e.,  $\sup_{v\in V}|M(v,0)| \le L$ a.s. for a constant $L\in(0,\infty).$
These processes take values in the space $\mathbb{C}^{(0)}(\R^{d'})$ of continuous paths in $\R^{d'}$,
with the functions $b$ and $\sigma$ being the {\em drift} and {\em diffusion} coefficients, respectively. Here  $b$ is $\R^{d'}$-valued whereas the diffusion matrix $\sigma$ is $\R^{d'} \times \R^{d'}$-valued; see  \cite[Definition 9.2]{BYY25} for full details.
The particles interact directly only with their (finite) graph neighbors, in contrast to the mean field model, where particles interact with all other particles.

We further  assume  Lipschitz conditions on $b$ and $\sigma$, as spelled out in \citet{LRW23} and \cite[Definition 9.2]{BYY25}. When ${\cal G}$ is admissible, the Lipschitz assumption implies the existence of a strong solution for the  system of interacting diffusions; c.f.  \cite[Theorem 3.1]{LRW23}.

When the processes are defined on the entirety of the graph ${\cal G}$, we denote them simply as $M := M^{{\cal G}}$. The paths (trajectories) of the processes up to time $t$ are denoted by $M[v,t]$. Let $\hat{\P} := \{(x,M(x),Z_x)\}_{x \in \P}$ be a marked point process on $\R^d \times \R^{d'} \times \mathbb{C}^{(0)}(\R^{d'})$, with $M(x), x \in \P,$ denoting the independent initial states.  

To cast the statistics of interacting diffusions in the framework of our general results, we consider, {\em for fixed $t\in (0,\infty)$,} real-valued score functions of the trajectories
\begin{equation}\label{xisysID}
	\xi ((x,M(x),Z_x), \hat{\P}):=  h(M^{{\cal G}(\P)}[x,t])
\end{equation} 
where  $h : \mathbb{C}^{(0)}(\R^{d'})(t) \to \R$ is a Lipschitz($1$) function with respect to the sup-norm, $\|\cdot\|_{\infty}$, on $\mathbb{C}^{(0)}(\R^{d'})(t)$, the space of trajectories up to time $t$. The measurability of $\xi$ with respect to
$\hat{\P}|_{\Gamma_{\lambda}}$ is established in the proof of Theorem 7.8 of \cite{LRW23} and also in Section 9 of \cite{BYY25}. 
Such scores give  rise
to the diffusion statistics 
\begin{equation*}\ignore{\label{sysidfunctional}}
	H_{\lambda}:={H}_{\lambda}^{\xi} := \sum_{x \in \P \cap \Gamma_{\lambda}} \xi ((x,M(x),Z_x), \hat{\P}).
\end{equation*}
Finally, for $k \in \N$ denote by
${\cal G}_x^{k}(\mathcal{X})$ the subgraph of
${\cal G}(\mathcal{X})$ induced by the graph ball
$B^{{\cal G}(\mathcal{X})}(x,k)$ centered at $x\in \mathcal{X}$ with graph radius $k$. This gives rise to the corresponding interacting diffusion processes $\left\{M^{{\cal G}_x^{k}(\mathcal{X})}(y,t)\right\}$ for $t\in [0,t_0]$ on $y\in B^{{\cal G}(\mathcal{X})}(x,k)$, which have the same coefficients $(b,\sigma)$, are driven by the same Brownian motions, and share the same initial conditions $M^{{\cal G}_x^{k}(\mathcal{X})}(y,0)=M(y,0)$ for $y\in B^{{\cal G}(\mathcal{X})}(x,k)$. 
Let $\P_x$ denote the Palm process of $\P$ at $x$, that is, $\P_x\sim{\cal L}_x(\P)$.

\begin{cor} \label{t:sysID} (rates of normal convergence for statistics of interacting diffusions on EDD input) 
	Let $\hat{\P} = \{(x,M(x),Z_x)\}$ be as above, with $M(x), x \in \P$, independent initial states which are a.s. uniformly bounded, and ${\cal G} ={\cal G}(\cdot, \sim)$ an admissible graph on $\P$.  Let $M = M^{{\cal G}(\P)}$ be the system of interacting diffusions as in \eqref{e:sys_ID_V} on ${\cal G}$ with  diffusion coefficients $b,\sigma$ satisfying the Lipschitz assumption of \cite{LRW23}. 
Fix $t_0 \in (0, \infty)$ and assume the score function $\xi$ in \eqref{xisysID} and the corresponding function $h$ satisfy the following moment conditions:
	\begin{equation}\label{moment_diffusion}
	\begin{aligned}
		\sup_{x \in \mathbb{R}^d} \E[ \max(1,|h(M^{{\cal G}(\P_x)}[x,{t_0}])|^p)] &< \infty,\\
		\sup_{x \in \mathbb{R}^d, k\in \mathbb{N}} \mathbb{E}\[\max\(1,\left|h\(M^{{\cal G}_x^{k}(\mathcal{P}_x)}[x,{t_0}]\)\right|\)^{p'}\]&<\infty,
	\end{aligned}
	\end{equation}
	for some positive constants $p,~p'$ such that $\min\{p,p'\}>4$. If $\Var H_{\lambda} = \Omega(\lambda^{\nu})$ for some $\nu > 2/3$, then 
	\begin{equation*} \ignore{\label{ratesdiffusion}}
		d_W\( \frac{H_{\lambda} - \E H_{\lambda}} {\sqrt{\Var H_{\lambda}}}, Z\) = O\(\lambda^{- \kappa}\),
	\end{equation*}
	for all $\kappa< 1.5\nu-1$.
\end{cor}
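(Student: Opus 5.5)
We will deduce the statement from Corollary~\ref{maincorLp}~(b) in its sub-exponential form, Remark~\ref{re1.6}~(iii). The short-range estimator is taken to be the graph-ball truncation
$$\hat{\xi}^{[r]}((x,M(x),Z_x),\hat\P):=h\bigl(M^{{\cal G}_x^{k(r)}(\P)}[x,t_0]\bigr),$$
where $k(r):=\lfloor r^{1/2}\rfloor$ or a similar slowly growing function of $r$. The diffusion system on the graph ball of radius $k(r)$ uses the same Brownian motions and the same initial states. It depends only on marked points within Euclidean distance $r$ of $x$ as soon as the interaction ranges $S_\lambda(y,\P)$ of all $y$ in that ball are at most $r/k(r)$. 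This is not literally a deterministic stopping set of radius $r$. We therefore define $\hat\xi^{[r]}$ as the truncated value computed on $\hat\P\cap(B(x,r)\times\M)$, which is local by construction. The two definitions agree off an event of $\mathbb{P}_x$-probability at most $\varphi$-type fast decay, by the stabilization of ${\cal G}$ (Definition~\ref{graphstabilizng}), a union bound over the ball, and the moments of $\card(\P\cap B(x,r))$.

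The required inputs are as follows. \eqref{momground} holds for all $p_1$ for the EDD processes under consideration (Remark~\ref{remark1.1}). EDD of $\hat\P$ holds because the marks $(M(x),Z_x)$ are i.i.d.\ and independent of $\P$, so total variation does not increase when independent marks are attached. The moment conditions \eqref{momscore} and \eqref{momrestrictedscore} with $p_2=p$ and $p_2'=p'$ are exactly \eqref{moment_diffusion}. Since $\min\{p,p'\}>4$ and $p_1$ can be taken arbitrarily large, $\ubar p$ is arbitrarily close to $p\wedge p'>4$, hence $\ubar p>3$. Moreover $\ubar p/(\ubar p-2)<2$, so stabilization in $L^{2}$ is sufficient.

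The main step is sub-exponential (fast decreasing) $L^2$-stabilization, $\varphi^{(L^2)}(r)\le C_m r^{-m}$ for every $m$. We will follow the argument in \cite[Section~9]{BYY25} and \cite{LRW23}. The Lipschitz assumptions on $b$ and $\sigma$ together with Gronwall's inequality give a bound on $\E\|M^{{\cal G}}[x,t_0]-M^{{\cal G}_x^k}[x,t_0]\|_\infty^2$. Conditional on the graph, this bound is of the form $\sum$ over paths of length $k$ from $x$ of $(Ct_0)^k/k!$, i.e.\ $(C t_0\,\Delta)^k/k!$, where $\Delta$ is the maximal degree in the ball. Because $h$ is Lipschitz, the same bound controls the score difference. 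We then split on the event $\{\Delta\le k^{1/2}\}$. On this event the factorial beats the degree growth and the bound decays super-polynomially in $k$. The complementary event has fast decreasing probability by stabilization, since degrees are bounded by point counts in balls of radius $S_\lambda$. On the complementary event we use Hölder's inequality with the $p$ and $p'$ moments. This is where $p,p'>2$ is needed, and it also motivates using $L^{2}$ rather than a larger $p_3$.

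We expect this last step to be the obstacle. The moments of the graph-ball size and of the degrees have to be controlled under the Palm measure $\mathbb{P}_x$ using only first-order Palm information and EDD, unlike \cite{BYY25}, where correlation functions are used. The first thing we would try is to bound the degrees through point counts in balls of radius given by the interaction range, and then apply Lemma~\ref{moments-ground} under Palm via stationarity. Once fast decreasing $\varphi^{(L^2)}$ is established for every order $\beta$, Corollary~\ref{maincorLp}~(b) gives the rate $\lambda^{-1.5\nu+1+\tau(2d,\beta)}$, and $\tau(2d,\beta)\to0$ as $\beta\to\infty$. This yields $O(\lambda^{-\kappa})$ for all $\kappa<1.5\nu-1$.
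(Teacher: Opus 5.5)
Your outline is the paper's: truncate the diffusion to the graph ball of radius $\lfloor\sqrt r\rfloor$, make the truncation local using the tails of the interaction ranges (your union bound is the paper's \eqref{example6.4}), and feed polynomial $L^{p_3}$-stabilization of every order into Corollary~\ref{maincorLp}~(b) and Remark~\ref{re1.6}~(iii). There are, however, two concrete problems.

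\emph{The moment hypothesis for your estimator is not verified.} You claim that \eqref{momscore} and \eqref{momrestrictedscore} are ``exactly'' \eqref{moment_diffusion}. This is false for the estimator you finally adopt. The second line of \eqref{moment_diffusion} controls $h(M^{{\cal G}_x^{k}(\P_x)}[x,t_0])$, a graph ball of the \emph{full} graph ${\cal G}(\P_x)$. Your local estimator is instead computed from ${\cal G}(\P\cap B(x,r))$. When some interaction range is large, the graph ball of $x$ in this restricted graph need not be a graph ball of ${\cal G}(\P_x)$ at all, and no hypothesis bounds the $p'$-th moment of $h$ on it. Agreement off a small event does not help, because \eqref{momrestrictedscore} is a uniform moment bound. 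It is a hypothesis of Corollary~\ref{maincorLp}, and you also need it for your H\"older step on the bad event.

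The paper avoids this by setting $\hat{\xi}^{[r]}:=\tilde{\xi}^{[r]}\mathbf{1}_{E_{x,r}}$. Here $\tilde{\xi}^{[r]}=h(M^{{\cal G}_x^{\lfloor\sqrt r\rfloor}(\P)}[x,t_0])$ and $E_{x,r}=\{S_\infty(y,\P)\le\lfloor\sqrt r\rfloor \text{ for all } y\in\P\cap B(x,r-\lfloor\sqrt r\rfloor)\}$. By the stopping-set property of the interaction ranges, this estimator is determined by $\hat{\P}|_{B(x,r)}$. Since $|\hat{\xi}^{[r]}|\le|\tilde{\xi}^{[r]}|$, \eqref{momrestrictedscore} follows directly from \eqref{moment_diffusion}. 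The stabilization error then splits as $\|\xi-\tilde{\xi}^{[r]}\|_{p_3}+\|\tilde{\xi}^{[r]}\mathbf{1}_{E_{x,r}^c}\|_{p_3}$.

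\emph{Your main step is left open, and the obstacle you anticipate does not arise.} The paper uses no degree-dependent bound like $(Ct_0\Delta)^k/k!$. It invokes Lemmas~9.6 and~9.7 of \cite{BYY25}, under the Lipschitz assumption of \cite{LRW23}. These give $\E\|M^{{\cal G}(\P_x)}[x,t_0]-M^{{\cal G}_x^{k}(\P_x)}[x,t_0]\|_\infty^2\le C^k/k!$ and $\E\|M^{{\cal G}(\P_x)}[x,t_0]\|_\infty^2\le C'$, with constants that do not depend on the graph or its degrees.

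With $k=\lfloor\sqrt r\rfloor$, the first bound already decays faster than any power of $r$. So there is no split on $\{\Delta\le k^{1/2}\}$, and no Palm control of degrees or graph-ball sizes is needed. The only probabilistic input is $\mathbb{P}_x(E_{x,r}^c)$, which is at most of order $r^d\varphi(\lfloor\sqrt r\rfloor)$ by Definition~\ref{graphstabilizng}.

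The paper also takes $p_3=\ubar{p}/(\ubar{p}-2)<2$, so that the $E_{x,r}^c$ term can be handled by H\"older with only these second moments: $\|\tilde{\xi}^{[r]}\mathbf{1}_{E_{x,r}^c}\|_{p_3}\le |h(\bm{0}(\cdot))|\,\mathbb{P}(E_{x,r}^c)^{1/p_3}+\mathbb{P}(E_{x,r}^c)^{(2-p_3)/(2p_3)}\,\|\|M^{{\cal G}_x^{k}(\P_x)}[x,t_0]\|_\infty\|_2$. Your choice $p_3=2$, with H\"older against the $p\wedge p'>2$ moments, would also handle this term once the estimator is corrected as above.

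If you did keep a degree-dependent Gronwall bound, note that Lemma~\ref{moments-ground} is a statement under $\mathbb{P}$, not $\mathbb{P}_x$. Controlling the maximal degree over the random points of a ball under $\mathbb{P}_x$ would then need an additional argument, for instance the mass-transport principle for the Palm distribution combined with H\"older and Palm moments of point counts. That extra work is avoidable.
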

\noindent {\it Proof.} The proof is adapted from that of \cite[Theorem 9.3]{BYY25} to our setting, which imposes fewer Palm requirements on the graph structure ${\cal G}$. We take the short-range estimator of the score function $\xi$ in \eqref{restr} to be
$$\hat{\xi}^{[r]}((x,M(x),Z_x),\hat{\mathcal{P}})=h\(M^{{\cal G}_x^{\left\lfloor \sqrt{r} \right\rfloor}(\mathcal{P})}[x,t_0]\)\mathbf{1}_{E_{x,r}},
$$
where  $E_{x,r}:=\{S_{\infty}(y,\P)\le\left\lfloor\sqrt{r}\right\rfloor~\text{for all}~y\in \P\cap B\(x,r-\left\lfloor\sqrt{r}\right\rfloor\)\}$. By construction, $\hat{\xi}^{[r]}((x,M(x),Z_x),\hat{\mathcal{P}})$ is determined by $\hat{\mathcal{P}}|_{B(x,r)}$. Moreover, by Remark~\ref{re1.3}~(iii) and the moment assumptions, the power $p_1$ in \eqref{momground} can be chosen arbitrarily large, while $p_2$ and $p_2'$ can be chosen as $p$ and $p'$ in \eqref{moment_diffusion}, respectively. 
Thus, by taking sufficiently large $p_1$, in Corollary~\ref{maincorLp} we take $\ubar{p}:=(p\wedge p')/2+2>4$, and set $p_3:=\ubar{p}/(\ubar{p}-2)<2$.  It suffices to show that $\xi$ satisfies polynomial $L^{p_3}$-stabilization in Definition~\ref{lpstab} with arbitrarily large $\beta$, as discussed in Remark~\ref{re1.6}~(iii). Set
$$\tilde{\xi}^{[r]}\((x,M(x),Z_x),\hat{\mathcal{P}}\)=h\(M^{{\cal G}_x^{\left\lfloor \sqrt{r} \right\rfloor}(\mathcal{P})}[x,t_0]\).$$
By the moment assumption in \eqref{moment_diffusion}, we have 
 $\|\tilde{\xi}^{[r]}\((x,M(x),Z_x),\hat{\mathcal{P}}_x\)\|_{p'}<\infty$ uniformly in $x$ and $r$, where $\hat{\mathcal{P}}_x$ is the Palm process of $\hat{\mathcal{P}}$ at $x$, i.e., $\hat{\mathcal{P}}_x\sim\law_x(\hat{\mathcal{P}})$. By the Minkowski inequality, H\"{o}lder's inequality, the Lipschitz($1$) assumption of $h$ and Lemmas~$9.6$ and~$9.7$ in \cite{BYY25}, whose bounds are stated for second moments and hence are used below after taking square roots, we have
\begin{align}
	&\left\|\xi((x,M(x),Z_x),\hat{\mathcal{P}}_x)-\hat{\xi}^{[r]}((x,M(x),Z_x),\hat{\mathcal{P}}_x)\right\|_{p_3}\nonumber\\
	&\le \left\|\xi((x,M(x),Z_x),\hat{\mathcal{P}}_x)-\tilde{\xi}^{[r]}((x,M(x),Z_x),\hat{\mathcal{P}}_x)\right\|_{p_3}\nonumber\\
	&\quad +\left\|\hat{\xi}^{[r]}((x,M(x),Z_x),\hat{\mathcal{P}}_x)-\tilde{\xi}^{[r]}((x,M(x),Z_x),\hat{\mathcal{P}}_x)\right\|_{p_3}\nonumber\\
	&\le \left\|\left\|M^{{\cal G}(\mathcal{P}_x)}[x,t_0]-M^{{\cal G}_x^{\left\lfloor \sqrt{r} \right\rfloor}(\mathcal{P}_x)}[x,t_0]\right\|_{\infty}\right\|_{p_3}+\left\|\mathbf{1}_{E_{x,r}^c}\(|h(\bm{0}(\cdot))|+\left\|M^{{\cal G}_x^{\left\lfloor \sqrt{r} \right\rfloor}(\mathcal{P}_x)}[x,t_0]\right\|_{\infty}\)\right\|_{p_3}\nonumber\\
	&\le \(\frac{C^{\left\lfloor \sqrt{r} \right\rfloor}}{\left\lfloor \sqrt{r} \right\rfloor!}\)^{1/2}+|h(\bm{0}(\cdot))|\mathbb{P}\(E_{x,r}^c\)^{1/p_3}+\mathbb{P}\(E_{x,r}^c\)^{(2-p_3)/2p_3}\((C')^{1/2}+\(\frac{C^{\left\lfloor \sqrt{r} \right\rfloor}}{\left\lfloor \sqrt{r} \right\rfloor!}\)^{1/2}\),\label{example6.2}
\end{align}
where $\bm{0}(\cdot)$ denotes the zero process on the time interval $[0,t]$, and $C$ and $C'$ are the positive constants appearing in Lemmas~$9.7$ and~$9.6$, respectively. Here, the second inequality follows from the Lipschitz property of $h$, which gives $$\left|h\(M^{{\cal G}_x^{\left\lfloor \sqrt{r} \right\rfloor}(\mathcal{P}_x)}[x,t_0]\)\right|\le |h(\bm{0}(\cdot))|+\left\|M^{{\cal G}_x^{\left\lfloor \sqrt{r} \right\rfloor}(\mathcal{P}_x)}[x,t_0]\right\|_{\infty},$$ and the third inequality follows from Minkowski's inequality, H\"older's inequality, and the fact that 
$$\left\|\left\|M^{{\cal G}_x^{\left\lfloor \sqrt{r} \right\rfloor}(\mathcal{P}_x)}[x,t_0]\right\|_{\infty}\right\|_2\le \left\|\left\|M^{{\cal G}(\mathcal{P}_x)}[x,t_0]-M^{{\cal G}_x^{\left\lfloor \sqrt{r} \right\rfloor}(\mathcal{P}_x)}[x,t_0]\right\|_{\infty}\right\|_{2}+\left\|\left\|M^{{\cal G}(\mathcal{P}_x)}[x,t_0]\right\|_{\infty}\right\|_{2}.$$ The first term on the right-hand side of \eqref{example6.2} decays faster than $r^{-\beta}$ for every $\beta\in(0,\infty)$ as $r$ goes to infinity. Furthermore,
\begin{align}
	\mathbb{P}_x(E_{x,r}^c)\le& \int_{ B\(x,r-\left\lfloor\sqrt{r}\right\rfloor\)}\mathbb{P}_y\( S_\infty(y,\mathcal{P})>\left\lfloor\sqrt{r}\right\rfloor\) \mu\mathrm{d} y\nonumber\\
	\le &\mu \Vol\( B\(x,r-\left\lfloor\sqrt{r}\right\rfloor\)\)C_{\beta}'r^{-\beta'}\nonumber\\
	\le& C_{\beta}''r^{d-\beta'}\label{example6.4}
\end{align}
for some positive constants $C_{\beta}'$ and $C_{\beta}''$ for all $\beta'\in(0,\infty)$, where $\mu$ is the intensity of $\mathcal P$ and the last inequality follows from the stabilizing assumption in Definition~\ref{graphstabilizng}. Consequently, the second and third terms on the right-hand side of \eqref{example6.2} also decay faster than $r^{-\beta}$ for all $\beta\in(0,\infty)$ by taking $\beta'=2\beta p_3/(2-p_3)+d$ in \eqref{example6.4}. Hence the estimates above verify
 polynomial $L^{p_3}$-stabilization in Definition~\ref{lpstab} with arbitrarily large polynomial decay exponent. Together with the moment condition for the short-range estimators verified above, Corollary~\ref{maincorLp} and Remark~\ref{re1.6}~(iii) yield the desired statement. \qed

\begin{re}\label{diff1stPalm2ndPalm}(cf. Remark~\ref{re1.5}~(iv)) 
		In \cite{BYY25}, the definition of a spatial random graph is based on the supremum of conditional probabilities given that $\P$ contains $x$ together with any other finite set $\{x_1,\dots,x_p\}\subset\Gamma_{\lambda}\backslash\{x\}$, for $p\in\mathbb{N}$, which is a potentially restrictive condition as it is based on higher-order Palm distributions. Our results are instead based on the first-order Palm distribution, leading to a slightly broader class of graphs for the same underlying point process $\P$.  For example, for any configuration ${\cal X}\in {\cal N}_{\mathbb{R}^d}$, we define the graph  ${\cal G}({\cal X},\sim)$ as follows: we connect each $x\in {\cal X}$ to its nearest neighbor in ${\cal X}$, 
	$$ N_x:=N_{x}({\cal X}):=\argmin_{y\in {\cal X}\setminus\{x\}}d(x,y),$$ 
	provided that $d(x, N_x)<1/\mathrm{e}$ and $\card({\cal X}\cap B(x,-\ln d(x, N_x)))$ is even. Then, 
	$$ S_{\lambda}(x,{\cal X}):=\[\sup_{y\in \{z\in{\cal X}\cap B(x,1/\mathrm{e});N_z=x\}\cup\{x\}}(-\ln d(y,N_y))\]\wedge d(x,\partial \Gamma_{\lambda})+1/\mathrm{e}$$ 
	defines an interaction range with respect to the graph ${\cal G}(\cdot,\sim)$. Let $\P$ be a homogeneous Poisson point process in $\mathbb{R}^d$ with unit intensity. One readily verifies that 
	\begin{align*}
		&\sup_{\lambda\in{(0, \infty]}}\sup_{x\in \Gamma_{\lambda}}\mathbb{P}_x(S_{\lambda}(x,\P\cap \Gamma_{\lambda})>r)\\
		&\le \sup_{\lambda\in{(0, \infty]}}\sup_{x\in \Gamma_{\lambda}}\mathbb{P}_x\(\sup_{y\in \{z\in \P\cap \Gamma_{\lambda}\cap B(x,1/\mathrm{e});N_z=x\}\cup\{x\}}(-\ln d(y,N_y(\P\cap \Gamma_{\lambda})))+\frac1 e>r\)\\
		&\le \sup_{\lambda\in{(0, \infty]}}\sup_{x\in \Gamma_{\lambda}}\mathbb{P}_x\(d(x,N_x(\P\cap \Gamma_{\lambda})) <\mathrm{e}^{\mathrm{e}^{-1}-r}\)\\
		&\le\sup_{\lambda\in{(0, \infty]}}\sup_{x\in \Gamma_{\lambda}} \mathbb{P}_x\(\card\(\P\cap (B(x,\mathrm{e}^{\mathrm{e}^{-1}-r}))\backslash\{x\}\)\ge 1\)\\
		&=\mathbb{P}\(\card\(\P\cap B\({\bf 0},\mathrm{e}^{\mathrm{e}^{-1}-r}\)\)\ge 1\)
	\end{align*}
	for all $r\ge 1$, where the last equality follows from the translation invariance of the homogeneous Poisson point process and the fact that its reduced Palm distribution coincides with its original distribution. The right-hand side decays exponentially fast. However,
	$$\sup_{\lambda\in{(0, \infty]}}\sup_{x_1,x_2\in \Gamma_{\lambda},x_1\ne x_2}\mathbb{P}_{x_1,x_2}(S_{\lambda}(x_1,\P\cap \Gamma_{\lambda})>r)=1$$
	for all $r\ge 1$, since when $x_2$ is sufficiently close to $x_1$ and $d(x_1,\partial\Gamma_{\lambda})>r,$ the dependence range of whether $x_1$ is connected to its nearest neighbor is always greater than $r$, which shows that the decay condition for the tails of $S_\lambda$ as given by  \cite[Definition~8.1]{BYY25} does not hold when $ p = 2$.  Graphs such as these,  admittedly not common,  illustrate that when we only require assumptions based on one-point Palm distributions then the class of admissible  graphs may be broader than the class of graphs satisfying assumptions based on all Palm distributions.
\end{re}

\subsection{Local $U$-statistics}\label{localustatistic}

The limit theory for local $U$-statistics has attracted considerable attention  over the past decades, see, e.g.,
\citet{RS13,ET14}  and    \cite{BYY19,BYY25} for statistics with Poisson and non-Poisson input.    
Given ${\cal X} \in {\cal N}_{\R^d}$, such statistics have the form 
$
[\sum_{\mathbf{x}\in{\cal X}^{k,\neq}}h_0(\mathbf{x})]/k!,
$ where $h_{0}$ is a real symmetric measurable function on $(\R^d)^{k,\neq}.$
\ignore{\yellow{and they can be represented as sums of score functions
		$$\sum_{x\in {\cal X}}\left\{\frac{1}{k!}\sum_{\mathbf{x}\in ({{\cal X}\backslash\{x\}})^{k-1,\neq}}h_{0}(x,\mathbf{x})\right\}=:\sum_{x\in {\cal X}}\xi_0(x,{\cal X}),$$
		where ${\cal Y}^{l,\neq}$ denotes the set of ordered $l$-tuples consisting of distinct elements in ${\cal Y}$, and $h_{0}(x,(x_1,\dots,x_{k-1})):=h_{0}(x,x_1,\dots,x_{k-1})$ is a real symmetric measurable function on $(\R^d)^{k,\neq}$.}}

Limit theorems for local $U$-statistics may be established under a locally dependent setup as in  \cite{BYY19,BYY25}, meaning that $h_0(x_1,\dots,x_k)$ vanishes whenever there exists a pair $x_i, x_j$ with $i,j\le k$ such that $|x_i - x_j|\ge r$. Related qualitative CLTs for certain $U$-statistics of Gibbs particle processes whose dominating Boolean model is subcritical were obtained in \cite{BHLV20}. Alternatively, such limit theorems may be derived under the assumption that ${\P}$ is a Poisson point process on a general metric space,  see, e.g., \cite{ET14,RS13}.

In this section, we deduce from Corollary~\ref{maincorLp}~(a) and~(b)  rates for the normal approximation error of local $U$-statistics of EDD marked point processes $\hat{\P}$. No independence assumptions are imposed beyond the EDD condition in Definition~\ref{EDD}. Instead, we let the kernel $h$ be a real symmetric measurable function on the space of the marked $k$-tuples $\{\((y_1,m_1),...,(y_k,m_k)\);\ (y_1,\dots,y_k)\in (\R^d)^{k,\neq},\ m_1,\dots,m_k\in\mathbb{M}\}$. We assume that the conditional $L^{p_0}$ norm of $h((x_1,M_{x_1}),\dots,(x_k,M_{x_k}))$ decays either exponentially or polynomially\ignore{(with sufficiently high order)} in the diameter of the argument set, for a given positive $p_0$, where $M_x$ denotes the mark of the point  $x\in \P$. We denote by $H_{\lambda}$ the $U$-statistic generated by $\hat{\P}|_{\Gamma_{\lambda}}$, namely
\begin{equation}\label{repW}
	H_{\lambda}	=\frac{1}{k!}\sum_{\hat{\mathbf{x}}\in (\hat{\P}|_{\Gamma_{\lambda}})^{k,\neq}}h(\hat{\mathbf{x}})=\sum_{x\in \P\cap \Gamma_{\lambda}}\xi((x,M_x),\hP,\Gamma_{\lambda}),
\end{equation}
where 
\begin{equation}\xi((x,m_x), \hatX,\Gamma_{\lambda}):=\frac{1}{k!}\sum_{\hat{\mathbf{x}}\in ({\hatX|_{\Gamma_{\lambda}\backslash\{x\}} })^{k-1,\neq}}h((x,m_x),\hat{\mathbf{x}}).\label{defxiustat01}\end{equation}

\begin{thm}\label{example7}(quantitative CLT for local $U$-statistics)
	Let  $\hP$ be  an EDD marked point process such that the ground point process ${\P}$ is either (i) a stationary Gibbs point process  whose  interaction range is smaller than the critical value of continuum percolation of an associated Poisson point process, or (ii) an $\alpha$-determinantal point process with  kernel $K(x,y):=K_0(x-y)=\overline{K_0(y-x)}$ satisfying condition  \eqref{reEDDdoubleexp}, and $\alpha$ such that $-1/\alpha\in\mathbb{N}$. Let $H_\lambda$ be as in \eqref{repW} and assume  $\var H_{\lambda} =\Omega(\lambda^\nu)$ for some constant $\nu>2/3$. 
	\begin{description}
		\item{(a)} If  the conditional $L^{p_0}$ norm of $h$ decays exponentially fast for some $p_0>3$, i.e., if
		$$\mathbb{E}_{x_1,\dots,x_k}\(|h((x_1,M_{x_1}),\dots,(x_k,M_{x_k}))|^{p_0}\)^{1/p_0}\le C_1\mathrm{e}^{-C_2 \diam(\{x_1,\dots,x_k\})}$$ 
		for some positive constants $C_1$ and $C_2$ for all distinct $x_1,\dots,x_k\in \mathbb{R}^d$, we have
		\begin{equation*}
			d_W\( \frac{H_{\lambda} - \E H_{\lambda}} {\sqrt{\Var H_{\lambda}}}, Z\)= O\( (\log \lambda)^{3d} \lambda^{-1.5\nu+1}\),
		\end{equation*}
		where $Z\sim N(0,1)$.
		\item{(b)} If  the conditional $L^{p_0}$ norm of $h$  decays polynomially fast for some $p_0>3$, i.e., if
		$$\mathbb{E}_{x_1,\dots,x_k}\(|h((x_1,M_{x_1}),\dots,(x_k,M_{x_k}))|^{p_0}\)^{1/p_0}\le C_3\(1\vee \diam(\{x_1,\dots,x_k\})\)^{-C_4}$$ 
		for some positive  constants $C_3$ and $C_4$ for all distinct $x_1,\dots,x_k\in \mathbb{R}^d$, with $$C_4>\frac{d}{3\nu-2}((18-3\nu)\vee 14)+d(k-1)+1,$$ 
		we have
		\begin{equation*}
			d_W\( \frac{H_{\lambda} - \E H_{\lambda}} {\sqrt{\Var H_{\lambda}}}, Z\) = O\(\lambda^{-1.5\nu+1+ \tau(2d,\beta)}\)=o(1),
		\end{equation*}
		for $\beta=C_4-d(k-1)-1$ where $Z\sim N(0,1)$ and where $\tau$ is defined in \eqref{taudef}.
	\end{description}
\end{thm}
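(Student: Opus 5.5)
We will derive Theorem~\ref{example7} from Corollary~\ref{maincorLp}, parts (a) and (b). The ground processes in cases (i) and (ii) have moments of all orders (Remark~\ref{remark1.1}), so $p_1$ can be taken arbitrarily large. By Remark~\ref{re1.6}~(i) it then suffices to check three things:
\begin{itemize}
\item the score $\xi$ in \eqref{defxiustat01} has a $p_2$-moment with $p_2>3$;
\item a family of short-range scores has a $p_2'$-moment with $p_2'>3$;
\item $L^{p_3}$-stabilization holds with $p_3\ge \ubar{p}/(\ubar{p}-2)$, exponential in case (a) and polynomial with order $\beta=C_4-d(k-1)-1$ in case (b).
\end{itemize}

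For the short-range estimator we take the natural truncation
$$\hat{\xi}^{[r]}((x,m_x),\hatX,\Gamma_\lambda):=\frac{1}{k!}\sum_{\hat{\mathbf{x}}\in (\hatX|_{(\Gamma_\lambda\cap B(x,r))\setminus\{x\}})^{k-1,\neq}}h((x,m_x),\hat{\mathbf{x}}).$$
This depends only on the configuration in $B(x,r)$, so \eqref{restr} holds.

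\emph{Moments.} Write $\xi=\sum_{j\ge 0}\xi_j$, where $\xi_j$ collects the $(k-1)$-tuples $\hat{\mathbf{x}}$ with $\max_i|x_i-x|\in[j,j+1)$. Similarly, $\xi-\hat\xi^{[r]}=\sum_{j\ge \lfloor r\rfloor}\xi_j$, up to the boundary shell. Minkowski's inequality under $\mathbb{P}_x$ gives $\|\xi\|_{p}\le\sum_j\|\xi_j\|_p$. Each $\xi_j$ is bounded as in Lemma~\ref{moments-XB}, i.e.\ by H\"older over the random number of summands:
\begin{itemize}
\item the number of tuples is at most $\card(\P\cap B(x,j+1))^{k-1}$;
\item its Palm moments are $O(j^{d(k-1)})$ by the Palm moment bounds available for Gibbs and $\alpha$-determinantal processes (via GNZ, resp.\ the correlation-function bound, both bounded by a product of constants);
\item the kernel's conditional $L^{p_0}$ norm is at most $C_1e^{-C_2 j}$, resp.\ $C_3 j^{-C_4}$.
\end{itemize}
To pass from the $k$-point Palm moment of $h$ to an expectation under $\mathbb{P}_x$, I would write $\E_x\sum_{\mathbf{x}}|h|^{q}$ as an integral against the $(k-1)$-point correlation function of the reduced Palm process. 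This correlation function is bounded, by the GNZ/Papangelou upper bound for Gibbs and by Hadamard's inequality for determinantal processes. Combining with H\"older (splitting the product of the count and $|h|$ as in \eqref{momentsXB-02}) yields
$$\|\xi_j\|_{p}\lesssim j^{d(k-1)+c}\,\eta(j),$$
with $\eta(j)=e^{-C_2 j}$ or $j^{-C_4}$, and $p$ slightly below $p_0$, still $>3$. Summing over $j$ gives uniform bounds on $\|\xi\|_p$ and $\sup_r\|\hat\xi^{[r]}\|_p$ in case (a). In case (b) the sum converges because $C_4>d(k-1)+1$.

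\emph{Stabilization.} Summing the tail $j\ge r$ gives $\varphi^{(L^{p_3})}(r)=O(e^{-C r})$ in case (a). In case (b) it gives $\sum_{j\ge r}j^{d(k-1)-C_4}=O(r^{-(C_4-d(k-1)-1)})$, which is exactly $\beta=C_4-d(k-1)-1$. Since $p_3\ge\ubar p/(\ubar p-2)$ can be taken near $1$ with $\ubar p$ close to $p_0>3$, the stabilization exponent $p_3$ is harmless. Corollary~\ref{cor.k4} lets us adjust $p$ if needed.

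\emph{Conclusion.} The hypothesis on $C_4$ is equivalent to \eqref{relationship} with $\gamma_0=2d$ and $\gamma_2=\beta$, so $\tau(2d,\beta)<1.5\nu-1$. Corollary~\ref{maincorLp}~(a)/(b) then gives the rates.

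The main obstacle is the moment step. The assumption on $h$ is stated under the $k$-point Palm distribution, but \eqref{momscore} and the $L^{p_3}$ bound are under the one-point Palm $\mathbb{P}_x$. Moreover, a $p$-th power of a sum of many terms must be controlled without higher Palm moments of $h$ jointly with counts. I would first try H\"older in the form of Lemma~\ref{moments-XB}: $|\sum_{\mathbf{x}} h|^p\le N^{p-1}\sum|h|^p$. Then $N^{p-1}$ must be separated from $\sum|h|^p$ via a further H\"older step, which costs a slight loss of integrability. This loss is why only $p<p_0$ (still $>3$) is claimed, and it is acceptable because $p_1=\infty$ effectively.
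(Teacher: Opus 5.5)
Your proposal is correct. Its skeleton matches the paper's proof: Corollary~\ref{maincorLp} with $p_1$ arbitrarily large, the restricted score as short-range estimator, a decomposition into shells according to the farthest point of the $(k-1)$-tuple, Minkowski over shells, and the tail sum $\sum_{j\ge r} j^{d(k-1)}\,\mathrm{UB}(j)$ giving $\beta=C_4-d(k-1)-1$.

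The real difference is in the per-shell moment bound.
\begin{itemize}
\item \emph{The paper's route.} It conditions on the ground configuration in the shell and applies Minkowski directly in $L^{p_0}$, with the Palm count moments supplied by Lemma~\ref{ex.lu.lma} (Poisson domination for Gibbs, the M{\o}ller--O'Reilly coupling $\P_x^!\subseteq\P$ for determinantal). This gives $p_2=p_2'=p_0$ outright. However, it tacitly treats the $k$-point Palm bound on $h$ as a bound conditional on the whole ground configuration; compare Remark~\ref{re.ex7}~(ii).
\item \emph{Your route.} You use a H\"older split as in \eqref{momentsXB-02}, the refined Campbell formula for the reduced Palm process, and bounded correlation functions. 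For $\alpha$-determinantal processes, apply Hadamard to each of the $-1/\alpha$ superposed determinantal processes. This uses the hypothesis exactly as stated. It also gives the count moments without Lemma~\ref{ex.lu.lma}. The only cost is integrability $s_1=p_1p_0/(p_1+p_0-1)<p_0$, which is harmless because $s_1>3$ once $p_1$ is large.
\end{itemize}

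Two small corrections.
\begin{itemize}
\item \emph{The exponent $c$.} Drop the hedge $j^{d(k-1)+c}$. With $s_1s_2=p_0$ you get $\bigl(\mathbb{E}_x\sum|h|^{p_0}\bigr)^{1/p_0}\lesssim \eta(j)\,j^{d(k-1)/p_0}$ and $\|N_j\|_{p_1}^{1-1/p_0}\lesssim j^{d(k-1)(1-1/p_0)}$, so $c=0$ exactly. You need this, since any $c>0$ would lower the stabilization order below $C_4-d(k-1)-1$ and spoil the stated rate.
\item \emph{The threshold for $p_3$.} $\ubar{p}/(\ubar{p}-2)$ is not ``near $1$''; for $p_0$ close to $3$ it is close to $3$. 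It is, however, strictly less than $3$ whenever $\ubar{p}>3$, so $p_3=3$ (via Lyapunov from your $L^{s_1}$ bound) suffices.
\end{itemize}
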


\begin{re}\label{re.ex7} 
	\begin{description}
		\item{(i)}
		Theorem~\ref{example7} can be easily extended to the case where the score is a function of $\hP$ instead of $\hP\vert_{\G_{\lambda}}$, i.e., for the case in which
		$H_{\lambda}=\sum_{x\in {\P}\cap\Gamma_{\lambda}}\xi((x,m_x),{\hP}),$ with minor adjustments.
		\item{(ii)} With minor modifications, all results also apply to EDD marked point processes $\hat{\P}$ whose ground process satisfies the bound \eqref{ex.lu.lma-x00} below for sufficiently large $k_0$. The same conclusions remain valid if the above conditional $L^{p_0}$ bounds are assumed to hold uniformly in the essential supremum sense.  For sake of brevity we omit the details.
	\end{description}
\end{re}

Before proceeding with the proof of the theorem, we state moment bounds for the considered point processes.

\begin{lma}\label{ex.lu.lma} 
	For each point process $\P$ in Theorem~\ref{example7} and each $k_0\in (0,\infty)$, there exists a constant $D_{k_0}\in(0,\infty)$, depending only on ${\cal L}(\P)$ and $k_0$, such that for all punctured balls $B^{\circ}(x,r):=B(x,r)\backslash\{x\}$, 
	\begin{equation}\label{ex.lu.lma-x00}
		\sup_{x\in \mathbb{R}^d}\mathbb{E}_{x}(\P(B^{\circ}(x,r))^{k_0})^{1/k_0}\le D_{k_0}(1\vee r)^d, \quad  r\in (0,\infty).       
	\end{equation}
\end{lma}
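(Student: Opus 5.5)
By monotonicity in $r$ it suffices to treat $r\ge1$: for $r<1$ one has $\P(B^\circ(x,r))\le\P(B^\circ(x,1))$. Also, by Lyapunov's inequality we may take $k_0\in\N$. Throughout, $\E_x$ refers to the reduced Palm configuration $\P^!_x$ (plus the point $x$, which is excluded by the puncture). By stationarity, $\mathcal L(\P^!_x-x)=\mathcal L(\P^!_{\0})$, so the left-hand side of \eqref{ex.lu.lma-x00} does not depend on $x$. The task therefore reduces to bounding $\E\big(\P^!_{\0}(B(\0,r))^{k_0}\big)$ by $C r^{dk_0}$.

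\textbf{Case (i), Gibbs processes.} Here I would use the GNZ equation together with the Papangelou intensity bound $\lambda^*(y,\cdot)\le h$, where we may take $h$ constant because of stationarity and the bounded-interaction, hard-core-free setting of \cite{HOS25}. The reduced Palm distribution at $\0$ is the law of $\P$ reweighted by $\lambda^*(\0,\P)/\mu$. Since $\lambda^*\le h$, this gives
\[
\E\big(\P^!_{\0}(B)^{k_0}\big)\le \frac{h}{\mu}\,\E\big(\P(B)^{k_0}\big).
\]
Next write $\P(B)^{k_0}$ as a finite combination of factorial powers $(\P(B))^{(n)}$ with $n\le k_0$, and apply the bound from Remark~\ref{remark1.1}, $\E[(\P(B))^{(n)}]\le (h\Vol(B))^n$. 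With $\Vol(B)=O(r^d)$ and $r\ge1$, this yields $O(r^{dk_0})$.

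\textbf{Case (ii), $\alpha$-determinantal processes.} By Remark~\ref{reEDD}, $\P$ is a superposition of $m=-1/\alpha$ i.i.d.\ determinantal processes $\P^{(1)},\dots,\P^{(m)}$, each with Hermitian kernel $-\alpha K$ satisfying \eqref{reEDDdoubleexp}. The Palm version at $\0$ of such a superposition is obtained by picking one component $j$ (with probability $1/m$ by stationarity), replacing it by its reduced Palm process, and keeping the others. For a single determinantal component, the coupling of \cite[Theorem~1]{MO21}, already used in the proof of Theorem~\ref{example1}, gives $\P^{(j),!}_{\0}\subseteq\P^{(j)}$. Hence $\P^!_{\0}(B)\le \P(B)$ stochastically, and it remains to bound $\E(\P(B)^{k_0})$. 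Minkowski's inequality, as in Lemma~\ref{moments-ground}, covers $B$ by $O(r^d)$ unit cubes, and the finiteness of all moments of $\P(\Gamma_1)$ (Remark~\ref{remark1.1}, from \cite{BYY19}) then gives $\|\P(B)\|_{k_0}\le C r^d$.

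\textbf{Expected obstacle.} The delicate step is justifying the Palm structure in case (ii): specifically, that the kernel $-\alpha K$ satisfies Assumptions~1--4 of \cite{MO21}. I would verify this exactly as in the proof of Theorem~\ref{example1}, using Hermitianity, boundedness and $0\le -\alpha K\le 1$. One must also identify the superposition's Palm law correctly; I would check this by computing the first-order moment measures. In case (i), the main point to confirm is that the Papangelou bound is uniform, which holds for the finite-range, locally stable models under consideration.
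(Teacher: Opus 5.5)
Your proof is correct. In case (ii) it follows the paper's route: the inclusion coupling $\P^!_x\subseteq\P$ from \cite[Theorem~1]{MO21} reduces the Palm moment to an ordinary moment, which Remark~\ref{remark1.1} and Lemma~\ref{moments-ground} then bound. You are actually more careful than the paper here. The paper's proof only speaks of ``a determinantal point process'', whereas you explicitly treat the $-1/\alpha$ i.i.d.\ components and the mixture formula for the Palm law of an independent superposition.

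In case (i) your route genuinely differs. The paper uses only the stochastic domination of $\P$ by a homogeneous Poisson process $\P_\tau$ (from \cite[Section~4.1]{HOS25}) together with the ratio-limit characterization of Palm expectations \cite[(10.3)]{kallenberg83}. This gives $\E_x(\P(B^\circ(x,r))^{k_0})\le(\tau/\mu)\,\E(\P_\tau(B^\circ(x,r))^{k_0})$, i.e.\ a direct comparison with explicit Poisson moments. You instead use the GNZ identity to write the reduced Palm law as the law of $\P$ reweighted by $\lambda^*(\0,\P)/\mu$, and then bound the unconditional moments by iterated-GNZ factorial moment bounds.

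What each buys: your version avoids the double limit $\delta_1\to0$, $\delta_2\to0$ in the paper's argument, whose justification through the local ratio-limit definition of the Palm distribution is somewhat delicate. On the other hand, you need a uniform bound on the Papangelou intensity itself. The paper's argument works verbatim for any stationary process that is stochastically dominated by a Poisson process, with no Gibbsian structure required. In the locally stable, finite-range setting of \cite{HOS25} both hypotheses hold, so nothing is lost. Your qualifier ``hard-core-free'' is unnecessary, since an upper bound on $\lambda^*$ is compatible with hard-core interactions.
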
 
\noindent{\it Proof.} When $\P$ is a Gibbs point process whose  interaction range  is smaller than the critical value of continuum percolation of an associated Poisson point process,  
it follows from \cite[Section~4.1]{HOS25} that it is  stochastically dominated by a homogeneous Poisson process $\P_\tau$ with a positive intensity $\tau$. Let $\mu$ be the intensity of $\P$. Fix $0<\delta_1<\delta_2<r$, and define the spherical shell $B_{x,\delta_2}=B(x,r)\setminus B(x,\delta_2)$. Then 
\begin{equation}\label{ex.lu.lma-x01}
	\frac{\mathbb{E}\( \P(B_{x,\delta_2})^{k_0}\P(B(x,\delta_1))\) } {\mathbb{E}\(\P(B(x,\delta_1))\)}\le \frac{\mathbb{E}\(\P_\tau(B_{x,\delta_2})^{k_0}\P_\tau(B(x,\delta_1))\)}{\mu\Vol(B(x,\delta_1))}=\frac{\tau}{\mu}\mathbb{E}\(\P_\tau(B_{x,\delta_2})^{k_0}\). 
\end{equation} 
First taking the limit as $\delta_1\to 0$, and then as $\delta_2\to 0$  in \eqref{ex.lu.lma-x01}, the definition of the Palm distribution in \citet[(10.3)]{kallenberg83} gives
\begin{equation*}
	\mathbb{E}_x \(\P(B^{\circ}(x,r))^{k_0}\) \le \frac{\tau}{\mu}\mathbb{E}\(\P_\tau(B^{\circ}(x,r))^{k_0}\).
\end{equation*} 
This implies inequality \eqref{ex.lu.lma-x00}.

When ${\P}$ is a determinantal point process, condition \eqref{reEDDdoubleexp} together with the Hermitian property ensures that its kernel satisfies Assumptions~1-3, while Assumption~4 is equivalent to the existence of the process given the first three assumptions (cf. the proof of Theorem~\ref{example1}). According to \cite[Theorem~1]{MO21}, for all $x \in \mathbb{R}^d$, there exists a coupling of ${\P}$ and its  reduced Palm version ${\P}_x^{!}$ such that ${\P}_x^{!} \subseteq {\P}$ almost surely. This property, together with Remark~\ref{remark1.1} and Lemma~\ref{moments-ground}, guarantees the desired conclusion. \qed

\noindent{\it Proof of Theorem~\ref{example7}.} We will deduce this from  Corollary~\ref{maincorLp}. The point process ${\P}$ satisfies the EDD condition as shown in \cite[Section~3]{CX23}. By Remarks~\ref{remark1.1} and~\ref{re1.6}~(i), it suffices to show that assumptions \eqref{momscore}, \eqref{momrestrictedscore} hold and that exponential $L^{p_3}$-stabilization for part (a) and polynomial $L^{p_3}$-stabilization for part (b) hold for a family of short-range score functions $(\hat{\xi}^{[r]})_{r\in(0,\infty)}$, for $p_2=p_2' =p_0>3$ and $p_3=3$.  Define $\hat{\xi}^{[r]}$ to be the restriction of $\xi$ to $\hP|_{B(x,r)}$ as in \eqref{restriction}, that is,  $$\hat{\xi}^{[r]}((x,m_x), \hatX,\Gamma_{\lambda})=\frac{1}{k!}\sum_{\hat{\mathbf{x}}\in ({\hatX|_{ B(x,r)^{\circ}\cap \Gamma_{\lambda}} })^{k-1,\neq}}h((x,m_x),\hat{\mathbf{x}}).$$ 

Define for $\mathbf{y}=\(y_1,\dots,y_n\)\in \(\real^d\)^{n,\neq}$ and $x\in \real^d$
$$
d_s(x,\mathbf{y}):=
d_s(x,\{y_1,\dots,y_n\}) := \sup\{d(x,y_j):\ 1\le j\le n\}.
$$
Since $d_s(x,\{y_1,\dots,y_n\}) \le \diam(\{x,y_1,\dots,y_n\})$, the fast-decay assumptions on $h$ in parts~(a) and~(b) imply, respectively, that
\begin{align*}
	\mathbb{E}_{x,y_1\dots,y_{k-1}}\(|h((x,M_{x}),(y_1,M_{y_1})\dots,(y_{k-1},M_{y_{k-1}}))|^{p_0}\)^{1/p_0} &\le C_1 \mathrm{e}^{-C_2 d_s(x,\{y_1,\dots,y_{k-1}\})}\quad\text{and},\\
	\mathbb{E}_{x,y_1\dots,y_{k-1}}\(|h((x,M_{x}),(y_1,M_{y_1})\dots,(y_{k-1},M_{y_{k-1}}))|^{p_0}\)^{1/p_0} &\le C_3 \(1\vee d_s(x,\{y_1,\dots,y_{k-1}\})\)^{-C_4}.
\end{align*}  
We then set 
$$\mathrm{UB}(\varrho)=C_1 \mathrm{e}^{-C_2 \varrho}\quad \text{for (a)}, \qquad\mathrm{UB}(\varrho)=C_3 (1\vee \varrho)^{-C_4}\quad \text{for (b)}.$$

For any marked configuration $\hatX$ with ground configuration $\mathcal{X}$, and for $(x,m_x)\in\hatX$, we decompose the sum in \eqref{defxiustat01} according to the  spherical shell $B(x,j)\setminus B(x,j-1)$ which contains 
among the ground coordinates of $\hat{\mathbf{y}}\in(\hatX)^{k-1,\neq}$ the point  furthest from $x$.
Precisely, for $j\in\mathbb N$, we define
\begin{align*}
	S_j\(x,\hatX\):=&\sum_{\hat{\mathbf{y}} \in (\hatX)^{k-1,\neq},\ d_s(x,\mathbf{y})\in[j-1,j)}h((x,m_x), \hat{\mathbf{y}})\\
	=&\sum_{\mathbf{y}\in ({\cal X}|_{ B^{\circ}(x,j)})^{k-1,\neq}\setminus ({\cal X}|_{ B^{\circ}(x,j-1)})^{k-1,\neq}}h((x,m_x),\hat{\mathbf{y}}),
\end{align*} 
where $\hat{\mathbf{y}}=((y_1,m_{y_1}),\dots,(y_n,m_{y_n}))$ for $\mathbf{y}=(y_1,\dots,y_n)\in \mathcal{X}^{n,\neq}$.
The number of admissible $(k-1)$-tuples contributing to the $j$-th shell term is bounded by the $(k-1)$-st power of the number of ground points in the corresponding shell. Thus,  for any $j \in \N$ we obtain  
via  Lemma~\ref{ex.lu.lma}
\begin{align}
	\mathbb{E}_{x}(|S_j\(x,\hP\vert_{ \Gamma_{\lambda}}\)|^{3})^{1/3}
&
\le \mathbb{E}_{x}(|S_j(x,\hP\vert_{ \Gamma_{\lambda}})|^{p_0})^{1/p_0}  \nonumber\\
	&= \mathbb{E}_{x}\(\mathbb{E}\left(\left|S_j\(x,\hP\vert_{ \Gamma_{\lambda}}\)\right|^{p_0}\middle|\P\cap(B(x,j)\setminus B(x,j-1))\right)\)^{1/p_0} \nonumber\\
	&\le \mathrm{UB}(j-1)^{p_0}\mathbb{E}_{x}\(\P(B(x,j)\setminus B(x,j-1))^{(k-1)p_0}\)^{1/p_0}\nonumber\\
	&\le D_{(k-1)p_0}^{k-1}\mathrm{UB}(j-1)j^{d(k-1)},\label{ex7-x1}
\end{align}
where the penultimate  inequality follows from Minkowski's inequality.

Rewrite $\xi$ and $\hat{\xi}^{[r]}$ via the additive representations
$$\xi((x,M_x),{\hP},\Gamma_{\lambda})=\frac{1}{k!}\sum_{j\in \mathbb{N}} S_j(x,{\hP}|_{ \Gamma_{\lambda}})~\mbox{and}~\hat{\xi}^{[r]}((x,M_x),{\hP},\Gamma_{\lambda})=\frac{1}{k!}\sum_{j\in \mathbb{N}}S_j(x,{\hP}|_{B(x,r)\cap \Gamma_{\lambda}}).$$

We first show that the moment assumptions \eqref{momscore} and \eqref{momrestrictedscore} hold for $p_2=p_2'=p_0>3$. The Minkowski inequality and  \eqref{ex7-x1} imply that in cases~(a) and~(b), we have
\begin{align*}
	\(\mathbb{E}_{x}\left|\xi((x,M_x),{\hP},\Gamma_{\lambda})\right|^{p_0}\)^{1/p_0}&\le \frac{1}{k!}\sum_{j=1}^{\infty}	\(\mathbb{E}_{x}\left|S_j\(x,{\hP}|_{ \Gamma_{\lambda}}\)\right|^{p_0}\)^{1/p_0}\nonumber\\ 
	&\le \frac{D_{(k-1)p_0}^{k-1}}{k!}\sum_{j=1}^{\infty}\mathrm{UB}(j-1)^{d(k-1)}<\infty,\ignore{\label{ex7}}
\end{align*}
where, by assumption,  $C_4>d(k-1)+1$. Moreover, the  assumption \eqref{momrestrictedscore} also holds for $p_2'=p_0$ by replacing $\xi$ and $S_j\(x,{\hP}|_{\Gamma_{\lambda}}\)$ with $\hat{\xi}^{[r]}$ and $S_j\(x,{\hP}|_{ B(x,r)\cap \Gamma_{\lambda}}\)$, respectively.

Next, we show that exponential $L^{p_3}$-stabilization, as in case~(a),  and polynomial $L^{p_3}$-stabilization, as in  case~(b),  hold for $p_3=3$. For any given $r_0>1$, we have 
\begin{align}
	&\xi((x,M_x),{\hP},\Gamma_{\lambda})-\hat{\xi}^{[r_0]}((x,M_x),{\hP},\Gamma_{\lambda})\nonumber\\
	&=\frac{1}{k!}\[\sum_{j=\lceil r_0\rceil+1}^\infty S_j\(x,{\hP}|_{\Gamma_{\lambda}}\)+
	\sum_{\hat{\mathbf{y}} \in \(\hP|_{B^{\circ}(x,\lceil r_0\rceil)}\)^{k-1,\neq}\setminus \(\hP|_{ B^{\circ}(x,r_0)}\)^{k-1,\neq}}h\((x,M_x),\hat{\mathbf{y}}\)\].\label{ex7-x2}
\end{align}
As the first term on the right-hand side in  \eqref{ex7-x2} is the leading term, we can assume, without loss of generality, that $r_0$ is an integer so that the second sum vanishes. Then, applying the Minkowski inequality and using \eqref{ex7-x1} again, we get
\begin{align*}
	&\(\mathbb{E}_{x}\left|\xi((x,M_x),{\hP},\Gamma_{\lambda})-\hat{\xi}^{[r_0]}((x,M_x),{\hP},\Gamma_{\lambda})\right|^{3}\)^{1/3}\nonumber\\
	&\le \frac{1}{k!}\sum_{j=r_0+1}^{\infty}\(\mathbb{E}_{x}\left|S_j(x,{\hP}|_{\Gamma_{\lambda}})\right|^{3}\)^{1/3} \nonumber
	\\ 
	&\le \frac{D_{(k-1)p_0}^{k-1}}{k!}\sum_{j=r_0+1}^{\infty}\mathrm{UB}(j-1)j^{d(k-1)}
\end{align*}
implying  cases (a) and (b) provided  $C_4>d(k-1)+1$. Indeed,
the right-hand side is bounded above by $C_5\mathrm{e}^{-C_6r_0}$ for some positive constants $C_5$ and $C_6$ in case~(a), and by $C_7r_0^{d(k-1)+1-C_4}$ for some positive constant $C_7$ in case~(b). Hence, statements~(a) and~(b) follow  from Corollary~\ref{maincorLp}~(a) and~(b), respectively. \qed

\subsubsection{Subgraph counting in the random connection model}

We study the problem of counting subgraphs within a class of random connection graphs inspired by the {\it age-dependent random connection model} (cf. \citet{HLO25} and references therein), where vertices are equipped with $\iid$ marks and the existence of an edge between two vertices depends on both their Euclidean distance $d$ and their respective marks.

Assume that $\P$ is one of the point processes introduced in Theorem~\ref{example7}, and attach to each $x \in {{\P}}$ an independent mark $M_x \sim U[0,1]$, with the marks being independent of ${{\P}}$. 
Given a function $\psi:(0,\infty) \to [0,1]$ and a symmetric function $g: [0,1] \times [0,1] \to [0,1]$, we
construct a random graph ${\cal G}(\P):={\cal G}(\P, g, \psi)$ with vertex set ${\P}$ by connecting  points $x, y \in {{\P}}$ whenever $g(M_x, M_y) \le \psi(d(x, y))$. 

For example, taking  $\psi$ to be a constant $p \in [0,1]$ leads to the connection rule that we connect vertices
$x,y$ whenever their marks satisfy  $g(M_x, M_y) \le p$,  that is, all pairs of points are connected with the same probability; the probabilities are dependent when a common vertex is involved and thus this model does not precisely coincide  with the age-dependent random connection model. On the other hand, if $\psi(t) = {\bf 1}_{(0 \leq t \leq r)}$ and $g\not\equiv0$, then we obtain a version of the random geometric graph with parameter $r$. Fixing $p \geq 0$, taking $g(s, t) = \max\{s, t\}$ and $\psi(r) = Cr^{-p}, r \in (0, \infty),$  gives rise to the graph defined by the connection rule that we connect $x, y$ whenever 
$$ \max\{U_x, U_y\} \leq  C(d(x,y))^{-p}, $$
which says that we connect points $x, y$ as soon as the balls centered at $x$ and $y$ with respective radii  $C^{1/p}U_x^{-1/p},  C^{1/p}U_y^{-1/p}$  contain both $x,y$. 
When $\P$ is  replaced by the lattice $\Z^d$, there is a regime of $p$-values such that the resulting graph is scale-free; see \citet{JY06}.

Let ${\cal G}_{\lambda}(\P)$  denote the restriction of ${\cal G}(\P)$  to the vertex set ${{\P}}\cap \G_{\lambda}$. Let $H_{\lambda}:=H_{\lambda}(F)$ denote the number of subgraphs of ${\cal G}_{\lambda}(\P)$  that are isomorphic to a given connected simple graph $F$ with $k$ vertices, where {\it connected} means that there exists a path between each pair of vertices in $F$, and {\it simple} means it is an unweighted, undirected graph containing neither self-loops nor multiple edges.
\begin{thm}\label{thm3.16} (quantitative CLT for the subgraph count in a type of age-dependent random connection model)
Assume that the point process ${{\P}}$ satisfies the assumptions in Theorem~\ref{example7}, $\var H_{\lambda} = \Omega(\lambda^\nu)$ for some positive constant $\nu>2/3$, and $g: [0,1] \times [0,1] \to [0,1]$ is a symmetric function which satisfies 
 $g(s, t) \ge \max\{s, t\}$. Let $Z\sim N(0,1)$. 
	\begin{description}
		\item{(a)} If $\psi$ decays exponentially fast, i.e.,  there exist positive constants $C_1$ and $C_2$ such that $\psi(r) \le C_1\mathrm{e}^{-C_2r}$ for all $r \in(0,\infty)$, then  
		\begin{equation*}
			d_W\( \frac{H_{\lambda} - \E H_{\lambda}} {\sqrt{\Var H_{\lambda}}}, Z\) = O\((\log \lambda)^{3d}\lambda^{-1.5\nu+1}\).
		\end{equation*}
		\item{(b)} If there exist constants $C_3 \in (0, \infty)$  and 
		$$C_4 >3 \left(\frac{d}{3\nu-2}((18-3\nu)\vee 14)+ d(k-1)+1\right)$$  
		such that $\psi(r) \le C_3(1\vee r)^{-C_4}$ for all $r \in(0,\infty)$, then
		\begin{equation*}
			d_W\( \frac{H_{\lambda} - \E H_{\lambda}} {\sqrt{\Var H_{\lambda} }},  Z\) =  O\(\lambda^{-1.5\nu+1+\tau(2d,\beta)+\epsilon}\),
		\end{equation*}
		for all $\epsilon \in (0,\infty)$, where $\beta=C_4/3-d(k-1)-1$ and where $\tau$ is defined in \eqref{taudef}. The above bound is $o(1)$ when $\epsilon$ is small enough.
	\end{description}
\end{thm}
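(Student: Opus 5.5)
The plan is to deduce Theorem~\ref{thm3.16} from Theorem~\ref{example7}, writing the subgraph count as a local $U$-statistic of the marked process $\hP=\{(x,M_x)\}$. Take $k$ equal to the number of vertices of $F$ and set
$$h((x_1,m_1),\dots,(x_k,m_k)) := \#\{\text{copies of } F \text{ in } {\cal G}(\{x_1,\dots,x_k\})\text{ using all } k \text{ vertices}\},$$
i.e. the number of subgraphs of the graph induced on $\{x_1,\dots,x_k\}$ with vertex set exactly $\{x_1,\dots,x_k\}$ that are isomorphic to $F$. Then $H_\lambda$ has the representation \eqref{repW}. The function $h$ is symmetric and bounded by a constant $c_F\le k!$, and it depends only on the marked points and not on the rest of the configuration.

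Two hypotheses of Theorem~\ref{example7} must be checked. First, $\hP$ must be EDD. The ground process is EDD by Remark~\ref{reEDD}, and i.i.d. marks that are independent of $\P$ preserve the total variation bound~\eqref{mixing}: marking both sides of the coupling with the same i.i.d. marks gives the same $d_{TV}$ bound. Second, the decay bound on the conditional $L^{p_0}$ norm of $h$. Since $F$ is connected, any copy of $F$ on $\{x_1,\dots,x_k\}$ contains a spanning tree. Some edge $(x_i,x_j)$ of that tree satisfies $d(x_i,x_j)\ge \diam(\{x_1,\dots,x_k\})/(k-1)$, because the tree path joining a diametral pair has at most $k-1$ edges. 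Hence $h\neq 0$ forces $g(M_{x_i},M_{x_j})\le\psi(\diam/(k-1))$ for some pair $i,j$. Using $g(s,t)\ge\max\{s,t\}$ and the independence of the marks under the Palm distribution, each such event has probability at most $\psi(\diam/(k-1))^2$. A union bound over the $\binom{k}{2}$ pairs then gives
$$\E_{x_1,\dots,x_k}|h|^{p_0}\le c_F^{p_0}\binom k2\,\psi\big(\diam/(k-1)\big)^{2}.$$
Taking $p_0$-th roots yields a bound of order $\psi(\diam/(k-1))^{2/p_0}$.

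In case (a) this bound is $C\mathrm{e}^{-C'\diam}$ for any fixed $p_0>3$, so Theorem~\ref{example7}(a) applies directly. In case (b) it is $C(1\vee\diam)^{-2C_4/p_0}$. Choosing $p_0=3+\delta$ gives the decay exponent $2C_4/(3+\delta)$. The main obstacle is matching the threshold: the theorem's hypothesis uses $C_4/3$, so I will use the cruder bound $\psi^{1/p_0}$ (from $\psi^2\le\psi$) together with exponent $C_4/p_0$. This is larger than the threshold $\frac{d}{3\nu-2}((18-3\nu)\vee14)+d(k-1)+1$ for $\delta$ small, because of the strict inequality assumed on $C_4$. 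Theorem~\ref{example7}(b) then gives the rate with $\beta_\delta=C_4/(3+\delta)-d(k-1)-1$. Since $\tau(2d,\beta)$ is continuous and decreasing in $\beta$, letting $\delta\downarrow0$ makes $\tau(2d,\beta_\delta)\le\tau(2d,\beta)+\epsilon$ for any $\epsilon>0$, which explains the $+\epsilon$ in the statement. The bound is $o(1)$ for small $\epsilon$ because the condition on $C_4$ is exactly what makes $-1.5\nu+1+\tau(2d,\beta)<0$.
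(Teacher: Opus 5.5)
Your proposal is correct and follows essentially the same route as the paper. You write $H_\lambda$ as a local $U$-statistic, use connectivity of $F$ to find an edge of length at least $\diam(\{x_1,\dots,x_k\})/(k-1)$, combine $g(s,t)\ge\max\{s,t\}$ with independence of the uniform marks to bound the $L^{3+\delta}$ norm of $h$, and apply Theorem~\ref{example7} with $\delta\downarrow 0$ to produce the $+\epsilon$ in part (b).

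One thing needs tidying. Since $\psi$ is not assumed monotone, bound $\psi(d(x_i,x_j))$ by the non-increasing envelope $C_1\mathrm{e}^{-C_2 r}$ or $C_3(1\vee r)^{-C_4}$ evaluated at $r=\diam(\{x_1,\dots,x_k\})/(k-1)$, rather than writing $\psi(\diam/(k-1))$. Your explicit EDD check for the i.i.d.\ marked process is fine. Downgrading from $\psi^2$ to $\psi$ is unnecessary, since a larger decay exponent only improves the rate, but it does no harm.
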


\noindent{\it Proof.}  Define $h((x_1,M_{x_1}),\dots,(x_k,M_{x_k}))$ to be the number of subgraphs of ${\cal G}(\P)$  that are isomorphic to $F$, with vertex set $\{x_1,\dots,x_k\}\in{{\P}}^{k,\neq}$ and associated marks $\{M_{x_1},\dots,M_{x_k}\}$. Without loss of generality, assume that the vertex set of $F$ is $\{v_1,\dots,v_k\}$, and write $i \overset{F}{\sim} j$ to indicate that there exists an edge between $v_i$ and $v_j$ in $F$ for $i,j \in \{1,\dots,k\}$. Let $\mathrm{aut}(F)$ denote the number of automorphisms of the graph $F$. If we take
$$ h((x_1,M_{x_1}),\dots,(x_k,M_{x_k}))=\frac{1}{\mathrm{aut}(F)}\sum_{\bm{\pi}\in \textbf{P}\{1,\dots,k\}}\prod_{i\overset{F}{\sim} j}\mathbf{1}_{g(M_{x_{\bm{\pi}(i)}},M_{x_{\bm{\pi}(j)}})\le \psi(d(x_{\bm{\pi}(i)},x_{\bm{\pi}(j)}))},$$
where $\textbf{P}\{1,\dots,k\}$ denotes the set of all permutations of $\{1,\dots,k\}$, then $H_{\lambda}$ can be represented as a local $U$-statistic as in \eqref{repW}.

Next, we upper bound  the $L^p$ norm of $h$,  
$p \in (0,\infty)$.  From the construction, for any $\bm{\pi}\in \textbf{P}\{1,\dots,k\}$, the product 
$$
\prod_{i\overset{F}{\sim} j}\mathbf{1}_{g(M_{x_{\bm{\pi}(i)}},M_{x_{\bm{\pi}(j)}})\le \psi(d(x_{\bm{\pi}(i)},x_{\bm{\pi}(j)}))}$$
is the indicator of the event that an isomorphism of $F$, in which vertex $v_i$ is mapped to $x_{\bm{\pi}(i)}$, $1\le i\le k,$ forms a subgraph of ${\cal G}(\P)$ given that $\{x_1,\dots,x_k\} \subset \P$.
Because $F$ is connected, any such isomorphism must contain a path of the form $(x_{i_1}, \dots, x_{i_m})$ for some $2 \le m \le k$, such that $d\left(x_{i_1},x_{i_m}\right)=\text{diam}\(\{x_1,\dots,x_k\}\)$, i.e., a path connecting the most distant points among $\{x_1,\dots,x_k\}$. Therefore, for any $\bm{\pi}\in \textbf{P}\{1,\dots,k\}$ and $\{x_1,\dots,x_k\}\in {{\P}}^{k,\neq}$,
and recalling $g(s, t) \ge \max\{s, t\}$,
 we estimate the $L^p$-norm of the indicator function as follows:
\begin{align}
	\left\|\prod_{i\overset{F}{\sim} j}\mathbf{1}_{g(M_{x_{\bm{\pi}(i)}},M_{x_{\bm{\pi}(j)}})\le \psi(d(x_{\bm{\pi}(i)},x_{\bm{\pi}(j)}))}\right\|_{p} 
	\le&\left\|\prod_{j=1}^{m-1}\mathbf{1}_{\max\{M_{x_{i_j}},M_{x_{i_{j+1}}}\}\le \psi(d(x_{i_j},x_{i_{j+1}}))}\right\|_{p}\nonumber
	\\\le &\prod_{j=1}^{m-1}\mathbb{P} \(M_{x_{i_j}}\le \psi(d(x_{i_j},x_{i_{j+1}}))\)^{1/p}.\nonumber
\end{align}
In case (a), where $\psi$ decays exponentially fast, we have
\begin{equation}\|h((x_1,M_{x_1}),\dots,(x_k,M_{x_k}))\|_{p}\le \frac{k!}{\mathrm{aut}(F)}(C_1\vee 1)^{(k-1)/{p}}\mathrm{e}^{-C_2\text{diam}\(\{x_1,\dots,x_k\}\)/{p}}.\label{caseaexpoadd1}\end{equation}
When $\psi$ decays polynomially fast as in case (b), we have
\begin{equation}\label{ustatisticdecay}
	\|h((x_1,M_{x_1}),\dots,(x_k,M_{x_k}))\|_{p}\le \frac{k!}{\mathrm{aut}(F)}(C_3\vee 1)^{1/{p}}\(\frac{\text{diam}\(\{x_1,\dots,x_k\}\)}{k-1}\)^{-C_4/{p}},
\end{equation}
noting that the existence of a path $(x_{i_{\bm{\pi},1}}, \dots, x_{i_{\bm{\pi},m}})$ for some $2 \le m \le k$ implies that there is  at least one edge in the path with length at least  $d(x_{i_{\bm{\pi},1}}, x_{i_{\bm{\pi},m}})/(k-1)$. Take $p=3+\epsilon_0$ for some 
 $\epsilon_0 \in (0,\infty)$. Then \eqref{ustatisticdecay} shows that
\begin{equation}\|h((x_1,M_{x_1}),\dots,(x_k,M_{x_k}))\|_{3+\epsilon_0}\le C_5\(\text{diam}\(\{x_1,\dots,x_k\}\)\)^{-C_4/{(3+\epsilon_0)}}, 
\label{caseaexpoadd2}
\end{equation} 
i.e., the $L^{3+\epsilon_0}$-norm of the kernel $h$ decays polynomially fast with exponent $C_4/(3+\epsilon_0)$. Since the marks are independent, the conditional
$L^p$-norm of $h$ given $\{x_1,\dots,x_k\}\subset\P$, coincides with the ordinary $L^p$-norm under Palm
conditioning. 
The assertion in case (a) follows  from
Theorem~\ref{example7} and \eqref{caseaexpoadd1}. For case (b), choose $\epsilon_0>0$ sufficiently small so that
$$\frac{C_4}{3+\epsilon_0}
>
\frac{d}{3\nu-2}\((18-3\nu)\vee 14\)+d(k-1)+1.
$$
Then \eqref{caseaexpoadd2} implies the polynomial-decay condition of
Theorem~\ref{example7}~(b) with decay exponent $C_4/(3+\epsilon_0)$, and hence  the conclusion follows from Theorem~\ref{example7}~(b). \qed

\vskip.3cm

\noindent{\bf Acknowledgements}. We thank Anne Marie Svane for  constructive and insightful comments on an earlier version of this paper. J.  Yukich gratefully acknowledges the kind hospitality and generous financial support from the University of Melbourne,  where the writing of this paper was initiated.


\def\ac{{Academic Press}~}
\def\aap{{Adv. Appl. Prob.}~}
\def\ap{{Ann. Prob.}~}
\def\anap{{Ann. Appl. Prob.}~}
\def\eljp{{\it Electron.\ J.~Prob.\/}~} 
\def\jap{{J. Appl. Prob.}~}
\def\jws{{John Wiley $\&$ Sons}~}
\def\ny{{New York}~}
\def\ptrf{{Prob. Theory Related Fields}~}
\def\sp{{Springer}~}
\def\spa{{Stochastic Processes and their Applications}~}
\def\sv{{Springer-Verlag}~}
\def\tpa{{Theory Prob. Appl.}~}
\def\zw{{Z. Wahrsch. Verw. Gebiete}~}
\def\jtp{{J. Theor. Prob.}~}


\begin{thebibliography}{9}
	
	
	\bibitem[Beardwood, Halton and Hammersley(1959)]{BHH59}Beardwood, J., Halton, J. and Hammersley, J.~(1959). The shortest path through many points. \emph{Math. Proc. Camb. Philos. Soc.}~\textbf{55}, 299--327.
	
	\bibitem[Bene{\v{s}} et al. (2020)]{BHLV20} Bene{\v{s}}, V., Hofer-Temmel, C., Last, G. and Ve{\v{c}}e{\v{r}}a, J.~(2020). Decorrelation of a class of Gibbs particle processes and asymptotic properties of $U$-statistics. \emph{\jap}\textbf{57}, 928--955.
	
	
	\bibitem[Bickel and Breiman(1983)]{BB} Bickel, P. and Breiman, L.~(1983).  Sums of functions of nearest neighbor distances, moment bounds, limit theorems, and a goodness of fit test.  \emph{Ann. Prob.}~\textbf{11}, 184-214.
	
	\bibitem[B{\l}aszczyszyn, Yogeshwaran and Yukich(2019)]{BYY19} B{\l}aszczyszyn, B., Yogeshwaran, D. and Yukich, J. E.~(2019). Limit theory for geometric statistics of point processes having fast decay of correlations. \emph{\ap}\textbf{47}, 835--895.
	
	\bibitem[B{\l}aszczyszyn, Yogeshwaran and Yukich(2026)]{BYY25} B{\l}aszczyszyn, B., Yogeshwaran, D. and Yukich, J. E.~(2026).  Limit theory for  Lipschitz-localized statistics in random geometric models. arXiv:2605.28430.
	

	\bibitem[Bradley(2005)]{B05} Bradley, R. C.~(2005). Basic properties of strong mixing conditions. A survey and some open questions. \emph{Prob. Surveys}~\textbf{2}, 107--144.
	

	\bibitem[Brockwell and Davis(2016)]{BD16} Brockwell, P. J. and Davis, R. A.~(2016). \emph{Introduction to Time Series and Forecasting} (3rd ed.). Springer.
	
	\bibitem[Bulinski and Shashkin(2007)]{BA07} Bulinski, A. and Shashkin, A.  (2007). \emph{Limit Theorems for Associated Random Fields and Related Systems.} Vol. 10. World Scientific.
	
	\bibitem[Chen, Goldstein and Shao(2011)]{CGS11} Chen, L. H. Y., Goldstein, L. and Shao, Q. M.~(2011). {\em Normal Approximation By Stein's Method}. Springer-Verlag.
	
	
	
	\bibitem[Cong and Xia(2023)]{CX23} Cong, T. and Xia, A.~(2023). Convergence rate for geometric statistics of point processes having fast decay of dependence. \emph{Electron. J. Prob.}~\textbf{28}, 1--35.
	
	\bibitem[Cong and Xia(2024)]{CX24} Cong, T. and Xia, A.~(2024). Normal approximation in total variation for statistics in geometric probability. \emph{\aap}\textbf{56}, 106--155.
	
	\bibitem[Davydov(1968)]{D68} Davydov, Y. A.~(1968). Convergence of Distributions Generated by Stationary Stochastic Processes. \emph{Theory Probab. Its Appl.~}\textbf{13}, 691--696.
	
	\bibitem[Dereudre(2019)]{D19}
	Dereudre, D. (2019). Introduction to the theory of Gibbs point processes. In
	\emph{Stochastic Geometry}, Lect. Notes in Math., \textbf{2237}, 181--229. Springer.
	
	\bibitem[Dinh et al.(2025)]{D25} Dinh, T.C., Ghosh, S., Tran, H.S. and Tran, M.H.~(2025).  Gaussian fluctuations for spin systems and point processes: near-optimal rates via quantitative Marcinkiewicz's theorem.  arXiv:2107.08469v3.

\bibitem[Eichelsbacher and Th$\"a$le(2014)]{ET14} Eichelsbacher, P. and Th\"ale, C.~(2014). 
New Berry-Esseen bounds for non-linear functionals of Poisson random measures. \emph{Electron. J. Probab.},
\textbf{19}, 1--25.
	
	
	
	
	\bibitem[Forrester and Lebowitz(2014)]{FL14} Forrester, P. J.  and Lebowitz, J. L.~(2014). Local central limit theorem for determinantal point processes. \emph{J. Stat. Phys.}~\textbf{157}, 60--69.
	
	
	\bibitem[Hamilton(1994)]{H94} Hamilton, J. D.~(1994). {\em Time Series Analysis}. Princeton University Press.
	
	\bibitem[Hern\'{a}ndez Ru\'{\i}z(2025)]{Rta} Hern\'{a}ndez Ru\'{\i}z, L. I.~(2025). Results for convergence rates associated with renewal processes. In \emph{S\'{e}minaire de Probabilit\'{e}s LII,} Lect. Notes in Math.,~\textbf{2363}, 427--450. Springer.
	
	\bibitem[Hirsch, Lachi{\`e}ze-Rey and Owada(2025)]{HLO25} Hirsch, C., Lachi{\`e}ze-Rey, R. and Owada, T.~(2025). Normal approximation for subgraph counts in age-dependent random connection models, arXiv:2505.09318.
	
	\bibitem[Hirsch, Otto and Svane(2025)]{HOS25} Hirsch, C., Otto, M. and Svane, A. M.~(2025). Normal approximation for Gibbs processes via disagreement couplings. \emph{Electron. J. Prob.}~{\bf 30}, 1--56.
	
	\bibitem[Hirsch, Otto and Svane(2026)]{HOS26} Hirsch, C., Otto, M. and Svane, A. M.~(2026). Normal and Poisson approximation for Gibbs point processes with pair potentials. arXiv:2601.18695.
	
	\bibitem[Hough et al.(2009)]{H09} Hough, J. B., Krishnapur, M., Peres, Y. and Vir{\`a}g, B.~(2009). \emph{Zeros of Gaussian analytic functions and determinantal point processes}. American Mathematical Society, Providence, RI.
	
	
	\bibitem[Kallenberg(1983)]{kallenberg83} Kallenberg, O.~(1983). \newblock {\em Random Measures}. \newblock Academic Press, London.
	
	
	
	
	\bibitem[Lachi\`eze-Rey, Schulte and Yukich(2019)]{LSY19} Lachi\`eze-Rey, R., Schulte, M. and Yukich, J. E. (2019). Normal approximation for stabilizing functionals.  \emph{\anap}\textbf{29}, 931--993. 
	
	\bibitem[Lacker, Ramanan and Wu(2023)]{LRW23} Lacker, D., Ramanan, K. and Wu, R.~(2023). Local weak convergence for sparse networks of interacting processes. \emph{\anap}\textbf{33}, 843--888.
	
	\bibitem[Lavancier, M{\o}ller and Rubak(2015)]{LMR15} Lavancier, F., M{\o}ller, J. and Rubak, E.~(2015). Determinantal point process models and statistical inference. \emph{J. R. Stat. Soc., B: Stat. Methodol.}~\textbf{77}, 853--877.
	
	
	\bibitem[Lo and Xia(2024)]{LX24} Lo, T. Y. Y. and Xia, A. (2024). On the rate of normal approximation for Poisson continuum percolation. \emph{Stat. Probab. Lett.}~\textbf{210}, Article~110110.
	
	
	
	\bibitem[Meester and Roy(1996)]{MY96} Meester, R. and Roy, R.~(1996). {\em Continuum Percolation.\/} Oxford University Press. 
	
	\bibitem[Michelen and Perkins(2022)]{MP22} Michelen, M. and Perkins, W.~(2022). Strong spatial mixing for repulsive point processes. \emph{J. Stat. Phys.}~\textbf{189}, Art. No. 9.
	
	\bibitem[M{\o}ller and O'Reilly(2021)]{MO21} M{\o}ller, J. and O'Reilly, E.~(2021). Couplings for determinantal point processes and their reduced Palm distributions with a view to quantifying repulsiveness. \emph{{\jap}}\textbf{58}, 469--483.
	
	
	
	
	
	\bibitem[Penrose and Pisztora(1996)]{PP96} Penrose, M. D. and Pisztora, A.~(1996). Large deviations for discrete and continuous percolation. \emph{\aap}\textbf{28}, 29--52. 
	
	
	
	\bibitem[Reitzner and Schulte(2013)]{RS13} Reitzner, M. and Schulte, M.~(2013). Central limit theorems for $U$-statistics of Poisson point processes. \emph{\ap}\textbf{41}, 3879--3909.
	
	\bibitem[Revuz(1970)]{R70} Revuz, D.~(1970). Mesures associ\'{e}es aux fonctionnelles additives de Markov. I. \emph{Trans. Amer.  Math. Soc.} \textbf{148}, 501--531.
	
	\bibitem[Rio(1993)]{R93} Rio, E.~(1993). Covariance inequalities for strongly mixing processes.
	\emph{Ann. Inst. H. Poincar\'{e} Prob. Statist.} \textbf{29}, 587--597.
	
	
	\bibitem[Rosenthal(1970)]{Rosenthal70} Rosenthal, H. P.~(1970). On the subspaces of $L_p(p > 2)$ spanned by sequences of independent random variables. \emph{Israel J. Math.}~\textbf{8}, 273--303.
	
	
	\bibitem[Ross(2010)]{R10} Ross, S.~(2010). \emph{Introduction to Probability Models (Tenth Edition)}. Academic Press.
	
	
	
	
	\bibitem[Schreiber and Yukich(2013)]{SY13} Schreiber, T.  and Yukich, J. E.~(2013). Limit theorems for geometric functionals of Gibbs point processes. \emph{Ann. Inst. H. Poincar\'{e} Prob. Statist.}~\textbf{49}, 1158--1182.
	
	
	
	\bibitem[Schulte and Yukich(2023)]{SchulteY} Schulte, M. and Yukich, J. E.~(2023).  Rates of multivariate normal approximation for statistics in geometric probability.  \emph{Ann.  Appl.  Prob.}, \textbf{33}, 507--548. 
	
	
	\bibitem[Shirai and Takahashi(2003)]{ST03} Shirai, T.  and Takahashi, Y.~(2003). Random point fields associated with certain Fredholm determinants I: fermion, Poisson and boson point processes. \emph{J. Funct. Anal.}~\textbf{205}, 414--463.
	
	
	
	\bibitem[Soshnikov(2002)]{S02} Soshnikov, A.~(2002). Gaussian limit for determinantal random point fields. \emph{\ap}~\textbf{30}, 171--187. 
	
	\bibitem[Steele(1981)]{Steele81} Steele, J. M.~(1981). Subadditive Euclidean functionals and nonlinear growth in geometric probability. \emph{\ap}\textbf{9}, 365--376. 
	
	\bibitem[Tong(1990)]{T90} Tong, H.~(1990). \emph{Non-linear Time Series: A Dynamical System Approach}.  Oxford University Press.
	
	\bibitem[Trauthwein and Yukich(2026)]{TY} Trauthwein, T. and Yukich, J. E.~(2026).  Second-order Poincar\'{e} inequalities and localization on the Poisson space.  arXiv:2605.23292.
	
	
	
	
	\bibitem[Wiroonsri(2019)]{W19} Wiroonsri, N.~(2019). Normal approximation for associated point processes via Stein's method with applications to determinantal point processes. \emph{J. Math. Anal. Appl.}~\textbf{480}, 123396.
	
	\bibitem[Xia and Yukich(2015)]{XY15} Xia, A. and Yukich, J. E.~(2015). Normal approximation for statistics of Gibbsian input in geometric probability. \emph{\aap}\textbf{47}, 934--972. 
	
	\bibitem[Yukich(2006)]{JY06}  Yukich, J. E.~(2006). Ultra-small scale-free geometric networks,  \emph{J. Appl. Prob.},  \textbf{43},  665--677.
\end{thebibliography}
\end{document}